\def\Vert{{\rm Vert}}
\def\RR{{\mathcal{R}}}
\def\Q{{\mathbb Q}}
\def\R{{\mathbb R}}
\def\C{{\mathbb C}}
\def\Z{{\mathbb Z}}
\def\F{{\mathbb F}}
\def\tF{{\tilde F}}
\def\Spec{\operatorname{Spec}}
\def\Conv{{\operatorname{Conv}}}
\def\Trop{{\operatorname{Trop}}}
\def\D{{\mathcal{D}}}
\def\sp{\operatorname{span}}
\def\A{{\mathcal{A}}}
\def\G{{\mathcal{G}}}
\def\SS{{\mathcal{S}}}
\def\I{{\mathcal{I}}}
\def\T{{\mathcal{T}}}
\def\u{{\mathbf{u}}}
\def\a{{\mathbf{a}}}
\def\y{{\mathbf{y}}}
\def\z{{\mathbf{z}}}
\def\Gr{{\rm Gr}}
\def\hGr{\widehat{\Gr}}
\def\ZZ{{\mathcal{Z}}}
\def\oGr{{\mathring{\Gr}}}
\def\CC{{\mathcal{C}}}
\def\M{{\mathcal{M}}}
\def\bM{\overline{\M}}
\def\tM{{\widetilde{\M}}}
\def\GL{{\rm GL}}
\def\Conf{{\operatorname{Conf}}}
\def\Ch{{\operatorname{Ch}}}
\def\bConf{\Ch}
\def\oConf{{\mathring{\Conf}}}
\def\oTheta{{\mathring{\Theta}}}
\def\tConf{{\widetilde{\Ch}}}
\def\Vol{{\rm Vol}}
\def\val{{\rm val}}
\def\Pluck{{\rm Dr}}
\newcommand{\PP}{{\mathbb P}}
\def\Supp{{\rm Supp}}
\def\F{{\mathcal F}}
\def\hT{{\widehat{ T}}}
\def\T{{\mathcal T}}
\def\tDelta{{\tilde \Delta}}
\def\N{{\bf N}}
\def\hPi{{\widehat{\Pi}}}
\def\oPi{{\mathring{\Pi}}}
\def\tPi{{\widetilde{\Pi}}}
\def\vv{{\mathbf{v}}}
\def\x{{\mathbf{x}}}
\def\sbinom{{\mbox{$\binom{[n]}{k}$}}}
\newtheorem{conjecture}{Conjecture}
\newtheorem{theorem}[conjecture]{Theorem}
\newtheorem{lemma}[conjecture]{Lemma}
\newtheorem{proposition}[conjecture]{Proposition}
\newtheorem{corollary}[conjecture]{Corollary}
\theoremstyle{remark}
\newtheorem{remark}[conjecture]{Remark}
\newtheorem{question}[conjecture]{Question}
\numberwithin{conjecture}{section}
\numberwithin{equation}{section}
\author{Nima Arkani-Hamed}
\address{School of Natural Sciences, Institute for Advanced Study, Princeton, NJ 08540}
\address{Center of Mathematical Sciences and Applications, Harvard University, Cambridge, MA 02138}
\email{arkani@ias.edu}
\author{Thomas Lam}
\address{Department of Mathematics, University of Michigan, Ann Arbor, MI 48109}
\address{Department of Mathematics, Massachusetts Institute of Technology, Cambridge, MA 02139}
\email{tfylam@umich.edu}
\author{Marcus Spradlin}
\address{Department of Physics and Brown Theoretical Physics Center, Brown University, Providence, RI 02912}
\email{marcus\_spradlin@brown.edu}
\begin{document}
\title{Positive configuration space}
\begin{abstract}
We define and study the totally nonnegative part of the Chow quotient of the Grassmannian, or more simply the {\it nonnegative configuration space}.  This space has a natural stratification by {\it positive Chow cells}, and we show that nonnegative configuration space is homeomorphic to a polytope as a stratified space.  We establish bijections between positive Chow cells and the following sets: (a) regular subdivisions of the hypersimplex into positroid polytopes, (b) the set of cones in the positive tropical Grassmannian, and (c) the set of cones in the positive Dressian.  Our work is motivated by connections to super Yang-Mills scattering amplitudes, which will be discussed in a sequel.

\end{abstract}
\maketitle
\tableofcontents
\section{Introduction}
This is the first in a sequence of papers where we define and study the totally nonnegative part of the Chow quotient of the Grassmannian, or more simply the {\it nonnegative configuration space}.  In this paper, we focus on the combinatorics and topology of this space.  In a sequel \cite{ALS2}, we will further study the geometry and its relations to cluster algebras, canonical bases, and scattering amplitudes.  Some of the applications of our work to ${\mathcal N}=4$ super Yang-Mills amplitudes were announced in the note \cite{ALS}.

\subsection{}
Lusztig \cite{Lus} and Postnikov \cite{Pos} defined the totally nonnegative Grassmannian $\Gr(k,n)_{\geq 0}$ (informally, the positive Grassmannian), the closed subspace of the real Grassmannian $\Gr(k,n)(\R)$ of $k$-planes in $\R^n$ cut out by the condition that all Pl\"ucker coordinates are nonnegative.  Postnikov \cite{Pos} (see also \cite{Rie}) studied the stratification of $\Gr(k,n)_{\geq 0}$ by positroid cells $\Pi_{\M,>0}$ and Galashin, Karp, and Lam \cite{GKL,GKL2} showed that positroid cells endow $\Gr(k,n)_{\geq 0}$ with the structure of a regular CW-complex homeomorphic to a closed ball.  Positroid cells are indexed by a class of matroids called positroids.  Positroids have been completely classified \cite{Oh} and are in bijection with Grassmann necklaces \cite{Pos} and bounded affine permutations \cite{KLS}, among other combinatorial objects.  This is in stark contrast to the situation of matroids.  There is no explicit classification of (realizable) matroids, and the geometry of matroid strata \cite{GGMS} is notoriously complicated \cite{Mn}.
%

Recently, the positive Grassmannian has made a prominent appearance in the study of scattering amplitudes \cite{book, AT}, where the boundary structure of $\Gr(k,n)_{\geq 0}$ was connected to the singularities of tree-level scattering amplitudes in maximally supersymmetric Yang-Mills theory.  Part of this connection is formalized in the statement that $\Gr(k,n)_{\geq 0}$ is a positive geometry \cite{ABL}, and one driving force in this developing subject, and of the present work, is to find positive geometries that have relations to physical problems. 

\subsection{}
The Grassmannian $\Gr(k,n)$ has a natural action of a torus $T$ that acts by scaling the basis vectors of the underlying vector space $\C^n$.  The quotient $\Gr(k,n)/T$ is closely related to the configuration space $\Conf(k,n)$ of $n$ points in $\PP^{k-1}$.  Kapranov \cite{Kap} studied the Chow quotient $\Ch(k,n)$ of the Grassmannian, which is a compactification of the subspace $\oConf(k,n) \subset \Conf(k,n)$ of generic configurations.  In the case $k = 2$, the Chow quotient $\Ch(2,n)$ is isomorphic to the Deligne-Knudsen-Mumford space $\bM_{0,n}$ of $n$-pointed stable rational curves.  

The image of the positive Grassmannian $\Gr(k,n)_{>0}$ in $\oConf(k,n) \subset \Ch(k,n)$ is the positive component $\Conf(k,n)_{>0}$ of generic configuration space.  We define the {\it totally nonnegative part of the Chow quotient of the Grassmannian}, or simply {\it nonnegative configuration space} $\Ch(k,n)_{\geq 0}$, as the closure of this positive component inside $\Ch(k,n)$.  As we shall explain in more detail in a sequel \cite{ALS2}, see also \cite{ALS,GGSVV}, scattering amplitudes can be thought of as ``functions" on $\Gr(4,n)/T$.  Just as the combinatorics of $\Gr(k,n)_{\geq 0}$ controls the singularities of the amplitude at tree-level, we expect the combinatorics of $\Ch(4,n)_{\geq 0}$ to be closely related to the singularities of the full scattering amplitude (integrated, and at all loops).  Some connections of (the closely related) tropical Grassmannians to amplitudes have also been discussed in \cite{CEGM,CR,DFOK1,SG,DFOK2,HP,DFOK3}.
\subsection{}
Our first aim in this work is to study the combinatorics of a natural stratification $\{\Theta_{\tDelta,>0}\}$ of $\Ch(k,n)_{\geq 0}$, extending the combinatorics associated to positroid cells of $\Gr(k,n)_{\geq 0}$.  We call these strata {$\{\Theta_{\tDelta,>0}\}$} {\it positive Chow cells}.

\begin{theorem}\label{thm:introbij}
There are canonical bijections between the following sets. 
\begin{enumerate}
\item The set $\{\Theta_{\tDelta,>0}\}$ of positive Chow cells of $\Ch(k,n)_{\geq 0}$.
\item The set $\D(k,n)$ of regular subdivisions of the hypersimplex $\Delta(k,n)$ into positroid polytopes.
\item The set of cones in the positive tropical Grassmannian $\Trop_{>0}\Gr(k,n)$, the space of valuations of positive Puiseux series points $\Gr(k,n)(\RR_{>0})$.
\item The set of cones in the space $\Pluck(k,n)_{>0} \subset \R^{\binom{[n]}{k}}$ (called the positive Dressian) of vectors satisfying the positive tropical (three-term) Pl\"ucker relations.
\end{enumerate}
\end{theorem}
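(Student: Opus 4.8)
The plan is to establish the four-way correspondence as a chain of bijections $\mbox{(3)}\leftrightarrow\mbox{(4)}\leftrightarrow\mbox{(2)}\leftrightarrow\mbox{(1)}$, treating the regular positroid subdivisions in (2) as the central bookkeeping object, and to verify in each case that the identification matches fan/stratification structure and not merely underlying sets. For the link between (4) and (2) I would use Speyer's theory of tropical linear spaces: a height function $p$ on $\Delta(k,n)$ satisfying the tropical three-term Pl\"ucker relations induces a regular subdivision $\D_p$ all of whose faces are matroid polytopes, and imposing the sign conditions of the \emph{positive} three-term relations forces each such matroid to be a positroid, so $\D_p$ is a regular positroid subdivision. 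Conversely, for a regular positroid subdivision $\D$ one must show that the relative interior of its secondary cone $\sigma_\D$ meets $\Pluck(k,n)_{>0}$, i.e.\ that some height function inducing exactly $\D$ obeys every positive tropical three-term relation. This is where positivity of the individual cells enters: each cell is a positroid, hence the matroid of a point of $\Gr(k,n)(\RR_{>0})$, and these realizations can be glued along $\D$ into a single positive Puiseux point whose valuation lies in the relative interior of $\sigma_\D$. This simultaneously identifies (2) with the cones of (4) as fans.

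For (4) against (3): the inclusion $\Trop_{>0}\Gr(k,n)\subseteq \Pluck(k,n)_{>0}$ is immediate, since the valuation of any point of $\Gr(k,n)(\RR_{>0})$ satisfies the three-term Pl\"ucker relations with the correct signs. The reverse inclusion --- every positive tropical Pl\"ucker vector is the valuation of a genuine positive Puiseux point --- is exactly the gluing construction above; this is the ``positive Dressian $=$ positive tropical Grassmannian'' phenomenon, known for $\Gr(2,n)$ and $\Gr(3,n)$ by Speyer--Williams and reduced in general to the cell-by-cell argument. Equality of the fan structures then follows because both sides have already been identified with the secondary-fan data of (2).

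For (1) against (2): a point of $\Ch(k,n)_{\geq 0}$ is a limit of torus-translates of a positive point of $\Gr(k,n)$, represented by an algebraic cycle in a product of projective spaces whose irreducible components are torus-orbit closures indexed by the faces of a subdivision $\tDelta$ of $\Delta(k,n)$; positivity of the original point forces each component to be a positroid orbit closure, so $\tDelta$ is a regular positroid subdivision, and this assignment is the map from (1) to (2). Surjectivity uses the realizability established above: the positive Puiseux point attached to $\D$ has a well-defined limit in $\Ch(k,n)_{\geq 0}$ lying in $\Theta_{\D,>0}$. Injectivity --- that $\tDelta$ pins down the cell --- I would obtain by identifying $\Theta_{\tDelta,>0}$ with a product of lower-dimensional nonnegative configuration spaces, one for each maximal cell of $\tDelta$, modulo the residual positive torus, and inducting on $n$.

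The main obstacle, underlying both the passage from (2) to (4) and from (2) to (1), is the global realizability statement: given a regular positroid subdivision, one must produce a \emph{single} positive point over $\RR_{>0}$ --- equivalently, a compatible limiting cycle in $\Ch(k,n)_{\geq 0}$ --- realizing it. Compatibility along the shared faces of the subdivision, where the separate positive realizations of the pieces must agree, is the delicate step, and it is precisely the content of the equality of the positive Dressian with the positive tropical Grassmannian; once that is in hand the remaining steps reduce to bookkeeping with matroid polytopes, secondary fans, and Postnikov's positroid combinatorics.
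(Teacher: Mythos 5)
Your architecture --- chaining $(3)\leftrightarrow(4)\leftrightarrow(2)\leftrightarrow(1)$ through the subdivisions --- matches the paper's, and you correctly locate the crux in the realizability statement. But your treatment of that crux is circular: you propose to prove that every positive tropical Pl\"ucker vector is the valuation of a point of $\Gr(k,n)(\RR_{>0})$ by ``gluing'' positive realizations of the individual cells of the subdivision, and you then say that the compatibility of those realizations along shared faces ``is precisely the content of the equality of the positive Dressian with the positive tropical Grassmannian'' --- which is the very statement you are trying to prove. No mechanism for the gluing is supplied, and the general-$k$ equality is not something you can cite: only small cases were known before (it is Theorem~\ref{thm:mainintro} here, obtained independently in \cite{SW2,LPW}). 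The paper's argument is global rather than cell-by-cell: Theorem~\ref{thm:param} shows, by an induction on weak separation, that a positive tropical Pl\"ucker vector is uniquely and freely determined by its values on a single cluster $\CC$, and one then realizes any rational such vector by substituting $t^{p_I}$ into the cluster parametrization of the positroid cell, using that all Pl\"ucker coordinates are subtraction-free in the cluster variables (Proposition~\ref{prop:Plucksub}) so that valuations tropicalize as expected; an alternative proof runs through tropical bridge reduction (\S\ref{sec:bridge}). Your proposal contains no substitute for Theorem~\ref{thm:param}. Note also that the direction $(2)\to(4)$ needs no gluing at all: a regular positroid subdivision carries a height function by definition of ``regular,'' and Proposition~\ref{prop:posdiv} says any such height function automatically satisfies the positive relations.

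A second, more contained error: you propose to prove injectivity of $(1)\to(2)$ by identifying $\Theta_{\tDelta,>0}$ with a product of smaller nonnegative configuration spaces, one per maximal cell. That factorization holds for $k=2$ (see \eqref{eq:Ch}) but fails for $k>2$: the paper says so explicitly at the end of \S\ref{sec:M0n}, and the tables in Appendix~\ref{sec:examples} exhibit subdivisions whose ``naive dimension'' $\sum_i(\dim(\M_i)-(n-1))$ strictly exceeds $\dim(\tDelta)$, which is exactly the failure of your product formula. Fortunately the injectivity you want is automatic: the cells $\Theta_{\tDelta,>0}$ are by definition the fibers of $X\mapsto\tDelta(X)$, so distinct subdivisions index distinct cells once each cell is known to be non-empty. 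Non-emptiness is the real content, and the paper extracts it from realizability together with the stratification-preserving surjection onto the polytope $P(k,n)$ (Proposition~\ref{prop:Ft} and Theorem~\ref{thm:ball}); your limit-of-a-realizing-curve argument for this point is essentially right, provided realizability has actually been established first.
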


Our second main result is a description of the topology of $\Ch(k,n)_{\geq 0}$, a variant of the results of \cite{Pos,GKL,GKL2}.  Somewhat surprisingly, while the geometry of the Chow quotient is considerably more complicated than that of the Grassmannian, the following result is easier than its Grassmannian counterpart.

\begin{theorem}\label{thm:introtop}
There is a stratification-preserving homeomorphism from nonnegative configuration space to a polytope.  In particular, each positive Chow cell $\Theta_{\tDelta,>0} \subset \Ch(k,n)_{\geq 0}$ is homeomorphic to an open ball.
\end{theorem}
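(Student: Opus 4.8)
The plan is to construct an explicit stratification-preserving homeomorphism onto a polytope, using the combinatorial dictionary of Theorem~\ref{thm:introbij} to identify the target. Set $d := \dim\Conf(k,n) = (k-1)(n-k-1)$. The first observation is that the unique top-dimensional stratum is the open cell $\Conf(k,n)_{>0}$, which is homeomorphic to $\R_{>0}^{d}$: apply Postnikov's parametrization $\Gr(k,n)_{>0} \cong \R_{>0}^{k(n-k)}$ and quotient by the free action of the $(n-1)$-dimensional positive torus. Hence the target polytope $P$ must be $d$-dimensional with interior $\Conf(k,n)_{>0}$, and by Theorem~\ref{thm:introbij} its proper faces must biject with the nonzero cones of the fan $\bar\Sigma := \Trop_{>0}\Gr(k,n)$ modulo its lineality space; a dimension count gives $\dim\bar\Sigma = d$, so $\bar\Sigma$ should be exactly the normal fan of $P$.

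The geometric heart of the argument is a stratum-by-stratum description of $\Ch(k,n)_{\geq0} = \overline{\Conf(k,n)_{>0}}$, which I would carry out by induction on $n$. A point of the positive Chow cell $\Theta_{\tDelta,>0}$ attached to a regular positroidal subdivision $\tDelta \in \D(k,n)$ is represented in the Chow quotient by a cycle supported on the union of the positroid varieties indexed by the top-dimensional cells of $\tDelta$; to pin down such a point one must choose, for each such cell, a point of a smaller positive configuration space, and glue these choices compatibly along the interior walls of $\tDelta$. This realizes $\Theta_{\tDelta,>0}$ as a product of positive configuration cells of strictly smaller complexity, hence, by the inductive hypothesis, as an open ball of dimension $d - \dim(\text{cone of }\tDelta)$, which already gives the last assertion of the theorem. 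Furthermore these cells are glued according to refinement: $\Theta_{\tDelta',>0} \subseteq \overline{\Theta_{\tDelta,>0}}$ exactly when $\tDelta$ refines $\tDelta'$. Together these facts produce a continuous bijection from $\Ch(k,n)_{\geq0}$ onto a compact polyhedral complex $B$ naturally built from $\bar\Sigma$ (the cone over the link of the origin, with cone point matching the top cell $\Conf(k,n)_{>0}$); since the source is compact and $B$ is Hausdorff, the bijection is automatically a homeomorphism.

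It remains to recognize $B$ as a polytope, equivalently to show that $\bar\Sigma$ is a complete polytopal fan in $\R^d$. Both completeness (the cones indexed by regular positroidal subdivisions tile $\R^d$) and polytopality can be obtained inductively once one knows that the link of each cone of $\Trop_{>0}\Gr(k,n)$ is again a product of positive tropical Grassmannians on fewer elements --- the tropical counterpart of the product decomposition above, visible directly from the positive three-term Pl\"ucker relations cutting out $\Pluck(k,n)_{>0}$ --- so that $P$ may be assembled from the inductively constructed lower-dimensional polytopes, for instance as a ``positroidal secondary polytope'' of the hypersimplex $\Delta(k,n)$. I expect the main obstacle to be the first half of the preceding paragraph: pinning down precisely which degenerate configurations occur in the closure of $\Conf(k,n)_{>0}$ inside the intricate compactification $\Ch(k,n)$, and verifying that each $\Theta_{\tDelta,>0}$ really is the claimed product. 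Once this geometric bookkeeping is done, the homeomorphism follows by a soft compactness argument and the polytopality is purely combinatorial; this is precisely why, although the Chow quotient is far more complicated than the Grassmannian, the topological statement here is more accessible than its $\Gr(k,n)_{\geq0}$ counterpart \cite{GKL,GKL2}, where the relevant face poset fails to be that of a polytope and the regular CW structure must be verified by hand.
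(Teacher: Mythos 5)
Your proposal breaks down at what you yourself call its ``geometric heart'': the claim that $\Theta_{\tDelta,>0}$ is realized as a product of positive configuration cells of strictly smaller complexity. That factorization is false for $k>2$. The paper says so explicitly at the end of \S\ref{sec:M0n} (the product decomposition \eqref{eq:Ch} holds only for $k=2$), and the data in Appendix~\ref{sec:examples} rules it out numerically: already for $(k,n)=(3,6)$, the two-piece subdivision cut by $x_1+x_2=1$ has participating positroids with $\dim(\Pi_{\M_i,>0}/T_{>0})=2$ each, so the product would be $4$-dimensional, whereas $\dim(\Theta_{\tDelta,>0})=3$. The compatibility conditions ``along the interior walls'' that you mention in passing are therefore essential: they cut the product down to a proper subspace, and showing that this fibered configuration is still an open ball is precisely the hard content of the theorem, which your argument does not address. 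Two further issues: your closure relation is reversed ($\Theta_{\tDelta',>0}\subseteq\overline{\Theta_{\tDelta,>0}}$ when $\tDelta'$ \emph{refines} $\tDelta$, the trivial subdivision giving the open top cell); and the proposed induction on $n$ is not set up, since the pieces $\Pi_{\M_i,>0}/T_{>0}$ are quotients of positroid cells on the full ground set $[n]$, not configuration spaces on fewer points.

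The paper's actual route avoids any stratum-by-stratum analysis. It introduces the ring $\C[\Gamma]$ of nearly convergent $T$-invariant monomials, identifies $\Spec(\C[\Gamma])$ with an affine open subset of the toric variety $X_{P(k,n)}$ of the Newton polytope $P(k,n)$ of the product of all Pl\"ucker coordinates in a positive parametrization (Proposition~\ref{prop:XPkn}), and shows that the resulting morphism $\varphi$ restricts to a stratification-preserving continuous bijection $\Ch(k,n)_{\geq 0}\to X_{P(k,n),\geq 0}$; compactness of the source then upgrades this to a homeomorphism, and $X_{P(k,n),\geq 0}\cong P(k,n)$ is classical. The combinatorial matching of faces of $P(k,n)$ with positroidal subdivisions is Theorem~\ref{thm:coincide} (coincidence of the secondary, Pl\"ucker, and positive fan structures), which plays the role of your ``positroidal secondary polytope''; but the topological statement is obtained globally, with the injectivity of $\varphi$ on each stratum (Proposition~\ref{prop:TV}) doing the work that a product decomposition cannot.
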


We remark that $\Gr(k,n)_{\geq 0}$ is {\it not} homeomorphic to a polytope as a stratified space.
We now explain each of the objects in Theorem~\ref{thm:introbij} in turn.  Let $[n]:= \{1,2,\ldots,n\}$ and let $\binom{[n]}{k}$ denote the set of all $k$-element subsets of $[n]$.

\subsection{} 
Each point $X \in \Ch(k,n)$ is represented by an algebraic cycle inside the Grassmannian.  If $X \in \oConf(k,n) \subset \Ch(k,n)$ then $X$ is represented by the torus orbit closure $\overline{T \cdot V}$ of a generic point $V$ in the Grassmannian.  The toric variety $T \cdot V$ is isomorphic to the projective toric variety $X_{\Delta(k,n)}$ associated to the hypersimplex $\Delta(k,n)$, the convex hull of all vectors $e_I \in \R^n$, $I \in \binom{[n]}{k}$ with $k$ 0-s and $(n-k)$ 1-s.  A general point $X \in \Ch(k,n)$ is represented by a union of toric varieties $X_{P_1}, \ldots,X_{P_m}$, where $P_1,\ldots,P_m$ are polytopes that form a regular subdivision of the hypersimplex into matroid polytopes (see \S\ref{sec:matroid}).  We thus obtain a stratification of $\Ch(k,n)$ by matroid subdivisions of the hypersimplex, see \cite{Kap,KT,Laf}.  However, it is a difficult question to describe which matroid subdivisions of the hypersimplex occur in this way.  This is a variant of the (also difficult) question of which matroids are realizable.

When $X \in \Ch(k,n)_{\geq 0}$ is nonnegative, the matroid polytopes $P_1,\ldots,P_m$ are positroid polytopes (Proposition~\ref{prop:conf}).  Since positroids have been completely classified, subdivisions of the hypersimplex by positroid polytopes are far more tractable.  Indeed, Theorem~\ref{thm:introbij} states that any regular subdivision of the hypersimplex $\Delta(k,n)$ into positroid polytopes appears in nonnegative configuration space.

\subsection{}   By definition, a regular subdivision of the hypersimplex arises from a weight vector $p_\bullet \in \R^{\binom{[n]}{k}}$: we lift each vertex $e_I$ of $\Delta(k,n)$ to height $p_I$, and project the lower faces of the resulting convex hull down to obtain a subdivision $\tDelta(p_\bullet)$.  Speyer \cite{Spe} showed that the subdivision $\tDelta(p_\bullet)$ is into matroid polytopes if and only if $p_\bullet$ satisfies the three-term tropical Pl\"ucker relations.  

We say that $p_\bullet$ satisfies the three-term positive tropical Pl\"ucker relations if for every $S$ and $a<b<c<d$ not contained in $S$, we have that
\begin{equation}\label{eq:trop}
p_{Sac} + p_{Sbd} = \min(p_{Sab} + p_{Scd} , p_{Sad} + p_{Sbc}).
\end{equation}
We show in Proposition~\ref{prop:posdiv} that the subdivision $\tDelta(p_\bullet)$ is into positroid polytopes if and only if $p_\bullet$ satisfies \eqref{eq:trop}.  Following \cite{HJJS,HJS,OPS2}, we call the space of vectors satisfying \eqref{eq:trop} the positive Dressian, and denote it by $\Pluck(k,n)_{>0}$.  More generally, for each positroid $\M$, we have a positive local Dressian $\Pluck(\M)_{>0}$.

\subsection{} Speyer and Sturmfels \cite{SS} studied the tropical Grassmannian $\Trop\, \Gr(k,n)$ which parametrizes tropical linear spaces.  Every point $p_\bullet \in \Trop\, \Gr(k,n)$ satisfies the three-term tropical Pl\"ucker relations, but in general the converse is not true \cite{HJJS}.

Speyer and Williams \cite{SW} defined the positive tropical Grassmannian $\Trop_{>0}\Gr(k,n) \subset \Trop \,\Gr(k,n)$.  Let $\RR= \bigcup_{n=1}^\infty \R((t^{1/n})) $ denote the field of Puiseux series and $\RR_{>0} \subset \RR$ those Puiseux series whose leading (lowest) coefficient is positive.  Then $\Trop_{>0}\Gr(k,n)$ is defined to be the closure of the set of valuations $p_\bullet = (p_I = \val( \Delta_I(V)) \mid I \in \binom{[n]}{k})$ for $V \in \Gr(k,n)(\RR_{>0})$.
We generalize this by also considering the positive tropical positroid cell $\Trop_{>0} \Pi_\M$.  It is immediate that every point $p_\bullet \in \Trop_{>0} \Gr(k,n)$ or $p_\bullet \in \Trop_{>0} \Pi_\M$ satisfies the three-term positive tropical Pl\"ucker relations \eqref{eq:trop}.  A key technical result is that the converse holds (Theorem~\ref{thm:main}).

\begin{theorem}\label{thm:mainintro}
Every rational positive tropical Pl\"ucker vector is realizable.  
Thus 
$$
\Trop_{> 0}\Gr(k,n) = \Pluck(k,n)_{>0}, \;\;\;\Trop_{>0} \Pi_{\M} = \Pluck(\M)_{>0}.
$$
\end{theorem}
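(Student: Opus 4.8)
The plan is to prove the one substantive inclusion — that every rational positive tropical Pl\"ucker vector is realizable — by evaluating all Pl\"ucker coordinates on a single totally positive Puiseux-series point read off from a cluster chart, and then propagating the valuation equalities through cluster mutations using \eqref{eq:trop}. First, the reverse inclusions are free: if $V\in\Gr(k,n)(\RR_{>0})$ then each $\Delta_I(V)$ is a Puiseux series with positive leading coefficient, so tropicalizing the three-term Pl\"ucker relation $\Delta_{Sac}\Delta_{Sbd}=\Delta_{Sab}\Delta_{Scd}+\Delta_{Sad}\Delta_{Sbc}$ and using that a sum of elements of $\RR_{>0}$ has valuation equal to the minimum of the summands' valuations (no cancellation) gives \eqref{eq:trop}; this passes to closures, so $\Trop_{>0}\Gr(k,n)\subseteq\Pluck(k,n)_{>0}$ and $\Trop_{>0}\Pi_\M\subseteq\Pluck(\M)_{>0}$. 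Since $\Pluck(k,n)_{>0}$ and $\Pluck(\M)_{>0}$ are rational polyhedral, rational points are dense in them, while $\Trop_{>0}\Gr(k,n)$ and $\Trop_{>0}\Pi_\M$ are closed; so it suffices to show that every rational $p_\bullet\in\Pluck(k,n)_{>0}$ (resp. $\Pluck(\M)_{>0}$) equals $(\val\Delta_I(V))_I$ for some $V\in\Gr(k,n)(\RR_{>0})$ (resp. $\Pi_\M(\RR_{>0})$).

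For this I would invoke the cluster structure on the cone $\hGr(k,n)$, resp. on the cone over the positroid variety $\Pi_\M$ (Scott for the Grassmannian; Galashin--Lam and Serhiyenko--Sherman-Bennett--Williams for positroid varieties). Fix a seed coming from a reduced plabic graph, equivalently a maximal weakly separated collection $\mathcal J$, so that the cluster and frozen variables of the seed are Pl\"ucker coordinates $\{\Delta_J:J\in\mathcal J\}$; moreover every Pl\"ucker coordinate that does not vanish identically on $\Pi_\M$ is itself a cluster or frozen variable, hence is reachable from $\mathcal J$ by a sequence of mutations, each a square move whose exchange relation is a three-term Grassmann--Pl\"ucker relation $\Delta_{Sac}\Delta_{Sbd}=\Delta_{Sab}\Delta_{Scd}+\Delta_{Sad}\Delta_{Sbc}$ with $a<b<c<d$ and with the four coordinates on the right belonging to the current cluster. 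Given rational $p_\bullet\in\Pluck(k,n)_{>0}$, let $V\in\Gr(k,n)(\RR)$ be determined by $\Delta_J(V)=t^{p_J}$ for all $J\in\mathcal J$; this is well defined because the cluster chart $(\Delta_J)_{J\in\mathcal J}$ is a birational isomorphism, and $V\in\Gr(k,n)(\RR_{>0})$ because both the chart and its inverse are subtraction-free, so they restrict to a bijection between $\RR_{>0}^{\mathcal J}$ (modulo overall scaling) and the totally positive Puiseux-series points. The positroid case is identical with $\mathcal J=\mathcal J_\M$ from a reduced plabic graph for $\M$.

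It then remains to check $\val\Delta_I(V)=p_I$ for every $I$, which I would do by induction on the length of a mutation path from $\mathcal J$ to $\Delta_I$. The base case $I\in\mathcal J$ is immediate since $\val(t^{p_I})=p_I$. For the step, consider a mutation with exchange relation $\Delta_{Sac}(V)\,\Delta_{Sbd}(V)=\Delta_{Sab}(V)\Delta_{Scd}(V)+\Delta_{Sad}(V)\Delta_{Sbc}(V)$, $a<b<c<d$, where one of $\Delta_{Sac},\Delta_{Sbd}$ is the new variable and the other, together with $\Delta_{Sab},\Delta_{Scd},\Delta_{Sad},\Delta_{Sbc}$, lie in the current cluster, so by the inductive hypothesis each of these five has valuation equal to its $p$-value and is a positive Puiseux series. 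Then the right-hand side, a sum of two elements of $\RR_{>0}$, has valuation $\min(p_{Sab}+p_{Scd},\,p_{Sad}+p_{Sbc})$, which by \eqref{eq:trop} equals $p_{Sac}+p_{Sbd}$; dividing by the known factor gives the new variable valuation $p_{Sbd}$ (resp. $p_{Sac}$), as required. Hence every Pl\"ucker valuation of $V$ equals the corresponding entry of $p_\bullet$, so $p_\bullet$ is realizable; the displayed equalities follow from the density and closedness reductions of the first paragraph, and $\Trop_{>0}\Pi_\M$ is handled by the parallel construction.

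The main obstacle is really the cluster-combinatorial input invoked above: that in the Grassmannian, and in each positroid variety, every non-vanishing Pl\"ucker coordinate is a cluster or frozen variable, and that the exchange relations along a connecting mutation path are precisely three-term Pl\"ucker relations of the form \eqref{eq:trop} with the four right-hand coordinates already present — together with the fact that the cluster chart identifies the totally positive locus over $\RR$ with $\RR_{>0}^{\mathcal J}$. If one prefers to avoid the full machinery of Grassmannian/positroid cluster algebras, the alternative route is an induction on $n$ using deletion and contraction of a boundary element, after reducing via the obvious base cases to the case where $\M$ is connected and loop- and coloop-free and $\tDelta(p_\bullet)$ is non-trivial (the trivial case being realized by rescaling the columns of a fixed matrix representing a point of $\Gr(k,n)_{>0}(\R)$): one realizes the deletion $\M\setminus n$ and contraction $\M/n$ inductively and then solves, for the last column, a system that is linear once signs are fixed by positivity, with solvability and consistency across all bases forced by \eqref{eq:trop}. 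I expect the delicate bookkeeping in that gluing step — guaranteeing that the candidate last column is positive and reproduces all required valuations, equivalently that the positive Dressian is no larger than the positive tropical Grassmannian, which is exactly the phenomenon that fails in the non-positive setting — to be where the real work lies.
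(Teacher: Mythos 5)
Your overall strategy---build $V \in \Gr(k,n)(\RR_{>0})$ from a cluster chart by setting $\Delta_J(V)=t^{p_J}$ for $J$ in a cluster $\CC$, then propagate valuations through three-term relations using \eqref{eq:trop} and the absence of cancellation in $\RR_{>0}$---is essentially the paper's first proof of Theorem~\ref{thm:main}, which routes the same idea through Theorem~\ref{thm:param} (a positive tropical Pl\"ucker vector is determined by its restriction to $\CC$) and Proposition~\ref{prop:Plucksub} (every $\Delta_J$, $J\in\M$, is subtraction-free in the cluster variables). For the uniform matroid your mutation-path induction is sound: every $I\in\binom{[n]}{k}$ lies in some maximal weakly separated collection, any two such are connected by square moves, and each exchange relation is a three-term Pl\"ucker relation with the other five terms in the current cluster; so $\Trop_{>0}\Gr(k,n)=\Pluck(k,n)_{>0}$ goes through.

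The gap is in the positroid half of the statement. The ``cluster-combinatorial input'' you invoke---that every Pl\"ucker coordinate not vanishing identically on $\Pi_\M$ is a cluster or frozen variable, hence reachable from $\mathcal J_\M$ by square moves---is false for a general positroid: as noted in \S\ref{ssec:weaksep} and in the discussion around Conjecture~\ref{conj:Laurent}, there are $I\in\M$ that belong to no cluster for $\M$ (no maximal weakly separated collection inside $\M$ containing the Grassmann necklace contains them). For such $I$ your induction never computes $\val\,\Delta_I(V)$, nor does it show $\Delta_I(V)\in\RR_{>0}$, so the equality $\Trop_{>0}\Pi_\M=\Pluck(\M)_{>0}$ is not established. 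This is exactly where the real work in the paper lies: the proof of Theorem~\ref{thm:param} runs a separate double induction on the number $w(J)$ of necklace elements not weakly separated from $J$ and on a distance $d(J)$, at each step selecting a three-term relation whose other five terms are either outside $\M$ or strictly closer to being weakly separated, and verifying combinatorially (via the Schubert-matroid description of $\M$) that the terms needed on the right-hand side really are bases. You would need either that argument, or some substitute such as Proposition~\ref{prop:Plucksub} obtained from the Muller--Speyer twist formulas together with the injectivity statement of Theorem~\ref{thm:param}, to close the positroid case.
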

%
%
We give two proofs of Theorem~\ref{thm:mainintro}.  The first one uses \eqref{eq:trop} directly. The second one uses a tropical bridge reduction (Section \S\ref{sec:bridge}) for positive tropical Pl\"ucker vectors, a variant of the bridge reduction algorithm for points $V \in \Gr(k,n)_{\geq 0}$ in \cite{LamCDM}.  Whereas usual bridge reduction gives parametrizations of $\Pi_{\M,>0}$, tropical bridge reduction gives parametrizations of $\Pluck(\M)_{>0} = \Trop_{>0}\Pi_\M$.

The space $\Pluck(\M)_{>0}$ has a number of different fan structures.  Two of them, the secondary fan structure and the Pl\"ucker fan structure, were shown to agree in \cite{OPS2}.  We study a class of positive fan structures (generalizing \cite{SW}) coming from positive parametrizations of $\Pi_{\M,>0}$ and show in Theorem~\ref{thm:coincide} that all the fan structures coincide.

\subsection{} Let us give an example illustrating the bijections of Theorem~\ref{thm:introbij}.  Full definitions are given in the main text.  Consider the curve $V: [0,\infty) \to \Gr(2,5)$ given by
$$
V(t) = \begin{bmatrix} 0 & 1&  1& 1&1\\
-1 & 0& 1& 1+ t & 1+2t  
\end{bmatrix}
$$
which has Pl\"ucker coordinates $\Delta_{12}=\Delta_{13}=\Delta_{14}=\Delta_{15}=\Delta_{23}=1$ and
$$
\Delta_{24}=1+t \qquad  \Delta_{25} = 1+2t \qquad \Delta_{34} = t \qquad \Delta_{35}=2t  \qquad \Delta_{45}=t.
$$
Thus the curve $V(t)$ lies in the positive Grassmannian $\Gr(2,5)_{>0}$ for $t > 0$, and defines a curve $X(t) \in \Conf(2,5)_{>0} = \Gr(2,5)_{>0}/T_{>0}$ in the positive component of generic configuration space.  Let $X:= \lim_{t \to 0} X(t) \in \Ch(2,5)_{\geq 0}$.  We have 
\begin{align*}
V_1&:=\lim_{t \to 0} V(t) =  \begin{bmatrix} 0 & 1&  1& 1&1\\
-1 & 0& 1& 1 & 1
\end{bmatrix} 
\end{align*}
and setting $V'(t) = V(t) \cdot  {\rm diag}(t^{1/2},t^{1/2},t^{-1/2},t^{-1/2},t^{-1/2}) $, we have 
\begin{align*}
V_2:= \lim_{t \to 0}V'(t) &= \lim_{t\to 0}\begin{bmatrix} 0 & \sqrt{t}&  1/\sqrt{t}& 1/\sqrt{t}&1/\sqrt{t}\\
-\sqrt{t} & 0& 1/\sqrt{t}& (1+t)/\sqrt{t} & (1+2t)/\sqrt{t}
\end{bmatrix} \\
&=\lim_{t\to 0}\begin{bmatrix} 1 & 1&  0& -1&-2\\
-t & 0& 1& 1+t& 1+2t
\end{bmatrix}\\
&=\begin{bmatrix} 1 & 1&  0& -1&-2\\
0& 0& 1& 1& 1
\end{bmatrix}.
\end{align*}
The points $V_1$ and $V_2$ have matroids
$$
\M_1:=\mbox{$\binom{[5]}{2}$} \setminus \{34,35,45\} \qquad \text{and} \qquad \M_2 :=\mbox{$\binom{[5]}{2}$} \setminus \{12\} 
$$
respectively.  The matroid polytopes $P_{\M_1}$ and $P_{\M_2}$ give a decomposition of the hypersimplex $\Delta(2,5)$ into positroid polytopes: this decomposition comes from slicing $\Delta(2,5)$ using the hyperplane $x_1+x_2 =1$ (or, equivalently $x_3+x_4+x_5 = 1$).  It follows that $X \in \Ch(2,5)_{\geq 0}$ is represented by the union of the two toric varieties $\overline{T \cdot V_1}$ and $\overline{T \cdot V_2}$ (inside $\Gr(2,5)$), which have moment polytopes $P_{\M_1}$ and $P_{\M_2}$ respectively.

The valuation $\val(f(t))$ of a polynomial (or formal power series) $f(t)$ is the degree of the lowest term in $f(t)$.  Defining $p_I:=\val(\Delta_I(V(t)))$ we obtain a vector $p_\bullet \in \R^{\binom{[5]}{2}}$, given by $p_{34}=p_{35}=p_{45}=1$ and $p_J = 0$ for $J \notin \{34,35,45\}$.  By definition, $p_\bullet$ lies the positive tropical Grassmannian.  It is also easy to check that $p_\bullet$ satisfies \eqref{eq:trop}.  For example, $0 = p_{13}+p_{25} = \min(p_{12}+p_{35},p_{15}+p_{23}) = \min(1,0) = 0$.  This induces the bijections of Theorem~\ref{thm:introbij}.

\subsection{}
Finally, let us explain some ingredients of the proof of Theorem~\ref{thm:introtop}.
We use the notion of nearly convergent functions on $\Ch(k,n)$ (the nomenclature comes from the stringy integrals of \cite{AHL1}).  These are certain $T$-invariant, subtraction-free, rational functions on the Grassmannian whose tropicalizations take nonnegative values.  The ring $\C[\Gamma]$ generated by nearly convergent functions is isomorphic to the coordinate ring of an affine open subset $X'_{P(k,n)}$ of a projective toric variety $X_{P(k,n)}$ (Proposition~\ref{prop:XPkn}) associated to some polytope $P(k,n)$.  We obtain a morphism 
$$
\varphi: \tConf(k,n) \longrightarrow X'_{P(k,n)}
$$
from an open subset $\tConf(k,n) \subset \Ch(k,n)$ of the Chow quotient to $X'_{P(k,n)}$.  We show that the restriction of $\varphi$ to the nonnegative part $\Ch(k,n)_{\geq 0}$ is a homeomorphism onto the nonnegative part $X_{P(k,n),\geq 0}$ of the toric variety, which is known to be homeomorphic to the polytope $P(k,n)$.

\smallskip
\noindent
{\bf Organization.} In \S\ref{sec:matroid}, we discuss matroids, positroids, and their matroid polytopes.  In \S\ref{sec:Gr}, we discuss the Grassmannian, configuration space, and the positroid stratification.  In \S\ref{sec:cluster}, we review cluster parametrizations of positroid cells.  In \S\ref{sec:def} and \S\ref{sec:strata}, we introduce our main object of interest: the nonnegative configuration space and its stratification by positive Chow cells.  In \S\ref{sec:Plucker} and \S\ref{sec:hypersimplex}, we study positive tropical vectors and positroid subdivisions of the hypersimplex.  In \S\ref{sec:trop}, we show that the positive Dressian and the positive tropical Grassmannian agree.  In \S\ref{sec:fan} we show that a number of fan structures on the positive Dressian coincide.  In \S\ref{sec:Gamma}, we introduce nearly convergent functions.  In \S\ref{sec:ball}, we prove that $\Ch(k,n)_{\geq 0}$ is homeomorphic to a ball.  The case $k = 2$ is studied as an example in \S\ref{sec:M0n}.  In \S\ref{sec:bridge}, we introduce and study tropical bridge operations.  \S\ref{sec:connected} contains some technical statements concerning connected positroids.
Appendix~\ref{sec:examples} contains data for the cases $(k,n) = (3,6), (3,7), (3,8)$.

\smallskip
\noindent
{\bf Acknowledgements.} We thank Song He for conversations related to this work.  We are grateful to Lauren Williams for a correction (see Remark~\ref{rem:Lauren}).  T.L. was supported
by NSF DMS-1464693, NSF DMS-1953852, and by a von Neumann Fellowship from the Institute for Advanced Study.
N.A-H. and M.S. were supported by DOE grants DE-SC0009988 and DE-SC0010010 respectively.

\begin{remark}
Many of the results in this work were announced at Amplitudes 2019 \cite{Namp}. Some of the results in \S\ref{sec:Plucker}--\S\ref{sec:trop} regarding the positive tropical Grassmannian, the positive Dressian, and positroidal subdivisions of the hypersimplex are not surprising to experts and overlap with independent recent work in \cite{Ola,Ear,Ear2,LPW,SW2}. For instance, Proposition \ref{prop:posSpeyer} is closely related to \cite[Theorem 3.8]{LPW} and Proposition \ref{prop:posdiv}(2) is \cite[Theorem 9.12]{LPW}.
\end{remark}

\section{Matroids and positroids}\label{sec:matroid}
\subsection{}
A matroid $\M \subset \binom{[n]}{k}$ of rank $k$ on $[n]$ is a nonempty collection of $k$-element subsets of $[n]$, called {\it bases}, satisfying the {\it exchange axiom}:
\begin{equation}\label{eq:exchange}
\mbox{if $I, J \in \M$ and $i \in I$ then there exists $j \in J$ such that $I \setminus \{i\} \cup \{j\} \in \M$.}
\end{equation}
The {\it uniform matroid} is the collection $\M = \binom{[n]}{k}$ of all $k$-element subsets of $[n]$. 

\subsection{} The {\it matroid polytope} $P_\M$ of a matroid in $\M$ is the convex hull of the vectors $e_I$, for $I \in \M$.  Here, $e_I = e_{i_1} + e_{i_2} + \cdots + e_{i_k}$ is the sum of $k$ basis vectors, where $I = \{i_1,\ldots,i_k\}$.  Thus the matroid polytope of the uniform matroid is the hypersimplex $\Delta(k,n)$, whose vertices are exactly the $0$-$1$ vectors with $k$ $1$-s and $(n-k)$ $0$-s.  We have the following characterization of matroid polytopes.

\begin{proposition}[\cite{GGMS}] \label{prop:GGMS}
A polytope $P \subset \R^n$ is the matroid polytope of a matroid of rank $k$ on $[n]$ if and only if its vertex set is a subset of $\{e_I \mid I \in \binom{[n]}{k}\}$, and all edges of $P$ are in the direction of $e_i - e_j$, for $i \neq j$.
\end{proposition}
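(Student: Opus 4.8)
The plan is to prove the two implications separately, using only the basis exchange axiom \eqref{eq:exchange} and elementary facts about faces and tangent cones of polytopes.

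\emph{From matroid to polytope.} Suppose $P = P_\M$. Each $e_I$ with $I \in \M$ is a vertex: the linear functional $x \mapsto \sum_{i \in I} x_i$ takes the value $k$ at $e_I$ and a value $< k$ at every other $e_J$, $J \in \M$, so $e_I \notin \operatorname{conv}\{e_J : J \in \M,\, J \neq I\}$; hence the vertex set of $P_\M$ is exactly $\{e_I : I \in \M\}$, which lies inside $\{e_I : I \in \binom{[n]}{k}\}$. For the edges, suppose $[e_I,e_J]$ is an edge of $P_\M$ but $s := |I \setminus J| = |J \setminus I| \geq 2$. Invoking the symmetric exchange property of matroids, choose $a \in I \setminus J$ and $b \in J \setminus I$ with $I' := (I \setminus \{a\}) \cup \{b\}$ and $J' := (J \setminus \{b\}) \cup \{a\}$ both in $\M$; then $e_{I'} + e_{J'} = e_I + e_J$, and $s \geq 2$ forces $\{I',J'\} \cap \{I,J\} = \emptyset$ and $I' \neq J'$. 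So the midpoint of $[e_I,e_J]$ lies in the relative interior of the distinct segment $[e_{I'},e_{J'}] \subseteq P_\M$, and hence the smallest face of $P_\M$ containing it contains $e_{I'},e_{J'}$ and cannot be $[e_I,e_J]$ — contradicting that $[e_I,e_J]$ is a face. Thus $|I \triangle J| = 2$, and the edge direction is $e_q - e_p$ with $I \setminus J = \{p\}$, $J \setminus I = \{q\}$.

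\emph{From polytope to matroid.} Given $P$ with the stated properties, observe that $P$ lies inside $\{0 \leq x_m \leq 1 : m \in [n]\} \cap \{\sum_m x_m = k\}$, since all its vertices do. Put $\M := \{I : e_I \text{ is a vertex of } P\}$; I claim $\M$ satisfies \eqref{eq:exchange}. Fix $I,J \in \M$ and $i \in I$; the case $i \in J$ being trivial, assume $i \in I \setminus J$. Let $F$ be the smallest face of $P$ containing $e_I$ and $e_J$, equivalently the smallest face containing $z := \tfrac12(e_I + e_J)$. For $m \notin I \cup J$ we have $z_m = 0$ while $x_m \geq 0$ on $P$, so $F \subseteq \{x_m = 0\}$; for $m \in I \cap J$ we have $z_m = 1$ while $x_m \leq 1$ on $P$, so $F \subseteq \{x_m = 1\}$. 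Hence every vertex $e_K$ of $F$ satisfies $I \cap J \subseteq K \subseteq I \cup J$. Now every edge of $F$ at $e_I$ has direction $e_q - e_p$ with $(I \setminus \{p\}) \cup \{q\}$ a vertex of $F$, and the containment just proved forces $p \in I \setminus J$ and $q \in J \setminus I$. Since $e_J - e_I$ lies in the tangent cone of $F$ at $e_I$, write $e_J - e_I = \sum \lambda_{pq}(e_q - e_p)$ with $\lambda_{pq} \geq 0$, summed over such pairs; comparing $i$-th coordinates (note $q \neq i$ for every $q$ occurring) gives $\sum_q \lambda_{iq} = 1$, so some $\lambda_{iq} > 0$. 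Then $j := q \in J$ and $(I \setminus \{i\}) \cup \{j\} \in \M$, which is exactly the exchange axiom.

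\emph{Where the work is.} In the first direction, excluding long edges is the only nontrivial step, and it rests on the symmetric exchange property of matroids (Brualdi); alternatively one can use that the set of $w$-maximal bases of $\M$ is again a matroid and that a rank-$k$ matroid whose polytope is one-dimensional has exactly two bases differing in a single element. In the second direction, the one delicate point is the face-containment lemma $I \cap J \subseteq K \subseteq I \cup J$, which is precisely where the box constraints — hence both hypotheses on $P$ — are used; once this is in hand, the coordinate comparison in the tangent cone is routine.
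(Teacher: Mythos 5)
Your proof is correct. Note that the paper itself offers no argument here --- Proposition~\ref{prop:GGMS} is simply cited from \cite{GGMS} --- so there is nothing internal to compare against; what you have written is a complete, self-contained proof of the cited characterization. Both directions check out: the vertex and midpoint argument correctly rules out edges $[e_I,e_J]$ with $|I\setminus J|\geq 2$ (the verification that $I',J'$ are distinct from each other and from $I,J$ does use $s\geq 2$, as you say), and in the converse direction the face-containment lemma $I\cap J\subseteq K\subseteq I\cup J$ plus the tangent-cone decomposition into edge directions at $e_I$ is exactly the right mechanism to extract the exchange axiom; the coordinate comparison at index $i$ is sound because $q\in J\setminus I$ forces $q\neq i$. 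The one point worth flagging is that your primary route leans on Brualdi's symmetric exchange property, which is a genuine theorem about matroids and not merely a restatement of \eqref{eq:exchange}; since you acknowledge this and sketch the standard alternative (that the $w$-minimal bases form a matroid, and a rank-$k$ matroid with a one-dimensional polytope has two bases differing in one element), this is a matter of citation hygiene rather than a gap. If you wanted the argument to rest only on \eqref{eq:exchange} as stated in the paper, the $\M_w$ route is the one to write out in full.
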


\subsection{}
If $\M_1$ is a matroid on a set $S_1$ and $\M_2$ a matroid on $S_2$, then the {\it direct sum }$\M_1 \oplus \M_2$ is a matroid on the disjoint union $S_1 \sqcup S_2$, given by
$$
\M_1 \oplus \M_2 = \{I_1 \sqcup I_2 \mid I_i \in \M_i\}.
$$
We say that a matroid $\M$ of rank $k$ on $[n]$ is \emph{connected} if the matroid polytope $P_\M$ is of full dimension, that is, has dimension $n-1$.  This is equivalent to the condition that $\M$ is not a non-trivial direct sum of smaller matroids.

\subsection{}
 The {\it Bruhat partial order} on $\binom{[n]}{k}$ is defined as follows.  For two subsets $I, J \in \binom{[n]}{k}$, we write $I \leq J$ if $I = \{i_1 < i_2 <\cdots < i_k\}$, $J = \{j_1 < j_2 < \cdots < j_k\}$ and we have $i_r \leq j_r$ for $r = 1,2,\ldots,k$.  
For $I \in \binom{[n]}{k}$, the {\it Schubert matroid} $\SS_I$ is defined as 
$$
\SS_I:= \{J \in \sbinom \mid I \leq J\}
$$
and has minimal element $I$.  For $a \in [n]$, let $\leq_a$ denote the cyclically rotated order on $[n]$ with minimum $a$, which induces a partial order $\leq_a$ on $\binom{[n]}{k}$.  Let $\SS_{I,a} := \{J \in \binom{[n]}{k} \mid I \leq_a J\}$ denote the cyclically rotated Schubert matroid.

\subsection{}
A {\it $(k,n)$-Grassmann necklace} $\I = (I_1,I_2,\ldots,I_n)$ \cite{Pos} is a $n$-tuple of $k$-element subsets of $[n]$ satisfying the following condition: for each $a \in [n]$, we have
\begin{enumerate}
\item $I_{a+1} = I_a$ if $a \notin I_a$, 
\item $I_{a+1} = I_a - \{a\} \cup \{a'\}$ if $a \in I_a$,
\end{enumerate}
with indices taken modulo $n$.  A {\it positroid} $\M$ is the matroid of a totally nonnegative point in the Grassmannian, and are in bijection with Grassmann necklaces.  

\begin{proposition}[\cite{Oh, Pos}] \label{prop:Oh}
Let $\I= (I_1,I_2,\ldots,I_n)$ be a $(k,n)$-Grassmann necklace.  Then the intersection of cyclically rotated Schubert matroids
\begin{equation}\label{eq:MI}
\M_\I = \SS_{I_1,1} \cap \SS_{I_2,2} \cap \cdots \cap \SS_{I_n,n}
\end{equation}
is a positroid, and the map $\I \mapsto \M_\I$ gives a bijection between $(k,n)$-Grassmann necklaces and  positroids of rank $k$ on $[n]$.
\end{proposition}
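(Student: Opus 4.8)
The plan is to produce the inverse correspondence $\M \mapsto \I(\M)$ and to check that it is a two-sided inverse of $\I \mapsto \M_\I$, the positroid/necklace matching falling out along the way. First I would attach to an \emph{arbitrary} matroid $\M$ of rank $k$ on $[n]$, and each $a \in [n]$, the $\leq_a$-minimal basis $I_a(\M)$; this is well defined since the greedy algorithm selects a unique minimal basis of a matroid in any linear order. I would then check that $\I(\M) := (I_1(\M), \dots, I_n(\M))$ satisfies the Grassmann necklace axioms. The orders $\leq_a$ and $\leq_{a+1}$ differ only in that the element $a$ moves from the bottom to the top, so comparing the two greedy runs: if $a$ is a loop of $\M$ then $a \notin I_a(\M)$ and $I_{a+1}(\M) = I_a(\M)$, which is axiom~(1); if $a$ is not a loop then $a \in I_a(\M)$, and since the greedy basis of the deletion $\M \setminus \{a\}$ is obtained from that of the contraction $\M / \{a\}$ (for the induced order on $[n] \setminus \{a\}$) by adjoining a single element $a'$, we get $I_{a+1}(\M) = I_a(\M) \setminus \{a\} \cup \{a'\}$, with $a' = a$ precisely when $a$ is a coloop, which is axiom~(2).

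Next I would record the easy inclusion, valid for any matroid $\M$: if $J \in \M$ then $I_a(\M) \leq_a J$ by minimality, so $J \in \SS_{I_a(\M),a}$ for all $a$, whence $\M \subseteq \M_{\I(\M)}$. The reverse inclusion for positroids is the crux. Unwinding the definition of $\SS_{I_a,a}$ into cyclic rank inequalities shows that
\[
\M_{\I(\M)} = \bigl\{\, J \in \binom{[n]}{k} : |J \cap C| \leq r_\M(C) \text{ for every cyclic interval } C \,\bigr\},
\]
where $r_\M$ denotes the rank function. So I must prove that a positroid is recovered from the ranks of its cyclic intervals --- equivalently, that if $V \in \Gr(k,n)_{\geq 0}$ realizes $\M$ and $J$ respects every such bound, then $\Delta_J(V) \neq 0$. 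This is \emph{the} obstacle, and it is where total positivity is genuinely used: the analogous statement is false for general matroids. I would prove it by induction on $k$ and on $n-k$, using that the deletion $\M \setminus \{n\}$ and the contraction $\M / \{n\}$ of a positroid are again positroids and that cyclic intervals restrict compatibly under these operations, together with a row-echelon normal form for totally nonnegative matrices to carry out the inductive step; the base cases (uniform matroids, and the presence of a loop or a coloop) are straightforward. Together with the easy inclusion this gives $\M = \M_{\I(\M)}$ for every positroid $\M$.

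Finally, to close the loop I would use that every Grassmann necklace arises from a positroid: given $\I$, a direct construction (via Postnikov's $\Gamma$-diagrams) produces a totally nonnegative matrix whose sequence of $\leq_a$-minimal bases is exactly $\I$; if $\M$ is the matroid of such a matrix then $\I(\M) = \I$, and by the previous paragraph $\M_\I = \M_{\I(\M)} = \M$, so $\M_\I$ is a positroid and $\I(\M_\I) = \I$. Conversely $\M = \M_{\I(\M)}$ for every positroid, and $\I(\M)$ is a Grassmann necklace by the first paragraph, so $\I \mapsto \M_\I$ and $\M \mapsto \I(\M)$ are mutually inverse bijections between $(k,n)$-Grassmann necklaces and positroids of rank $k$ on $[n]$. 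I expect everything except the inductive realizability step of the second paragraph to be routine bookkeeping with the greedy algorithm and with Postnikov's parametrizations; as a side remark, Proposition~\ref{prop:GGMS} could instead be used to verify directly that $P_{\M_\I}$ is a matroid polytope, but that route still leaves realizability as the essential point.
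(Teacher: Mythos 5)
The paper offers no proof of this proposition --- it is quoted from \cite{Oh,Pos} --- so there is no internal argument to compare against. Your architecture (define $\I(\M)$ by the greedy algorithm and verify the necklace axioms; get $\M\subseteq\M_{\I(\M)}$ from minimality; prove the reverse inclusion for positroids; realize every necklace by a Le-diagram to close the bijection) is the standard one, and the first, second, and fourth steps are indeed routine or legitimately quotable from Postnikov. Your translation of $\M_{\I(\M)}$ into the cyclic rank inequalities $|J\cap C|\le r_\M(C)$ is also correct, since $|I_a(\M)\cap[a,b]|=r_\M([a,b])$ for the greedy basis.

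The gap is exactly at the step you flag as the crux, and the inductive scheme you propose for it does not close. You assert that ``cyclic intervals restrict compatibly'' under deletion and contraction of $n$, but they do not in the way the induction needs: after deleting $n$, a wrapping cyclic interval $C=[a,n-1]\cup[1,b]$ of the new ground set $[n-1]$ corresponds to the cyclic interval $C\cup\{n\}$ of $[n]$, and whenever $n$ does not lie in the span of $C$ one has $r_\M(C\cup\{n\})=r_\M(C)+1$, so the hypothesis on $J$ only gives $|J\cap C|\le r_{\M\setminus n}(C)+1$. Verifying the inductive hypothesis for $\M\setminus n$ therefore already requires the bound $|J\cap C|\le r_\M(C)$ for sets $C$ that are \emph{not} cyclic intervals of $[n]$ --- which is tantamount to the conclusion $J\in\M$ you are trying to establish. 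The same circularity appears for contraction. This is why Oh's theorem is genuinely nontrivial: the known arguments (Oh's original proof, or deducing it from the alcovedness of positroid polytopes --- note that the proof of Proposition~\ref{prop:posfacet} in this paper invokes \eqref{eq:MI} and so cannot be used here without circularity) all need an idea beyond deletion--contraction bookkeeping. Until that step is supplied, the inclusion $\M_{\I(\M)}\subseteq\M$, and with it the whole bijection, remains unproved.
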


\subsection{}
Let $\M$ be an arbitrary matroid of rank $k$ on $[n]$ and $a \in [n]$.  Then $\M$ has a minimum with respect to $\leq_a$, which is denoted $I_a(\M)$.  

\begin{proposition}
The $n$-tuple $\I(\M)=(I_1(\M),I_2(\M),\ldots,I_n(\M))$ is a $(k,n)$-Grassmann necklace.
\end{proposition}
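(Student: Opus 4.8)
The plan is to identify $I_a(\M)$ with a greedy basis and to observe that passing from $\leq_a$ to $\leq_{a+1}$ merely moves the single element $a$ from the bottom to the top of the order. Recall the classical theorem of Gale: for any linear order $\prec$ on $[n]$, the greedy basis $G_\prec(\M)$ --- built by scanning $[n]$ in $\prec$-increasing order and keeping each element that preserves independence --- is the unique minimum of $\M$ in the induced Bruhat order. Hence $I_a(\M) = G_{\leq_a}(\M)$, so it is enough to compare $G_{\leq_a}(\M)$ with $G_{\leq_{a+1}}(\M)$. The orders $\leq_a$ and $\leq_{a+1}$ on $[n]$ differ only in that $a$ is $\leq_a$-minimal but $\leq_{a+1}$-maximal; they restrict to the same order $\prec$ on $E':=[n]\setminus\{a\}$. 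I will use two elementary facts. (A) An element $e$ is \emph{not} chosen by $G_\prec(\M)$ iff $e\in\sp_\M(\{e'\in[n]:e'\prec e\})$, since the partial greedy set on any prefix is a basis of that prefix. (B) If $a$ is not a loop then $\M/a$ is a quotient of $\M\setminus a$, i.e. $\sp_{\M/a}(X)\supseteq\sp_{\M\setminus a}(X)$ for all $X\subseteq E'$ (immediate from submodularity of the rank function); combined with (A) this gives $G_\prec(\M/a)\subseteq G_\prec(\M\setminus a)$ for the order $\prec$ on $E'$.

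First I would dispose of the case $a\notin I_a(\M)$. Since $G_{\leq_a}$ processes $a$ first, $a$ is omitted only if $\{a\}$ is dependent, i.e. $a$ is a loop; then $a$ belongs to no basis, $\M$ is a matroid on $E'$ on which $\leq_a$ and $\leq_{a+1}$ agree, and therefore $I_{a+1}(\M)=I_a(\M)$, which is condition (1). Now suppose $a\in I_a(\M)$, so $a$ is not a loop. In $G_{\leq_a}(\M)$ we adjoin $a$ first and then scan $E'$ in order $\prec$; by (A) and the identity $\sp_\M(\{a\}\cup Y)\setminus\{a\}=\sp_{\M/a}(Y)$, the elements of $E'$ that get chosen are exactly $G_\prec(\M/a)$, so $I_a(\M)\setminus\{a\}=G_\prec(\M/a)$. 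In $G_{\leq_{a+1}}(\M)$ the elements of $E'$ are scanned first (as $a$ is $\leq_{a+1}$-maximal), and by (A) together with $\sp_{\M\setminus a}(Y)=\sp_\M(Y)\setminus\{a\}$ the chosen elements among $E'$ are exactly $G_\prec(\M\setminus a)$; processing $a$ at the last step can only enlarge this set, so $G_\prec(\M\setminus a)\subseteq I_{a+1}(\M)$. Combining with (B), $I_a(\M)\setminus\{a\}=G_\prec(\M/a)\subseteq G_\prec(\M\setminus a)\subseteq I_{a+1}(\M)$. As $|I_{a+1}(\M)|=k=|I_a(\M)|$, there is a unique $a'\in[n]$ with $I_{a+1}(\M)=(I_a(\M)\setminus\{a\})\cup\{a'\}=I_a(\M)-\{a\}\cup\{a'\}$ --- the case $a'=a$, which occurs exactly when $a$ is a coloop, being permitted --- which is condition (2). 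All indices are read modulo $n$.

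The only places that need any care are facts (A) and (B) and the identity $I_a(\M)\setminus\{a\}=G_\prec(\M/a)$, namely that the greedy rule for $\M$ after committing to $a$ is literally the greedy rule for the contraction $\M/a$; all of this is routine matroid bookkeeping. I do not anticipate a genuine obstacle: the whole argument rests on the single observation that $\leq_{a+1}$ is $\leq_a$ with the element $a$ moved from first to last, and that the greedy basis is insensitive to this move up to exchanging $a$ for at most one other element. (One can also avoid Gale's theorem for condition (1): by the symmetric exchange property, any basis omitting $a$ fails to be $\leq_a$-minimal, so $I_a(\M)$ omits $a$ only when $a$ is a loop.)
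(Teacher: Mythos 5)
Your proof is correct. Note that the paper itself supplies no argument for this proposition --- it is stated as a known fact with citations to \cite{Pos,KLS} --- so there is no in-paper proof to compare against; what you have written is a legitimate self-contained justification. Your route is the standard one packaged cleanly through Gale's greedy theorem: identifying $I_a(\M)$ with the greedy basis for $\leq_a$ turns the necklace conditions into the comparison $G_\prec(\M/a)\subseteq G_\prec(\M\setminus a)$, where the two matroids arise because $\leq_a$ processes $a$ first and $\leq_{a+1}$ processes it last; the inclusion then follows from the quotient relation $\sp_{\M\setminus a}(X)\subseteq\sp_{\M/a}(X)$, and a cardinality count forces $I_{a+1}(\M)=I_a(\M)-\{a\}\cup\{a'\}$. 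All the supporting facts you flag --- that the greedy set on a prefix is a basis of that prefix, the rejection criterion (A), the submodularity argument for (B), and the identification of ``greedy after committing to $a$'' with greedy on $\M/a$ --- are routine and hold as stated, and your handling of the loop and coloop edge cases (giving condition (1) and the case $a'=a$ of condition (2), respectively) is right. The proof in the literature (Postnikov; Knutson--Lam--Speyer) argues directly with the exchange axiom to show the $\leq_a$- and $\leq_{a+1}$-minima differ in at most the element $a$; your greedy formulation proves the same thing but makes the existence of the minimum and the single-swap structure fall out of one uniform mechanism, at the modest cost of invoking Gale's theorem.
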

 We call $\M_{\I(\M)}$ the {\it positroid envelope} of $\M$ \cite{Pos,KLS}.  A matroid $\M$ equals its positroid envelope if and only if $\M$ is a positroid.
 
 \subsection{}
 A {\it $(k,n)$-bounded affine permutation} is a bijection $f:\Z \to \Z$ satisfying the conditions
 \begin{enumerate}
 \item $f(i+n) = f(i) + n$ for all $i \in \Z$,
 \item $i \leq f(i) \leq i+n$ for all $i \in \Z$,
 \item $\sum_{i=1}^n (f(i)-i) = kn$. 
 \end{enumerate}
 
 Given a $(k,n)$-Grassmann necklace $\I = (I_1,I_2,\ldots,I_n)$, we define $f_\I:\Z \to \Z$ by 
$$
 f_\I(a) = \begin{cases} a & \mbox{if $a \notin I_a$} \\
 a+n & \mbox{if $a \in I_a \cap I_{a+1}$} \\
 a' & \mbox{if $I_{a+1} =  I_a - \{a\} \cup \{a'\}$ and $a' > a$}\\
 a'+n& \mbox{if $I_{a+1} =  I_a - \{a\} \cup \{a'\} $ and $a' < a$}
 \end{cases}
 $$
 for $a =1,2,\ldots,n$, and extending the domain to $\Z$ by setting $f(i+n) = f(i) + n$ for all $i \in \Z$.
 
\begin{proposition}\cite{KLS} \label{prop:KLS}
For any $(k,n)$-Grassmann necklace $\I$, the function $f_\I$ is a $(k,n)$-bounded affine permutation.  The map $\I \mapsto f_\I$ gives a bijection between $(k,n)$-Grassmann necklaces and $(k,n)$-bounded affine permutations.
\end{proposition}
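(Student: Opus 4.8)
The plan is to verify directly that $f_\I$ satisfies the four defining requirements of a $(k,n)$-bounded affine permutation, and then to exhibit an explicit inverse to the map $\I \mapsto f_\I$.

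For the first assertion, periodicity $f_\I(i+n)=f_\I(i)+n$ holds by construction, and the inequality $i \le f_\I(a) \le a+n$ is immediate from inspecting the four cases in the definition of $f_\I$ (using $1 \le a \le n$ and $1 \le a' \le n$). The two substantive points are that $f_\I$ is a bijection of $\Z$ and that $\sum_{a=1}^{n}(f_\I(a)-a)=kn$. Since $f_\I$ is periodic, it is a bijection of $\Z$ exactly when the induced map $\bar f_\I$ on $\Z/n\Z$ is a bijection; to see the latter I would track, for each fixed $j \in [n]$, the set of indices $a$ for which $j \in I_a$ as $a$ runs cyclically through $1,2,\dots,n,1,\dots$. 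The Grassmann necklace axioms force that $j$ is deleted from the necklace (that is, $j \in I_a$ but $j \notin I_{a+1}$) only at the single step $a=j$, whereas whenever $j$ is inserted at a step $a$ it enters ``as the element $a'$'', so that $\bar f_\I(a)=j$. Balancing the one possible deletion against the insertions around the cyclic sequence $I_1,\dots,I_n,I_{n+1}=I_1$ shows that $j = \bar f_\I(a)$ for exactly one $a \in [n]$, so $\bar f_\I$ is a bijection.

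Once $\bar f_\I$ is known to be a bijection of $\Z/n\Z$, write $f_\I(a)-a=(\bar f_\I(a)-a)+n\,c_a$ with $c_a \in \{0,1\}$ forced by $0 \le f_\I(a)-a \le n$. Summing over $a\in\{1,\dots,n\}$ and using $\sum_a \bar f_\I(a)=\sum_a a$ reduces the sum condition to $\sum_{a=1}^n c_a = k$; since $c_a=1$ precisely when $f_\I(a)>n$, i.e. in the second and fourth cases of the definition, it remains to prove $\#\{a \in [n] : f_\I(a)>n\}=|I_1|$. I would establish this by matching each such step $a$ with an element of $I_1$: to a step of the second type assign $a$ itself (such an $a$ lies in $I_b$ for every $b$, hence in $I_1$), and to a step of the fourth type assign the inserted element $\bar f_\I(a)<a$. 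A short analysis of which elements of the necklace ``straddle'' the position between $n$ and $1$ shows this assignment is a bijection onto $I_1$; since $|I_1|=k$, this gives $\sum c_a=k$ and hence $\sum_{a=1}^n(f_\I(a)-a)=kn$. For the bijection statement I would write down the inverse explicitly: given a $(k,n)$-bounded affine permutation $g$, define $\I_g=(I_1,\dots,I_n)$ by
$$
I_a:=\{\, g(i) \bmod n \ :\ a-n \le i < a,\ g(i)\ge a \,\}\subseteq[n].
$$
By condition (2) any $i$ contributing here lies in the window $\{a-n,\dots,a-1\}$, on which $g$ realizes each residue mod $n$ exactly once, so $|I_a| = \#\{i \in \{a-n,\dots,a-1\} : g(i)\ge a\}$; translating the window by periodicity, this count equals $\#\{i \in [n] : g(i)>n\}$, which as in the previous paragraph equals $\tfrac1n\sum_{i=1}^n(g(i)-i)=k$ by condition (3), so $I_a \in \binom{[n]}{k}$. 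Comparing $I_a$ with $I_{a+1}$ — they differ only through the two indices $a-n$ and $a$ that leave and enter the window — reproduces exactly the two Grassmann necklace axioms, so $\I_g$ is a $(k,n)$-Grassmann necklace. Finally one checks $f_{\I_g}=g$ and $\I_{f_\I}=\I$ by unwinding the definitions, showing $\I \mapsto f_\I$ and $g \mapsto \I_g$ are mutually inverse.

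I expect the main obstacle to be the cyclic bookkeeping in the two middle paragraphs — the insertion/deletion balance around the necklace, and the identification of the ``wraparound'' steps with the elements of $I_1$ — since this is the one place where the global coherence of a Grassmann necklace, rather than mere step-to-step compatibility, is used; the remaining verifications are either immediate or a mechanical comparison of two windowed sums. (Alternatively, the statement can be deduced from Proposition~\ref{prop:Oh} together with the known correspondence between positroids and bounded affine permutations, but the argument above is self-contained.)
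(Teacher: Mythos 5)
The paper does not prove this proposition at all --- it is quoted from [KLS] as a known result --- so your self-contained verification is necessarily a different route from the paper's, and it is correct. The three substantive steps all check out: (i) the trajectory argument (an element $j$ can only be deleted at step $a=j$ and is otherwise inserted as some $a'$, so each residue has exactly one preimage under $\bar f_\I$, the three cases being $j$ in every $I_a$, in no $I_a$, or in a proper cyclic interval of them); (ii) the identification of $\{a : f_\I(a)>n\}$ with $I_1$ via $a\mapsto \bar f_\I(a)$, which is injective because $\bar f_\I$ is, lands in $I_1$ by the survival argument you sketch, and is onto because an element of $I_1$ cannot fall into your first or third case at its unique $\bar f_\I$-preimage; (iii) the explicit inverse, whose comparison of the windows for $I_a$ and $I_{a+1}$ reproduces the necklace axioms according to whether $g(a)=a$, $g(a)=a+n$, or $a<g(a)<a+n$. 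One small wording issue: the equality $\#\{i\in[a-n,a-1] : g(i)\ge a\}=\#\{i\in[n]:g(i)>n\}$ does not follow from ``translating the window by periodicity,'' since the shift by $a-1$ is not a multiple of $n$; it follows instead from the telescoping identity $N_{a+1}-N_a=[g(a)>a]-[g^{-1}(a)<a]=0$, which uses $g(i)\ge i$ to locate $g^{-1}(a)$ in the window. That is a one-line repair, not a gap, and the rest of the outline is sound.
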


Thus we have bijections between positroids of rank $k$ on $[n]$, and $(k,n)$-Grassmann necklaces, and 
$(k,n)$-bounded affine permutations.  We write $f_\M:= f_{\I(\M)}$.

\subsection{}
A polytope $P$ in $\{(x_1,\ldots,x_n) \mid x_1+x_2+\cdots+x_n = k\} \subset \R^n$ is called {\it alcoved} \cite{LP1} if it is given by the intersection of half spaces of the form
$$
H = \{(x_1,\ldots,x_n) \mid \sum_{i \in [a,b]} x_i \geq c\}
$$
where $[a,b] \subset [n]$ is a cyclic interval.  The following two results are a special case of the theory of polypositroids \cite{LP}, see also \cite[Proposition 5.5 and Corollary 5.4]{ARW}.

\begin{proposition}[\cite{LP}] \label{prop:posfacet}
Let $\M$ be a matroid with matroid polytope $P_\M$.  Then $P_\M$ is alcoved if and only if $\M$ is a positroid.
\end{proposition}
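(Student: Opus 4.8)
The plan is to deduce both implications from one identity, valid for \emph{every} matroid $\M$, relating $P_\M$ to its positroid envelope $\M_{\I(\M)}$:
$$
P_{\M_{\I(\M)}} \;=\; Q_\M,\qquad Q_\M \;:=\; \Delta(k,n)\cap\bigcap_{C}\Bigl\{x:\ \sum_{i\in C}x_i\le\operatorname{rk}_\M(C)\Bigr\},
$$
where $C$ runs over all cyclic intervals of $[n]$ and $\operatorname{rk}_\M$ is the rank function of $\M$. The polytope $Q_\M$ is alcoved, since on $\{\sum_i x_i=k\}$ the inequality $\sum_{i\in C}x_i\le\operatorname{rk}_\M(C)$ reads $\sum_{i\in[n]\setminus C}x_i\ge k-\operatorname{rk}_\M(C)$, and $[n]\setminus C$ is again a cyclic interval. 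Granting the identity, the proposition follows at once. One always has $P_\M\subseteq Q_\M$: at each vertex $e_J$ ($J\in\M$), $\sum_{i\in C}(e_J)_i=|J\cap C|=\operatorname{rk}_\M(J\cap C)\le\operatorname{rk}_\M(C)$. If $\M$ is a positroid then $\M=\M_{\I(\M)}$, so $P_\M=P_{\M_{\I(\M)}}=Q_\M$ is alcoved. Conversely, if $P_\M$ is alcoved, each half-space $\sum_{i\in[a,b]}x_i\ge c_0$ occurring in its description becomes, using $\sum_i x_i=k$, an inequality $\sum_{i\in C'}x_i\le c$ with $C'=[n]\setminus[a,b]$ a cyclic interval; evaluating at the vertices $e_J$ of $P_\M$ gives $c\ge\max_{J\in\M}|J\cap C'|=\operatorname{rk}_\M(C')$, so this half-space is implied by the inequalities defining $Q_\M$. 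Hence $Q_\M\subseteq P_\M$, so $P_\M=Q_\M=P_{\M_{\I(\M)}}$; since a matroid is determined by its polytope, $\M=\M_{\I(\M)}$ and $\M$ is a positroid.

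It remains to prove $P_{\M_{\I(\M)}}=Q_\M$. Since $Q_\M\subseteq\Delta(k,n)\subseteq[0,1]^n$, it is enough to show that $Q_\M$ is an integral polytope and to identify its vertices, which are then $0/1$ vectors $e_J$ with $J\in\binom{[n]}{k}$. For integrality, I would eliminate $x_n$ via $x_n=k-x_1-\cdots-x_{n-1}$: a cyclic interval of $[n]$ either omits $n$, and is then an ordinary interval of $[n-1]$, or contains $n$, in which case its complement is an ordinary interval of $[n-1]$; consequently every defining inequality of $Q_\M$ transforms into one whose coefficient vector is $\pm$ the indicator of an interval of $[n-1]$, with integer right-hand side. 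Interval (consecutive-ones) matrices are totally unimodular, so the reduced system is totally unimodular with integral data, and $Q_\M$ is integral.

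Next I would determine which $e_J$ lie in $Q_\M$. For any basis $J$ of any matroid $\N$ and any $C$ one has $|J\cap C|=\operatorname{rk}_\N(J\cap C)\le\operatorname{rk}_\N(C)$; applying this to $\N=\M_{\I(\M)}$ and combining with the estimate $\operatorname{rk}_{\M_{\I(\M)}}(C)\le\operatorname{rk}_\M(C)$ for all cyclic $C$, we get $e_J\in Q_\M$ whenever $J\in\M_{\I(\M)}$. For the estimate, let $a$ be the starting point of $C$, so that $C=[a,m]$ is an initial segment of the order $\leq_a$; then $\M_{\I(\M)}=\bigcap_b\SS_{I_b(\M),b}\subseteq\SS_{I_a(\M),a}$, and since the rank of an initial segment of $\leq_a$ in the cyclically rotated Schubert matroid $\SS_{I,a}$ equals $|I\cap[a,m]|$ (its minimal basis $I$ realizes the maximum), while $|I_a(\M)\cap[a,m]|=\operatorname{rk}_\M([a,m])$ because the $\leq_a$-minimal basis $I_a(\M)$ is built greedily, we obtain $\operatorname{rk}_{\M_{\I(\M)}}(C)\le|I_a(\M)\cap C|=\operatorname{rk}_\M(C)$. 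Conversely, if $e_J\in Q_\M$ then for every $a$ and every initial segment $[a,m]$ of $\leq_a$ we have $|J\cap[a,m]|\le\operatorname{rk}_\M([a,m])=|I_a(\M)\cap[a,m]|$, which is precisely the condition $I_a(\M)\leq_a J$ (the order $\leq_a$ on $\binom{[n]}{k}$ is characterized by these intersection inequalities); hence $J\in\bigcap_a\SS_{I_a(\M),a}=\M_{\I(\M)}$. Thus $Q_\M$ and $P_{\M_{\I(\M)}}$ have the same $0/1$ points and therefore coincide.

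The step I expect to be the main obstacle is the integrality of $Q_\M$: cyclic-interval incidence matrices fail to be totally unimodular in general, so one must first use the relation $\sum_i x_i=k$ to reduce to an ordinary-interval (consecutive-ones) system before invoking total unimodularity. (Alternatively, the proposition can be derived from the theory of polypositroids in \cite{LP}, or from the analysis of positroid polytopes in \cite{ARW}.)
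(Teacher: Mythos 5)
Your proof is correct and follows essentially the same route as the paper's: both arguments hinge on the identity that the intersection of the cyclic-interval rank half-spaces (equivalently, of the rotated Schubert matroid polytopes $P_{\SS_{I_a(\M),a}}$) is exactly the matroid polytope of the positroid envelope $\M_{\I(\M)}$, from which both implications follow. The paper asserts this identity, leaning on \cite{LP} and \cite{ARW}, whereas you supply a self-contained verification (integrality via total unimodularity after eliminating $x_n$, plus the Gale-order identification of the lattice points with $\bigcap_a \SS_{I_a(\M),a}$) --- a useful filling-in of details rather than a different method.
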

\begin{proof}
The matroid polytope $P_{\SS_{I,a}}$ of the rotated Schubert matroid $\SS_I$ is the intersection of the hypersimplex $\Delta(k,n)$ with the inequalities
$$
x_a + x_{a+1} + \cdots + x_b \geq \#(I \cap [a,b])
$$
for $i = 1,2,\ldots,n$.  By definition, this is an alcoved polytope, and so is the intersection $P_{\SS_{I_1,1}} \cap P_{\SS_{I_2,2}} \cap \cdots \cap P_{\SS_{I_n,n}}$.  Since every positroid is of the form \eqref{eq:MI}, every positroid polytope is alcoved. 

Now let $P_\M$ be the matroid polytope of an arbitrary matroid.  Then the smallest alcoved polytope $P$ containing $P_\M$ is the intersection of the rotated Schubert matroid polytopes $P_{\SS_{I_a(\M),a}}$ for $a =1,2,\ldots,n$.  Thus $P$ is the matroid polytope of the positroid envelope of $\M$.  In particular, if $\M$ is itself a positroid then $P_\M$ is alcoved.
\end{proof}

\begin{corollary}
Every face of a positroid polytope is itself a positroid polytope.
\end{corollary}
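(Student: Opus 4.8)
The plan is to deduce this from Propositions~\ref{prop:GGMS} and~\ref{prop:posfacet}. Let $F$ be a face of a positroid polytope $P_\M$. First I would argue that $F$ is a matroid polytope at all: its vertices are among the vertices of $P_\M$, hence among $\{e_I \mid I \in \binom{[n]}{k}\}$, and its edges are edges of $P_\M$, hence of the form $e_i - e_j$ with $i \neq j$; so Proposition~\ref{prop:GGMS} gives $F = P_{\M'}$ for some matroid $\M'$ of rank $k$ on $[n]$.

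It then remains to check, via Proposition~\ref{prop:posfacet}, that $F$ is alcoved, for then $\M'$ is forced to be a positroid. Here I would use that $P_\M$, being a positroid polytope, is alcoved: inside the hyperplane $\{x_1+\cdots+x_n = k\}$ it is cut out by inequalities $\sum_{i \in [a,b]} x_i \geq c$ over cyclic intervals $[a,b]$, so in particular every facet-supporting hyperplane of $P_\M$ is of the form $\sum_{i \in [a,b]} x_i = c$ (the defining inequalities of the hypersimplex being of this type as well, since $x_i \geq 0$ is $\sum_{j \in [i,i]} x_j \geq 0$ and $x_i \leq 1$ rewrites as $\sum_{j \neq i} x_j \geq k-1$). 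The face $F$ is obtained by promoting some of these inequalities to equalities $\sum_{i \in [a,b]} x_i = c$, and each such equality is the conjunction of $\sum_{i \in [a,b]} x_i \geq c$ with $\sum_{i \in [n] \setminus [a,b]} x_i \geq k - c$; since the complement of a cyclic interval is again a cyclic interval, $F$ is still cut out by alcoved-type inequalities. Thus $F$ is alcoved, and Proposition~\ref{prop:posfacet} identifies $\M'$ as a positroid, so $F = P_{\M'}$ is a positroid polytope.

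I do not expect a real obstacle here: the whole content is the combinatorial observation that the class of one-sided cyclic-interval inequalities is stable under complementation, so that the facet hyperplanes of an alcoved polytope — and hence the equalities cutting out any of its faces — stay within the alcoved class. The only thing to double-check carefully is the first step, that $F$ is a matroid polytope at all, but this is immediate from Proposition~\ref{prop:GGMS} since the defining data (vertex set contained in $\{e_I\}$, all edge directions of the form $e_i - e_j$) is inherited by every face.
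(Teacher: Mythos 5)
Your proof is correct and follows essentially the same route as the paper, which simply asserts that every face of a matroid polytope (resp.\ alcoved polytope) is again a matroid polytope (resp.\ alcoved polytope) and combines this with Proposition~\ref{prop:posfacet}. You have merely supplied the details the paper leaves implicit — Proposition~\ref{prop:GGMS} for the matroid part, and closure of cyclic-interval inequalities under complementation for the alcoved part — and both checks are sound.
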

\begin{proof}
Every face of a matroid polytope (resp. alcoved polytope) is a matroid polytope (resp. alcoved polytope).
\end{proof}

A {\it noncrossing partition} $(S_1,\ldots,S_r)$ of $[n]$ is a partition of $[n]$ into disjoint sets such that there do not exist $a<b<c<d$ such that $a,c \in S_i$ and $b,d \in S_j$ for $i \neq j$.  

\begin{proposition}[{\cite[Theorem 7.6]{ARW}}]
\label{prop:disconnect} 
Let $f = f_\M$ be the bounded affine permutation associated to a positroid $\M$.  Suppose that $\M = \M_1 \oplus \M_2 \oplus \cdots \oplus \M_r$, where $\M_i$ is a positroid on the ground set $S_i \subset [n]$.  Then $(S_1,\ldots,S_r)$ form a noncrossing partition of $[n]$, and $(f_\M(S_i) \mod n) = S_i$ for $i = 1,2,\ldots,r$.  In particular, the connected components of the positroid $\M$ are themselves positroids.

Conversely, let $(S_1,\ldots,S_r)$ be a noncrossing partition of $[n]$ and $\M_i$ be a positroid on the ground set $S_i$.
Then the direct sum $\M_1 \oplus \M_2 \oplus \cdots \oplus \M_r$ is a positroid.
\end{proposition}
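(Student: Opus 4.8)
The plan is to work entirely with matroid polytopes via Proposition~\ref{prop:posfacet} (a matroid is a positroid iff its matroid polytope is alcoved), using two elementary inputs: (i) $P_{\M_1\oplus\cdots\oplus\M_r}=P_{\M_1}\times\cdots\times P_{\M_r}$ inside $\R^{[n]}=\prod_i\R^{S_i}$, so that $\min_{P_\M}\sum_{m\in A}x_m=\sum_i\min_{P_{\M_i}}\sum_{m\in A\cap S_i}x_m$ for any $A\subseteq[n]$; and (ii) the fact that every noncrossing partition of $[n]$ with $\ge 2$ blocks has a block that is a cyclic interval (induct on $n$: inside the block containing $1$, take the arc from $1$ to the next element of that block, and noncrossingness forces the whole partition to restrict to that arc, so apply induction there). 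I prove the two directions separately, and deduce $f_\M(S_i)\equiv S_i\pmod n$ from the behaviour of Grassmann necklaces under direct sums. (The substance of the first assertion is the case where $\M_1,\dots,\M_r$ is the decomposition of $\M$ into connected components, and I focus on that.)

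For the forward direction, let $\M$ be a positroid, so $P_\M$ is alcoved, and let $\M=\M_{T_1}\oplus\cdots\oplus\M_{T_s}$ be its decomposition into connected components, so $P_\M=\prod_j P_{\M_{T_j}}$. First, each $\M_{T_j}$ is a positroid: if $z\in\R^{T_j}$ has the correct coordinate sum and satisfies every cyclic-interval inequality valid on $P_{\M_{T_j}}$ (in the induced cyclic order on $T_j$), extend $z$ to $x\in\R^{[n]}$ by choosing $x|_{T_{j'}}\in P_{\M_{T_{j'}}}$ for $j'\ne j$; by (i) each cyclic-interval inequality of $\R^{[n]}$ splits across the factors and is satisfied, so $x\in P_\M$ and hence $z=x|_{T_j}\in P_{\M_{T_j}}$, i.e. $P_{\M_{T_j}}$ is alcoved. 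Second, $(T_1,\dots,T_s)$ is noncrossing: since $P_\M$ is alcoved, $\operatorname{aff}(P_\M)$ is cut out by those cyclic-interval equalities $\sum_{m\in J}x_m=c_J$ for which the inequality is tight on all of $P_\M$, i.e. for which $J$ is a separator of $\M$; by (i) and connectedness of the $\M_{T_j}$, these are exactly the cyclic intervals $J$ that are unions of some of the $T_j$. As each $T_j$ is itself a separator, $e_{T_j}$ lies in the normal space of $\operatorname{aff}(P_\M)$, i.e. in $\operatorname{span}\{e_J : J\text{ a cyclic interval of the form }\bigcup_{\ell\in W}T_\ell\}$. Now suppose $(T_j)$ were crossing, say $a<b<c<d$ with $a,c\in T_p$ and $b,d\in T_q$, $p\ne q$, and set $\psi(x)=x_a-x_b+x_c-x_d$. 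For any such $J$ we have $a\in J\iff c\in J\iff T_p\subseteq J$ and likewise for $b,d,T_q$; moreover an arc containing both $a$ and $c$ must contain one of the two arcs joining them, hence contains $b$ or $d$, so $T_p\subseteq J\iff T_q\subseteq J$ and therefore $\psi(e_J)=0$. But $\psi(e_{T_p})=2$, contradicting $e_{T_p}\in\operatorname{span}\{e_J\}$. Hence $(T_1,\dots,T_s)$ is noncrossing.

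For the converse, let $(S_1,\dots,S_r)$ be noncrossing and each $\M_i$ a positroid on $S_i$; I show $P_\M=\prod_i P_{\M_i}$ is alcoved by induction on $r$, the case $r=1$ being trivial. By (ii) some block, say $S_r$, is a cyclic interval $[a,b]$, so $S_r^c=[b+1,a-1]$ is also a cyclic interval; $(S_1,\dots,S_{r-1})$ is a noncrossing partition of $S_r^c$, so by induction $\M''=\M_1\oplus\cdots\oplus\M_{r-1}$ is a positroid on $S_r^c$ with $P_{\M''}$ alcoved. Since $S_r$ and $S_r^c$ are cyclic intervals of $[n]$, each cyclic interval $[p,q]$ of $[n]$ meets them in sub-arcs, so by (i) the inequality $\sum_{m\in[p,q]}x_m\ge c_{[p,q]}$ of $P_\M$ is the sum of a cyclic-interval inequality of $P_{\M_r}$ and one of $P_{\M''}$; conversely, if $x$ satisfies $\sum_m x_m=k$ and all cyclic-interval inequalities of $[n]$, the inequalities for $[p,q]=S_r$ and $[p,q]=S_r^c$ force $\sum_{m\in S_r}x_m=\operatorname{rk}\M_r$ and $\sum_{m\in S_r^c}x_m=\operatorname{rk}\M''$, after which $x|_{S_r}$ satisfies every cyclic-interval inequality of $P_{\M_r}$ (a wrapping one being equivalent, given the equality $\sum_{S_r}x_m=\operatorname{rk}\M_r$, to the inequality for the complementary non-wrapping sub-arc), so $x|_{S_r}\in P_{\M_r}$, and likewise $x|_{S_r^c}\in P_{\M''}$; thus $x\in P_{\M_r}\times P_{\M''}=P_\M$. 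Hence $P_\M$ is alcoved and $\M$ is a positroid.

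Finally, for $f_\M(S_i)\equiv S_i$: the $\leq_a$-minimal basis of a direct sum is the disjoint union of the $\leq_a$-minimal bases of the summands (the order $\leq_a$ is compatible with disjoint unions — merging two dominated sorted tuples yields a dominated sorted tuple), so $I_a(\M)=\bigsqcup_i I_a(\M_i)$, and the Grassmann necklace step $I_a(\M)\to I_{a+1}(\M)$ alters coordinates only inside the block $S_i$ containing $a$; reading $f_\M(a)$ off this step shows $f_\M(a)$ is congruent mod $n$ to an element of $S_i$, so $f_\M(S_i)\equiv S_i\pmod n$, with equality since $f_\M$ is a bijection. I expect the main obstacle to be the noncrossingness claim in the forward direction — pinning down precisely which cyclic-interval inequalities are tight on $P_\M$, and turning a hypothetical crossing into the functional $\psi$; this is exactly the point that must, and does, fail for non-alcoved matroid polytopes such as $\Delta(2,4)$ with a pair of opposite vertices removed.
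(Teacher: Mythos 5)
Your proof is correct, but note that the paper contains no proof of this proposition at all: it is quoted from \cite[Theorem 7.6]{ARW}, so the comparison is with that reference rather than with an argument in the text. Your route — deducing everything from the alcoved-polytope characterization of positroids (Proposition~\ref{prop:posfacet}) together with the product decomposition $P_{\M_1\oplus\cdots\oplus\M_r}=\prod_i P_{\M_i}$ — is genuinely different from the permutation-based argument in \cite{ARW}, and what it buys is a short, self-contained derivation using only material the paper actually proves (the proof of Proposition~\ref{prop:posfacet} does not depend on Proposition~\ref{prop:disconnect}, so there is no circularity; note that Proposition~\ref{prop:posSpeyer} later uses both, which is downstream and harmless). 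The individual steps check out: identifying the cyclic intervals $J$ with $\sum_{m\in J}x_m$ constant on $P_\M$ as exactly the unions of connected components uses the full-dimensionality of each $P_{\M_{T_j}}$ in its hyperplane, which is the paper's definition of connectedness; the functional $\psi=x_a-x_b+x_c-x_d$ then kills a hypothetical crossing cleanly; the converse induction via an interval block of the noncrossing partition is sound once one observes that every cyclic interval of the induced order on $S_r$ is $[p,q]\cap S_r$ for some cyclic interval $[p,q]$ of $[n]$; and the greedy-algorithm identity $I_a(\M)=\bigsqcup_i I_a(\M_i)$ for Gale minima gives $f_\M(S_i)\bmod n\subseteq S_i$, hence equality by bijectivity.

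One caveat you were right to flag, and which deserves emphasis: the noncrossing claim in the forward direction is only true for the decomposition into connected components (equivalently, when each $\M_i$ is required to be connected, as it is in \cite{ARW}). As literally stated here it fails: the rank-two positroid on $[4]$ with unique basis $\{1,2\}$ is a direct sum of positroids over the crossing partition $\{1,3\},\{2,4\}$. So your restriction is not a convenience but a necessary repair of the statement. The remaining clauses — $f_\M(S_i)\equiv S_i \pmod n$ and the converse — do hold for arbitrary, not necessarily connected, summands, and your arguments for those indeed never use connectedness.
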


\section{Grassmannians and configuration spaces}\label{sec:Gr}
\subsection{}
Let $\Gr(k,n)$ denote the Grassmannian of $k$-planes in $\C^n$.  For $V \in \Gr(k,n)$ we let $\Delta_I(V)$, for $I \in \binom{[n]}{k}$ denote its {\it Pl\"ucker coordinates}.  These Pl\"ucker coordinates satisfy the Pl\"ucker relations and are defined up to a common scalar.  We refer the reader to \cite{LamCDM} for further details.  The most important relation for us will be the three-term Pl\"ucker relation
\begin{equation}\label{eq:threeterm}
\Delta_{Sac}\Delta_{Sbd} = \Delta_{Sab}\Delta_{Scd} + \Delta_{Sad} \Delta_{Sbc}
\end{equation}
where $S \subset [n]$ is of size $k-2$ and $a<b<c<d$ are not contained in $S$.

It is convenient to also work with the affine cone $\hGr(k,n)$ over $\Gr(k,n)$.  A point $V$ in $\hGr(k,n)$ is a collection of Pl\"ucker coordinates $\Delta_I(V)$ that satisfy the Pl\"ucker relations.  But now scaling Pl\"ucker coordinates give different points, and $\hGr(k,n)$ also contains a distinguished cone point $0$, where all Pl\"ucker coordinates vanish.

\subsection{}  

The {\it matroid} $\M(V)$ of $V \in \Gr(k,n)$ is defined as
$$
\M(V) := \left\{I \in \sbinom \mid \Delta_I(V) \neq 0\right\}.
$$
Similarly, one can define $\M(V)$ for $V \in \hGr(k,n) \setminus \{0\}$.  If $V = 0$ is the cone point, then $\M(V)$ is not defined.  For a matroid $\M$, define the matroid strata
$$
\G_\M:= \{V \in \Gr(k,n) \mid \M(V) = \M\}$$
For the uniform matroid, we have
$$
\oGr(k,n):=\G_{\binom{[n]}{k}} = \left\{V \in \Gr(k,n) \mid \Delta_I(V) \neq 0 \text{ for all } I \in \sbinom\right\}.
$$

\subsection{}
The torus $(\C^\times)^n = \{(t_1,t_2,\ldots,t_n) \mid t_i \in \C^\times\}$ acts on $\Gr(k,n)$ by scaling the $i$-th column of a representing matrix by $t_i$.  For $t \in \C^\times$, the element $(t,t,\ldots,t) \in (\C^\times)^n$ scales all Pl\"ucker coordinates by $t^k$, and thus the action of $(\C^\times)^n$ factors through the torus
$$
T  := (\C^\times)^n/\C^\times \simeq  (\C^\times)^{n-1}.
$$ 
An element $(t_1,\ldots,t_n) \in (\C^\times)^n$ acts on $V \in \hGr(k,n)$ by 
$$
\Delta_I((t_1,\ldots,t_n) \cdot V) = \prod_{i\in I} t_i \, \Delta_I(V).
$$
This action factors through the quotient torus
$$
\hT = (\C^\times)^{n}/(\Z/k\Z)
$$
where $\Z/k\Z = \{(\zeta,\ldots,\zeta) \mid \zeta^k = 1\}$ is a cyclic group of order $k$.  The character lattices $X(T)$ and $X(\hT)$ of $T$ and $\hT$ are naturally identified with sublattices of $\Z^n = X((\C^\times)^n)$:
\begin{align}
\begin{split}\label{eq:character}
X(T) &= \{(x_1,\ldots,x_n) \in \Z^n \mid \sum x_i = 0\} \\
X(\hT) &= \{(x_1,\ldots,x_n) \in \Z^n \mid k \text{ divides } \sum x_i \}
\end{split}.
\end{align}

\begin{lemma}\label{lem:connected}
The matroid $\M$ is connected if and only if the action of $T$ on $\G_\M$ is free.
\end{lemma}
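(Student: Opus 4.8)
The plan is to reduce freeness of the $T$-action on $\G_\M$ to an identity of lattices, $L_\M=X(T)$, where $L_\M$ is the sublattice of $X(T)$ spanned by the edge vectors of the matroid polytope $P_\M$, and then to establish this identity for connected $\M$ using matroid circuits.

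First I would compute the stabilizer. Fix $V\in\G_\M$ (we may assume $\G_\M\neq\emptyset$, the only case of interest). Lifting the action to $(\C^\times)^n$, an element $(t_1,\dots,t_n)$ fixes $V\in\Gr(k,n)$ precisely when there is a scalar $\lambda$ with $\bigl(\prod_{i\in I}t_i\bigr)\Delta_I(V)=\lambda\,\Delta_I(V)$ for all $I$, i.e.\ when $\prod_{i\in I}t_i=\prod_{i\in J}t_i$ for all $I,J\in\M(V)=\M$. Let $L_\M\subseteq X(T)$ be the sublattice generated by the vectors $e_I-e_J$, $I,J\in\M$; each such vector has coordinate sum $0$, so indeed $L_\M\subseteq X(T)$. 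Then the stabilizer of $V$ in $(\C^\times)^n$ is exactly the subgroup annihilated by $L_\M$, which always contains the diagonal $\C^\times$. Passing to $T=(\C^\times)^n/\C^\times$, I get that $\operatorname{Stab}_T(V)=\{t\in T:\chi(t)=1\text{ for all }\chi\in L_\M\}$ is a diagonalizable subgroup with character group $X(T)/L_\M$, hence trivial if and only if $L_\M=X(T)$ (a proper sublattice of infinite index would leave a positive-dimensional subtorus, one of finite index $>1$ a nontrivial finite subgroup). Since this depends only on $\M$, the action of $T$ on $\G_\M$ is free if and only if $L_\M=X(T)$.

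It then remains to show that $L_\M=X(T)$ if and only if $\M$ is connected. One direction is immediate: if $L_\M=X(T)$ then $\operatorname{rank}L_\M=n-1$, and since $L_\M\otimes\R$ is the linear span of $\{e_I-e_{I_0}:I\in\M\}$ for any fixed $I_0\in\M$, the polytope $P_\M$ has dimension $n-1$, so $\M$ is connected. For the converse I would use the standard characterization that $\M$ is connected iff every two elements of $[n]$ lie in a common circuit. Since the vectors $e_i-e_j$, $i\neq j$, generate $X(T)$, it suffices to show $e_i-e_j\in L_\M$ for all $i\neq j$: given a circuit $C$ containing $i$ and $j$, the set $C\setminus\{i\}$ is independent, so it extends to a basis $B$, and then $j\in B$ while $i\notin B$ (as $C$ is dependent). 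Thus $C$ is the fundamental circuit of $i$ relative to $B$, and the fundamental-circuit exchange property gives that $B':=(B\setminus\{j\})\cup\{i\}$ is a basis; since $e_{B'}-e_{B}=e_i-e_j$, we conclude $e_i-e_j\in L_\M$, and hence $L_\M=X(T)$.

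The hard part will be the implication ``connected $\Rightarrow$ free'': connectedness only supplies $\dim P_\M=n-1$, i.e.\ that $L_\M$ has full rank $n-1$ in $X(T)$, whereas freeness requires the \emph{exact} equality $L_\M=X(T)$ — a proper finite-index sublattice would still produce a nontrivial finite stabilizer. The circuit / fundamental-circuit argument in the previous paragraph is exactly what upgrades the rank statement to this saturation statement; the remaining steps are routine bookkeeping with character lattices of tori.
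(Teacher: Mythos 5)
Your proof is correct, and the overall reduction is the same as the paper's: both arguments show that the stabilizer of any $V\in\G_\M$ is the diagonalizable subgroup of $T$ annihilated by the sublattice of $X(T)$ spanned by differences $e_I-e_J$ of bases of $\M$, so that freeness is equivalent to this sublattice being all of $X(T)$ (your stabilizer computation makes this cleaner and more explicit than the paper, which phrases it in terms of weights of local coordinates $\Delta_{J_m}/\Delta_I$ after a gauge-fix). Where you genuinely diverge is in how the lattice equality $L_\M=X(T)$ is established for connected $\M$. The paper argues via polytope geometry: it picks a vertex $e_I$ of the full-dimensional polytope $P_\M$, takes $n-1$ linearly independent edge directions there, invokes Proposition~\ref{prop:GGMS} to see these are roots $e_i-e_j$, and uses the fact that linearly independent type-$A$ roots are unimodular (they form a spanning tree) to conclude their integral span is $X(T)$. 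You instead use Whitney's circuit characterization of matroid connectivity together with the fundamental-circuit exchange property to show directly that \emph{every} root $e_i-e_j$ is a difference $e_{B'}-e_B$ of two bases, hence lies in $L_\M$. Your route avoids both the unimodularity fact and the existence of $n-1$ independent edges at a vertex, at the cost of importing the (standard but external) equivalence between ``not a nontrivial direct sum'' and ``any two elements lie in a common circuit''; it also has the merit of spelling out the converse direction, which the paper dismisses as ``similar.''
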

\begin{proof}
Suppose $\M$ is connected.  We may find a vertex $e_I$ of $P_\M$ and vertices $e_{J_1},\ldots,e_{J_{n-1}}$ connected to $e_I$ via an edge of $P_\M$, so that the span of $\{e_I,e_{J_1},\ldots,e_{J_{n-1}}\}$ is linearly independent.  By Proposition~\ref{prop:GGMS}, the edges of $P_\M$ are roots, i.e., vectors of the form $e_i - e_j$.  A collection of linearly independent roots is easily seen to be unimodular, i.e., their integral span is the lattice $\{(x_1,\ldots,x_n) \in \Z^n \mid \sum_i x_i = 0\}$, which is the character lattice $X(T)$ of $T$.  Points $V \in \G_\M$ can be gauge-fixed to satisfy $\Delta_I(V) = 1$.  Under this gauge-fix, the torus $T$ acts on the Pl\"ucker coordinates $\Delta_{J_1},\ldots,\Delta_{J_{n-1}}$ with weights $e_{J_1}-e_I,\ldots,e_{J_{n-1}}-e_I$.  We have just argued that these weights span the character lattice $X(T)$, and thus $T$ must act freely on $\G_\M$.

The ``if" direction is similar.
\end{proof}

\subsection{} 
The torus $T$ acts freely on the complex manifold $\oGr(k,n)$, and thus the quotient $\oGr(k,n)/T$ is a manifold of dimension $k(n-k) - (n-1)$.  The space $\oGr(k,n)/T$ has the following alternative description.  Let $\Conf(k,n)$ be the space $(\PP^{k-1})^n/\GL(n)$ of $\GL(n)$-orbits of $n$ points in the projective space $\PP^{k-1}$.  A configuration $p = (p_1,\ldots,p_n)$ is called {\it generic} if any $r$ of the points, for $r \leq k$, affinely span a subspace of $\PP^{k-1}$ of dimension $r-1$.  We denote the space of generic configurations by $\oConf(k,n) \subset \Conf(k,n)$.  We have an isomorphism $\oGr(k,n)/T \simeq \oConf(k,n)$.

\subsection{} 
The {\it totally nonnegative Grassmannian} $\Gr(k,n)_{\geq 0}$ is the subspace of $\Gr(k,n)$ consisting of points $V \in \Gr(k,n)(\R)$ all of whose Pl\"ucker coordinates are nonnegative.  The {\it totally positive Grassmannian} $\Gr(k,n)_{>0}$ consists of points all of whose Pl\"ucker coordinates are positive.  These spaces were defined by Lusztig \cite{Lus} and Postnikov \cite{Pos}.  The totally nonnegative Grassmannian $\Gr(k,n)_{\geq 0}$ is homeomorphic to a closed ball \cite{GKL}. 

The matroid of a point $V \in \Gr(k,n)_{\geq 0}$ is called a {\it positroid}.  Positroids can be indexed by $(k,n)$-Grassmann necklaces (Proposition~\ref{prop:Oh}) and $(k,n)$-bounded affine permutations (Proposition~\ref{prop:KLS}).  The matroid of $V \in \Gr(k,n)_{>0}$ is the uniform matroid, and thus we have $\Gr(k,n)_{>0} \subset \oGr(k,n)$.  Indeed, $\Gr(k,n)_{>0}$ is a connected component of the manifold $\oGr(k,n)$, and is diffeomorphic to an open ball of dimension $k(n-k) -(n-1)$.  The image of $\Gr(k,n)_{>0}$ in $\oGr(k,n)/T \simeq \oConf(k,n)$ is called the {\it positive component} of (generic) configuration space, and denoted $\Conf(k,n)_{>0}$.

\subsection{}
For a positroid $\M$, define the {\it positroid cell}
$$
\Pi_{\M,>0} := \{V \in \Gr(k,n)_{\geq 0} \mid \M(V) = \M\}.
$$
By \cite{Pos}, $\Pi_{\M,>0}$ is homeomorphic to an open ball.  We define $\dim(\M)$ to be the dimension of this ball.  We have the disjoint union \cite{Pos}
\begin{equation}\label{eq:positroiddecomp}
\Gr(k,n)_{\geq 0} = \bigsqcup_\M \Pi_{\M,>0}
\end{equation}
as $\M$ varies over positroids of rank $k$ on $[n]$.  The closure $\Pi_{\M,\geq 0}:= \overline{\Pi_{\M,>0}}$ is a union 
$
\Pi_{\M,\geq 0} = \bigsqcup_{\M' \subseteq \M} \Pi_{\M',>0}
$
of positroid cells.  The decomposition \eqref{eq:positroiddecomp} gives $\Gr(k,n)_{\geq 0}$ the structure of a regular CW complex \cite{GKL2}. 

\subsection{}
The group $T_{>0}:= \R_{>0}^n/\R_{>0} \subset (\C^\times)^{n}/(\C^\times) = T$ acts on $\Gr(k,n)_{\geq 0}$, preserving the positroid cells $\Pi_{\M,>0}$. 
From Lemma \ref{lem:connected}, we have the following.
\begin{lemma}
Suppose that $\M$ is a connected positroid.  Then $T_{>0}$ acts freely on $\Pi_{\M,>0}$.
\end{lemma}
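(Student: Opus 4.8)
The plan is to deduce this immediately from Lemma~\ref{lem:connected} by a restriction argument. First I would observe that $\Pi_{\M,>0}$ is a subset of the complex matroid stratum $\G_\M$: by definition every $V \in \Pi_{\M,>0}$ is a (real) point of $\Gr(k,n)$ whose set of nonvanishing Pl\"ucker coordinates is exactly $\M$, so $\M(V) = \M$ and hence $V \in \G_\M$. Second, $T_{>0} = \R_{>0}^n/\R_{>0}$ is a subgroup of $T = (\C^\times)^n/\C^\times$, and as noted in the paragraph preceding the statement, $T_{>0}$ preserves $\Pi_{\M,>0}$, so the action of $T_{>0}$ on $\Pi_{\M,>0}$ is well-defined and is the restriction of the $T$-action on $\G_\M$.

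Now since $\M$ is a connected positroid, it is in particular a connected matroid, so Lemma~\ref{lem:connected} applies and tells us that $T$ acts freely on $\G_\M$; equivalently, every point of $\G_\M$ has trivial stabilizer in $T$. For any $V \in \Pi_{\M,>0} \subseteq \G_\M$, its stabilizer in $T_{>0}$ is contained in its stabilizer in $T$, which is trivial; hence the stabilizer in $T_{>0}$ is trivial as well. Therefore $T_{>0}$ acts freely on $\Pi_{\M,>0}$.

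There is essentially no obstacle here — the only things to check are the two inclusions above (the inclusion $\Pi_{\M,>0}\subseteq\G_\M$ is immediate from the definitions, and the stability of $\Pi_{\M,>0}$ under $T_{>0}$ is already recorded), after which the conclusion is a formal consequence of Lemma~\ref{lem:connected}. If one preferred a self-contained argument, one could instead imitate the proof of Lemma~\ref{lem:connected} directly: gauge-fix $V \in \Pi_{\M,>0}$ so that $\Delta_I(V)=1$ for a basis $I$ of $\M$, use connectedness of $P_\M$ to produce bases $J_1,\dots,J_{n-1}$ with $e_{J_1}-e_I,\dots,e_{J_{n-1}}-e_I$ spanning $X(T)$, and note that a $T_{>0}$-element fixing $V$ must have $\log$-coordinates orthogonal to all these roots, hence be trivial; but citing Lemma~\ref{lem:connected} is cleaner.
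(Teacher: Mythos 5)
Your proof is correct and matches the paper exactly: the paper states this lemma as an immediate consequence of Lemma~\ref{lem:connected}, which is precisely the restriction argument you spell out ($\Pi_{\M,>0}\subseteq\G_\M$ and $T_{>0}\subseteq T$, so stabilizers can only shrink). Nothing further is needed.
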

The quotient $\Pi_{\M,>0}/T_{>0}$ is a real manifold of dimension $\dim(\M)-(n-1)$.  In particular, if $\M$ is a connected positroid then $\dim(\M) \geq n-1$.
If $\M$ is a connected positroid and $\dim(\M) = n-1$, then we call $\M$ a {\it minimal connected positroid} (see Lemma~\ref{lem:mintree}).  In this case, $\Pi_{\M,>0}/T_{>0}$ is a single point.

\subsection{}
For a positroid $\M$, define the {\it positroid variety} $\Pi_\M$ to be the Zariski-closure of $\Pi_{\M,>0}$ inside $\Gr(k,n)$.  The variety $\Pi_\M$ is an irreducible, normal subvariety of $\Gr(k,n)$ of dimension $\dim(\M)$ \cite{KLS}.  Positroid varieties give a stratification of $\Gr(k,n)$.  We define the {\it open positroid variety} $\oPi_\M \subset \Pi_\M$ so that
$
\Pi_{\M} = \bigsqcup_{\M' \subseteq \M} \oPi_{\M'}.
$
We have $\Gr(k,n)_{\geq 0} \cap \oPi_{\M'} = \Pi_{\M',>0}$.  We caution that $\oPi_\M$ is not equal to $\G_\M$.  Instead, $\oPi_\M$ is the union of $\G_{\M'}$ over all matroids $\M'$ such that the positroid envelope of $\M'$ is equal to $\M$.

\subsection{}\label{ssec:bridgereduce}
We recall the bridge parametrizations of $\Pi_\M$ from \cite{LamCDM}, see also \cite{Kar}.  The group $\GL(n)$ acts on $\Gr(k,n)$ by right multiplication.  For $i = 1,2,\ldots, n-1$, let $x_i(t) \in \GL(n)$ be the matrix that differs from the identity in a single entry equal to $t$ in the $(i,i+1)$-th matrix entry.  For $i = n$, we let $x_n(t) \in \GL(n)$ be the matrix that differs from the identity in a single entry equal to $(-1)^{k-1} t$ in the $(n,1)$-th matrix entry.  Thus $x_i(t)$ acts on a $k \times n$ matrix $V$ by adding the $i$-th column to the $(i+1)$-th column (with a sign if $i = n$).
The action of $x_i(t)$ can be written in Pl\"ucker coordinates as (cf. \cite[Lemma 7.6]{LamCDM})
\begin{equation}\label{eq:Deltax}
\Delta_I(V \cdot x_i(t)) = \begin{cases} \Delta_I(V) + t \Delta_{I \setminus \{i+1\} \cup \{i\}}(V) &\mbox{if $i+1 \in I$ but $i \notin I$} \\
\Delta_I(V) & \mbox{otherwise.}
\end{cases}
\end{equation}
More generally, for $\gamma=(i,j)$ let $x_\gamma(t)$ be the matrix that differs from the identity in a single entry equal to $\pm t$ in the $(i,j)$-th matrix entry, taking the positive sign if $i < j$ and the sign $(-1)^{k-1}$ if $i > j$.  

The following result allows us to reduce totally nonnegative points recursively.

\begin{proposition}[\cite{LamCDM}]\label{prop:bridgedecomp}
Let $V \in \Pi_{\M,>0} \subset \Gr(k,n)_{\geq 0}$ where $k > 0$.  Then at least one of the following holds:
\begin{enumerate}
\item For some $i \in [n]$, we have $f_\M(i) = i$.  Then $V$ is in the image of the map $\kappa_i: \Gr(k,n-1)_{\geq 0} \hookrightarrow \Gr(k,n)_{\geq 0}$ obtained by adding an $i$-th column equal to 0.
\item For some $i \in [n]$, we have $f_\M(i) = i+n$.  Then $V$ is in the image of the map $\eta_i: \Gr(k-1,n-1)_{\geq 0} \hookrightarrow \Gr(k,n)_{\geq 0}$ obtained by adding an extra first row and an extra $i$-th column, placing 0-s in all the new entries except for a $1$ in the $(1,i)$-th entry, and finally multiplying the columns $1,2,\ldots,i-1$ by $(-1)^{k-1}$.
\item For some $i \in [n]$, we have $i+1 \leq f_\M(i) < f_\M(i+1) \leq i+n$.  Then $V = V' \cdot x_i(a)$ where $V' \in \Pi_{\M',>0}$ with $\M'$ the positroid satisfying $f_{\M'} = f_\M s_i$ (where $s_i$ is the simple transposition swapping $i$ and $i+1$), and 
$$
a = \frac{\Delta_{I_{i+1}}(V)}{\Delta_{I_{i+1} \setminus\{i+1\} \cup \{i\}}(V)} >0.
$$
\end{enumerate}
\end{proposition}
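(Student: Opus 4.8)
\emph{The plan} is to first establish a combinatorial trichotomy for $f=f_\M$ and then verify the stated geometric conclusion case by case. Since $f$ is a $(k,n)$-bounded affine permutation, $i\leq f(i)\leq i+n$ for all $i$. If $f(i)=i$ for some $i$ we aim for conclusion (1); if $f(i)=i+n$ for some $i$ we aim for (2). Otherwise $i+1\leq f(i)\leq i+n-1$ for every $i$, and then some $i$ must satisfy $f(i)<f(i+1)$ --- for if not, injectivity of $f$ would force $f(1)>f(2)>\cdots>f(n)>f(n+1)=f(1)+n$, which is absurd. For such an $i$ the bounds $i+1\leq f(i)$ and $f(i+1)\leq(i+1)+n-1=i+n$ hold automatically, so we land in case (3). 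Hence one of (1)--(3) always applies, and it remains to verify the geometric statement in each.

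\emph{Cases (1) and (2)} are routine matrix manipulations. In case (1), $f_\M(i)=i$ means $i$ is a loop of $\M$, so $\Delta_I(V)=0$ for every $I\ni i$; a matrix representative of $V$ then has zero $i$-th column, and deleting it yields a rank-$k$ point $V'\in\Gr(k,n-1)$ whose Pl\"ucker coordinates are the nonnegative numbers $\Delta_J(V)$ with $i\notin J$, so $V'\in\Gr(k,n-1)_{\geq 0}$ and $V=\kappa_i(V')$. In case (2), $f_\M(i)=i+n$ means $i$ is a coloop of $\M$, so $\M=\M'\oplus U_{1,1}$ with $\M'$ of rank $k-1$ on $[n]\setminus\{i\}$, and correspondingly $V=V'\oplus\langle e_i\rangle$ with $V'$ a $(k-1)$-plane inside $\C^{[n]\setminus\{i\}}$. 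Expanding each $k\times k$ Pl\"ucker minor of $V$ along its $i$-th column (which is a standard basis vector) shows that, for $I\ni i$, $\Delta_I(V)$ equals a Pl\"ucker coordinate of $V'$ up to a sign depending only on the position of $i$ within $I$; the column twist by $(-1)^{k-1}$ built into $\eta_i$ is arranged precisely to absorb these signs, so $V'\in\Gr(k-1,n-1)_{\geq 0}$ and $V=\eta_i(V')$.

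\emph{Case (3), the bridge.} From $i+1\leq f_\M(i)<f_\M(i+1)\leq i+n$ and the formula defining $f_\I$ one reads off that $i\in I_i(\M)$, $i\notin I_{i+1}(\M)$, and $i+1\in I_{i+1}(\M)$; combined with the Grassmann necklace relation $I_{i+1}=I_i\setminus\{i\}\cup\{i'\}$ (where $i'\equiv f_\M(i)\bmod n$) and the ascent $f_\M(i)<f_\M(i+1)$, one checks that $J:=I_{i+1}\setminus\{i+1\}\cup\{i\}$ is again a basis of $\M$. Hence $\Delta_{I_{i+1}}(V)>0$ and $\Delta_J(V)>0$, so $a:=\Delta_{I_{i+1}}(V)/\Delta_J(V)$ is well defined and positive. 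Put $V':=V\cdot x_i(-a)$; then $V=V'\cdot x_i(a)$ because $x_i(-a)x_i(a)$ is the identity matrix. By \eqref{eq:Deltax}, $\Delta_I(V')=\Delta_I(V)$ unless $i+1\in I$ and $i\notin I$, in which case $\Delta_I(V')=\Delta_I(V)-a\,\Delta_{I\setminus\{i+1\}\cup\{i\}}(V)$, so in particular $\Delta_{I_{i+1}}(V')=0$. It then remains to prove: (i) $V'\in\Gr(k,n)_{\geq 0}$, i.e.\ $\Delta_I(V)\geq a\,\Delta_{I\setminus\{i+1\}\cup\{i\}}(V)$ whenever $i+1\in I$ and $i\notin I$; and (ii) the matroid of $V'$ is the positroid $\M'$ with $f_{\M'}=f_\M s_i$ --- which is again a $(k,n)$-bounded affine permutation (the ascent $f_\M(i)<f_\M(i+1)$ is exactly what preserves the boundedness inequalities) and satisfies $\dim(\M')=\dim(\M)-1$. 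Granting (i) and (ii) we obtain $V'\in\Pi_{\M',>0}$, which finishes case (3).

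\emph{The main obstacle} is entirely inside case (3): the two claims (i), (ii) and the combinatorial fact that $J$ is a basis. The inequality (i) is Postnikov's bridge lemma: expanding the three-term Pl\"ucker relation \eqref{eq:threeterm} and using the $\leq_{i+1}$-minimality of $I_{i+1}$ among bases of $\M$, one shows that the ratio $\Delta_I(V)/\Delta_{I\setminus\{i+1\}\cup\{i\}}(V)$, taken over all admissible $I$, attains its minimum at $I=I_{i+1}$; given (i), claim (ii) is a bookkeeping computation with \eqref{eq:MI} and \eqref{eq:Deltax} tracking exactly which Pl\"ucker coordinates of $V'$ vanish. This is the genuinely technical point, and is what the bridge construction of \cite{LamCDM} (cf.\ \cite{Kar}) carries out in detail; everything in Steps 1 and 2 is elementary.
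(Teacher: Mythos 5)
The paper offers no proof of this proposition: it is imported wholesale from \cite{LamCDM} (see the discussion in \S\ref{ssec:bridgereduce}), so there is no internal argument to compare yours against. What you supply is a correct and slightly fuller scaffold than the paper records. Your trichotomy argument is right: if $f(i)\neq i$ and $f(i)\neq i+n$ for all $i$, then injectivity of $f$ together with $f(n+1)=f(1)+n$ forces an ascent $f(i)<f(i+1)$, and the bounds $i+1\leq f(i)$ and $f(i+1)\leq i+n$ are then automatic. Cases (1) and (2) are indeed routine loop/coloop reductions, and your sign bookkeeping for $\eta_i$ is the right explanation of the $(-1)^{k-1}$ column twist. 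In case (3) you correctly reduce the proposition to exactly three claims: that $J=I_{i+1}\setminus\{i+1\}\cup\{i\}$ is a basis of $\M$ (so $a$ is defined and positive), that $V'=V\cdot x_i(-a)$ is still totally nonnegative, and that $\M(V')=\M'$ with $f_{\M'}=f_\M s_i$.

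The caveat is that those three claims \emph{are} the proposition --- everything else is elementary --- and you do not prove them; you name them and cite \cite{LamCDM} and \cite{Kar}, exactly as the paper itself does. Concretely, what is missing is the Pl\"ucker-relation argument showing that among all $I$ with $i+1\in I$ and $i\notin I$ the ratio $\Delta_I(V)/\Delta_{I\setminus\{i+1\}\cup\{i\}}(V)$ is minimized at $I=I_{i+1}$ (this is the inequality guaranteeing $V'\in\Gr(k,n)_{\geq 0}$, proved in \cite{LamCDM} via \eqref{eq:threeterm} and the $\leq_{i+1}$-minimality of $I_{i+1}$), together with the verification via \eqref{eq:Deltax} that the Pl\"ucker coordinates of $V'$ vanish precisely on $\sbinom\setminus\M'$. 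Since the paper treats the proposition as a black box, your proposal is at parity with it as written; but read as a self-contained proof it has an acknowledged gap at precisely the step that carries the content.
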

Proposition~\ref{prop:bridgedecomp} allows us to reduce any $V \in \Gr(k,n)_{\geq 0}$ to a positroid stratum that has dimension 0.  The recursion can be chosen to only depend on $\M = \M(V)$, with the real numbers in (3) taken to be parameters.  Parametrizations of $\R_{>0}^{\dim(\M)}  \simeq \Pi_{\M,>0}$ obtained in this way are called {\it bridge parametrizations}.  Note that $\kappa_{i+1} \circ x_i(t) = x_{i,i+2}(t) \circ \kappa_{i+1}$, so in general we need to use the matrices $x_\gamma(t)$.  Bridge parametrizations are of the form
\begin{equation}\label{eq:bridgeparam}
\R_{>0}^d \ni (t_1,t_2,\ldots,t_d) \longmapsto x_{\gamma_1}(t_1) \cdots x_{\gamma_d}(t_d) \cdot x_I \in \Pi_{\M,>0}
\end{equation}
where $d = \dim(\M)$ and $x_{i_1,\ldots,i_k} = {\rm span}(e_{i_1},\ldots,e_{i_k}) \in \Gr(k,n)^T$ is a torus fixed point.  We caution that in general $x_\gamma(t)$ for $t > 0$ does not preserve total nonnegativity.  They do preserve total nonnegativity when used in a bridge parametrization.

\section{Clusters for positroids}\label{sec:cluster}
\subsection{}\label{ssec:weaksep}
Let $\M$ be a positroid.  Recall that $\oPi_\M \subset \Gr(k,n)$ denotes the open positroid variety.  Let $\tPi_\M \subset \hGr(k,n)$ denote the cone over $\oPi_\M$.  By \cite{GL}, the coordinate rings $\C[\oPi_\M]$ and $\C[\tPi_\M]$ are isomorphic to cluster algebras.\footnote{In \cite{GL}, the ring $\C[\oPi_\M]$ is considered.  Working with $\C[\tPi_\M]$ allows us to avoid gauge-fixing a Pl\"ucker coordinate to equal to 1.}

A {\it cluster $\CC$ for $\M$} is a subset of $\M$ that indexes a seed in the cluster structure of $\C[\tPi_\M]$.  If $\M = \binom{[n]}{k}$ is the uniform matroid, then we simply call $\CC$ a {\it cluster}.  Any cluster $\CC$ for $\M$ has cardinality $|\CC| = \dim(\M) + 1$.  (We consider only clusters where the cluster variables are Pl\"ucker coordinates coming from face labels of a plabic graph.)  
Clusters for $\M$ can be described via weak separation \cite{OPS}.  We say that two subsets $I, J \subset [n]$ are {\it weakly-separated} if we cannot find $1\leq a < b < c < d \leq n$ such that $a,c \in I \setminus J$ and $b,d \in J \setminus I$ (or with $I$ and $J$ swapped).  The following result can be taken to be the definition of a cluster.

\begin{proposition}[\cite{OPS}]\label{prop:OPS}
A subset $\CC \subset \M$ is a cluster if it is pairwise weakly-separated, has size $\dim(\M) + 1$, and contains the Grassmann necklace $\I$ of $\M$.  Any pairwise weakly-separated subset of $\binom{[n]}{k}$ can be extended to a cluster.
\end{proposition}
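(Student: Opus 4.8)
The plan is to reduce the statement to the purity theorem for weakly separated collections of Oh--Postnikov--Speyer \cite{OPS}. Recall the plabic-graph model underlying the cluster structure of $\C[\tPi_\M]$ from \cite{GL}: a \emph{reduced plabic graph for $\M$} is a plabic graph whose trip permutation is the bounded affine permutation $f_\M$, and such a graph $G$ carries a collection $\F_G\subset\M$ of face labels; by the remark accompanying the proposition, a cluster for $\M$ is precisely a collection of the form $\F_G$ (and $G\mapsto\F_G$ matches these with the Pl\"ucker-variable seeds of $\C[\tPi_\M]$, square moves corresponding to mutations). So the proposition becomes a statement entirely about weakly separated collections: (i) every $\F_G$ is pairwise weakly separated, contained in $\M$, contains $\I(\M)$, and has size $\dim(\M)+1$; and (ii) these are exactly the collections $\CC$ with $\I(\M)\subseteq\CC\subseteq\M$ that are maximal among pairwise weakly separated such collections, and every pairwise weakly separated $\CC$ with $\I(\M)\subseteq\CC\subseteq\M$ extends to such a maximal one. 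Part (i) is elementary from the trip-permutation combinatorics; part (ii) is the content of \cite{OPS}.

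Granting (i)--(ii), the proposition follows at once. If $\CC\subset\M$ is pairwise weakly separated, contains $\I(\M)$, and $|\CC|=\dim(\M)+1$, extend $\CC$ to a maximal pairwise weakly separated $\CC'$ with $\I(\M)\subseteq\CC'\subseteq\M$; by (ii) we have $|\CC'|=\dim(\M)+1=|\CC|$, hence $\CC'=\CC$, so $\CC=\F_G$ for some $G$ and $\CC$ is a cluster. For the last sentence, take $\M=\binom{[n]}{k}$, whose Grassmann necklace is the set of cyclic intervals $\{a,a+1,\dots,a+k-1\}$ (indices mod $n$); given a pairwise weakly separated $\CC_0\subset\binom{[n]}{k}$, first adjoin all these cyclic intervals (each is weakly separated from every $k$-subset, so the collection stays pairwise weakly separated), then extend to a maximal pairwise weakly separated collection $\CC$, which by (ii) has size $k(n-k)+1$ and is therefore a cluster containing $\CC_0$.

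The substantive point, hence the expected main obstacle, is the purity statement (ii) for $\M$. I would either cite the proof in \cite{OPS} or, more in keeping with this paper, establish it by a bridge reduction paralleling Proposition~\ref{prop:bridgedecomp}: induct on $\dim(\M)$, and given a maximal pairwise weakly separated $\CC$ with $\I(\M)\subseteq\CC\subseteq\M$ and $\dim(\M)>n-1$, locate a pair $I_{i+1},\,I_{i+1}\setminus\{i+1\}\cup\{i\}$ in $\CC$ witnessing a bridge move, show that the induced operation on $\CC$ produces a maximal pairwise weakly separated collection for the smaller positroid $\M'$ with $f_{\M'}=f_\M s_i$, and that $|\CC|$ decreases by exactly one; the base cases are the $0$-dimensional positroids, where the unique cluster is $\I(\M)$ itself, and reversing the reduction rebuilds a reduced plabic graph. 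In either route the delicate points are showing that the combinatorial move preserves pairwise weak separation and pinning down the change in cardinality; the latter is where maximality (not merely weak separation) is used, and where the Leclerc--Zelevinsky-style exchange combinatorics genuinely enters.
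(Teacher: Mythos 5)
Your proposal is correct and matches the paper's treatment: the paper states this proposition purely as a citation to \cite{OPS} (remarking that it "can be taken to be the definition of a cluster"), and your argument likewise reduces everything to the Oh--Postnikov--Speyer purity theorem for weakly separated collections sandwiched between $\I(\M)$ and $\M$, with only routine bookkeeping (cardinality comparison, and the observation that cyclic intervals are weakly separated from every $k$-subset) added on top. Since the substantive content is exactly the cited purity result, there is nothing further to check.
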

Every $I \in \binom{[n]}{k}$ belongs to some cluster $\CC \subset \binom{[n]}{k}$, but this is not true with an arbitrary positroid $\M$ replacing $\binom{[n]}{k}$.

\subsection{} \label{ssec:mutation}
Let $S \subset [n]$ be of size $k-2$ and $a<b<c<d$ numbers not contained in $S$.
Let $\CC \in \M$ be a cluster.  If $Sac, Sab,Scd,Sad,Sbc \in \CC$ (resp. $Sbd,Sab,Scd,Sad,Sbc \in \CC$) then we can {\it mutate} $\CC$ at $Sac$ (resp. $Sbd$) to produce another cluster $\CC' \subset \M$ where $Sac$ has been replaced by $Sbd$ (resp. $Sbd$ has been replaced by $Sac$).  The Pl\"ucker variables of $\CC$ and $\CC'$ are then related by \eqref{eq:threeterm}.  

\begin{proposition}[\cite{OPS}] \label{prop:mutation}
Any two clusters $\CC,\CC' \subset \M$ (as in Proposition~\ref{prop:OPS}) are related by a sequence of mutations. 
\end{proposition}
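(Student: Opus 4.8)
The statement is, given Proposition~\ref{prop:OPS}, essentially a reformulation of the main theorem of \cite{OPS}, so the plan is to match the two setups and then invoke that theorem. By Proposition~\ref{prop:OPS}, a cluster for $\M$ is the same thing as an inclusion-maximal element of the collection of weakly-separated sets $\CC$ with $\I \subseteq \CC \subseteq \M$; here one uses the \emph{purity} input of \cite{OPS}, that all such maximal collections have the common size $\dim(\M)+1$. It therefore suffices to show that any two such inclusion-maximal collections are connected by a chain of mutations, where a mutation replaces $Sac \in \CC$ by $Sbd$ (or the reverse) precisely when in addition $Sab, Scd, Sad, Sbc \in \CC$, the new Pl\"ucker coordinate being governed by \eqref{eq:threeterm}. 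This is exactly the weak-separation and mutation-connectivity theorem of \cite{OPS} for the positroid $\M$, so the whole task is to transport the definitions and apply it.

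To make the argument structural rather than a bare citation, I would pass through plabic graphs, which is also what justifies the footnote restriction to cluster variables arising from face labels of a plabic graph. By \cite{OPS} together with \cite{Pos}, the inclusion-maximal weakly-separated collections inside $\M$ are exactly the sets of face labels of reduced plabic graphs $G$ whose trip (decorated) permutation equals $f_\M$. Postnikov's move theorem then says that any two reduced plabic graphs with a fixed trip permutation are connected by a sequence of local moves: square moves, contracting and uncontracting unicolored edges, and inserting or deleting degree-two vertices. The last two families of moves leave the multiset of face labels unchanged, while a square move changes exactly one face label from $Sac$ to $Sbd$ (or back), with the four ``side'' labels $Sab, Scd, Sad, Sbc$ automatically present in the face labeling; thus a square move is precisely a mutation in the sense of \S\ref{ssec:mutation}. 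Concatenating the moves provided by Postnikov's theorem yields the desired sequence of mutations connecting two given clusters $\CC, \CC'$ for $\M$. I would also note in passing that an inductive alternative is available via the bridge recursion of Proposition~\ref{prop:bridgedecomp}, peeling a bridge and inducting on $\dim(\M)$, but keeping careful track of how clusters transform under bridge removal appears more delicate than the plabic-graph route.

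The main obstacle is exactly the input from \cite{OPS}: the purity statement that maximal weakly-separated collections inside $\M$ all have size $\dim(\M)+1$, together with their mutation-connectivity. This is the genuinely hard combinatorial content (it resolves the Leclerc--Zelevinsky purity conjecture and its refinement), and I would take it as a black box, there being by now several independent proofs. The remaining new bookkeeping is routine but should be stated: first, that no hypothesis of Proposition~\ref{prop:OPS} is lost under a square move, i.e.\ every intermediate collection is again a weakly-separated subset of $\M$ of size $\dim(\M)+1$ containing $\I$, hence again a cluster for $\M$, so the chain never leaves the class of clusters; and second, that the five-element mutability condition of \S\ref{ssec:mutation} is equivalent to $Sac$ (respectively $Sbd$) labelling a square face of the corresponding reduced plabic graph, so that square moves and mutations are literally the same operation.
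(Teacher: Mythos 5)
Your proposal is correct, and it matches the paper exactly: the paper offers no independent argument for Proposition~\ref{prop:mutation}, simply citing \cite{OPS}, and your plabic-graph unpacking (maximal weakly-separated collections in $\M$ containing $\I$ are the face-label sets of reduced plabic graphs with trip permutation $f_\M$, square moves are mutations, and Postnikov's move-connectivity theorem supplies the chain) is precisely the content of that citation. Nothing further is needed.
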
 

By a positive Laurent polynomial we mean a Laurent polynomial such that the coefficient of every monomial is nonnegative.  The following result is a special case of general positivity results of cluster algebras \cite{LS}.

\begin{proposition}\label{prop:Laurent}
For $J \in \binom{[n]}{k}$, and a cluster $\CC \subset \binom{[n]}{k}$, the Pl\"ucker variable $\Delta_J$ is a positive Laurent polynomial in $\{\Delta_I \mid I \in \CC\}$. 
\end{proposition}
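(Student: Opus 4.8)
The plan is to derive this from two standard facts about cluster algebras: the Laurent phenomenon and positivity. Recall from \S\ref{sec:cluster} that $\C[\hGr(k,n)] = \C[\tPi_{\binom{[n]}{k}}]$ carries a cluster algebra structure (a special case of the structure on $\C[\tPi_\M]$ from \cite{GL}) whose seeds are indexed by the clusters $\CC \subset \binom{[n]}{k}$ of Proposition~\ref{prop:OPS}, with the Pl\"ucker coordinates $\{\Delta_I \mid I \in \CC\}$ as the cluster and frozen variables of the corresponding seed, and with skew-symmetric exchange matrices (the quivers attached to the associated plabic graphs). The extension statement of Proposition~\ref{prop:OPS} shows that the given $J$ lies in \emph{some} cluster $\CC'$, so $\Delta_J$ is a cluster variable — or, if $J$ is one of the $n$ cyclic intervals of length $k$, a frozen variable, in which case $J$ belongs to every cluster, since every cluster contains the Grassmann necklace of the uniform matroid, which consists precisely of these cyclic intervals. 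Since the seeds of a cluster algebra form a single mutation class, we may take $\CC$ itself to be the initial seed; the Laurent phenomenon of Fomin--Zelevinsky then already gives that $\Delta_J$ is a Laurent polynomial in $\{\Delta_I \mid I \in \CC\}$.

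To upgrade this to a \emph{positive} Laurent polynomial, I would invoke the positivity theorem for cluster algebras with skew-symmetric exchange matrix: every cluster variable is a subtraction-free Laurent polynomial in the cluster variables of any seed. This is the Lee--Schiffler theorem \cite{LS} (for the skew-symmetrizable generalization one could instead appeal to the work of Gross--Hacking--Keel--Kontsevich). Applying it with $\CC$ as the initial seed yields the proposition. The cases $J \in \CC$ — in particular all frozen $J$ — are immediate, $\Delta_J$ then being a single monomial.

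The only real obstacle is the positivity input, and it genuinely requires a deep result rather than an elementary induction. Concretely, by Proposition~\ref{prop:mutation} any cluster containing $J$ is joined to $\CC$ by a chain of mutations, and a single mutation replaces a Pl\"ucker cluster variable by the manifestly positive expression $\Delta_{Sbd} = (\Delta_{Sab}\Delta_{Scd} + \Delta_{Sad}\Delta_{Sbc})/\Delta_{Sac}$ coming from the three-term relation \eqref{eq:threeterm}. However, this base step does not propagate along the chain: substituting positive Laurent polynomials into the monomials of $\Delta_{Sbd}$ carrying a negative exponent — that is, into the factor $\Delta_{Sac}^{-1}$ — does not in general produce a Laurent polynomial, let alone a positive one. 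Overcoming exactly this difficulty is the content of \cite{LS}, which we therefore quote as a black box; no part of the present paper reproves it.
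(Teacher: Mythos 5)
Your proof is correct and takes essentially the same route as the paper, which proves this proposition simply by citing it as ``a special case of general positivity results of cluster algebras \cite{LS}.'' The extra bookkeeping you supply --- that every $J \in \binom{[n]}{k}$ is a cluster variable (or frozen variable) of the cluster structure on $\C[\hGr(k,n)]$ by the extension statement of Proposition~\ref{prop:OPS}, so that the Lee--Schiffler theorem applies with $\CC$ as the initial seed --- is exactly the intended reduction, and your remark that the naive mutation-by-mutation substitution does not propagate positivity correctly identifies why the deep input is unavoidable.
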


For an arbitrary positroid $\M$, we have the following weaker statement.
\begin{proposition}\label{prop:Plucksub}
For $J \in \M$, and a cluster $\CC \subset \M$, the Pl\"ucker variable $\Delta_J$ is a subtraction-free rational expression in $\{\Delta_I \mid I \in \CC\}$. 
\end{proposition}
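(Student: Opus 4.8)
The plan is to derive Proposition~\ref{prop:Plucksub} from Proposition~\ref{prop:Laurent} together with the cluster-structure results already in hand, namely Propositions~\ref{prop:OPS} and~\ref{prop:mutation}. The key point is that for a positroid $\M$, every Pl\"ucker coordinate $\Delta_J$ with $J \in \M$ appears in \emph{some} cluster for $\M$ (this is exactly why the parenthetical ``the cluster variables are Pl\"ucker coordinates coming from face labels of a plabic graph'' is included), and any two clusters for $\M$ are connected by mutations, each of which is governed by the three-term relation \eqref{eq:threeterm}.

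First I would fix a cluster $\CC \subset \M$ and a target $J \in \M$. By Proposition~\ref{prop:OPS}, $\{J\}$ is pairwise weakly separated, hence extends to a cluster $\CC' \subset \M$ with $J \in \CC'$. By Proposition~\ref{prop:mutation}, there is a sequence of mutations $\CC = \CC_0, \CC_1, \ldots, \CC_m = \CC'$. I would then argue by induction on the mutation distance that every Pl\"ucker variable indexed by an element of $\CC_r$ is a subtraction-free rational expression in $\{\Delta_I \mid I \in \CC\}$. The base case $r = 0$ is trivial. For the inductive step, a single mutation replaces one cluster variable, say $\Delta_{Sac}$, by $\Delta_{Sbd}$ (or vice versa), and by \S\ref{ssec:mutation} the new variable is expressed via \eqref{eq:threeterm} as
$$
\Delta_{Sbd} = \frac{\Delta_{Sab}\Delta_{Scd} + \Delta_{Sad}\Delta_{Sbc}}{\Delta_{Sac}},
$$
all of whose ingredients $\Delta_{Sab}, \Delta_{Scd}, \Delta_{Sad}, \Delta_{Sbc}, \Delta_{Sac}$ lie in $\CC_r$ and hence, by the inductive hypothesis, are already subtraction-free rational expressions in $\{\Delta_I \mid I \in \CC\}$. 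Since the class of subtraction-free rational expressions is closed under addition, multiplication, and division (by a nonzero such expression), $\Delta_{Sbd}$ is again subtraction-free. Applying this to $\CC' = \CC_m$ gives the claim for $\Delta_J$.

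The main obstacle, and the one place where I would be careful, is justifying that $J \in \M$ actually belongs to some cluster for $\M$ in the plabic-graph sense used here: Proposition~\ref{prop:OPS} as stated guarantees that a pairwise weakly separated \emph{subset} extends to a cluster, so I need the trivial observation that a singleton $\{J\}$ with $J \in \M$ is weakly separated, plus the compatibility of the ``contains the Grassmann necklace $\I$ of $\M$'' condition — but any maximal weakly separated collection inside $\M$ containing $J$ will automatically contain $\I$ by the maximality clause of Proposition~\ref{prop:OPS}, so this is not a real difficulty. A secondary point worth a sentence is that one should work in the cone $\tPi_\M$ (coordinate ring $\C[\tPi_\M]$) rather than gauge-fixing, so that all the $\Delta_I$ are honest functions and the three-term relation \eqref{eq:threeterm} holds on the nose; this is the reason for the footnote attached to \S\ref{ssec:weaksep}. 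With these remarks in place the induction is routine, and no genuinely hard step remains.
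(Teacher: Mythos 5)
Your mutation-induction step is fine as far as it goes, but the proof breaks at its very first move: the claim that every $J \in \M$ belongs to some cluster for $\M$ is false, and the paper says so explicitly. Immediately after Proposition~\ref{prop:OPS} the text notes that ``Every $I \in \binom{[n]}{k}$ belongs to some cluster $\CC \subset \binom{[n]}{k}$, but this is not true with an arbitrary positroid $\M$ replacing $\binom{[n]}{k}$,'' and the discussion preceding Conjecture~\ref{conj:Laurent} repeats that ``there are Pl\"ucker variables $\Delta_I$, $I \in \M$ that are not cluster variables in any cluster.'' The obstruction is exactly what your argument glosses over: a cluster for $\M$ must contain the Grassmann necklace $\I(\M)$, and for a general positroid there are $J \in \M$ that fail to be weakly separated from some necklace element $I_a$ (this is the quantity $w(J)>0$ around which the proof of Theorem~\ref{thm:param} is organized); such a $J$ cannot lie in any pairwise weakly separated collection containing $\I$, hence lies in no cluster. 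The extension clause of Proposition~\ref{prop:OPS} is stated only for $\binom{[n]}{k}$ and does not rescue this. Note also that if your first step were correct, Conjecture~\ref{conj:Laurent} would follow at once from cluster-algebra Laurent positivity, which is precisely why the paper must leave it as a conjecture.

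The paper's actual argument handles these ``bad'' $J$ by the double induction on $(w(J),d(J))$ carried out in the proof of Theorem~\ref{thm:param}: one produces a three-term relation $\Delta_J \Delta_{K_0} = \Delta_{K_1}\Delta_{K_2} + \Delta_{K_3}\Delta_{K_4}$ in which each $K_t$ is either outside $\M$ (so its Pl\"ucker coordinate vanishes and that term drops) or strictly closer to being weakly separated from the necklace, and verifies that $K_0 \in \M$ so that the division is legitimate and one never divides by zero. Your argument does correctly dispose of the base case $w(J)=0$, where it essentially coincides with the paper's; to repair the proof you would need to supply the $w(J)>0$ induction, or else invoke the explicit formulae of \cite{MS}, as the first sentence of the paper's proof does.
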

\begin{proof}
This result follows, for example, from the formulae in \cite{MS}.  It also follows from the proof of Theorem~\ref{thm:param} we give below.  Namely, in that proof we show that a formula for $\Delta_J$ in terms of $\{\Delta_I \mid I \in \CC\}$ can be obtained by iteratively applying the three-term Pl\"ucker relation \eqref{eq:threeterm}.  In other words, we iteratively substitute $\Delta_{Sac} = (\Delta_{Sab}\Delta_{Scd} + \Delta_{Sad} \Delta_{Sbc})/\Delta_{Sbd}$, without ever dividing by 0.
\end{proof}

The following conjecture is likely known to many experts.  It does not immediately follow from the identification \cite{GL} of $\C[\oPi_\M]$ with a cluster algebra because there are Pl\"ucker variables $\Delta_I$, $I \in \M$ that are not cluster variables in any cluster.
\begin{conjecture}\label{conj:Laurent}
For $J \in \M$, and a cluster $\CC \subset \M$, the Pl\"ucker variable $\Delta_J$ is a positive Laurent polynomial in $\{\Delta_I \mid I \in \CC\}$. 
\end{conjecture}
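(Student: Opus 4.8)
We sketch a possible route to Conjecture~\ref{conj:Laurent}. The statement has two layers, and only the second is substantive. By \cite{GL} the ring $\C[\tPi_\M]$ carries a cluster algebra structure, and $\Delta_J$ (for $J \in \M$) is a regular function on $\tPi_\M$, hence lies in this cluster algebra; so the Laurent phenomenon already guarantees that $\Delta_J$ \emph{is} a Laurent polynomial in the variables of $\CC$. Thus the entire content is that the coefficients of this Laurent polynomial are nonnegative. One may first reduce to the case that $\M$ is connected: if $\M = \M_1 \oplus \cdots \oplus \M_r$ with ground sets forming a noncrossing partition of $[n]$ (Proposition~\ref{prop:disconnect}), then, up to the torus action, $\oPi_\M \cong \oPi_{\M_1} \times \cdots \times \oPi_{\M_r}$ with the product cluster structure, any cluster $\CC$ is assembled from clusters for the $\M_i$ with frozen variables glued across the partition, and $\Delta_{I_1 \sqcup \cdots \sqcup I_r} = \prod_i \Delta_{I_i}$; positivity for the $\M_i$ then gives it for $\M$.

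Two tempting shortcuts are unavailable, and this is the source of the difficulty. First, one cannot invoke the cluster-variable positivity theorem of Lee--Schiffler / Gross--Hacking--Keel--Kontsevich directly, because --- as noted in the text --- there are Pl\"ucker coordinates $\Delta_J$, $J\in\M$, that are not cluster variables of $\C[\tPi_\M]$ in any seed. Second, one cannot merely upgrade Proposition~\ref{prop:Plucksub}: a subtraction-free rational expression that happens to equal a Laurent polynomial need not be a \emph{positive} Laurent polynomial --- witness $(x^3+y^3)/(x+y) = x^2 - xy + y^2$ --- so the iterated three-term substitutions producing a formula for $\Delta_J$ in $\CC$ do not by themselves certify positivity.

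The plan is therefore to use a genuinely cluster-theoretic positivity mechanism: the theta basis of $\C[\tPi_\M]$ of Gross--Hacking--Keel--Kontsevich (positroid cluster algebras are locally acyclic, after Muller, and satisfy the hypotheses needed for the theta basis to exist). Theta functions have positive Laurent expansions in every cluster and nonnegative structure constants, so it suffices to show that $\Delta_J$ expands in the theta basis with nonnegative coefficients --- for instance, that $\Delta_J$ is a cluster monomial in some (possibly different) seed, or more generally lies in the cone spanned by theta functions. Two concrete routes suggest themselves. \emph{(i)} Use the Muller--Speyer twist automorphism of $\oPi_\M$, together with a positroid analogue of the Marsh--Scott dimer formula, to identify each Pl\"ucker coordinate (up to an explicit frozen monomial) with a cluster variable or cluster monomial of a twisted seed, whence positivity follows from the variable case. \emph{(ii)} Induct on $\dim\M$ using the bridge / deletion-contraction reductions of Proposition~\ref{prop:bridgedecomp}: show that each of the operations $\kappa_i$, $\eta_i$, and a bridge $x_i(a)$ is compatible with the cluster structures and transports positive Laurent expansions to positive Laurent expansions. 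The base cases are the zero-dimensional positroid cells and the uniform matroid (Proposition~\ref{prop:Laurent}), and the bridge step uses \eqref{eq:Deltax}, which expresses $\Delta_J$ after a bridge as a positive combination of two Pl\"ucker coordinates before it.

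The step I expect to be the crux is the compatibility claim underlying either route: controlling precisely how the cluster structure on $\C[\tPi_\M]$, and the distinguished Pl\"ucker coordinates inside it, transform under the twist or the bridge operations --- in particular, proving that the twisted Pl\"ucker coordinates really are cluster or frozen variables of the relevant seed (a positroid Marsh--Scott statement), or that the quasi-cluster maps induced by bridges send theta functions to nonnegative combinations of theta functions. In the language of scattering diagrams this is the assertion that the broken-line computation of the theta expansion of $\Delta_J$ never exhibits a cancellation; verifying this uniformly in $\M$ is precisely what we do not know how to do, which is why the statement remains a conjecture.
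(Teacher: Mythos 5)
This statement is stated in the paper as a \emph{conjecture}, and the paper offers no proof of it; the only commentary given is the remark immediately preceding it, namely that the claim does not follow from the cluster structure of \cite{GL} because some Pl\"ucker coordinates $\Delta_J$, $J\in\M$, are not cluster variables in any seed. Your proposal is therefore not being measured against a proof in the paper, and indeed it is not a proof: you say yourself that the crux --- showing that $\Delta_J$ lies in the nonnegative cone of theta functions, or that the twist/bridge operations transport positive expansions to positive expansions --- is exactly what you do not know how to verify. That is a genuine gap, and it is the same gap the authors implicitly acknowledge by labelling the statement a conjecture.

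That said, your diagnosis of \emph{why} the statement is nontrivial is accurate and matches the paper's own remark: the Lee--Schiffler-type positivity of Proposition~\ref{prop:Laurent} only covers cluster variables, and Proposition~\ref{prop:Plucksub} only yields subtraction-free rational expressions, which (as your example $(x^3+y^3)/(x+y)=x^2-xy+y^2$ shows) need not be positive Laurent polynomials even when they happen to be Laurent polynomials. The reduction to connected $\M$ via Proposition~\ref{prop:disconnect} is sound, and both of your proposed routes (identifying $\Delta_J$ up to a frozen monomial with a cluster monomial of a twisted seed \`a la Marsh--Scott/Muller--Speyer, or inducting along the bridge reductions of Proposition~\ref{prop:bridgedecomp} using \eqref{eq:Deltax}) are reasonable programs. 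But neither is carried out here, so the proposal should be read as a plausibility argument and a statement of the obstruction, not as a proof; the conjecture remains open as far as this paper and your write-up are concerned.
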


\subsection{}\label{ssec:posparam}
Let $\M$ be a positroid and set $d =\dim(\M)$.  Let $(\C^\times)^d$ be a torus with coordinate functions $x_1,\ldots,x_d$, so that the coordinate ring is $\C[x_1^{\pm 1},\ldots,x_d^{\pm 1}]$.  A rational map $(\C^\times)^d \to \Pi_\M$ is called a {\it positive parametrization} of $\Pi_\M$ if it is birational, the restriction to $\R_{>0}^d$ is a homeomorphism onto $\Gr(k,n)_{>0}$, and every Pl\"ucker coordinate is a subtraction-free rational expression $\Delta_I(\x)$ in $x_1,\ldots,x_m$.  Any choice of cluster $\CC$ gives a positive parametrization after setting one of the Pl\"ucker coordinates to 1: the map $T(\CC) = (\C^\times)^d \to \Pi_\M$ is simply the inclusion of the cluster torus $\T(\CC)$ indexed by $\CC$ into the positroid variety. This map comes from the inclusion $\C[\tPi_\M] \subset \C[\Delta_J^{\pm 1} \mid J \in \CC]$, called the {\it Laurent phenomenon}.

\subsection{}\label{ssec:gaugefix}
We now consider a simple-minded notion of ``cluster" for $\Pi_\M/T$.  Let $\CC$ be a cluster for a connected positroid $\M$.  A {\it gauge-fix} $\G = \{J_1,\ldots,J_n\} \subset \CC$ is a subset such that the integral span of $e_{J_1},\ldots,e_{J_n}$ inside $\Z^n$ is the $n$-dimensional lattice $X(\hT) = \{(x_1,\ldots,x_n) \in \Z^n \mid k \text{ divides } \sum x_i \}$.  We shall prove the following result in \S\ref{ssec:proofspan}.

\begin{lemma}\label{lem:span}
Let $\M$ be a connected positroid.  Then there exists a cluster $\CC \subset \M$ such that a gauge-fix $\G \subset \CC$ exists.
\end{lemma}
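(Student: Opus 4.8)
The goal is to find, for a connected positroid $\M$, a cluster $\CC \subset \M$ containing a subset $\G = \{J_1,\ldots,J_n\}$ whose characteristic vectors $e_{J_1},\ldots,e_{J_n}$ generate the lattice $X(\hT)$. The plan is to build $\G$ one element at a time, maintaining a pairwise weakly-separated set that we extend to a cluster at the end via Proposition~\ref{prop:OPS}. The starting point is the Grassmann necklace $\I = (I_1,\ldots,I_n)$ of $\M$, which by Proposition~\ref{prop:OPS} is contained in every cluster for $\M$, so we may as well work with $\I$ itself: I would first analyze the sublattice $L$ of $X(\hT)$ generated by $e_{I_1},\ldots,e_{I_n}$, and then, if necessary, adjoin a few more weakly-separated sets from $\M$ to fill out the lattice.

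Here is the structure I have in mind. From the Grassmann necklace axioms, consecutive differences $e_{I_{a+1}} - e_{I_a}$ are either $0$ (when $a \notin I_a$) or $e_{a'} - e_a$ (when $I_{a+1} = I_a \setminus\{a\}\cup\{a'\}$). So the differences $e_{I_{a+1}} - e_{I_a}$ span a sublattice of $X(T)$ whose combinatorics is governed by the bounded affine permutation $f_\M$. I expect that connectedness of $\M$ — equivalently, that $P_\M$ is full-dimensional, equivalently that $\M$ is not a direct sum — translates, via Proposition~\ref{prop:disconnect}, into the statement that these root differences are ``cyclically connected'' enough to span all of $X(T)$. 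Concretely: if the roots $e_{a'} - e_a$ arising from the necklace failed to span $X(T)$, the supporting graph on $[n]$ (with an edge $\{a,a'\}$ for each nontrivial necklace step) would be disconnected, and I would want to argue this forces $\M$ to decompose as a direct sum along that partition, contradicting connectedness. Once the necklace roots span $X(T)$, I still need one more lattice vector to get from $X(T)$ to the rank-$n$ lattice $X(\hT)$ (recall $X(\hT)/X(T) \cong \Z$, with $e_I$ mapping to $1 \in \Z/$ the quotient): but any single $e_{I_a}$ already has coordinate-sum $k$, which generates $X(\hT)/X(T)$ since $\gcd(k, \cdot)$ considerations are trivial here — $e_I$ maps to a generator. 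So $L = X(\hT)$ already, using only necklace elements, and in fact we can likely take $\G$ to be an $n$-element subset of $\I$ (padding with repeated or additional necklace entries is not allowed, so if $\I$ has fewer than $n$ distinct elements I would need to adjoin further weakly-separated sets, which is where a small additional argument is needed).

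The one subtlety I would be careful about: the $I_a$ need not be distinct, and a gauge-fix requires $n$ sets $J_1,\ldots,J_n$ — I read the definition as allowing these to be an arbitrary $n$-subset of $\CC$, so I need $|\CC| \geq n$, i.e. $\dim(\M) \geq n-1$, which is exactly the inequality recorded after Lemma~\ref{lem:connected}'s corollary for connected positroids. Then the task reduces to: among the at-least-$n$ elements of some cluster $\CC \supseteq \I$, can we always select $n$ whose characteristic vectors generate $X(\hT)$? Since the full set $\{e_I : I \in \CC\}$ spans $X(\hT) \otimes \Q$ (the cluster torus $\T(\CC)$ has dimension $d = \dim\M$ and surjects appropriately, or more directly because $\CC \supseteq \I$ and the necklace already spans), a generic/greedy selection of $n$ spanning vectors works over $\Q$; to get integral spanning I would invoke unimodularity as in the proof of Lemma~\ref{lem:connected} — the edges of $P_\M$ are roots, a linearly independent set of roots is unimodular, and one extracts $e_{J_1},\ldots,e_{J_n}$ accordingly.

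The main obstacle I anticipate is precisely the passage from ``spanning over $\Q$'' to ``spanning $X(\hT)$ over $\Z$'' using only Plücker labels that sit inside a single weakly-separated cluster $\CC$: unimodularity of root systems handles the $X(T)$ part cleanly, but I need to ensure the extra non-root direction (the coordinate-sum-$k$ direction) is hit by a lattice vector I can actually include in $\CC$ alongside the others while staying weakly separated. I expect this is where the detailed structure of Grassmann necklaces and Proposition~\ref{prop:OPS} (every weakly-separated set extends to a cluster) does the real work, and the proof in the paper likely proceeds by an explicit inductive construction of $\CC$ and $\G$ together, perhaps mirroring the bridge decomposition of \S\ref{ssec:bridgereduce}, rather than the spanning-then-unimodularity argument I sketched.
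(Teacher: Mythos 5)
Your central step fails: it is not true that connectedness of $\M$ forces the ``necklace roots'' $e_{I_{a+1}}-e_{I_a}$ to span $X(T)$. Take the uniform matroid of rank $2$ on $[4]$, which is connected. Its Grassmann necklace is $(12,23,34,14)$, the nontrivial necklace steps give the edges $\{1,3\}$ and $\{2,4\}$, and this supporting graph is disconnected even though $\M$ is not. Correspondingly $e_{12}+e_{34}=e_{23}+e_{14}=(1,1,1,1)$, so the four necklace vectors span only a rank-$3$ sublattice while $X(\hT)$ has rank $4$; here any gauge-fix must use a non-necklace label such as $13$. The implication you hoped to extract from Proposition~\ref{prop:disconnect} only goes through for \emph{noncrossing} partitions preserved by $f_\M \bmod n$; a crossing partition (as in this example) can be preserved without disconnecting $\M$. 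Your hedge about the $I_a$ failing to be distinct does not cover this failure mode, since here the necklace has $n$ distinct elements and still does not span.

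The fallback you sketch is also not a proof. Two things are missing. First, even granting that $\{e_J \mid J \in \CC\}$ integrally spans $X(\hT)$ for a cluster $\CC$ (this is true, and is the first step of the paper's argument, proved there by observing that otherwise a nontrivial subtorus of $\hT$ would act trivially on $\tPi_\M$ via the Laurent embedding $\C[\tPi_\M]\subset \C(\Delta_J \mid J\in\CC)$ --- your ``torus surjects appropriately'' remark is the right instinct, but your alternative justification via the necklace is the false claim above), a generating set of a rank-$n$ lattice need not contain an $n$-element generating subset: $\{2,3\}$ generates $\Z$ but neither element does alone. Second, the unimodularity device from Lemma~\ref{lem:connected} does not transfer: there one chooses vertices of $P_\M$ joined to a fixed vertex by \emph{edges}, so the differences are roots, but such a family need not be pairwise weakly separated, and conversely differences $e_J-e_{J'}$ of two arbitrary members of a cluster are generally not roots. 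Your closing guess about the actual mechanism is correct: the paper constructs $\CC$ and $\G$ simultaneously by induction along the bridge reduction of Lemma~\ref{lem:connectbridge}, at each step either deleting a column (removing one face label containing a fixed index and applying induction on $[n]\setminus\{i-1\}$) or adding a bridge (which adds exactly one new face label to the plabic graph, leaving the inductively obtained gauge-fix intact). That inductive construction, not a spanning-then-selection argument, is what makes the lemma work.
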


If $\G$ is a gauge-fix then the action of $\hT$ can uniquely fix $\Delta_J = 1$ for all $J \in \G$ (assuming that $\Delta_J \neq 0$ for all $J \in \G$).  We thus have a canonical identification 
\begin{equation}\label{eq:posgf}
\Pi_{\M,>0}/T_{>0} = \{V \in \Pi_{\M,>0} \mid \Delta_J(V) = 1 \mbox{ for all } J \in \G\}.
\end{equation}

\section{Nonnegative configuration space}\label{sec:def}
The goal of this section is to construct a compactification of $\Conf(k,n)_{>0}$.  
\subsection{}
Let $X \subset \PP^{k-1}$ be an irreducible subvariety of dimension $r-1$.  The degree $\deg(X)$ of $X$ is equal to the number $\#(L \cap X)$ of intersection points of $X$ with a generic hyperplane $L \subset \PP^{k-1}$ of dimension $(k-r-1)$.  Let $\ZZ(X) \subset \Gr(k-r,k)$ denote the subvariety of $(k-r-1)$-dimensional projective subspaces $L \subset \PP^{k-1}$ that intersect $X$.  It is an irreducible hypersurface of degree $d$ in $\Gr(k-r,k)$ (\cite[Proposition 2.2]{GKZ}).  Let $R_d(k-r,k)$ denote the degree $d$ component of the coordinate ring of the Grassmannian.  We define the Chow form of $X$, denoted $R_X \in R_d(k-r,k)$, to be the unique up to scalar non-zero element of $R_d(k-r,k)$ that vanishes on $\ZZ(X)$.  The variety $X$ can be recovered from $R_X$.

\subsection{}
An $(r-1)$-dimensional algebraic cycle in $\PP^{k-1}$ is a formal finite linear combination $X = \sum m_i X_i$, where $m_i$ are nonnegative integers and $X_i \subset \PP^{k-1}$ are irreducible closed subvarieties of dimension $(r-1)$.  We define $\deg(X) = \sum m_i \deg(X_i)$.

Let $C(r,d,k)$ denote the set of all $(r-1)$-dimensional algebraic cycles in $\PP^{k-1}$ of degree $d$.  Then $C(r,1,k)$ can naturally be identified with the Grassmannian $\Gr(r,k)$ of $r$-planes in $\R^k$.  The set $C(r,d,k)$ acquires the structure of an algebraic variety, called the {\it Chow variety}, via the following result of Chow and van der Waerden.

\begin{theorem}\label{thm:Chow}
The map $C(r,d,k) \to \PP(R_d(k-r,k))$ given by 
$
X \mapsto \prod_i R_{X_i}^{m_i}
$
defines an embedding of $C(r,d,k)$ as closed subvariety of $\PP(R_d(k-r,k))$.
\end{theorem}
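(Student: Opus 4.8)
This is the classical theorem of Chow and van der Waerden asserting the existence of the Chow variety, and I will describe the shape of a proof (the details are carried out in, e.g., \cite[Chapters 3--4]{GKZ}). Since $C(r,d,k)$ carries no a priori algebraic structure, the content is to prove that the map $\Phi\colon X=\sum_i m_iX_i\mapsto\prod_iR_{X_i}^{m_i}$ is injective on the set of effective $(r-1)$-cycles of degree $d$ and that its image $\mathcal V\subseteq\PP(R_d(k-r,k))$ is Zariski-closed; one then defines the algebraic structure on $C(r,d,k)$ to be that of the closed subvariety $\mathcal V$ with its reduced induced structure, so that $\Phi$ is a closed embedding by construction. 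It is convenient to identify $\Gr(k-r,k)$ with the Grassmannian of $r$-planes in the dual space $(\C^k)^{\vee}$ via orthogonal complement, so that a point is the row span of an $r\times k$ matrix of covectors $u=(u^{(1)},\dots,u^{(r)})$, with Pl\"ucker coordinates the $r\times r$ minors of $u$; under this identification $R_X$ is, up to a nonzero scalar, the unique element of $R_d(k-r,k)$ whose value at $u$ vanishes exactly when the $r$ hyperplanes $\{u^{(j)}=0\}$ have a common point lying on $X$. Its well-definedness, degree, and irreducibility when $X$ is irreducible are \cite[Proposition 2.2]{GKZ}.

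The first step is injectivity of $\Phi$, extending the recovery of an irreducible $X$ from $R_X$ (noted above) to arbitrary cycles. I would first recover the support $\Supp(X)=\bigcup_iX_i$ set-theoretically via the rule: $x\in\Supp(X)$ if and only if $R_X(u)=0$ for every $r\times k$ matrix $u$ with $u\cdot x=0$. One implication is immediate, since if $x\in X_i$ then the common zero locus of the rows of any such $u$ contains a point of $X_i$; for the converse, if $x\notin\Supp(X)$, a dimension count---comparing the dimension of the family of $(k-r-1)$-dimensional projective subspaces through $x$ with that of the sublocus of those also meeting $\Supp(X)$, and using $x\notin\Supp(X)$---shows that the generic such subspace is disjoint from $\Supp(X)$, so $R_X$ is nonzero on a corresponding $u$. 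Having recovered the $X_i$, one recovers the $m_i$: since the $X_i$ are distinct irreducible varieties, the $\ZZ(X_i)$ are distinct irreducible hypersurfaces, hence the $R_{X_i}$ are pairwise non-proportional irreducible elements of the Pl\"ucker coordinate ring of $\Gr(k-r,k)$, which is a unique factorization domain; so $R_X=\prod_iR_{X_i}^{m_i}$ is the unique factorization, and $m_i$ is the multiplicity of the irreducible factor cutting out $\ZZ(X_i)$. This gives injectivity.

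The substantial step is the closedness of $\mathcal V$, and the plan is to show that $\mathcal V$ is constructible and stable under specialization, hence closed. For constructibility I would use that Chow forms vary algebraically in flat families: for a flat family of closed subschemes of $\PP^{k-1}$ with fibers equidimensional of dimension $r-1$ and degree $d$ over a reduced base $S$, taking the Chow form of the associated cycle of each fiber gives a morphism $S\to\PP(R_d(k-r,k))$, because the coefficients of the Chow form are iterated resultants of the equations defining the family and so depend polynomially on those equations. Applying this to the finitely many relevant components of a Hilbert scheme parametrizing irreducible subvarieties of dimension $r-1$ and degree at most $d$ (a bounded family), and combining with the multiplication morphism $(F_1,\dots,F_l)\mapsto\prod_jF_j$, $\PP(R_{e_1})\times\cdots\times\PP(R_{e_l})\to\PP(R_{e_1+\cdots+e_l})$, ranged over the finitely many partitions $d=e_1+\cdots+e_l$ into positive integers, exhibits $\mathcal V$ as a finite union of images of morphisms from varieties, hence as a constructible set. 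For stability under specialization I would take a one-parameter degeneration of effective cycles of degree $d$ over a discrete valuation ring, produce a limit cycle $X_0$ of degree $d$ by taking a flat limit subscheme (the Hilbert scheme is proper) and passing to its associated cycle, and show $R_{X_t}\to R_{X_0}$ up to scalar, again via the resultant description of the Chow form.

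The hard part is precisely this last point: establishing the continuity of the Chow form under specialization together with the bookkeeping that no degree or multiplicity is lost in the limit---equivalently, that a limit of Chow forms of degree-$d$ cycles is again the Chow form of an honest effective cycle of degree $d$ rather than a degenerate form with the wrong components or multiplicities. This is classical, carried out either through the specialization theory of cycles or by exhibiting $\mathcal V$ directly as a common zero locus of explicit ``Chow equations'' (incidence relations together with conditions forcing suitable restrictions of a form to factor into linear forms); see \cite[Chapters 3--4]{GKZ}. Granting it, $\mathcal V$ is constructible and closed under specialization, hence closed, and the theorem follows.
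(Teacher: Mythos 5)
The paper does not prove this statement: it is quoted as the classical theorem of Chow and van der Waerden, with \cite{GKZ} cited only for the construction and basic properties of the Chow form $R_X$. So there is no in-paper argument to compare against; what you have written is an outline of the standard proof from the literature, and as an outline it is sound. The two halves are correctly identified and correctly ordered by difficulty: injectivity follows from recovering $\Supp(X)$ via the incidence criterion (your dimension count for the planes through a point $x\notin\Supp(X)$ is right: the planes through $x$ form a family of dimension $r(k-r-1)$, while those through $x$ and also meeting the $(r-1)$-dimensional set $\Supp(X)$ form a family of dimension $r(k-r-1)-1$) together with unique factorization in the homogeneous coordinate ring of $\Gr(k-r,k)$ under the Pl\"ucker embedding --- a fact you invoke without comment but which does need to be known (it holds because that ring is a normal domain whose divisor class group is trivial). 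Closedness of the image is indeed the substantial point, and your reduction to ``constructible plus stable under specialization'' is the right strategy; you are candid that the core of the matter --- that a flat one-parameter limit of degree-$d$ cycles has a Chow form which is the limit of the Chow forms, with no drop in degree or loss of multiplicity --- is exactly what is proved in \cite{GKZ} via the resultant description or the Chow equations, and you have not reproved it. One small point worth making explicit: since $C(r,d,k)$ has no a priori algebraic structure, the theorem is by definition the assertion that $\Phi$ is injective with Zariski-closed image, and the variety structure on $C(r,d,k)$ is the one transported from that image; your first paragraph says this correctly, and it is the right way to read the statement.
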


\subsection{}
A special case of Theorem \ref{thm:Chow} is the statement that $\Gr(k,n)$ is a closed subvariety of $\PP^{\binom{[n]}{k}-1}$.  Let $X = \overline{T \cdot V} \subset \Gr(k,n)$ be a torus orbit closure in $\Gr(k,n)$, where $V \in \oGr(k,n)$.  Since $T \cdot V \simeq T$, the variety $X$ has dimension $(n-1)$.  It is a toric variety with moment polytope equal to the hypersimplex.  It follows that the degree of $X$ inside $\Gr(k,n)$ and inside $\PP^{\binom{[n]}{k}-1}$ is equal to the volume $\Vol(\Delta(k,n))$, the Eulerian number $A_{n,k}$.  We thus have a natural injection $\oConf(k,n) \hookrightarrow C(n-1,A_{n,k},\binom{n}{k})$ sending a point $V \in \oConf(k,n)$ to the algebraic cycle $\overline{T \cdot V}$.  The closure of $\oConf(k,n)$ in $C(n-1,A_{n,k},\binom{n}{k})$ is called the {\it Chow quotient of the Grassmannian}, and denoted $\bConf(k,n)$.  It is a projective algebraic variety.

We define the {\it totally nonnegative part of the Chow quotient of the Grassmannian} or {\it nonnegative configuration space}, denoted $\Ch(k,n)_{\geq 0}$, to be the closure of the image of $\Conf(k,n)_{>0}$ in $\bConf(k,n)$.  It is a compact Hausdorff topological space.

\begin{remark}
For a positroid $\M$, we can also define $\bConf(\M)_{\geq 0}$ as the closure of $\Pi_{\M,>0}/T_{>0}$ inside an appropriate Chow variety.
\end{remark}

\section{Positive Chow cells}\label{sec:strata}
A point $X \in \bConf(k,n)$ is an algebraic cycle in $\Gr(k,n)$ of dimension $(n-1)$ and degree $A_{n,k}$.  We have $X = \sum_{i=1}^s m_i X_i$ where $X_i$ are toric varieties that are torus-orbit closures for the same torus $T$, and we assume $m_i$ are positive.  Let $P_i$ denote the moment polytope of $X_i$.  By \cite{Kap}, we have the following constraints on $X$:
\begin{enumerate}
\item we have $m_i = 1$ for all $i$,
\item each $P_i = P_{\M_i}$ is a matroid polytope, and
\item the polytopes $P_1,P_2,\ldots,P_s$ form a regular polyhedral subdivision of the hypersimplex $\Delta(k,n)$.
\end{enumerate}

For $X \in \Ch(k,n)_{\geq 0}$, we strengthen this result as follows.  
\begin{proposition}\label{prop:conf}
Let $X = \sum_{i=1}^s X_i \in \Ch(k,n)_{\geq 0}$.  Then $X_i$ is a toric variety with moment polytope equal to a positroid polytope $P_i = P_{\M_i}$.  The positroid polytopes $P_1,\ldots,P_s$ form a regular polyhedral subdivision of the hypersimplex $\Delta(k,n)$.  
\end{proposition}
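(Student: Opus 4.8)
The plan is to combine the known structural constraints on points of the Chow quotient (items (1)--(3) above from \cite{Kap}) with a limiting/positivity argument to upgrade "matroid polytope" to "positroid polytope." The starting point is a point $X = \sum_i X_i \in \Ch(k,n)_{\geq 0}$, which by definition is a limit of torus orbit closures $\overline{T \cdot V(t)}$ where $V(t) \in \Gr(k,n)_{>0}$ is a curve in the positive Grassmannian (or, after passing to Puiseux series, a single point $V \in \Gr(k,n)(\RR_{>0})$). The matroid subdivision $\{P_{\M_i}\}$ of $\Delta(k,n)$ is the regular subdivision $\tDelta(p_\bullet)$ induced by the weight vector $p_\bullet = (\val(\Delta_I(V)))_{I}$. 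Since $V \in \Gr(k,n)(\RR_{>0})$, each $\Delta_I(V)$ is a positive Puiseux series, and the three-term Pl\"ucker relation \eqref{eq:threeterm} forces $p_\bullet$ to satisfy the positive tropical Pl\"ucker relations \eqref{eq:trop}: in a relation $\Delta_{Sac}\Delta_{Sbd} = \Delta_{Sab}\Delta_{Scd} + \Delta_{Sad}\Delta_{Sbc}$ with all terms having positive leading coefficients, the valuation of a sum of positive-leading-coefficient series is the minimum of the valuations (no cancellation), so $p_{Sac} + p_{Sbd} = \min(p_{Sab}+p_{Scd}, p_{Sad}+p_{Sbc})$.

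The next step is to invoke Proposition~\ref{prop:posdiv} (stated in the introduction as the characterization of positroidal subdivisions): a regular subdivision $\tDelta(p_\bullet)$ is a subdivision into positroid polytopes precisely when $p_\bullet$ satisfies \eqref{eq:trop}. Having just verified \eqref{eq:trop} for our $p_\bullet$, we conclude that every cell $P_i$ of the subdivision is a positroid polytope, i.e.\ $\M_i$ is a positroid. This immediately gives the two assertions of the proposition: the $P_i = P_{\M_i}$ are positroid polytopes, and they form a regular subdivision of $\Delta(k,n)$ (the latter already being part of Kapranov's constraints (3), now with the cells identified as positroidal). One should also note that by the Corollary to Proposition~\ref{prop:posfacet}, every face of a positroid polytope is a positroid polytope, so the subdivision is positroidal in the strong sense (all faces, not just maximal cells).

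The main subtlety—and the step I'd want to be careful about—is the passage from the \emph{definition} of $\Ch(k,n)_{\geq 0}$ as a closure of $\Conf(k,n)_{>0}$ inside the Chow variety to the concrete description of a boundary point $X$ as the limit of toric varieties attached to a single Puiseux point $V$ with a well-defined valuation vector, and the matching of that limit's cycle-theoretic decomposition $\sum X_i$ with the cells of $\tDelta(p_\bullet)$. This identification is essentially Kapranov's description of the Chow quotient together with the Speyer--Sturmfels dictionary between limits of torus orbits and regular matroid subdivisions, so I would set it up by: (i) taking an arbitrary $X$ in the closure and realizing it via a Puiseux-series point using the standard fact that real one-parameter limits suffice (valuative criterion / curve selection in the Chow variety); (ii) citing \cite{Kap,KT} for the statement that the toric pieces $X_i$ of $\lim \overline{T\cdot V}$ are exactly the torus orbit closures $\overline{T \cdot V_i}$ whose moment polytopes are the cells of $\tDelta(p_\bullet)$, where $V_i$ is the "initial degeneration" of $V$ with respect to the corresponding cell; (iii) observing $\M_i = \M(V_i)$ is the matroid of that initial form. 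The rest is the no-cancellation argument above. Everything else is routine once this dictionary is in place.
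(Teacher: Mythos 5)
Your argument is correct, but it takes a genuinely different route from the paper's. The paper's proof is a short topological argument that never mentions tropicalization: writing $X_i = \overline{T \cdot V_i}$, it approximates a generic point $p \in T\cdot V_i$ by points $p(t_j) \in T\cdot V(t_j)$ with $V(t_j) \in \Gr(k,n)_{>0}$, and then uses the finite component group $(\Z/2\Z)^{n-1}$ of the real torus to find a sign twist $q = g\cdot p$ that is a limit of totally positive points; hence $q \in \Gr(k,n)_{\geq 0}$ and the matroid of $X_i$ is a positroid by definition. This needs only the Kapranov constraints (1)--(3) quoted just before the proposition, works with arbitrary sequences, and uses no curve selection, no Puiseux lifting, and none of the later combinatorial machinery. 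Your route instead passes through the tropical dictionary: curve selection to replace an arbitrary closure point by an analytic curve, its Puiseux point $V \in \Gr(k,n)(\RR_{>0})$, the no-cancellation observation (Lemma~\ref{lem:valtrop}) giving the positive tropical Pl\"ucker relations for $\val(\Delta_\bullet(V))$, the Kapranov--Keel--Tevelev identification of the components of the limit cycle with the maximal cells of $\tDelta(\val(V))$, and finally Proposition~\ref{prop:posdiv}. This is logically sound --- Propositions~\ref{prop:posSpeyer} and~\ref{prop:posdiv} are purely combinatorial and do not depend on Proposition~\ref{prop:conf}, so there is no circularity --- and it buys more: it identifies the subdivision explicitly as $\tDelta(\val(V))$, a fact the paper also needs but only asserts (with a pointer to \cite{KT}) in \S\ref{sec:trop}. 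The costs are the heavier external input (the limit-cycle/subdivision dictionary, which the paper's proof entirely avoids) and the curve-selection step; you flag both of these correctly, so I would accept the proof as written, while noting that the paper's argument is the more economical one.
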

\begin{proof}
Let $X = \lim_{t \to 0} X(t)$ where $X(t) = \overline{T \cdot V(t)} \in \Conf(k,n)_{>0}$.  For each $i = 1,2,\ldots,s$, we have $X_i = \overline{T \cdot V_i}$ for some $V_i \in \Gr(k,n)$.  A generic point $p \in T \cdot V_i$ is thus the limit of points $p(t_1),p(t_2),\ldots$ where $p(t_j) \in T \cdot V(t_j)$, where $V(t_j) \in \Gr(k,n)_{>0}$.  Let $(\Z/2\Z)^{n-1} := \{+1,-1\}^{n}/\{+1,-1\}$ denote the group of components of $T$.  The $2^{n-1}$ points $g \cdot p$ are distinct for $g \in (\Z/2\Z)^{n-1}$.  For at least one of these points $q = g \cdot p \in X_i$, the infinite sequence $g \cdot p(t_1),g \cdot p(t_2), \ldots$ contains a subsequence $q(t'_1),q(t'_2),\ldots$ which all belong to $\Gr(k,n)_{>0}$.  It follows that $q \in \Gr(k,n)_{\geq 0}$, and thus the matroid of $X_i$ is a positroid. 
\end{proof}

Let $\D(k,n)$ denote the set of regular polyhedral subdivisions of the hypersimplex into positroid polytopes.  For $X \in \Ch(k,n)_{\geq 0}$, let $\tDelta(X)$ denote the positroid decomposition from Proposition~\ref{prop:conf}.  We have a stratification
\begin{align*}
\Ch(k,n)_{\geq 0} &= \bigsqcup_{\tDelta \in \D(k,n)} \Theta_{\tDelta, >0} \\
\Theta_{\tDelta,>0} &:= \{X \in \Ch(k,n)_{\geq 0} \mid \tDelta(X) = \tDelta\}.
\end{align*}
Define $\Theta_{\tDelta,\geq 0}$ to be closure $\overline{\Theta_{\tDelta,>0} }$ in the analytic topology.
The spaces $\Theta_{\tDelta,>0}$ and $\Theta_{\tDelta,\geq 0}$ are analogues of the open and closed positroid cells $\Pi_{\M,>0}$ and $\Pi_{\M,\geq 0}$ respectively.  Define a partial order on $\D(k,n)$ by $\tDelta' \leq \tDelta$ if $\tDelta'$ is a refinement of $\tDelta$.

\begin{theorem}\label{thm:ball}
There is a stratification-preserving homeomorphism between $\Ch(k,n)_{\geq 0}$ and a polytope $P(k,n)$ of dimension $r = k(n-k)-(n-1)$.  Each stratum $\Theta_{\tDelta,>0}$ (resp. $\Theta_{\tDelta,\geq 0}$) is non-empty and homeomorphic to an open ball (resp. closed ball) of dimension $\dim(\tDelta)$, given by \eqref{eq:dimDelta}.  The closed face $\Theta_{\tDelta,\geq 0}$ is the union of relatively open faces $\Theta_{\tDelta',>0}$ as $\tDelta'$ varies over subdivisions that refine $\tDelta$.
\end{theorem}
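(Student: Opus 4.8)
The plan is to realize $\Ch(k,n)_{\geq 0}$ as the nonnegative part of a projective toric variety, with nearly convergent functions serving as coordinates. First I would collect all nearly convergent functions into the ring $\C[\Gamma]$ and, invoking Proposition~\ref{prop:XPkn}, identify $\C[\Gamma]$ with the coordinate ring of an affine open subvariety $X'_{P(k,n)}$ of the projective toric variety $X_{P(k,n)}$ associated to the polytope $P(k,n)$: the lattice points of $P(k,n)$ index a generating set of $\Gamma$, and the binomial relations among the corresponding monomials capture the multiplicative relations among nearly convergent functions. Two structural facts are used throughout: the nonnegative part $X_{P(k,n),\geq 0}$ is contained in $X'_{P(k,n)}$, so it makes sense to map into the latter; and, by the classical description of the nonnegative part of a projective toric variety, $X_{P(k,n),\geq 0}$ is homeomorphic to the polytope $P(k,n)$ itself, its torus orbits corresponding to the faces of $P(k,n)$ in a way respecting inclusions of orbit closures and of faces.

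Next I would build the morphism $\varphi\colon \tConf(k,n)\to X'_{P(k,n)}$ by sending a point to the tuple of values of the chosen generators of $\Gamma$; these values are finite precisely because the functions are nearly convergent. The heart of the argument is the claim that $\Ch(k,n)_{\geq 0}\subseteq\tConf(k,n)$ and that $\varphi$ restricts to a continuous bijection $\Ch(k,n)_{\geq 0}\to X_{P(k,n),\geq 0}$; since the source is compact and the target Hausdorff, any such bijection is automatically a homeomorphism, so it remains to establish three points: (a) every nearly convergent function stays finite on $\Ch(k,n)_{\geq 0}$, so $\varphi$ is everywhere defined there and lands in the nonnegative part; (b) $\varphi$ separates the points of $\Ch(k,n)_{\geq 0}$; and (c) $\varphi$ is onto $X_{P(k,n),\geq 0}$. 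For (c) I would first check by direct computation that on the dense stratum $\Conf(k,n)_{>0}$ the map $\varphi$ is a bijection onto the positive torus $(\R_{>0})^r\subset X_{P(k,n),\geq 0}$, where $r=k(n-k)-(n-1)$; then $\varphi(\Ch(k,n)_{\geq 0})$ is compact, hence closed, and contains this dense subset, hence equals all of $X_{P(k,n),\geq 0}$.

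For the stratification statement I would show that $\varphi$ carries each positive Chow cell $\Theta_{\tDelta,>0}$ into a single torus orbit of $X_{P(k,n)}$. A point $X\in\Theta_{\tDelta,>0}$ has a prescribed regular positroid subdivision $\tDelta$; via the bijections of Theorem~\ref{thm:introbij} this $\tDelta$ corresponds to a cone of the positive Dressian $\Pluck(k,n)_{>0}=\Trop_{>0}\Gr(k,n)$, and that cone dictates exactly which generators of $\Gamma$ vanish at $X$, namely those whose tropicalization is strictly positive along the cone. This determines the torus orbit, hence a face $F_{\tDelta}$ of $P(k,n)$, and $\tDelta\mapsto F_{\tDelta}$ is an order-preserving bijection from $\D(k,n)$ (ordered by refinement) onto the face poset of $P(k,n)$. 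Injectivity of $\varphi$ on all of $\Ch(k,n)_{\geq 0}$ then follows once it is known on each $\Theta_{\tDelta,>0}$ — which, after identifying $\varphi|_{\Theta_{\tDelta,>0}}$ with a positive parametrization of the corresponding relatively open orbit piece (reducing, via a product decomposition of $\Theta_{\tDelta,>0}$ over the cells $P_{\M_i}$ of $\tDelta$, to the lower-dimensional spaces $\Ch(\M_i)_{\geq 0}$), amounts to the analogue of (b) for smaller data. Matching $\Theta_{\tDelta,>0}$ with the relatively open orbit piece of $X_{F_{\tDelta}}$ identifies it with an open ball of dimension $\dim F_{\tDelta}$, which one verifies equals $\dim(\tDelta)$ of \eqref{eq:dimDelta}; nonemptiness of every $\Theta_{\tDelta,>0}$ follows because every face of $P(k,n)$ is in the image; and the identity $\Theta_{\tDelta,\geq 0}=\bigsqcup_{\tDelta'\le\tDelta}\Theta_{\tDelta',>0}$ is the toric statement that the closed face $X_{F_{\tDelta},\geq 0}$ is the union of the relatively open orbit pieces of its subfaces.

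The main obstacle is (b) together with the boundary cases of (c): showing that nearly convergent functions genuinely separate the points of $\Ch(k,n)_{\geq 0}$, and that every torus orbit of $X_{P(k,n),\geq 0}$ is actually hit by a limit cycle from $\Conf(k,n)_{>0}$. For the latter I would degenerate along positive Puiseux-series curves: Theorem~\ref{thm:mainintro} guarantees that every cone of $\Trop_{>0}\Gr(k,n)$ is realized by a point of $\Gr(k,n)(\RR_{>0})$, which produces a curve in $\Conf(k,n)_{>0}$ whose limit in $\Ch(k,n)_{\geq 0}$ lies in the prescribed stratum and therefore maps to the prescribed orbit. Separation is the more delicate point: it requires exhibiting, for any two distinct nonnegative configurations, a $T$-invariant subtraction-free rational function with nonnegative tropicalization distinguishing them, and this is where one must exploit both the explicit generating set of $\Gamma$ — in particular that it contains enough cross-ratio-type invariants and products thereof — and, on the boundary strata, the inductive product structure of the positive Chow cells.
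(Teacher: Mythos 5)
Your overall strategy is exactly the paper's: map $\Ch(k,n)_{\geq 0}$ via the nearly convergent functions into $X'_{P(k,n)}=\Spec(\C[\Gamma])$, show the restriction is a stratification-preserving bijection onto $X_{P(k,n),\geq 0}$, and conclude by compactness of the source and Hausdorffness of the target. Your points (a) and (c), and the identification of strata with torus orbits via the tropicalization/cone dictionary, match Lemma~\ref{lem:bound}, Proposition~\ref{prop:strata}, and Proposition~\ref{prop:Ft} closely.

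The gap is in your treatment of injectivity (your point (b)). You propose to reduce separation of points to lower-dimensional data ``via a product decomposition of $\Theta_{\tDelta,>0}$ over the cells $P_{\M_i}$ of $\tDelta$.'' No such product decomposition exists for $k>2$: the paper notes explicitly in \S\ref{sec:M0n} that the factorization \eqref{eq:Ch} fails beyond $k=2$, and the tables in Appendix~\ref{sec:examples} exhibit subdivisions with $\ndim(\tDelta)>\dim(\tDelta)$, i.e.\ $\Theta_{\tDelta,>0}$ has strictly smaller dimension than $\prod_i \Pi_{\M_i,>0}/T_{>0}$, because the components $V_i$ of a cycle $X=\sum_i \overline{T\cdot V_i}$ are constrained to be compatible along shared faces. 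What is true, and what the paper uses, is only the set-theoretic statement that $X$ is determined by the tuple $(V_1,\ldots,V_r)$; the real work (Proposition~\ref{prop:TV}) is then to show that for each connected $\M_i$ the cluster coordinates of $V_i$, after gauge-fixing, are recovered as ratios $\x^{\vv+e_j}/P(\x)^\ell$ over $\x^{\vv}/P(\x)^\ell$ with \emph{both} lattice points $\vv,\vv+e_j$ lying on the face $\ell F$ of $\ell P(k,n)$ indexed by $\tDelta$, so that neither function vanishes on the stratum. Verifying this nonvanishing is the delicate step you flag but do not supply: it requires Lemma~\ref{lem:span} (existence of a gauge-fix inside a cluster for $\M_i$), Lemma~\ref{lem:facezero}, and Corollary~\ref{cor:param} to show that every vector in the normal cone $C(F)$ has vanishing coordinates in the directions indexed by $\CC_i'\setminus\G$. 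Without that computation, the separation claim is unsubstantiated, and the product-structure route you sketch in its place is not available.
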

The proof of Theorem~\ref{thm:ball} is delayed to \S\ref{ssec:homeo}.  Theorem~\ref{thm:ball} generalizes various results concerning the topology of positroid cells \cite{Pos,GKL,GKL2}.

Finally, we define $\Theta_{\tDelta}$ to be the Zariski closure of $\Theta_{\tDelta,>0} $ inside $\bConf(k,n)$, and define $\oTheta_{\tDelta}$ to be the complement of $\{\Theta_{\tDelta'} \mid \tDelta' < \tDelta\}$ in $\Theta_{\tDelta}$.  The varieties $\oTheta_{\tDelta}$ and $\Theta_{\tDelta}$ are analogues of the open and closed positroid varieties $\oPi_\M$ and $\Pi_\M$.

\section{Positive tropical Pl\"ucker vectors}\label{sec:Plucker}
\subsection{}
Let $p_\bullet = \{p_I \mid I \in \binom{[n]}{k}\}$ be a collection of ``numbers" where $p_I \in \R \cup \{\infty\}$, and not all $p_I$ are equal to infinity.  The support of $p_\bullet$ is the collection $\Supp(p) = \{I \mid p_I <\infty\} \subset \binom{[n]}{k}$.  We say that $p_\bullet$ satisfies the {\it tropical Pl\"ucker relations} \cite{Spe} if for every $S$ of size $k-2$ and $a<b<c<d$ not contained in $S$, the minimum of the three quantities
$$\{p_{Sac} + p_{Sbd}, p_{Sab} + p_{Scd} , p_{Sad} + p_{Sbc}\}$$ 
is attained twice.  We say that $p_\bullet$ is a {\it tropical Pl\"ucker vector} if it satisfies the tropical Pl\"ucker relations and, in addition, the support of $p_\bullet$ is a matroid.  We say that $p_\bullet$ satisfies the {\it positive tropical Pl\"ucker relations} if for every $S$ and $a<b<c<d$ not contained in $S$, the equation \eqref{eq:trop} holds.  A tropical Pl\"ucker vector $p_\bullet$ that satisfies the positive tropical Pl\"ucker relations is called a {\it positive tropical Pl\"ucker vector}.  A tropical Pl\"ucker vector is called {\it integral} if $p_I \in \Z\cup \{\infty\}$ and {\it rational} if $p_I \in \Q \cup \{\infty\}$.

\begin{remark}\label{rem:Lauren}
In an earlier version of this work, we erroneously asserted that any vector $p_\bullet$ satisfying the tropical Pl\"ucker relations had support $\Supp(p_\bullet)$ given by a matroid.  We thank Lauren Williams for the correction.  See \cite{OPS2} for some related discussion.
\end{remark} 

\subsection{}
 {\it Tropicalization} takes a subtraction-free rational expression to a piecewise-linear expression under the substitution
\begin{equation}\label{eq:tropsubs}
(+,\times,\div) \longmapsto (\min, +, -).
\end{equation}
For example, the rational function $\dfrac{x^3+y+1}{2xy+y^2}$ tropicalizes to the piecewise-linear function $\min(3X,Y)-\min(X+Y,2Y)$.
The equation \eqref{eq:trop} is obtained from \eqref{eq:threeterm} by applying \eqref{eq:tropsubs}, and sending the variables $\Delta_I$ to the variables $p_I$.  

\subsection{}
For a positroid $\M$, let $\Pluck(\M)_{>0}$ denote the set of positive tropical Pl\"ucker vectors with support $\M$.  Let $\Pluck(\M)_{>0}(\Z)$ (resp. $\Pluck(\M)_{>0}(\Q)$) denote those vectors that are integral (resp. rational).
We write $\Pluck(k,n)_{>0}$ when $\M$ is the uniform matroid and call $p_\bullet \in \Pluck(k,n)_{>0}$ a {\it finite} positive tropical Pl\"ucker vector.  We let $\Pluck(k,n)_{\geq 0}$ denote the set of all positive tropical Pl\"ucker vectors, the union of $\Pluck(\M)_{>0}$ over all positroids $\M$ (see Proposition \ref{prop:posSpeyer}).

In \cite{HJJS}, the set of finite tropical Pl\"ucker vectors is called the {\it Dressian}, and following their terminology, we call $\Pluck(k,n)_{>0}$ the {\it positive Dressian}. 
\subsection{}

If $p_\bullet$ satisfies is a positive tropical Pl\"ucker vector, then in addition to \eqref{eq:exchange}, $\M = \Supp(p_\bullet)$ satisfies the following positive 3-term exchange relation:
\begin{equation}\label{eq:3ex}
(Sab,Scd \in \M) \text{ or } (Sad, Sbc \in \M) \implies (Sac,Sbd \in \M)
\end{equation}
for $a<b<c<d$ not contained in $S \subset [n]$.

\begin{proposition}\label{prop:posSpeyer}
If a matroid $\M$ satisfies the positive 3-term exchange relation then it is a positroid.  Thus if $p_\bullet$ is a positive tropical Pl\"ucker vector then $\M=\Supp(p_\bullet)$ is a positroid.
\end{proposition}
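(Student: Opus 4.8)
The plan is to prove the contrapositive structure: assume $\M$ is a matroid of rank $k$ on $[n]$ satisfying the positive 3-term exchange relation \eqref{eq:3ex}, and show that $\M$ equals its positroid envelope $\M_{\I(\M)}$. By Proposition~\ref{prop:Oh} this suffices, since $\M_{\I(\M)}$ is always a positroid and $\M = \M_{\I(\M)}$ characterizes positroids. Since the positroid envelope always contains $\M$, the real content is the reverse inclusion $\M_{\I(\M)} \subseteq \M$: every $J$ lying in all the cyclically rotated Schubert matroids $\SS_{I_a(\M),a}$ must actually be a basis of $\M$.

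First I would set up the key tool: \emph{exchange chains} driven by the positive relation. The hypothesis \eqref{eq:3ex} says that whenever $Sab, Scd \in \M$ or $Sad, Sbc \in \M$ (with $a<b<c<d$, $S$ of size $k-2$ disjoint from $\{a,b,c,d\}$), the ``middle'' pair $Sac, Sbd$ is forced into $\M$. The strategy is to take an arbitrary $J \in \M_{\I(\M)}$ and a known basis $B \in \M$ (e.g.\ a necklace element $I_a(\M)$), and repeatedly apply \eqref{eq:3ex} to ``slide'' $B$ toward $J$, each step replacing two elements of the current basis according to a crossing pattern, staying inside $\M$ at every stage, until $J$ itself is produced. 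The combinatorial heart is to show that if $J$ beats every rotated Schubert minimum, then such a sequence of positive exchanges connecting some basis of $\M$ to $J$ always exists. Concretely, I expect to induct on $n$ (or on $k$) using the standard positroid deletion/contraction: pick $a$ minimal such that $a \notin I_a(\M)$ or use the column where $f_\M$ has a fixed point or a ``$+n$'' value, reduce to a positroid of smaller rank or ground set, and lift the inductive conclusion back up. Proposition~\ref{prop:posfacet} (that positroid polytopes are exactly the alcoved matroid polytopes) gives an equivalent reformulation that may be cleaner to run the induction on: I would try to show that the matroid polytope $P_\M$ is cut out by the cyclic-interval inequalities $\sum_{i \in [a,b]} x_i \ge \#(I_a(\M) \cap [a,b])$, i.e.\ that $P_\M$ is alcoved, by showing any vertex $e_J$ of the alcoved hull that is not in $P_\M$ would force, via \eqref{eq:3ex}, a genuine vertex of $P_\M$ outside the alcoved hull, a contradiction.

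The second sentence of the proposition (``if $p_\bullet$ satisfies the positive tropical Pl\"ucker relations then $\Supp(p_\bullet)$ is a positroid'') then follows immediately: by Proposition~\ref{prop:Speyer} the support is a matroid, and the discussion preceding the statement already observes that \eqref{eq:trop} forces \eqref{eq:3ex} on the support (if $Sab,Scd \in \M$ then $p_{Sab}+p_{Scd} < \infty$, so the minimum on the right of \eqref{eq:trop} is finite, hence $p_{Sac}+p_{Sbd} < \infty$, i.e.\ $Sac,Sbd \in \M$; similarly for the other branch). So only the first sentence requires work.

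The main obstacle I anticipate is the bookkeeping in the exchange-chain argument: guaranteeing that the positive 3-term relation can \emph{always} be applied along a path from a known basis to an arbitrary element of the positroid envelope, without getting stuck in a configuration where no crossing quadruple $a<b<c<d$ with the right membership pattern is available. The noncrossing structure built into Grassmann necklaces (via Proposition~\ref{prop:disconnect} and the alcoved description in Proposition~\ref{prop:posfacet}) should be exactly what prevents this, but making that precise — i.e.\ showing that the obstruction to $J$ being a basis always localizes to a single violated cyclic-interval inequality, which in turn pinpoints a crossing quadruple to which \eqref{eq:3ex} applies — is the delicate step. An alternative route that may sidestep some of this is to invoke Proposition~\ref{prop:posfacet} directly and argue purely on the polytope level: show that \eqref{eq:3ex} forces every edge-connected-component analysis of $P_\M$ to respect cyclic intervals, so $P_\M$ is alcoved; I would develop both lines in parallel and take whichever bottoms out more cleanly in the induction.
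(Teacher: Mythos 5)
Your reduction of the second sentence to the first is fine, and your instinct to go through Proposition~\ref{prop:posfacet} (alcovedness) is the right one --- that is essentially the axis of the paper's argument. But as written this is a strategy outline, not a proof: the step you yourself flag as ``the delicate step'' --- showing that the failure of $J \in \M_{\I(\M)}$ to be a basis, or the failure of alcovedness, always pinpoints a crossing quadruple to which \eqref{eq:3ex} applies --- is exactly the content of the proposition, and it is left open in both of your parallel lines. Your first line (exchange chains from a known basis to an arbitrary envelope element) is moreover the harder of the two to close: \eqref{eq:3ex} only lets you \emph{fill in the middle pair} $Sac, Sbd$ once you already have $Sab,Scd$ or $Sad,Sbc$, and there is no evident way to steer such fill-ins toward a prescribed target $J$ in the envelope. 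The paper avoids this entirely by never arguing basis-by-basis.

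What the paper actually does, and what your proposal is missing, is a concrete mechanism for manufacturing the violating quadruple. First, if $\M$ has two connected direct summands $\M'$, $\M''$ on \emph{crossing} ground sets $A\sqcup C$ and $B\sqcup D$ (cyclic order $A,B,C,D$), then connectedness of $\M'$ gives bases with different intersection sizes with $A$, hence (by ordinary basis exchange) bases $I', J'=I'\cup\{c\}\setminus\{a\}$ with $a\in A$, $c\in C$; doing the same in $\M''$ and taking direct sums produces $Tab, Tcd \in \M$ while $Tac, Tbd \notin \M$ (they are not of direct-sum form), violating \eqref{eq:3ex}. Combined with Proposition~\ref{prop:disconnect} and induction on $n$, this handles the disconnected case. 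Second, for connected $\M$ that is not a positroid, Proposition~\ref{prop:posfacet} gives a facet $H=\{\sum_{s\in S}x_s=r\}$ with $S$ not a cyclic interval; the face matroid $\M'=\M|_H$ is a direct sum over the crossing sets $S$ and $[n]\setminus S$, so the previous argument yields $Tab,Tcd\in\M'$ with at least one of $Tac,Tbd$ not in $\M'$. Since \eqref{eq:3ex} applied to $\M$ forces $Tac,Tbd\in\M$, the vertices $e_{Tac}$ and $e_{Tbd}$ must both lie strictly off $H$ --- and a short computation shows they lie on \emph{opposite} sides of $H$, contradicting that $H$ supports a face of $P_\M$. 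This last geometric observation is the piece that ``localizes the obstruction,'' and it is the step your proposal defers; without it (or a worked-out substitute) the argument does not close.
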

\begin{proof}
We establish this result by induction on $n$.  Suppose that $\M$ is disconnected, so $\M = \M_1 \oplus \M_2$ on disjoint ground sets $S_1$ and $S_2$, such that $S_1 \cup S_2 =[n]$.  If $S_1,S_2$ are cyclic intervals, then $\M_i$ satisfies positive 3-term exchange relation within $S_i$.  Thus by induction $\M$ is the direct sum of two positroids on disjoint cyclic intervals, and is thus a positroid by Proposition~\ref{prop:disconnect}.

Otherwise, we can find direct summands $\M',\M''$ of $\M$ which are two connected matroids on subsets $A \sqcup C,B \sqcup D$ that are crossing, i.e. $A,B,C,D$ are cyclic intervals occurring in cyclic order.  Since $\M'$ is connected, there are bases $I_1,I_2$ of $\M'$ such that $|I_1 \cap A| \neq |I_2 \cap A|$.  By repeated application of the basis exchange axiom, we see that $\M'$ contains two bases $I',J'$ such that $J' = I' \cup\{c\} -\{a\}$ with $a \in A$ and $c \in C$.  Similarly,  we have bases $I'',J''$ of $\M''$ such that $J'' = I'' \cup\{d\} -\{b\}$ with $b \in A$ and $d \in C$.  We can thus find bases $Tab,Tcd$ of $\M$, while $Tac, Tbd$ are not bases, a contradiction.

We now assume that $\M$ is a connected matroid.  If it is not a positroid, then by Lemma \ref{prop:posfacet}, its matroid polytope $P_\M$ has a facet cut out by an equation $H = \{\sum_{s\in S} x_s = r\}$, where $S= S_1 \sqcup S_2$ is cyclically disconnected.  Let $\M'$ be the matroid whose matroid polytope is $P_{\M'} = H \cap P_\M$.  Now $\M' = \M_1 \oplus \M_2$ where $\M_1$ has ground set $S$ and $\M_2$ has ground set $[n] \setminus S$.  The assumption that $\dim(P_{\M'}) = \dim(P_\M) - 1$ together with $|S|, |[n]\setminus S| \geq 2$ implies that both $\M_1$ and $\M_2$ are non-trivial.  Since the ground sets $S$ and $[n] \setminus S$ are crossing, our earlier argument implies that $\M'$ cannot satisfy the positive 3-term exchange relation.  After a cyclic relabeling we may assume that $Tab,Tcd \in \M'$, while at least one of $Tac$ and $Tbd$ is not in $\M'$.  However, both $Tac,Tbd$ are in $\M$, so this is only possible if both $e_{Tac}$ and $e_{Tbd}$ do not lie on the hyperplane $H$.  Indeed, the two vertices $e_{Tac}$ and $e_{Tbd}$ lie on opposite sides of $H$, and this contradicts the assumption that $H$ is a facet.
\end{proof}

\subsection{}
Whereas the space of tropical Pl\"ucker vectors has a very complicated polyhedral structure \cite{SS,HJJS}, the situation is much simpler for positive tropical Pl\"ucker vectors.  

\begin{theorem}\label{thm:param}
Let $\M$ be a positroid and $\CC$ be a cluster for $\M$.  The maps
\begin{align}\label{eq:cluster}
\Pluck(\M)_{>0} &\longrightarrow \R^{\CC}, \qquad &p_\bullet &\longmapsto (p_I \mid I \in \CC) \\
\Pluck(\M)_{>0}(\Z) &\longrightarrow \Z^{\CC}, \qquad &p_\bullet &\longmapsto (p_I \mid I \in \CC)
\end{align}
are bijections.
\end{theorem}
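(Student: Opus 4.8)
The plan is to write down an explicit inverse of the restriction map. For each basis $J\in\M$ I will attach a piecewise-linear function $T_J\colon\R^\CC\to\R$ — a ``tropical cluster expansion'' of the coordinate $p_J$ — with the property that $p_J=T_J\big((p_I)_{I\in\CC}\big)$ for every $p_\bullet\in\Pluck(\M)_{>0}$; this yields injectivity of both maps at once. For surjectivity I will realize every rational point of $\R^\CC$ as the restriction of an honest positive tropical Pl\"ucker vector, by exponentiating cluster coordinates over the Puiseux field $\RR$, and then extend to all of $\R^\CC$ by continuity.

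\emph{Injectivity.} By (the proof of) Proposition~\ref{prop:Plucksub}, each $\Delta_J$ with $J\in\M$ can be written as a subtraction-free rational expression $R_J$ in $\{\Delta_I\mid I\in\CC\}$, obtained from $\Delta_J$ by a finite chain $\Delta_J=E_0\rightsquigarrow E_1\rightsquigarrow\cdots\rightsquigarrow E_m=R_J$ in which each step replaces one occurrence of some $\Delta_{Sac}$ by $(\Delta_{Sab}\Delta_{Scd}+\Delta_{Sad}\Delta_{Sbc})/\Delta_{Sbd}$ with $Sbd\in\M$, so that no denominator is identically zero on $\tPi_\M$. Set $T_J:=\Trop(R_J)$, so $T_J=X_J$ when $J\in\CC$. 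Because tropicalization is computed by the substitution~\eqref{eq:tropsubs}, it commutes with substitution of subtraction-free expressions; hence $\Trop(E_{i+1})$ is obtained from $\Trop(E_i)$ by replacing the slot $X_{Sac}$ with $\min(X_{Sab}+X_{Scd},\,X_{Sad}+X_{Sbc})-X_{Sbd}$. Evaluating at any $p_\bullet\in\Pluck(\M)_{>0}$, that substituted quantity equals $p_{Sac}$ by the positive tropical Pl\"ucker relation~\eqref{eq:trop} (and $p_{Sbd}<\infty$ since $Sbd\in\M$, so the subtraction is meaningful). Since $\Trop(E_0)(p_\bullet)=p_J$, induction on $i$ gives $\Trop(E_m)(p_\bullet)=p_J$, i.e.\ $p_J=T_J\big((p_I)_{I\in\CC}\big)$. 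As $p_\bullet$ is $\infty$ off $\M$, it is therefore determined by its restriction to $\CC$, so both maps are injective.

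\emph{Surjectivity.} First let $a\in\Z^\CC$. Using the cluster torus for $\CC$ inside $\tPi_\M$ furnished by the Laurent phenomenon (\S\ref{ssec:weaksep}, \S\ref{ssec:posparam}), let $V\in\tPi_\M(\RR)$ be the point with $\Delta_I(V)=t^{a_I}$ for all $I\in\CC$. For $J\in\M$ we have $\Delta_J(V)=R_J\big((t^{a_I})_{I\in\CC}\big)\in\RR_{>0}$, while $\Delta_J$ vanishes identically on $\tPi_\M$ and hence $\Delta_J(V)=0$ for $J\notin\M$; so $V$ is a nonzero point with $\M(V)=\M$ and all Pl\"ucker coordinates in $\RR_{\ge0}$. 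Put $q_\bullet:=\big(\val\Delta_J(V)\big)_J$, so $\Supp(q_\bullet)=\M$ and $q_I=a_I$ for $I\in\CC$. Taking $\val$ of the three-term relation~\eqref{eq:threeterm} at $V$, and using that $\Delta_{Sab}(V)\Delta_{Scd}(V)$ and $\Delta_{Sad}(V)\Delta_{Sbc}(V)$ have nonnegative leading coefficients (so their sum exhibits no cancellation), yields exactly~\eqref{eq:trop} for $q_\bullet$; thus $q_\bullet\in\Pluck(\M)_{>0}$ restricts to $a$, and $q_\bullet$ is integral when $a$ is. This proves surjectivity onto $\Z^\CC$, and onto $\Q^\CC$ by the same argument. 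For general $a\in\R^\CC$, set $q_J:=T_J(a)$ for $J\in\M$ and $q_J:=\infty$ otherwise; then $\Supp(q_\bullet)=\M$ (each $T_J$ is a finite piecewise-linear function) and $q_\bullet$ restricts to $a$. By the injectivity argument applied to the realizations above, $(T_J(a'))_{J\in\M}$, extended by $\infty$, lies in $\Pluck(\M)_{>0}$ for every $a'\in\Q^\CC$; hence each instance of~\eqref{eq:trop} — being either an identity of continuous piecewise-linear functions of $a$ or a condition on $\M$ alone — holds on the dense set $\Q^\CC$, and therefore for all $a$. Thus $q_\bullet\in\Pluck(\M)_{>0}$, and surjectivity follows.

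\emph{Where the difficulty lies.} The heart of the matter is the injectivity step: that the subtraction-free expansion of $\Delta_J$ from Proposition~\ref{prop:Plucksub}, once tropicalized, actually computes $p_J$ for an \emph{arbitrary} positive tropical Pl\"ucker vector. This is precisely where positivity is used — each elementary substitution tropicalizes to the three-term relation solved for a single coordinate, which is exactly~\eqref{eq:trop}, and \eqref{eq:trop} \emph{determines} the missing coordinate, whereas the ordinary tropical Pl\"ucker relation (``the minimum is attained twice'') does not. A further point requiring care is that every denominator appearing in the expansion is indexed by an element of $\M$, so that the corresponding value $p_{Sbd}$ is finite; this is guaranteed by the ``without ever dividing by $0$'' clause in the proof of Proposition~\ref{prop:Plucksub}.
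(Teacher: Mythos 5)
Your overall architecture is reasonable: the ``tropicalization commutes with formula-level substitution'' logic for injectivity is correct, and your surjectivity argument (evaluating the cluster parametrization at $\Delta_I=t^{a_I}$ over the Puiseux field, taking valuations via the no-cancellation observation, then extending from $\Q^\CC$ to $\R^\CC$ by density) is legitimate and in fact a welcome supplement, since it is essentially the realizability construction the paper deploys later for Theorem~\ref{thm:main}. But the injectivity step --- which you correctly identify as the heart of the matter --- is circular as written. You invoke ``(the proof of) Proposition~\ref{prop:Plucksub}'' to produce, for each $J\in\M$, a finite chain of substitutions $\Delta_{Sac}\mapsto(\Delta_{Sab}\Delta_{Scd}+\Delta_{Sad}\Delta_{Sbc})/\Delta_{Sbd}$ in which every index occurring in a denominator lies in $\M$. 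The paper justifies exactly this form of Proposition~\ref{prop:Plucksub} by saying it ``follows from the proof of Theorem~\ref{thm:param} we give below''; the alternative justification via the formulae of \cite{MS} yields subtraction-free expressions that are not a priori of iterated three-term type. So the existence of your chain $E_0\rightsquigarrow\cdots\rightsquigarrow E_m$ is precisely the content to be proved, and it is genuinely nontrivial: for a general positroid $\M$ there are bases $J\in\M$ that are not cluster variables in any cluster (the paper notes this right after Proposition~\ref{prop:OPS}), so mutation-connectedness of clusters does not supply the chain for such $J$.

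What is missing is the combinatorial engine of the paper's argument: the double induction on $w(J)$ (the number of Grassmann necklace elements $I_a$ not weakly separated from $J$) and $d(J)$, together with the construction, when $w(J)>0$, of the octahedron $\{J,K_0,K_1,K_2,K_3,K_4\}$ and the verification that (i) $K_0,K_3,K_4\in\M$, (ii) each $K_t$ is either outside $\M$ or strictly smaller in the induction order, and (iii) weak separation from the necklace is preserved. That case analysis is where the positroid structure (cyclically rotated Schubert matroids, the necklace) actually enters, and nothing in your write-up replaces it. To repair the proof you must either establish the existence of the substitution chain directly --- which amounts to reproducing that induction --- or switch to the paper's second route via tropical bridge reduction (Theorem~\ref{thm:bridge}), which avoids clusters altogether.
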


\begin{proof}
Fix a cluster $\CC \subset \M$.  We show that $p_\bullet \in \Pluck(\M)_{>0}$ is determined by the values of $p_I$, $I \in \CC$.  Recall from \S\ref{ssec:weaksep} that $I,J$ are {\it weakly separated} if there does not exist cyclically ordered $a<b<c<d$ such that $a,c \in I \setminus J$ and $b,d \in J \setminus I$.  Let $(I_1,\ldots,I_n)$ denote the Grassmann necklace of $\M$.  For each $J \in \M$, define two integers $w(J)$ and $d(J)$ by
\begin{align*}
w(J)&:= \#\{a \mid (I_a, J) \text{ are not weakly separated } \} \\
d(J) &:= \begin{cases} \min_{a : \; (I_a, J) \text{ are not weakly separated}}(|I_a\setminus J|) & \mbox{if $w(J) > 0$} \\
0 & \mbox{otherwise.}
\end{cases}
\end{align*}
We show that $p_J$ is determined by $p_I$, $I \in \CC$ by induction first on $w(J)$, then on $d(J)$.  If $w(J) = 0$ then $J$ is weakly separated with $(I_1,\ldots,I_n)$ and by Proposition~\ref{prop:OPS} $J$ belongs to some cluster $\CC$ for $\M$.  By Proposition~\ref{prop:mutation}, all clusters are related by mutation, so we can express $p_J$ in terms of $p_I$, $I \in \CC$ by repeated application of the positive tropical Pl\"ucker relations.  
Thus all $J \in \M$ satisfying $w(J)=0$ is determined by $p_I$, $I \in \CC$.

Now suppose $w = w(J) > 0$.  Then we have $d = d(J) \geq 2$.  We suppose by induction that the result has been proven for all $J'$ with $w(J') < w$, or $w(J') =w$ and $d(J') < d$.  Choose $a \in [n]$ so that $(I_a,J)$ is not weakly separated and $I_a \setminus J = \{i_1,\ldots,i_d\}$ and $J \setminus I_a = \{j_1,\ldots,j_d\}$.  Since $J \geq_a I_a$, there is a unique noncrossing matching on these $2d$ points, which after reindexing we assume to be $\{(i_1,j_1),(i_2,j_2),\ldots,(i_d,j_d)\}$ where $i_r <_a j_r$ for all $r$.  Here, $<_a$ denotes the cyclic rotation of the total order where $a$ is minimal.  Since $(I_a,J)$ is not weakly separated, we can find $(i,j)$ and $(i',j')$ in this matching so that $i <_a j <_a i' <_a j'$ and there are no elements of $(I_a \setminus J) \cup (J \setminus I_a)$ in the open cyclic intervals $(i,j)$ and $(i',j')$.  Let $S = J \setminus \{j,j'\}$.  Define
$$
K_0 = Sii' \qquad K_1 = S ij \qquad K_2= Si'j' \qquad K_3 = Sij' \qquad K_4 = Sji'
$$
so that we have a positive tropical Pl\"ucker relation
$$
p_{J} + p_{K_0}= \min( p_{K_1} + p_{K_2},  p_{K_3} + p_{K_4}).
$$
We make the following claims: 
\begin{enumerate}
\item
if $(I_b,J)$ is weakly separated, then $(I_b,K_t)$ is weakly separated, for $t=0,1,2,3,4$,
\item $K_0,K_3,K_4 \in \M$, and 
\item for each $t =0,1,2,3,4$, one of the following holds: $K_t \notin \M$, or $w(J) > w(K_t)$, or ($w(J) = w(K_t)$ and $d(K_t) < d(J)$).
\end{enumerate}

\noindent {\bf Proof of (1).} First note that if $I_b \leq_b L$ and $(I_b,L)$ are weakly separated, then for some $c$ we have $I_b \setminus L \subset [b,c-1]$ and $L \setminus I_b \subset [c,b-1]$.  In particular, for $L = I_a$, we have 
\begin{equation}\label{eq:Iab}
I_b \setminus I_a \subset [b,a-1] \qquad \text{and} \qquad I_a \setminus I_b \subset [a,b-1]
\end{equation}
and for $L = J$, we have for some $c$,
\begin{equation}\label{eq:IbJ}
I_b \setminus J \subset [b,c-1] \qquad \text{and} \qquad J \setminus I_b \subset [c,b-1]
\end{equation}

We say that $(x,y) \in [n]^2$ {\it crosses} $(u,v) \in [n]^2$ if all of $x,y,u,v$ are distinct and the two line segments $\overline{xy}$ and $\overline{uv}$ cross when $1,2,\ldots,n$ are arranged in order around a circle.  Suppose that $(I_b,K_t)$ is not weakly separated.  Then we have $x,y \in I_b \setminus K_t$ and $u,v \in K_t \setminus I_b$ such that $(x,y)$ crosses $(u,v)$.

Case (a): we have $i,j,i',j' \in [b,a-1]$.  Then $i,i' \in I_b$ so neither $u$ or $v$ is equal to $i,i'$, and so the claim follows from $(I_b,J)$ being weakly separated.

Case (b): we have $i \in [a,b-1]$ and $j,i',j' \in [b,a-1]$.  We may assume that $u = i$ and since $i' \in I_b$, we have $v \in J \setminus I_b$.  Neither $x$ nor $y$ lies in $[i+1,b-1] \subset (i,j)$ since they would have to belong to $I_a$ and thus also to $J$.  Thus $v \in [b,i-1]$ and we have $(\{x,y\} \cap [v+1,i-1]) \neq \emptyset$ which contradicts \eqref{eq:IbJ}.

Case (c): we have $i,j \in [a,b-1]$ and $i',j' \in [b,a-1]$.  We may assume that $u =i$ and since $i' \in I_b$, we have $v \in J \setminus I_b$.  Neither $x$ nor $y$ lies in $(i,j)$ since they would have to belong to $I_a$ and thus also to $J$.  But $j \in J \setminus I_a$ and by \eqref{eq:Iab}, $j \notin I_b$.  If $(x,y)$ crosses $(i,v)$ then it must also cross $(j,v)$, contradicting $(I_b,J)$ being weakly separated.

Case (d): we have $i,j,i' \in [a,b-1]$ and $j' \in [b,a-1]$.  If one of $u,v$ is equal to $i$ and the other is in $J \setminus I_b$ then the argument is the same as for Case (c).  Also, if $(x,y)$ crosses $(i,i')$ then $(x,y)$ crosses $(j,i')$ as well since \eqref{eq:Iab} implies that $(\{x,y\} \cap (i,j) )= \emptyset$.  We may thus assume that $u = i'$ and $v \in J \setminus I_b$.  Similarly, $(\{x,y\} \cap [i+1,b-1]) = \emptyset$ so $v \in [b,i-1]$ and we have $(\{x,y\} \cap [v+1,i-1]) \neq \emptyset$ which contradicts \eqref{eq:IbJ}.

Case (e): we have $i,j,i',j' \in [a,b-1]$.  By \eqref{eq:Iab}, we have $(\{x,y\} \cap (i,j)) = \emptyset = (\{x,y\} \cap (i',j'))$.  At least one of $u,v$ is equal to $i,i'$.  Replacing $i$ by $j$ or $i'$ by $j'$ does not change whether $(x,y)$ crosses $(u,v)$.  Thus $(x,y)$ crosses something of the form $(j,r)$ or $(j',r)$ where $r \in J \setminus I_b$.  This contradicts $(I_b,J)$ being weakly separated. 

\noindent {\bf Proof of (2).}
We use the description of $\M$ as an intersection of Schubert matroids (Proposition~\ref{prop:Oh}).  We prove that $K_3,K_4 \in \M$ (and this immediately implies $K_0 \in \M$).  Indeed, we show that swapping any $i<_a j$ where the open interval $(i,j)$ contains nothing in $(I_a \setminus J) \cup (J \setminus I_a)$ works.  So let $J'$ be the result of such a swap.  We need to show that $J' \geq_b I_b$ for all $b$, and it suffices to show this for $b \in J'$.  Let $L = (i,j) \cap I_a = (i,j) \cap J = (i,j) \cap I_a \cap J$.

Case (a): Suppose $b \leq_a i$.  The claim follows from $J \geq_b I_b$ together with $\{i\} \cup L \subset I_b$, which holds since $\{i\} \cup L \subset I_a$, using \eqref{eq:Iab}.

Case (b): Suppose $b >_a j >_a i$.  We have that $I_b$ is the disjoint union of sets $A,B,C$ such that $A \leq_b J \cap [b,i)$ and $B\leq_b  L \cup \{j\}$ and $C \leq_b J \cap (j,b-1]$.  We claim that $B \leq_b L \cup \{i\}$.  This follows from $j \notin I_a \implies j \notin I_b \implies j \notin B$.

Case (c): Suppose $b >_a i$ but $b \leq_a j$.  The claim follows from $J' \geq_b  J \geq_b I_b$.

\noindent {\bf Proof of (3).}  Suppose that $K_t \in \M$.  Then from (1) we have $w(K_t) \leq w(J)$, and equality can only happen if $(I_a,K_t)$ is not weakly separated.  But by construction we have $|I_a \setminus K_t| > |I_a \setminus J|$, so if $w(K_t) = w(J)$ we have $d(K_t) < d(J)$.

We have proved all the claims (1),(2),(3).  By induction, all of the $p_{K_t}$ are determined by $p_I$, $I \in \CC$.
The formula $p_{J}= \min( p_{K_1} + p_{K_2},  p_{K_3} + p_{K_4}) - p_{K_0}$ shows that $p_J$ is also determined by $p_I$, $I \in \CC$.  By induction, $p_\bullet \in \Pluck(\M)_{>0}$ is determined by $p_I$, $I \in \CC$.  Furthermore, it is clear that if $p_I \in \Z$ for $I \in \CC$ then $p_\bullet \in \Pluck(\M)_{>0}(\Z)$.  The theorem is proven.
\end{proof}

Another proof of Theorem \ref{thm:param} is given after Theorem \ref{thm:bridge}.

Let $X^\vee(\hT) \subset \R^n$ denote the lattice generated by $\Z^n$ and the vector $(1/k,1/k,\ldots,1/k) \in \R^n$.  The lattice $X^\vee(\hT)$ is dual to the lattice $X(\hT) \subset \Z^n$ of \eqref{eq:character}.  Restricting the action \eqref{eq:equiv} of $\R^n$ on $\Pluck(\M)_{>0}$, we obtain an action of  $X^\vee(\hT)$ on $\Pluck(\M)_{>0}(\Z)$.  The orbits of this action are denoted $\Pluck(\M)_{>0}/\!\!\sim$ and $\Pluck(\M)_{>0}(\Z)/\!\!\sim$ respectively.  Let us now parametrize $\Pluck(\M)_{>0}/\!\!\sim$ and $\Pluck(\M)_{>0}(\Z)/\!\!\sim$.  Recall from \S\ref{ssec:gaugefix} the notion of a gauge-fix $\G\subset \CC$.

\begin{corollary}\label{cor:param}
Let $\M$ be a connected positroid, $\CC$ be a cluster for $\M$ and $\G \subset \CC$ be a gauge-fix.  The maps
\begin{align*}
\Pluck(\M)_{>0}/\!\!\sim&\longrightarrow \R^{\CC \setminus \G}, \qquad &p_\bullet &\longmapsto (p'_I \mid I \in \CC \setminus \G) \\
\Pluck(\M)_{>0}(\Z)/\!\!\sim &\longrightarrow \Z^{\CC\setminus \G}, \qquad &p_\bullet &\longmapsto (p'_I \mid I \in \CC \setminus \G)
\end{align*}
are bijections, where $p'_\bullet \sim p_\bullet$ is the unique vector in the equivalence class of $p_\bullet$ satisfying $p'_J = 0$ for $J \in \G$.
\end{corollary}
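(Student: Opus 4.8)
The plan is to transport the entire statement to $\R^\CC$ by means of Theorem~\ref{thm:param}, and then to recognize the quotient by $\sim$ as the coordinate subspace of $\R^\CC$ cut out by the gauge-fix $\G$. First, by Theorem~\ref{thm:param}, the restriction map $\rho\colon\Pluck(\M)_{>0}\to\R^\CC$, $p_\bullet\mapsto(p_I\mid I\in\CC)$, is a bijection carrying $\Pluck(\M)_{>0}(\Z)$ onto $\Z^\CC$. I would then record how the translation action \eqref{eq:equiv} transforms under $\rho$: an element $v\in\R^n$ acts by $(v\cdot p)_I = p_I+\langle e_I,v\rangle$, and this preserves $\Pluck(\M)_{>0}$ together with its support, since all three pairs occurring in a relation \eqref{eq:trop} have the same weight $2e_S+e_a+e_b+e_c+e_d$ and so are shifted equally. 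Under $\rho$ the action becomes translation by $L(v)$, where $L\colon\R^n\to\R^\CC$ is the linear map $v\mapsto(\langle e_I,v\rangle\mid I\in\CC)$. As $e_I$ has coordinate sum $k$, we have $e_I\in X(\hT)$ for every $I\in\CC$, whence $L(X^\vee(\hT))\subset\Z^\CC$; thus $\rho$ also intertwines the $X^\vee(\hT)$-action on $\Pluck(\M)_{>0}(\Z)$ with translation by the sublattice $L(X^\vee(\hT))\subset\Z^\CC$.

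The crux is a unimodularity statement coming from the gauge-fix. Let $\pi\colon\R^\CC\to\R^\G$ be the coordinate projection onto the $\G$-coordinates; then $\pi\circ L\colon\R^n\to\R^\G$ is the map $v\mapsto(\langle e_J,v\rangle\mid J\in\G)$, i.e.\ the linear map whose matrix has rows $e_J$, $J\in\G$. The definition of a gauge-fix is precisely that $\{e_J\mid J\in\G\}$ is a $\Z$-basis of the rank-$n$ lattice $X(\hT)$; in particular these $n$ vectors are $\R$-linearly independent, so $\pi\circ L$ is a linear isomorphism, and, being the matrix of a basis of $X(\hT)$, it restricts to a lattice isomorphism $X^\vee(\hT)=\Hom(X(\hT),\Z)\xrightarrow{\ \sim\ }\Hom(\Z^\G,\Z)=\Z^\G$. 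This is the step I expect to require the most care, since it is the one place where the hypotheses "$\M$ connected" and "$\G$ is a gauge-fix" enter, and one must keep straight the two lattices $X(\hT)$ and $X^\vee(\hT)$, the integrality of $L$, and the exact (not merely finite-index) nature of the basis property.

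Finally I would run the standard slice argument. For $q\in\R^\CC$, since $\pi\circ L$ is invertible there is a unique $v\in\R^n$ with $(\pi\circ L)(v)=-\pi(q)$, and then $q':=q+L(v)$ is the unique element of the $\R^n$-orbit of $q$ with $\pi(q')=0$; in particular the $\R^n$-action on $\R^\CC$, hence the $X^\vee(\hT)$-action, is free. Thus the orbit space is in bijection with the slice $\{q\in\R^\CC\mid\pi(q)=0\}$, and composing with the isomorphism of this slice onto $\R^{\CC\setminus\G}$ given by the remaining coordinates yields the first map of the Corollary. For the integral statement, take $q\in\Z^\CC$: then $\pi(q)\in\Z^\G$, so the unique $v$ above lies in $X^\vee(\hT)$ by the lattice isomorphism just established, and therefore $q'=q+L(v)\in\Z^\CC$ because $L(X^\vee(\hT))\subset\Z^\CC$; this identifies $\Z^\CC/L(X^\vee(\hT))$ with $\{q\in\Z^\CC\mid\pi(q)=0\}\cong\Z^{\CC\setminus\G}$ and gives the second map. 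Transporting both bijections back through $\rho$, and observing that the representative $p'_\bullet$ with $p'_J=0$ for $J\in\G$ still lies in $\Pluck(\M)_{>0}$ (respectively in $\Pluck(\M)_{>0}(\Z)$) because the group actions preserve these sets, completes the argument.
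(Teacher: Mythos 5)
Your proposal is correct and follows essentially the same route as the paper: reduce to $\R^\CC$ via Theorem~\ref{thm:param}, and use the fact that a gauge-fix makes $\{e_J \mid J \in \G\}$ a $\Z$-basis of $X(\hT)$, so that pairing against $X^\vee(\hT)$ produces a unique representative with prescribed (zero) $\G$-coordinates. The paper states this more tersely but the key lemma and the slice argument are the same.
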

\begin{proof}
We prove the $\Z$ case.  The statement follows from Theorem \ref{thm:param} and the following claim: given $p_\bullet \in \Pluck(\M)_{>0}(\Z)$ and integers $(c_J \mid J \in \G) \in \Z^{\G}$, there is a unique $p'_\bullet \sim p_\bullet$ such that $p'_J = c_J$ for all $J \in \G$.  This claim follows from the following statement: for any $(c_J \mid J \in \G) \in \Z^{\G}$, there exists $\a \in X^\vee(\hT)$ such that $\a \cdot e_J = c_J$ for all $J \in \G$, which in turn follows from the definition of gauge-fix in \S\ref{ssec:gaugefix}.
\end{proof}

\section{Subdivisions of the hypersimplex}\label{sec:hypersimplex}
\subsection{}
\def\tP{{\tilde P}}
Let $P$ be a polytope.  A {\it subdivision} of $P$ is a collection $\tP = \{Q\}$ of polytopes $Q$ (called {\it faces} of $\tP$) such that 
\begin{enumerate}
\item each face of $Q \in \tP$ is in $\tP$,
\item the intersection $Q \cap Q'$ for $Q,Q' \in \tP$ is a face of both $Q$ and $Q'$,
\item the union of all $Q \in \tP$ is equal to $P$.
\end{enumerate}
We typically give a subdivision by only listing the polytopes $Q$ of maximal dimension.

\subsection{}
Given any $p_\bullet \in \R^{\binom{n}{k}}$ we obtain a subdivision of the hypersimplex as follows.  We lift each vertex $e_I \in \Delta(k,n)$ to the point $e'_I = (e_I, p_I)$ in one higher dimension.  Then we project the lower faces of the convex hull $\Conv(e'_I)$ back into $\Delta(k,n)$.  These faces will give us a polyhedral subdivision of $\Delta(k,n)$ denoted $\tDelta(p_\bullet)$, and these subdivisions are called {\it regular}.  More generally, for $p_\bullet \in (\R \cup \{\infty\})^{\binom{n}{k}}$ (not all equal to $\infty$), we obtain a regular subdivision $\tDelta(p_\bullet)$ of the polytope $P_{\Supp(p_\bullet)} := \Conv(e_I \mid I \in \Supp(p_\bullet))$.  By lifting some vertices $e_I$ to $\infty$, the resulting convex hull $\Conf(e'_I)$ becomes a polyhedron with many ``vertical" faces.  The projection of the lower faces of $\Conv(e'_I)$ will only cover $P_{\Supp(p_\bullet)}$.

Let us describe the faces of $\tDelta(p_\bullet)$ more explicitly.  Faces $F$ of $\tDelta(p_\bullet)$ are convex polytopes whose vertices are a subset of the vectors $\{e_I \mid I \in \binom{[n]}{k}\}$.  Abusing notation, we will also consider $F$ as a subset of $\binom{[n]}{k}$.  We call $p_\bullet$ and $p'_\bullet$ {\it equivalent}, and write $p_\bullet \sim p'_\bullet$ if there exists a vector $\a = (a_1,\ldots,a_n) \in \R^n$ such that for all $I \in \binom{[n]}{k}$,
\begin{equation}\label{eq:equiv}
p'_I  = p_I + \sum_{i \in I} a_i.
\end{equation}
We write $p'_\bullet = \a \cdot p_\bullet$.
If $p_\bullet \sim p'_\bullet$, then we have $\tDelta(p_\bullet) = \tDelta(p'_\bullet)$.  The faces of $\tDelta(p_\bullet)$ are the bottom faces
\begin{equation}\label{eq:tDelta}
F = F(p'_\bullet) := \{I \mid p'_I = \min(p'_\bullet)\} \subseteq \sbinom
\end{equation}
for $p'_\bullet \sim p_\bullet$.  Here, $\min(p'_\bullet) \in \R$ is the minimum value in the vector $p'_\bullet$.  By \eqref{eq:equiv}, we can always assume that $p'_\bullet \geq 0$ and $\min(p'_\bullet) = 0$.  Note that if $p'_\bullet$ is chosen generically, we expect $F(p'_\bullet)$ to be a single vertex of the hypersimplex.

\begin{lemma}\label{lem:facezero}
Suppose that $F$ is a full-dimensional face of $\tDelta(p_\bullet)$.  Then there is a unique $p'_\bullet \sim p_\bullet$ such that $p'_I =0$ for $I \in F$.  This $p'_\bullet$ satisfies $p'_J >0$ for $J \notin F$.
\end{lemma}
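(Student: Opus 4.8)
The plan is to extract the vector $\a \in \R^n$ from the affine-linear functional that defines the face $F$, and then verify the claimed positivity by convexity. First I would recall that, since $F$ is a full-dimensional face of the regular subdivision $\tDelta(p_\bullet)$, it is one of the lower (bottom) faces of the lifted polyhedron $\Conv(e'_I)$, $e'_I = (e_I, p_I)$. A full-dimensional lower face corresponds to a supporting hyperplane of $\Conv(e'_I)$ of the form $\{(x, h) \in \R^n \times \R \mid h = \langle \a', x\rangle + c\}$ for some $\a' \in \R^n$ and $c \in \R$, with the property that $p_I \geq \langle \a', e_I\rangle + c$ for all $I \in \binom{[n]}{k}$, with equality exactly for $I \in F$. (Here the word ``lower'' forces a genuine non-vertical hyperplane, which is what makes $F$ full-dimensional rather than vertical.) I would then set $\a = -\a'$ and define $p'_\bullet$ by \eqref{eq:equiv}, i.e.\ $p'_I = p_I + \sum_{i \in I} a_i = p_I - \langle \a', e_I\rangle$. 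By construction $p'_I = c$ for $I \in F$ and $p'_J > c$ for $J \notin F$; subtracting the constant $c$ (which is itself achievable by adjusting $\a$ by a multiple of $(1,\dots,1)$, using that all $e_I$ have coordinate sum $k$) gives $p'_I = 0$ on $F$ and $p'_J > 0$ off $F$, as desired. Since $p_\bullet \sim p'_\bullet$, this is the same subdivision.

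For uniqueness, suppose $p'_\bullet \sim p_\bullet$ and $p''_\bullet \sim p_\bullet$ both vanish on $F$. Then $p''_\bullet - p'_\bullet = \b \cdot 0$ for some $\b \in \R^n$, i.e.\ $p''_I - p'_I = \sum_{i\in I} b_i$ for all $I$, and this quantity vanishes for all $I \in F$. The key point is that $F$, being full-dimensional, affinely spans the hyperplane $\{x \in \R^n \mid \sum_i x_i = k\}$; equivalently, the differences $e_I - e_{I_0}$ for $I, I_0 \in F$ span the lattice-hyperplane direction $\{x \mid \sum x_i = 0\}$ over $\R$. Hence the linear functional $x \mapsto \sum_i b_i x_i$ vanishes on all of that hyperplane, so $\b$ is a scalar multiple of $(1,1,\dots,1)$; but then $\sum_{i \in I} b_i$ is the same constant for every $I \in \binom{[n]}{k}$ (namely $k$ times that scalar), and since it is $0$ on $F$ it is $0$ everywhere, giving $p''_\bullet = p'_\bullet$.

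I expect the only real subtlety to be the bookkeeping around the affine hyperplane $\sum x_i = k$ and the two-step normalization: one first produces a functional realizing $F$ as a bottom face, which pins down $\a$ only up to adding multiples of $(1,\dots,1)$, and then one uses precisely that freedom to arrange $\min(p'_\bullet) = 0$ to be attained on $F$. The convexity input---that a full-dimensional lower face is cut out by a non-vertical supporting hyperplane with strict inequality off the face---is standard for regular subdivisions and can be quoted; the spanning statement for a full-dimensional face of a hypersimplex subdivision is immediate from the definition of ``full-dimensional'' (dimension $n-1$) together with the fact that the ambient affine span of $\Delta(k,n)$ is exactly $\{\sum x_i = k\}$. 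Everything else is routine linear algebra.
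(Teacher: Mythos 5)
Your proposal is correct and follows essentially the same route as the paper: the paper's proof likewise derives uniqueness from the fact that $\{e_I \mid I \in F\}$ spans $\R^n$ when $F$ is full-dimensional (your two-step argument via multiples of $(1,\dots,1)$ is an equivalent unwinding of this), and gets existence and the strict positivity off $F$ from the defining property of a lower face of the regular subdivision. The extra bookkeeping you supply about non-vertical supporting hyperplanes and normalizing the constant $c$ is exactly what the paper leaves implicit.
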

\begin{proof}
If $F$ is full-dimensional, then the vectors $\{e_I \mid I \in F\}$ span $\R^n$.  Thus at most one $p'_\bullet$ equivalent to $p_\bullet$ satisfies the condition $p'_I =0$ for $I \in F$.   The existence and the last conclusion follows from the assumption that $F$ is a face of $\tDelta(p_\bullet)$.
\end{proof}

The following result is immediate.
\begin{lemma}
The condition that $p_\bullet$ is a tropical Pl\"ucker vector (resp. positive tropical Pl\"ucker vector) is a property of the equivalence class of $p_\bullet$.
\end{lemma}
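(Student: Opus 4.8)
The plan is to check directly that each three-term tropical Pl\"ucker relation, and each positive three-term tropical Pl\"ucker relation, is individually invariant under the operation $p_\bullet \mapsto \a\cdot p_\bullet$. Since $\sim$ is symmetric (indeed an equivalence relation) it suffices to show that if $p_\bullet$ satisfies the relations then so does $p'_\bullet := \a\cdot p_\bullet$. First I would record that the support is unchanged: since each $a_i$ is finite, $p'_I = p_I + \sum_{i\in I}a_i$ is finite exactly when $p_I$ is, so $\Supp(p'_\bullet) = \Supp(p_\bullet)$; in particular $p'_\bullet$ is again not identically $\infty$.

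Now fix $S\subset[n]$ of size $k-2$ and $a<b<c<d$ not contained in $S$. The three quantities appearing in the relation are $p_{Sac}+p_{Sbd}$, $p_{Sab}+p_{Scd}$, and $p_{Sad}+p_{Sbc}$. The key computation is that in passing from $p_\bullet$ to $p'_\bullet$ each of these three sums increases by the \emph{same} constant
\[
C \;:=\; 2\sum_{i\in S}a_i \;+\; a_a+a_b+a_c+a_d ,
\]
because in every case the two index sets being paired are $S$ together with a $2$-element subset of $\{a,b,c,d\}$ and the complementary $2$-element subset of $\{a,b,c,d\}$. (With the convention $\infty + C = \infty$ for finite $C$, this remains correct when some of the $p_I$ are infinite.) Adding the common constant $C$ to three elements of $\R\cup\{\infty\}$ changes neither which of them attain the minimum nor whether an identity of the shape \eqref{eq:trop} holds among them. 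Hence the (positive) three-term relation indexed by $(S;a,b,c,d)$ holds for $p'_\bullet$ if and only if it holds for $p_\bullet$.

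Quantifying over all $S$ and all $a<b<c<d$ not in $S$ then yields the lemma. I do not anticipate any real obstacle here: the only point requiring a moment's care is the bookkeeping when some $p_I=\infty$, which is handled by the convention $\infty+C=\infty$, and the trivial remark that since $\sim$ is reflexive, symmetric and transitive, establishing one direction of the equivalence suffices for the assertion that being a (positive) tropical Pl\"ucker vector is a property of the equivalence class.
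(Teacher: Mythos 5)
Your proof is correct and is exactly the verification the paper has in mind: the paper states this lemma with the single line ``The following result is immediate,'' and the immediate argument is precisely your observation that each of the three sums in \eqref{eq:trop} shifts by the common constant $2\sum_{i\in S}a_i+a_a+a_b+a_c+a_d$, so the relation is unaffected. Your extra care about the support and the $\infty+C=\infty$ convention is sound and fills in the details the paper omits.
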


\subsection{}

Suppose that $p_\bullet$ is a tropical Pl\"ucker vector.  Then $\tDelta(p_\bullet)$ is a regular subdivision of the matroid polytope $P_{\Supp(p_\bullet)}$.  Suppose that $p_\bullet$ is a positive tropical Pl\"ucker vector.  Then by Proposition \ref{prop:posSpeyer}, $\tDelta(p_\bullet)$ is a regular subdivision of the positroid polytope $P_{\Supp(p_\bullet)}$.  Part (1) of the following result is proved in \cite{Spe}; see also \cite[Corollary 13]{OPS2}.
\begin{proposition}\label{prop:posdiv}
Suppose that $p_\bullet$ has support equal to a matroid $\M$.
\begin{enumerate}
\item
$p_\bullet$ satisfies the tropical Pl\"ucker relations if and only if each face of the regular subdivision $\tDelta(p_\bullet)$ is a matroid polytope.   
\item
$p_\bullet$ satisfies the positive tropical Pl\"ucker relations if and only if each face of the regular subdivision $\tDelta(p_\bullet)$ is a positroid polytope.   
\end{enumerate}
\end{proposition}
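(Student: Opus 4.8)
The plan is to derive both parts from Speyer's theorem (quoted as Proposition~\ref{prop:Speyer}) together with the two characterizations of positroids already on hand: Proposition~\ref{prop:posSpeyer} (the positive $3$-term exchange relation characterizes positroids) and Proposition~\ref{prop:disconnect} (connected components of a positroid form a non-crossing partition). Part (1) is the theorem of Speyer: given Proposition~\ref{prop:Speyer}, the point is that a face $F$ of $\tDelta(p_\bullet)$ is of the form $F(p'_\bullet)=\{I:p'_I=\min(p'_\bullet)\}$ for some $p'_\bullet\sim p_\bullet$, and the argmin set of a tropical Pl\"ucker vector is a matroid by Proposition~\ref{prop:Speyer}; the converse is a special case of the local argument I give for Part (2) with ``positroid'' weakened to ``matroid''. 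So I would concentrate on Part (2), proved in parallel.

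For the forward direction of (2), suppose $p_\bullet$ satisfies \eqref{eq:trop} and let $F=F(p'_\bullet)$ be a face of $\tDelta(p_\bullet)$, a matroid polytope by (1); write $\M_F=\{I:p'_I=m\}$ with $m=\min(p'_\bullet)$. Since \eqref{eq:trop} is a property of the $\sim$-class, $p'_\bullet$ also satisfies it, and I would check that $\M_F$ obeys the positive $3$-term exchange relation \eqref{eq:3ex}: if $Sab,Scd\in\M_F$ then $p'_{Sac}+p'_{Sbd}=\min(p'_{Sab}+p'_{Scd},p'_{Sad}+p'_{Sbc})\le 2m$ while $p'_{Sac},p'_{Sbd}\ge m$, forcing $p'_{Sac}=p'_{Sbd}=m$, i.e.\ $Sac,Sbd\in\M_F$; the other case is symmetric. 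Proposition~\ref{prop:posSpeyer} then shows $\M_F$ is a positroid, so $F$ is a positroid polytope.

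For the backward direction of (2), assume every face of $\tDelta(p_\bullet)$ is a positroid polytope; in particular each face is a matroid polytope, so $p_\bullet$ is a tropical Pl\"ucker vector by (1). Fix $S$ of size $k-2$ and $a<b<c<d$ in $[n]\setminus S$ and suppose \eqref{eq:trop} fails there. Writing $\sigma_1=p_{Sab}+p_{Scd}$, $\sigma_2=p_{Sac}+p_{Sbd}$, $\sigma_3=p_{Sad}+p_{Sbc}$, the tropical Pl\"ucker relation says the minimum of the $\sigma_i$ is attained twice, and failure of \eqref{eq:trop} forces $\sigma_1=\sigma_3<\sigma_2$. I would then localize: the set $O=\{x\in\Delta(k,n):x_s=1\ (s\in S),\ x_j=0\ (j\notin S\cup\{a,b,c,d\})\}$ is a face of the hypersimplex isomorphic to the octahedron $\Delta(2,4)$ with vertices $e_{Sab},e_{Sac},e_{Sad},e_{Sbc},e_{Sbd},e_{Scd}$, the restricted heights $(p_I)$ induce a regular subdivision of $O$, and --- by the standard compatibility of regular subdivisions with faces --- its cells are exactly the $C\cap O$ for $C\in\tDelta(p_\bullet)$, each of which is a face of $C$ (as $O$ is a face of $\Delta(k,n)$) and hence a positroid polytope (faces of positroid polytopes are positroid polytopes, the corollary to Proposition~\ref{prop:posfacet}). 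Because $\sigma_1=\sigma_3$ there is a shift $p'_\bullet\sim p_\bullet$ with $p'_{Sab}=p'_{Sad}=p'_{Scd}=p'_{Sbc}=0$; then $p'_{Sac}+p'_{Sbd}=\sigma_2-\sigma_1>0$, and within the one-parameter family of such shifts I can also arrange $p'_{Sac},p'_{Sbd}>0$. So the restriction of $p'_\bullet$ to the vertices of $O$ is $\ge 0$ with minimum $0$ attained exactly on $\{Sab,Sad,Scd,Sbc\}$, whence the rhombus $\Conv(e_{Sab},e_{Sad},e_{Scd},e_{Sbc})$ is a cell of this subdivision of $O$, hence a face of $\tDelta(p_\bullet)$. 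But that rhombus is the matroid polytope of the matroid on $[n]$ with basis set $\{Sab,Sad,Scd,Sbc\}$, which decomposes as a direct sum having the two parallel pairs $\{a,c\}$ and $\{b,d\}$ among its connected components; as $\{a,c\}$ and $\{b,d\}$ cross, this matroid is not a positroid by Proposition~\ref{prop:disconnect} --- a contradiction. Hence \eqref{eq:trop} holds for all $S,a<b<c<d$ and $p_\bullet$ is a positive tropical Pl\"ucker vector.

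The routine bookkeeping ($\sim$-invariance, the forward direction, the linear system for the shift) is harmless. The step I expect to need the most care is the localization in the backward direction: justifying that a regular subdivision restricts to a face as a regular subdivision whose cells are faces of the original cells --- so that positroidality, which is inherited by faces, can be tested one octahedron at a time --- and then correctly matching the inequality $\sigma_1=\sigma_3<\sigma_2$ to the unique non-positroidal matroid subdivision of $\Delta(2,4)$ (the split into two square pyramids whose shared square is the direct sum over the crossing partition $\{a,c\}\mid\{b,d\}$).
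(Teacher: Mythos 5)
Your proof is correct and follows essentially the same route as the paper's: one direction collapses a face to its argmin set and invokes Proposition~\ref{prop:posSpeyer} (your direct verification of \eqref{eq:3ex} via the $\le 2m$ versus $\ge m$ comparison is equivalent to the paper's $0/\infty$ vector $q_\bullet$), while the other direction localizes to an octahedral face of the hypersimplex. Your identification of $\sigma_1=\sigma_3<\sigma_2$ as the unique way \eqref{eq:trop} can fail for a tropical Pl\"ucker vector, producing the crossing rhombus excluded by Proposition~\ref{prop:disconnect}, simply makes explicit the ``straightforward case-by-case analysis'' the paper leaves to the reader.
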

\begin{proof}
We prove (2).

Suppose that $\tDelta(p_\bullet)$ is a positroid subdivision.  Let us fix $S \subset [n]$ of size $k-2$ and $a<b<c<d$ not contained in $S$.  The intersection of $\Delta(k,n)$ with the hyperplanes $\{x_i = 0 \mid i \notin Sabcd\}$ and $\{x_i = 1 \mid i \in S\}$ is a face $F = F(S;abcd)$ of $\Delta(k,n)$.  This face $F$ is an octahedron with six vertices $e_{Sab}, e_{Sac}, e_{Sad}, e_{Sbc}, e_{Sbd}, e_{Scd}$.  The intersection of the positroid subdivision $\tDelta(p_\bullet)$ with $F$ gives a subdivision of $\tF$ (or a subpolytope of $\tF$), which must be a positroid subdivision.  Thus it suffices to observe that the positive tropical Pl\"ucker relation \eqref{eq:trop} holds for the six ``numbers" $p_{Sab}, p_{Sac}, p_{Sad}, p_{Sbc}, p_{Sbd}, p_{Scd}$ if they induce a positroid subdivision of a subpositroid of $\tF$.  This is a straightforward case-by-case analysis.

Suppose that $p_\bullet$ satisfies the positive tropical Pl\"ucker relation.  Let $F$ be a face of the subdivision $\tDelta(p_\bullet)$.  Then we can find $p'_\bullet \sim p_\bullet$ satisfying $p'_\bullet \geq 0$ and $\min(p'_\bullet) = 0$ so that $F = \{I \mid p'_I = 0\}$.  Let $q_\bullet$ be defined by
$$
q_I = \begin{cases} 0 & \mbox{if $p'_I = 0$} \\
\infty & \mbox{if $p'_I > 0$.}
\end{cases}
$$
By (1), we know that $F = \Supp(q_\bullet)$ is a matroid.  By Proposition \ref{prop:posSpeyer}, to show that $F$ is a positroid, it suffices to show that $q_\bullet$ is a positive tropical Pl\"ucker vector.  Consider any $S$ and $a<b<c<d$ as in \eqref{eq:trop}.  If both sides of \eqref{eq:trop} are equal to 0 (resp. positive) for $p'_\bullet$, then both sides of \eqref{eq:trop} are equal to 0 (resp. equal to $\infty$) for $q_\bullet$.  Thus \eqref{eq:trop} holds for $q_\bullet$ if it holds for $p'_\bullet$.
\end{proof}

\section{Positive tropical Grassmannian}\label{sec:trop}
\subsection{}\label{sec:real}
 Let $\RR = \bigcup_{n=1}^\infty \R((t^{1/n})) $ denote the field of Puiseux series over $\R$.  We define $\val: \RR \to \R \cup \{\infty\}$ by  $\val(0) = \infty$ and $\val(x(t)) = r$ if the lowest term of $x(t)$ is equal to $\alpha t^r$ where $\alpha \in \R^\times$.  We define $\RR_{>0} \subset \RR$ to be the semifield consisting of Puiseux series $x(t)$ that are non-zero and such that coefficient of the lowest term is a positive real number.  We let $\RR_{\geq 0} :=\RR_{>0} \cup \{0\}$.

A point $V(t)$ in the Grassmannian $\Gr(k,n)(\RR)$ is, as usual, determined by its Pl\"ucker coordinates $\Delta_I(V) \in \RR$, which satisfy the Pl\"ucker relations and are defined up to a common scalar.
We let $ \Gr(k,n)(\RR_{\geq 0}) \subset \Gr(k,n)(\RR)$ (resp. $ \Gr(k,n)(\RR_{> 0}) \subset \Gr(k,n)(\RR)$) denote the subset of points $V(t)$ whose Pl\"ucker coordinates lie in $\RR_{\geq 0}$ (resp. $\RR_{>0}$).  Similarly, we define $\hGr(k,n)(\RR)$, $\hGr(k,n)(\RR_{\geq 0})$, and $\hGr(k,n)(\RR_{>0})$.  

\begin{lemma}\label{lem:valtrop}
For $V(t) \in \hGr(k,n)(\RR_{\geq 0})$, the vector $p_I = \val(\Delta_I(V(t)))$ is a positive tropical Pl\"ucker vector.
\end{lemma}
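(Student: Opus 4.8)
The plan is to read the statement off directly from the three-term Pl\"ucker relation \eqref{eq:threeterm}, using only the elementary fact that $\val$ restricts to a semifield homomorphism on $\RR_{\geq 0}$. Concretely, I would first record the two properties: for $x,y \in \RR_{\geq 0}$ one has $\val(xy) = \val(x)+\val(y)$ and $\val(x+y) = \min(\val(x),\val(y))$. The multiplicativity is standard for Puiseux series. For the second identity, if one of $x,y$ is $0$ it is immediate with the convention $\val(0)=\infty$; otherwise write $x = \alpha t^{\val(x)} + \cdots$ and $y = \beta t^{\val(y)}+\cdots$ with $\alpha,\beta>0$. If $\val(x)\neq\val(y)$ the term of lower order survives; if $\val(x)=\val(y)$ the coefficient of $t^{\val(x)}$ in $x+y$ is $\alpha+\beta>0$, hence nonzero. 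In either case there is no cancellation of leading terms, so $\val(x+y)=\min(\val(x),\val(y))$. (This is precisely the statement that $\RR_{\geq 0}$ is a sub-semifield of $\RR$ and $\val\colon \RR_{\geq 0}\to(\R\cup\{\infty\},\min,+)$ is a semifield homomorphism.)

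Next, fix $S\subset[n]$ of size $k-2$ and $a<b<c<d$ not contained in $S$. Since $V(t)\in\hGr(k,n)(\RR_{\geq 0})$, every $\Delta_I(V(t))$ lies in $\RR_{\geq 0}$, hence so do both summands $\Delta_{Sab}\Delta_{Scd}$ and $\Delta_{Sad}\Delta_{Sbc}$ on the right-hand side of \eqref{eq:threeterm}. Applying $\val$ to both sides of \eqref{eq:threeterm}, using multiplicativity on the left and both properties on the right, yields $p_{Sac}+p_{Sbd} = \min(p_{Sab}+p_{Scd},\, p_{Sad}+p_{Sbc})$, which is exactly \eqref{eq:trop}. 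Since this holds for all choices of $S$ and $a<b<c<d$, the vector $p_\bullet$ satisfies the positive tropical Pl\"ucker relations (and it has at least one finite entry because $V(t)$ is a genuine point of the Grassmannian), as required.

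There is essentially no real obstacle here; the only point requiring any care is the bookkeeping of the $\infty$ values when some Pl\"ucker coordinates vanish, where one must check that the displayed identity still holds in $\R\cup\{\infty\}$ under the conventions $\infty+x=\infty$ and $\min(\infty,x)=x$ --- this is routine. It is worth emphasizing \emph{why} positivity is what makes the argument work: for a general $V(t)\in\Gr(k,n)(\RR)$ the two summands on the right of \eqref{eq:threeterm} could have leading terms that cancel, so one only gets $\val(x+y)\geq\min(\val(x),\val(y))$ and hence the weaker ``min attained twice'' tropical relation; the hypothesis $\Delta_I(V(t))\in\RR_{\geq 0}$ rules out exactly this cancellation and upgrades the relation to the sharp equation \eqref{eq:trop}.
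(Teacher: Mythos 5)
Your proof is correct and is exactly the argument the paper intends: the paper gives no written proof beyond remarking that the lemma "follows easily from the three-term Pl\"ucker relations," and your write-up simply spells out that observation (the semifield-homomorphism property of $\val$ on $\RR_{\geq 0}$ applied to \eqref{eq:threeterm}), including the correct handling of vanishing coordinates and the role of positivity in preventing leading-term cancellation.
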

\begin{proof}
The support of $p_\bullet$ is a matroid $\M$.  The three-term positive tropical Pl\"ucker relations follows from tropicalizing \eqref{eq:threeterm}.  Thus $p_\bullet$ is a positive tropical Pl\"ucker vector.
\end{proof}

For a positroid $\M$, we also define 
$$
\Pi_\M(\RR_{>0}):= \{V(t) \in \Gr(k,n)(\RR_{\geq 0}) \mid \Delta_I(V(t)) \in \RR_{>0} \text{ if and only if } I \in \M\}.
$$
By Proposition~\ref{prop:posSpeyer} and Lemma~\ref{lem:valtrop}, we have the disjoint union
\begin{equation}\label{eq:RRdecomp}
\Gr(k,n)(\RR_{\geq 0}) = \bigsqcup_\M \Pi_\M(\RR_{>0})
\end{equation}
as $\M$ varies over positroids of rank $k$ on $[n]$, an analogue of \eqref{eq:positroiddecomp}.

\subsection{}
We now connect $\Gr(k,n)(\RR_{>0})$ to the nonnegative part $\Ch(k,n)_{\geq 0}$ of the Chow quotient.  Let $\gamma(t)$ be a curve in $\Gr(k,n)$ that is analytic near $t = 0$.  Thus each Pl\"ucker coordinate $\Delta_I(\gamma(t))$ has a Taylor expansion $g_I(t) \in \R[[t]]$ that converges near $t = 0$.  Suppose that for some $s > 0$, we have that $\gamma((0,s)) \in \Gr(k,n)_{>0}$.  Then for every $I$, we have that $g_I(t) \neq 0$ and the coefficient of the lowest term of $g_I(t)$ must be positive.  In particular, $\gamma(t)$ ``agrees" with a point $V(t) \in \Gr(k,n)(\RR_{>0})$.

Furthermore, $\gamma((0,s))$ determines a curve in $\Conf(k,n)_{>0} =\Gr(k,n)_{>0}/T_{>0}$, and thus $\lim_{t \to 0} \gamma(t)$ is a point in $X = \sum_i X_i \in \Ch(k,n)_{\geq 0}$.  The hypersimplex decomposition $\tDelta(X)$ is given by $\tDelta(X) = \tDelta(\val(V(t)))$.  Let $d(\a) = {\rm diag}(t^{a_1},\ldots,t^{a_n}) \in \GL(n)(\RR)$ be an $\RR$-valued point in the torus acting on $\Gr(k,n)$.  Then $p_\bullet = \val(V)$ and $p'_\bullet = \val(V \cdot d(\a))$ are related by \eqref{eq:equiv}.  Let $V'(t) = V(t) \cdot d(\a)$, and suppose that $p'_\bullet = \val(V'(t)) \geq 0$ and at least one $p_I$ is equal to 0.  Then $V_0 = \lim_{t \to 0} V'(t)$ lies in $\Pi_{\M,>0}$ for some positroid $\M$.  The point $V_0$ belongs to (at least) one of the toric varieties $X_i = \overline{T \cdot V_i}$, and the positroid polytope $P_\M$ is one of the faces of $\tDelta(X)$.  To obtain the maximal faces, one has to pick the vector $\a$ carefully, just as \eqref{eq:tDelta} typically gives lower-dimensional faces.

We refer the reader to \cite{KT} for further details from this perspective.
%
%

\subsection{}
Following \cite{SW}, we define the {\it positive tropical Grassmannian}
$$
\Trop_{> 0} \Gr(k,n) :=\overline{\val(\hGr(k,n)(\RR_{> 0}))} \subset \Pluck(k,n)_{>0}
$$
as the closure of $\val$ on $\hGr(k,n)(\RR_{> 0})$, and the {\it nonnegative tropical Grassmannian}
$$
\Trop_{\geq 0} \Gr(k,n) := \overline{\val ( \hGr(k,n)(\RR_{\geq 0})  \setminus \{0\})} \subset \Pluck(k,n)_{\geq 0}
$$
where $\{0\}$ denotes the cone point of $\hGr(k,n)(\RR)$ (by convention, this cone point does not lie in $\hGr(k,n)(\RR_{>0})$).  For a positroid $\M$, we also define $\Trop_{>0} \Pi_{\M} :=  \overline{\val (\hPi_\M(\RR_{>0}))}$.  By \eqref{eq:RRdecomp}, we have 
\begin{equation}\label{eq:Tropecomp}
\Trop_{\geq 0} \Gr(k,n) = \bigsqcup_\M \Trop_{>0} \Pi_{\M} .
\end{equation}


\subsection{}
We call a positive tropical Pl\"ucker vector $p_\bullet$ {\it realizable} if it arises as $\val(\Delta_I(V))$ for some $V \in  \Gr(k,n)(\RR_{\geq 0})$.  It is not immediately clear that every $p_\bullet \in \Pluck(k,n)_{\geq 0}$ is realizable because a priori $\val(\Delta_I(V))$ satisfies relations beyond \eqref{eq:trop}: for example such a relation exists for any (not necessarily three-term) Pl\"ucker relation for $\Gr(k,n)$.

\begin{theorem}\label{thm:main}
Every rational positive tropical Pl\"ucker vector is realizable.  
Thus 
$$
\Trop_{\geq 0}\Gr(k,n) = \Pluck(k,n)_{\geq 0}, \;\;\;  \Trop_{> 0}\Gr(k,n) = \Pluck(k,n)_{>0}, \;\;\;\Trop_{>0} \Pi_{\M} = \Pluck(\M)_{>0}.
$$
\end{theorem}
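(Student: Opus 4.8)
\emph{Proof strategy.} The inclusions ``$\subseteq$'' in all three displayed equalities are immediate from Lemma~\ref{lem:valtrop}, so the real content is the reverse: every rational positive tropical Pl\"ucker vector $p_\bullet \in \Pluck(\M)_{>0}(\Q)$, with $\M$ a positroid (possibly the uniform matroid), is realized as $p_I = \val(\Delta_I(V))$ for some $V$. The general non-rational case will then follow, since each side of each equality is by definition a closure and the rational points are dense. The plan is to build such a $V$ over the Puiseux field $\RR$ directly from a cluster parametrization of the positroid variety $\Pi_\M$, and then invoke the rigidity Theorem~\ref{thm:param}.

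Concretely, I would first fix a cluster $\CC = \{I_0, I_1, \dots, I_d\} \subset \M$ for $\M$ (which exists by the weak-separation description of clusters recalled in \S\ref{ssec:weaksep}), where $d = \dim(\M)$, and take the positive parametrization of $\Pi_\M$ attached to $\CC$ from \S\ref{ssec:posparam}: gauge-fixing $\Delta_{I_0} = 1$, this identifies the cluster torus $(\C^\times)^d$ with coordinates $x_i = \Delta_{I_i}$ as a dense torus in $\Pi_\M$, and by Proposition~\ref{prop:Plucksub} every Pl\"ucker coordinate $\Delta_I$, $I \in \M$, is a subtraction-free rational function $\Delta_I(\x)$ of $x_1,\dots,x_d$ (while $\Delta_I \equiv 0$ for $I \notin \M = \Supp(p_\bullet)$). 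Note we do not need the stronger positive-Laurent statement of Conjecture~\ref{conj:Laurent}; subtraction-freeness is all that matters. Now set $q_i := p_{I_i} - p_{I_0}\in \Q$ and let $V(t) \in \Gr(k,n)(\RR)$ be obtained by specializing $x_i \mapsto t^{q_i} \in \RR_{>0}$ and rescaling the whole Pl\"ucker vector by $t^{p_{I_0}}$ (equivalently, scaling one row of a representing matrix by $t^{p_{I_0}}$, which multiplies every Pl\"ucker coordinate by $t^{p_{I_0}}$); the Pl\"ucker relations persist under specialization, and subtraction-freeness guarantees $V(t) \in \Pi_\M(\RR_{>0})$.

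The second step is the valuation computation. For a subtraction-free rational function $f = g/h$ with $g,h$ having nonnegative coefficients, substituting positive Puiseux monomials produces no cancellation of lowest-order terms, so $\val\!\bigl(f(t^{q_1},\dots,t^{q_d})\bigr) = \Trop(f)(q_1,\dots,q_d)$ on the nose, where $\Trop$ is tropicalization as in \eqref{eq:tropsubs}. Hence $p'_I := \val(\Delta_I(V(t)))$ equals $p_{I_0} + \Trop(\Delta_I)(q_\bullet)$ for $I \in \M$ and $\infty = p_I$ otherwise. By construction $p'_{I_0} = p_{I_0}$ (as $\Delta_{I_0}\equiv 1$) and $p'_{I_i} = p_{I_0} + q_i = p_{I_i}$, so $p'_\bullet$ agrees with $p_\bullet$ on the cluster $\CC$, and $p'_\bullet \in \Pluck(\M)_{>0}$ by Lemma~\ref{lem:valtrop}. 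The final step is rigidity: Theorem~\ref{thm:param} says a positive tropical Pl\"ucker vector with support $\M$ is determined by its restriction to any cluster, so $p'_\bullet = p_\bullet$ and $V(t)$ realizes $p_\bullet$. For $\M$ uniform one has $V(t) \in \Gr(k,n)(\RR_{>0})$, giving $\Trop_{>0}\Gr(k,n) = \Pluck(k,n)_{>0}$; for general $\M$ one has $\Pi_\M(\RR_{>0}) \subset \Gr(k,n)(\RR_{\ge 0})$, yielding both $\Trop_{>0}\Pi_\M = \Pluck(\M)_{>0}$ and, taking the union over $\M$, $\Trop_{\ge 0}\Gr(k,n) = \Pluck(k,n)_{\ge 0}$.

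I expect the main obstacle not to lie in this argument as such, but in making sure that all the cluster-theoretic inputs are legitimately in place: that every positroid $\M$ admits a cluster, that its cluster-torus embedding really is a positive parametrization of $\Pi_\M$ with subtraction-free Pl\"ucker coordinates (Proposition~\ref{prop:Plucksub}), and --- for the closure statements and the positroid-cell version --- that $\Pluck(\M)_{>0}$ is the closure of its rational points. Granting the machinery of \S\ref{sec:cluster}, the only genuinely computational ingredient is the ``no cancellation'' identity $\val = \Trop$ for subtraction-free functions, which is elementary, and the rest is an application of Theorem~\ref{thm:param}. An alternative route, which the paper may prefer for a second proof, is to replace the cluster parametrization by a bridge parametrization of $\Pi_{\M,>0}$ and tropicalize it (Section~\S\ref{sec:bridge}), matching cluster coordinates in exactly the same way.
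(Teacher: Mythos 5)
Your proof is correct and is essentially the paper's own (first) proof of Theorem~\ref{thm:main}: both realize a rational vector by feeding positive Puiseux monomials into the subtraction-free cluster parametrization of $\Pi_\M$ (Proposition~\ref{prop:Plucksub}), observe via Lemma~\ref{lem:valtrop} that the resulting valuation vector lies in $\Pluck(\M)_{>0}$ and agrees with $p_\bullet$ on the cluster, and then conclude by the rigidity statement of Theorem~\ref{thm:param}, finishing with a closure argument. The bridge-parametrization alternative you mention at the end is precisely the paper's second proof, via Theorem~\ref{thm:bridge}.
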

\begin{proof}
Let $\M$ be a positroid, $\CC \subset \M$ be a cluster, and $d = \dim(\M)$.  By \S\ref{ssec:posparam}, we have a positive parametrization $T(\CC) = (\C^\times)^d \hookrightarrow \Pi_\M$.  This map induces a map $\iota_\CC: \RR_{>0}^d \to \Pi_\M(\RR)$ where if $f=(f_I(t) \mid I \in \CC) \in \RR_{>0}^d$, we have $\Delta_I(\iota_\CC((f)) = f_I(t)$.  By Proposition~\ref{prop:Plucksub}, the image of $\iota_\CC$ lies in $\Pi_\M(\RR_{>0})$.  Let $\nu_\CC: \Pluck(\M)_{>0} \to \R^\CC$ denote the projection $p_\bullet \mapsto (p_I \mid I \in \CC)$ of Theorem~\ref{thm:param}.
Then the composition $\nu_\CC \circ \val \circ \iota_\CC$ sends $\RR_{>0}^d$ surjectively onto $\Q^d$.  By Theorem~\ref{thm:param} we have $\nu_\CC: \Pluck(M)_{>0}(\Q) \to \Q^d$ is an isomorphism, we see that every rational positive tropical Pl\"ucker vector is realizable.  Taking the closure in $\R^d$, we obtain $\Trop_{>0} \Pi_{\M} = \Pluck(\M)_{>0}$.
\end{proof}

\subsection{}
We give another proof of Theorem \ref{thm:main} using bridge decompositions, which we believe is of independent interest.  Recall from \eqref{eq:bridgeparam} the notion of bridge parametrizations of $\Pi_{\M,>0}$.  In \S\ref{sec:bridge}, we define tropical bridges $T_i(a)$, $a\in \Z$ (and more generally $T_\gamma(t)$ for $\gamma =(i,j)$) acting on the space of positive tropical Pl\"ucker vectors, with the following property: for $a(t) \in \RR_{>0}$ and $V(t) \in \Gr(k,n)(\RR_{\geq 0})$
$$
\val( V(t) \cdot x_i(a(t))) = T_i(\val(a(t))) \cdot \val(V(t)).
$$
In particular, if $p_\bullet$ is representable, it follows immediately that $T_i(a) \cdot p_\bullet$ is representable for any $a \in \Q$.

\begin{theorem}\label{thm:bridge}
For any bridge parametrization \eqref{eq:bridgeparam} of $ \Pi_\M(\R_{>0})$, we have a tropical bridge parametrization of $\Pluck(\M)_{>0}$, given by
$$
\R^{d+1} \simeq \Pluck(\M)_{>0}, \qquad (z_0,z_1,z_2,\ldots,z_d) \longmapsto 
T_\gamma(\z) \cdot p(I,z_0)_\bullet := T_{\gamma_1}(z_1) \cdots T_{\gamma_d}(z_d) \cdot p(I,z_0)_\bullet
$$
where $d = \dim(\M)$ and $p(I)_\bullet \in \Pluck(\{I\})_{>0}$ is defined by $p(I,z_0)_I = z_0$ and $p(I,z_0)_J =\infty$ for $J \neq I$.  The tropical bridge parametrization maps $\Z^{d+1}$ (resp. $\Q^{d+1}$) isomorphically onto $\Pluck(\M)_{>0}(\Z)$ (resp. $\Pluck(\M)_{>0}(\Q)$).
\end{theorem}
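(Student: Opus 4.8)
The plan is to prove the claimed bijection $\R^{d+1}\simeq\Pluck(\M)_{>0}$, $d=\dim(\M)$, by induction on $d$, with the engine a tropical analogue of Proposition~\ref{prop:bridgedecomp}. A bridge parametrization \eqref{eq:bridgeparam} records a reduction sequence $\gamma_1,\ldots,\gamma_d$ and a base vertex $I$, and along the way a chain of positroids $\M=\M_0,\M_1,\ldots,\M_d=\{I\}$ with $\dim(\M_j)=d-j$, where $\M_j$ is obtained from $\M_{j-1}$ by removing the bridge $\gamma_j$ (so $f_{\M_j}=f_{\M_{j-1}}s_{\gamma_j}$, with the loop and coloop steps of Proposition~\ref{prop:bridgedecomp}(1),(2) folded into the $x_\gamma$-notation). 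The first, and central, step is to show that for each $j$ the tropical bridge
\[
\Pluck(\M_j)_{>0}\times\R\longrightarrow\Pluck(\M_{j-1})_{>0},\qquad (p'_\bullet,z)\longmapsto T_{\gamma_j}(z)\cdot p'_\bullet
\]
is a bijection, with inverse $p_\bullet\mapsto\bigl(T_{\gamma_j}(-z(p_\bullet))\cdot p_\bullet,\,z(p_\bullet)\bigr)$, where $z(p_\bullet)$ is the tropicalization of the explicit ratio of Pl\"ucker coordinates in Proposition~\ref{prop:bridgedecomp}(3). This is precisely the tropical bridge decomposition to be set up in \S\ref{sec:bridge}.

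Granting this, the induction runs smoothly. When $d=0$ the positroid $\M=\{I\}$ is a single basis (the remaining elements of $[n]$ being loops or coloops), the relations \eqref{eq:trop} hold vacuously, and $\Pluck(\{I\})_{>0}=\{p(I,z_0)_\bullet\mid z_0\in\R\}$, so $z_0\mapsto p(I,z_0)_\bullet$ is the required bijection. For $d>0$ I would factor the tropical bridge map as
\[
T_{\gamma_1}(z_1)\cdots T_{\gamma_d}(z_d)\cdot p(I,z_0)_\bullet\;=\;T_{\gamma_1}(z_1)\cdot\bigl(T_{\gamma_2}(z_2)\cdots T_{\gamma_d}(z_d)\cdot p(I,z_0)_\bullet\bigr),
\]
where the bracketed composite is the tropical bridge parametrization attached to the reduction sequence $\gamma_2,\ldots,\gamma_d$ of $\M_1$, hence a bijection $\R^{d}\to\Pluck(\M_1)_{>0}$ by the inductive hypothesis; composing with the bijection $\Pluck(\M_1)_{>0}\times\R\to\Pluck(\M)_{>0}$ of the previous paragraph shows that $\R^{d+1}\to\Pluck(\M)_{>0}$ is a bijection. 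Because each $T_\gamma(\cdot)$ and the extraction $z(\cdot)$ are piecewise-linear with integer coefficients, this bijection and its inverse carry $\Z^{d+1}$ (resp.\ $\Q^{d+1}$) onto $\Pluck(\M)_{>0}(\Z)$ (resp.\ $\Pluck(\M)_{>0}(\Q)$), which is the last sentence of the theorem.

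The one genuinely delicate point is the tropical bridge decomposition itself: one must check that $T_{\gamma_j}(-z(p_\bullet))\cdot p_\bullet$ has support exactly $\M_j$ and again satisfies \eqref{eq:trop}, and that $z(p_\bullet)$ is the unique real parameter with this property. This rests on transporting the combinatorics of how the transpositions $s_i$ act on bounded affine permutations and Grassmann necklaces to the level of supports (legitimate because $\Supp(p_\bullet)$ is always a positroid by Proposition~\ref{prop:posSpeyer}), together with the elementary behaviour of the maps $T_\gamma$ read off from \eqref{eq:Deltax}. Finally, this argument yields the promised second proof of Theorem~\ref{thm:main}: given a rational $p_\bullet\in\Pluck(\M)_{>0}(\Q)$, choose $(z_0,\ldots,z_d)\in\Q^{d+1}$ mapping to $p_\bullet$, lift each $z_i$ to $t^{z_i}\in\RR_{>0}$, and set $V(t):=x_{\gamma_1}(t^{z_1})\cdots x_{\gamma_d}(t^{z_d})\cdot(t^{z_0}x_I)\in\hPi_\M(\RR_{>0})$; iterating the defining identity $\val(V(t)\cdot x_i(a(t)))=T_i(\val(a(t)))\cdot\val(V(t))$, together with $\val(t^{z_0}x_I)=p(I,z_0)_\bullet$, gives $\val(\Delta_\bullet(V(t)))=p_\bullet$, so $p_\bullet$ is realizable.
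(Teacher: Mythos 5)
Your inductive skeleton is exactly the paper's: reduce along the chain $\M=\M_0,\M_1,\ldots,\M_d=\{I\}$, handle the singleton base case, and observe that everything is piecewise-linear over $\Z$ so the integral and rational statements follow. However, there are two problems. First, your proposed inverse $p_\bullet\mapsto T_{\gamma_j}(-z(p_\bullet))\cdot p_\bullet$ is wrong: tropical bridges are not undone by negating the parameter. If $q'=T_{i,j}(a)\cdot q$, then applying $T_{i,j}(-a)$ gives, for $j\in I$, $i\notin I$, the value $\min(q_I,\,q_{I\setminus\{j\}\cup\{i\}}+a,\,q_{I\setminus\{j\}\cup\{i\}}-a)$, which does not recover $q_I$; in particular for $I\in\M_{j-1}\setminus\M_j$ (where $q_I=\infty$ but $q_{I\setminus\{j\}\cup\{i\}}<\infty$) your formula produces a finite value, so the support of $T_{\gamma_j}(-z)\cdot p_\bullet$ is strictly larger than $\M_j$. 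The min operation has no inverse, which is precisely why the paper cannot write the reduction as another tropical bridge.

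Second, and relatedly, the entire substance of the theorem is the existence and uniqueness of the factorization $p_\bullet=T_{\gamma}(a)\cdot q_\bullet$ with $q_\bullet\in\Pluck(\M')_{>0}$, i.e., the tropical analogue of Proposition~\ref{prop:bridgedecomp}; you correctly flag this as ``the one genuinely delicate point'' but then only gesture at it. In the paper this is Proposition~\ref{prop:tropreduce}, whose proof in the key case (3) occupies several pages: $q_\bullet$ is defined by an explicit recursion on the bases $K2$ of ``general type'' via \eqref{eq:ab}, one must check the recursion is independent of the choice of exchange pair $(a,b)$ (a nontrivial case analysis using \eqref{eq:trop} repeatedly), that $\Supp(q_\bullet)=\M'$, that $q_\bullet$ again satisfies all instances of \eqref{eq:trop} (cases (a)--(e), including a reduction via Lemma~\ref{lem:notgeneral}), and finally that $T_i(a)\cdot q_\bullet=p_\bullet$. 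None of this is supplied or correctly set up by your inverse formula, so the proof has a genuine gap at its central step. (Your final paragraph on realizability, and the care needed to apply Proposition~\ref{prop:bridge} only to representable vectors to avoid circularity, do match the paper.)
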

The proof of Theorem \ref{thm:bridge} is given in \S\ref{sec:bridge}.  Theorem \ref{thm:main} follows immediately from Theorem \ref{thm:bridge}.   We give another proof of Theorem~\ref{thm:param}.

\begin{proof}[Second proof of Theorem \ref{thm:param}]
Any two bridge parametrizations of $\Pi_{\M,>0}$ are related by an invertible rational transformation that is subtraction-free in both directions, see \cite{Pos}.  
For particular choices of bridge parametrizations and cluster parametrizations, \cite{MS} gives explicit invertible subtraction-free rational expressions between them; for one direction we have \cite[Theorem 3.3]{MS} and for the other combine \cite[Theorem 7.1]{MS} with \cite[Proposition 7.10]{MS}.
Tropicalizing these subtraction-free rational transformations (using \eqref{eq:tropsubs}), we conclude that $\z \in \R^d$ and $(p_I \mid I \in \CC) \in \R^\CC$ are related by an invertible piece-wise linear transformation, and this holds even integrally.  Thus the bijections of \eqref{eq:cluster} follow from Theorem \ref{thm:bridge}.
%
\end{proof}

\section{Fan structure}\label{sec:fan}

\subsection{}
A (polyhedral) {\it fan} $\F = \{C\}$ in a vector space $\R^d$ is a finite collection of closed polyhedral cones $C \subset \R^d$ satisfying the conditions: 
\begin{enumerate}
\item
If $C, C' \in \F$ then $C \cap C' \in \F$.
\item
For $C \in \F$, every face $C' \subset C$ is in $\F$.
\end{enumerate} 
We say that $\F$ is {\it complete} if the union of the cones in $\F$ is equal to $\R^d$.  In the case that $\F$ is complete, or if $\F$ is pure of some dimension $d'$ (i.e., all maximal cones have dimension $d'$), we call the maximal cones {\it chambers}.

\subsection{}
The spaces $\Pluck(\M)_{>0}$ and $\Pluck(k,n)_{>0}$ have a number of different {\it fan structures} that we now define.  All of these fan structures are compatible with equivalence: if $p_\bullet\sim p'_\bullet$ then they belong to the same cone.  Thus we can also think of these as complete fan structures on the vector spaces $\Pluck(\M)_{>0}/\!\!\sim$ and $\Pluck(k,n)_{>0}/\!\!\sim$.  

If we consider $\Pluck(\M)_{>0}$ as a subset of $\R^{\binom{[n]}{k}}$, then these fan structures are collections of cones of dimension less than or equal to $\dim(\M)+1 = |\CC|$.  If we consider $\Pluck(\M)_{>0} \simeq \R^{\CC}$ as a vector space (using Theorem \ref{thm:param}), then we have complete fan structures on $\R^{\CC}$.

\subsection{}
We begin by defining the {\it secondary fan structure}.  Two vectors $p_\bullet, q_\bullet \in \Pluck(\M)_{>0}$ belong to the same (relatively open) cone of the secondary fan structure if and only if they induce the same positroid polytope subdivision $\tDelta$ of $P_\M$ i.e. if $\tDelta(p_\bullet) = \tDelta(q_\bullet)$.  For a regular subdivision $\tDelta$ of $P_\M$ into positroid polytopes, we denote by $C(\tDelta) \subset \Pluck(\M)_{>0}$ the corresponding (closed) cone.  The faces of a $C(\tDelta) \subset \Pluck(\M)_{>0}$ are the cones $C(\tDelta')$ as $\tDelta'$ varies over all subdivisions that refine $\tDelta$.  

We define an integer $\dim(\tDelta)$ for a regular polyhedral subdivision $\tDelta$ of $P_\M$ into positroid polytopes: 
\begin{equation}
\label{eq:dimDelta}
\dim(\tDelta):= \dim(\M)+1 - \dim(C(\tDelta)).
\end{equation}
Thus if $\tDelta_0$ is the trivial subdivision of $\Delta(k,n)$, then $\dim(\tDelta_0) = k(n-k)-(n-1)$.

\subsection{}
For every $S \subset [n]$ and $a<b<c<d$ not contained in $S$, we have the three term Pl\"ucker fan whose two chambers are given by
\begin{align*}
p_{Sac} + p_{Sbd} &= p_{Sab} + p_{Scd} \leq  p_{Sad} + p_{Sbc}\\
p_{Sac} + p_{Sbd} &=  p_{Sad} + p_{Sbc} \leq  p_{Sab} + p_{Scd} 
\end{align*}
This fan has exactly three cones: the two chambers above and a third cone where $p_{Sac} + p_{Sbd} =  p_{Sad} + p_{Sbc} =  p_{Sab} + p_{Scd}$, the intersection of the two chambers.  The {\it Pl\"ucker fan structure} on $\Pluck(\M)_{>0}$ is the common refinement of all the three term Pl\"ucker fans.  In other words, two vectors $p_\bullet$ and $q_\bullet$ belong to the same relatively open cone of the Pl\"ucker fan structure if exactly which of the three quantities $p_{Sac} + p_{Sbd}, p_{Sad} + p_{Sbc}, p_{Sab} + p_{Scd}$ is minimal is the same for $p_\bullet$ and for $q_\bullet$.  

\subsection{}
The {\it positive fan structure} is the fan whose cones are the images of the domains of linearity for a positive parametrization by a cluster.  To be precise, pick a cluster $\CC$ for $\M$.  By Theorem \ref{thm:param} we identify $p_\bullet \in \Pluck(\M)_{>0}$ with a point  in $\R^{\CC}$.  For each $J \in \M$, the function $p_J$ is a piecewise-linear function on $\R^{\CC}$.  The common domains of linearity (see \S\ref{ssec:domain} for further discussion) for all the functions $p_J$, $J \in \M$ is the positive fan structure of $\Pluck(\M)_{>0}$ with respect to the cluster $\CC$.

\begin{proposition}
The positive fan structures on $\Pluck(\M)_{>0}$ induced by two different clusters $\CC$ and $\CC'$ are naturally identified via the isomorphisms of Theorem \ref{thm:param}.
\end{proposition}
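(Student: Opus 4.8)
The plan is to reduce to the case of a single mutation, and then to observe that the tropicalized transition map is linear on each cone of the positive fan structure, so that the two fans refine one another and hence coincide.

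First I would introduce $\tau = \tau_{\CC,\CC'}\colon \R^{\CC}\to\R^{\CC'}$, the composite of the two coordinate isomorphisms of Theorem~\ref{thm:param}; it is a bijection, and the assertion is that it carries the positive fan structure $\F_\CC$ onto $\F_{\CC'}$. By Proposition~\ref{prop:mutation} any two clusters of $\M$ are joined by a chain of mutations, and the corresponding maps $\tau$ compose, so I may assume $\CC' = \CC\setminus\{Sac\}\cup\{Sbd\}$ is obtained from $\CC$ by a single mutation at $Sac$, with $Sac,Sab,Scd,Sad,Sbc\in\CC$; since the four labels $Sab,Scd,Sad,Sbc$ are distinct from $Sac$, they also lie in $\CC'$. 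Using the positive tropical Pl\"ucker relation~\eqref{eq:trop} to express $p_{Sbd}$ in terms of the $\CC$-coordinates, the map $\tau$ fixes every coordinate $p_I$ with $I\in\CC\setminus\{Sac\}$ and replaces the $Sac$-coordinate by
\[
p_{Sbd} = \min\bigl(p_{Sab}+p_{Scd},\; p_{Sad}+p_{Sbc}\bigr) - p_{Sac};
\]
so $\tau$ is piecewise-linear, and it is linear on each of the two closed half-spaces cut out by the hyperplane $H = \{p_{Sab}+p_{Scd}=p_{Sad}+p_{Sbc}\}$.

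Next I would check that $\F_\CC$ refines $H$, i.e.\ that no cone $\sigma\in\F_\CC$ meets both open half-spaces. Since $Sbd\in\M$, the function $p_{Sbd}$ is by the definition of the positive fan structure linear on each cone of $\F_\CC$; but the displayed formula shows $p_{Sbd}$ is not linear across $H$, because $Sab,Scd,Sad,Sbc$ are distinct elements of $\CC$ and so $p_{Sab}+p_{Scd}$ and $p_{Sad}+p_{Sbc}$ are distinct linear forms in the $\CC$-coordinates. Hence each $\sigma\in\F_\CC$ lies on one side of $H$, and $\tau$ restricts to a linear isomorphism of $\sigma$ onto $\tau(\sigma)$. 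Now I would conclude: for $\sigma\in\F_\CC$, every $p_J$ with $J\in\M$ is linear on $\sigma$ in the $\CC$-coordinates, and since $\tau|_\sigma$ is a linear isomorphism the $\CC$-coordinates are linear in the $\CC'$-coordinates on $\tau(\sigma)$; hence every $p_J$ is linear on $\tau(\sigma)$ in the $\CC'$-coordinates, so $\tau(\sigma)$ is contained in a single cone of $\F_{\CC'}$. This says $\tau(\F_\CC)$ refines $\F_{\CC'}$; running the same argument with $\CC$ and $\CC'$ interchanged (using $\tau^{-1}=\tau_{\CC',\CC}$) shows $\F_{\CC'}$ refines $\tau(\F_\CC)$, and since both are complete fans in $\R^{\CC'}$ they must be equal.

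The only delicate point is the first step: verifying that a single Pl\"ucker mutation acts on the coordinate vector precisely by the tropicalized three-term exchange relation, with the four auxiliary labels $Sab,Scd,Sad,Sbc$ present in both clusters. After that, everything is dictated by Theorem~\ref{thm:param} and the definition of the positive fan structure, and the only routine verification remaining is the standard fact (as discussed in \S\ref{ssec:domain}) that a cone on which all the functions $p_J$ are linear lies inside a single cone of the common-refinement fan $\F_{\CC'}$.
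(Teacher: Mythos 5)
Your proof is correct, and its engine is the same as the paper's: on each cone of the positive fan all the functions $p_J$ are linear, so the transition map sends cones of one fan into cones of the other, and symmetry forces the fans to coincide. The one place you diverge is in how you establish that the transition map $\tau$ is linear on each cone of $\F_\CC$: you reduce to a single mutation via Proposition~\ref{prop:mutation}, write out the tropical exchange relation explicitly, and argue that no cone of $\F_\CC$ crosses the wall $\{p_{Sab}+p_{Scd}=p_{Sad}+p_{Sbc}\}$. The paper gets this for free without any mutation reduction: the components of $\tau$ are exactly the functions $p_J$ for $J\in\CC'\subset\M$, and every $p_J$ with $J\in\M$ is linear on each chamber of $\F_\CC$ by the very definition of the positive fan structure. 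So your mutation step is a correct but unnecessary detour; its payoff is an explicit formula for $\tau$ and a concrete identification of the wall being refined, at the cost of invoking connectivity of the mutation graph.
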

\begin{proof}
In each chamber $A$ of the positive fan structure on $\R^{\CC}$, all the functions $p_J$, $J\in \M$ are linear.  The composition and inverse of linear functions is linear, so under the piecewise-linear isomorphism $\R^{\CC} \to \R^{\CC'}$ the cone $A$ will be sent inside some chamber $A'$ of the positive fan structure on $\R^{\CC'}$, and reversing the roles of $\CC$ and $\CC'$ we see that $A$ and $A'$ are isomorphic.
\end{proof}
We may thus speak of {\it the} positive fan structure on $\Pluck(\M)_{>0}$.  

\subsection{}
We shall show that the secondary fan structure, the Pl\"ucker fan structure, and the positive fan structure coincide.
\begin{proposition}\label{prop:pure}
With any of the three fan structures, $\Pluck(\M)_{>0}$ is a polyhedral fan pure of dimension $\dim(\M)+1$.
\end{proposition}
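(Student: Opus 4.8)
The plan is to prove purity for one of the three fan structures and then deduce it for the others using the results already stated. The cleanest choice is the positive fan structure, since Theorem~\ref{thm:param} gives a global homeomorphism $\Pluck(\M)_{>0} \simeq \R^{\CC}$ with $\dim \R^{\CC} = \dim(\M)+1$, and the positive fan structure is by definition the collection of common domains of linearity of the piecewise-linear functions $p_J$, $J \in \M$, on this vector space. The key point to establish is that each such domain of linearity is a full-dimensional polyhedral cone, i.e.\ that the fan is pure of top dimension $\dim(\M)+1$, and that there are only finitely many of them.

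First I would recall that each $p_J$ is obtained by tropicalizing a subtraction-free rational expression for $\Delta_J$ in the cluster variables $\{\Delta_I \mid I \in \CC\}$ (Proposition~\ref{prop:Plucksub}), so $p_J$ is a min-plus piecewise-linear function with integer coefficients; concretely it can be built by iteratively tropicalizing three-term Pl\"ucker relations as in the proof of Theorem~\ref{thm:param}. A finite min of integer-linear functionals has finitely many domains of linearity, each a closed polyhedral cone defined over $\Z$; intersecting the finitely many such subdivisions coming from all $J \in \M$ produces a finite polyhedral fan $\F$ covering all of $\R^{\CC}$. So completeness and finiteness are immediate. What requires an argument is \emph{purity}: that every maximal cone of $\F$ has dimension exactly $\dim(\M)+1 = \dim \R^{\CC}$. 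But this is automatic for a \emph{complete} fan in $\R^N$: the union of the maximal cones is all of $\R^N$, which has a nonempty interior, so at least one maximal cone is full-dimensional; and if $C$ is a maximal cone of dimension $< N$, then a point in its relative interior has a neighborhood meeting some other cone $C'$, and by maximality $C \not\subseteq C'$, yet near that point the only cones present are faces of $C$ and of cones adjacent through its facets — a short local analysis (or: a complete fan has no maximal cone of dimension $<N$ because its support would then be contained in a finite union of lower-dimensional subspaces, which cannot equal $\R^N$) shows every maximal cone is full-dimensional. Hence $\F$ is pure of dimension $\dim(\M)+1$.

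Having established this for the positive fan structure, I would transfer it to the other two. For the Pl\"ucker fan structure: it is by definition the common refinement of the three-term Pl\"ucker fans, and the functions whose domains of linearity define the positive fan structure include (after one mutation step) all the relevant $p_{Sac}, p_{Sbd}$ etc., so the positive fan structure refines the Pl\"ucker fan structure; conversely, by the second proof of Theorem~\ref{thm:param}, any two cluster parametrizations — hence the positive fan structures they induce — are related by a piecewise-linear isomorphism whose domains of linearity are themselves unions of Pl\"ucker-fan cones, which forces the two fans to coincide. (The precise statement that secondary $=$ Pl\"ucker is \cite{OPS2}, cited in the excerpt, and secondary $=$ positive will be Theorem~\ref{thm:coincide}; but for \emph{purity} it suffices that they are all refinements of, or refined by, the positive fan structure, and two fans one of which refines the other and which have the same support are equal.) Thus all three fan structures are the same polyhedral fan, which we have just shown is pure of dimension $\dim(\M)+1$.

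The main obstacle I anticipate is the bookkeeping in showing that the positive fan structure actually \emph{equals} (not merely refines, or is refined by) the Pl\"ucker and secondary fan structures, so that the purity statement genuinely applies to all three simultaneously as phrased. The purity of a single complete fan is essentially formal; the content is the coincidence of the three structures, which rests on Theorem~\ref{thm:param} together with the subtraction-free comparison of bridge and cluster parametrizations from the second proof of Theorem~\ref{thm:param} and the identification of secondary and Pl\"ucker fans in \cite{OPS2}. I would therefore structure the write-up so that Proposition~\ref{prop:pure} cites Theorem~\ref{thm:coincide} for the coincidence and then gives the one-line complete-fan argument for purity.
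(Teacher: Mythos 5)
Your purity argument for the positive fan structure is exactly the paper's: identify $\Pluck(\M)_{>0}$ with $\R^{\CC}$ via Theorem~\ref{thm:param}, note that the common domains of linearity of finitely many piecewise-linear functions form a finite complete polyhedral fan, and use the standard fact that a finite complete fan in $\R^{N}$ is pure of dimension $N$. That part is fine.

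The problem is your treatment of the other two fan structures. You propose to deduce their purity from the coincidence of the three fans, ultimately citing Theorem~\ref{thm:coincide}; but the paper's proof of Theorem~\ref{thm:coincide} \emph{uses} Proposition~\ref{prop:pure} (``it suffices to compare cones of maximal dimension''), so this is circular. Your attempted workaround is also not sound: the claim that ``two fans one of which refines the other and which have the same support are equal'' is false (subdividing one full-dimensional cone of a complete fan produces a strict refinement with the same support), and in any case a refinement relation alone does not transfer purity in the direction you need. The repair is simply to drop the comparison altogether: the secondary and Pl\"ucker fan structures are each, on their own, finite complete polyhedral fans on $\R^{\CC}$ --- finitely many cones (finitely many positroidal subdivisions, resp.\ finitely many sign patterns among the three-term relations), each cut out by finitely many linear conditions, covering all of $\Pluck(\M)_{>0}\simeq\R^{\CC}$ --- so the same ``complete finite fan is pure'' observation applies verbatim to each of the three structures separately. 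This is what the paper does, and it keeps Proposition~\ref{prop:pure} logically prior to Theorem~\ref{thm:coincide}.
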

\begin{proof}
Any complete finite fan structure on $\R^{\CC}$ is pure of dimension $|\CC|$, i.e., $\R^{\CC}$ is the union of the closed cones of dimension exactly $|\CC|$.
By Theorem \ref{thm:param}, the set $\Pluck(\M)_{>0}$ is the image of $\R^{\CC}$ under a continuous piecewise-linear map.  Thus $\Pluck(\M)_{>0}$ is a polyhedral fan pure of dimension $\dim(\M)+1$.
%
\end{proof}

The secondary fan structure and the Pl\"ucker fan structure are shown to coincide in \cite{OPS2}. 
\begin{theorem}\label{thm:coincide}
The three fan structures on $\Pluck(\M)_{>0}$ coincide.
\end{theorem}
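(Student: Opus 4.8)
The plan is to take for granted the coincidence of the secondary and Pl\"ucker fan structures established in \cite{OPS2}, so that it suffices to prove the positive fan structure agrees with the Pl\"ucker fan structure. Since both are complete fans on $\Pluck(\M)_{>0}\simeq\R^{\CC}$ (Theorem~\ref{thm:param}, Proposition~\ref{prop:pure}), it is enough to show each refines the other, i.e.\ that every maximal cone of one is contained in a cone of the other; and by the cluster-independence of the positive fan structure we may fix any cluster $\CC$ for $\M$ throughout.

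\textbf{The positive fan refines the Pl\"ucker fan.} Let $C\subseteq\R^{\CC}$ be a maximal cone of the positive fan structure, so that every $p_J$ ($J\in\M$), viewed via Theorem~\ref{thm:param} as a function on $\R^{\CC}$, is affine on $C$. Fix $S$ of size $k-2$ and $a<b<c<d$ not in $S$. If $Sac\notin\M$ or $Sbd\notin\M$, then the positive $3$-term exchange relation \eqref{eq:3ex} (Proposition~\ref{prop:posSpeyer}), together with the fact that the tropical minimum must be attained twice, forces all of $p_{Sac}+p_{Sbd}$, $p_{Sab}+p_{Scd}$, $p_{Sad}+p_{Sbc}$ to be identically $\infty$ on $\Pluck(\M)_{>0}$; similarly, if exactly one of the pairs $\{Sab,Scd\}$, $\{Sad,Sbc\}$ lies in $\M$, then \eqref{eq:trop} holds identically. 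In either case the corresponding three-term Pl\"ucker fan is trivial and there is nothing to check. Otherwise all six of $p_{Sab},\dots,p_{Scd}$ are finite and affine on $C$, and since $p_{Sac}+p_{Sbd}=\min(p_{Sab}+p_{Scd},\,p_{Sad}+p_{Sbc})$ holds identically, the minimum of the two affine functions $p_{Sab}+p_{Scd}$ and $p_{Sad}+p_{Sbc}$ is affine on the full-dimensional cone $C$; an elementary argument then shows that on $C$ one of these two functions is everywhere $\le$ the other. Hence the data recording which of the three sums is minimal is constant on $C$, so $C$ lies in a single cone of the Pl\"ucker fan.

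\textbf{The Pl\"ucker fan refines the positive fan.} Here I would revisit the computation in the proof of Theorem~\ref{thm:param}, where each $p_J$ is produced by iterated substitution of the form $p_J=\min(p_{K_1}+p_{K_2},\,p_{K_3}+p_{K_4})-p_{K_0}$, with $K_0,K_1,K_2,K_3,K_4,J$ the six subsets of an octahedral face $F(S;abcd)$ and the induction carried out on $(w(J),d(J))$. The key point is that the branch taken in this substitution --- which of the two sums realizes the minimum --- is dictated exactly by the three-term Pl\"ucker fan attached to $S$ and $\{a,b,c,d\}$, because that relation is precisely $p_{K_0}+p_J=\min(p_{K_1}+p_{K_2},\,p_{K_3}+p_{K_4})$. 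Consequently, on a fixed cone of the Pl\"ucker fan every such branch is resolved, and an induction on $(w(J),d(J))$ --- with the base case $w(J)=0$ handled the same way, using that mutation of clusters is also governed by three-term relations --- shows that every $p_J$ is affine there. Thus each cone of the Pl\"ucker fan is contained in a cone of the positive fan.

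Combining the two inclusions gives that the positive and Pl\"ucker fan structures coincide, and with \cite{OPS2} all three coincide. I expect the main obstacle to be the bookkeeping in the second step: one must verify that every branching condition occurring in the recursive expression for a given $p_J$ is literally the trace on $\Pluck(\M)_{>0}$ of one of the three-term Pl\"ucker fans of \S\ref{sec:fan} --- in particular matching the cyclic order $\leq_a$ used in the proof of Theorem~\ref{thm:param} with the linear order $a<b<c<d$ used to index the Pl\"ucker fans, and disposing of the degenerate cases where some $K_t\notin\M$ so that the relevant minimum has effectively a single finite term. Once this identification is in place, the induction is routine.
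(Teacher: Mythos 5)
Your proposal is correct and follows essentially the same route as the paper: cite \cite{OPS2} for the secondary/Pl\"ucker coincidence, use full-dimensionality of positive-fan chambers to show that one of the two linear branches of \eqref{eq:trop} dominates identically (so the positive fan refines the Pl\"ucker fan), and then observe that within a Pl\"ucker chamber every minimum in the recursion of Theorem~\ref{thm:param} is resolved, making each $p_J$ linear there (so the Pl\"ucker fan refines the positive fan). The extra bookkeeping you flag (degenerate instances with $\infty$ entries, and matching the cyclic order in the recursion to the linear order in \eqref{eq:trop}) is harmless and resolves exactly as you expect, since the relations used in Theorem~\ref{thm:param} are literal instances of \eqref{eq:trop} after sorting.
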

\begin{proof}
We show that the Pl\"ucker fan and the positive fan agree.  By Proposition \ref{prop:pure}, it suffices to compare cones of maximal dimension $\dim(\Pi_\M)$.  Let $A \subset \R^{\CC}$ be such a cone in the positive fan structure for a cluster $\CC$ for $\M$.  Then all the functions $p_J$ are linear functions on $A$: we have $p_J = \sum_I \alpha^A_{J,I} p_I$, where $\alpha^A_{J,I} \in \Z$.  Substituting these expressions into $(p_{Sac} + p_{Sbd}) - (p_{Sab} + p_{Scd})$ or $(p_{Sac} + p_{Sbd}) - (p_{Sad} + p_{Sbc})$ we obtain two linear functions.  For each point of $A$, at least one of these linear functions vanishes.  Since $A$ has maximal dimension, one of the two linear functions is identically 0 on $A$; it follows that $A$ is completely contained in some chamber of the Pl\"ucker fan.  We have shown that the positive fan structure is a refinement of the Pl\"ucker fan structure.

Next let $A'$ be a chamber of the Pl\"ucker fan structure.  The proof of Theorem~\ref{thm:param} shows that each $p_J$ can be written as a piecewise-linear function of $p_I$, $I \in \CC$, by iteratively applying the equality \eqref{eq:trop}.  Within the chamber $A'$, the RHS of \eqref{eq:trop} is a linear function instead of a piecewise-linear function.  It follows that the restriction of $p_J$ to $A'$ is a linear function.  We have shown that the Pl\"ucker fan structure is a refinement of the positive fan structure.
\end{proof}



\begin{remark}
In \cite{SW} a particular fan structure on $\Trop_{>0} \Gr(k,n)$ is studied.  The positive parametrization in \cite{SW} is related by an invertible monomial transformation to a gauge-fix of the cluster parametrization for the cluster
$$
\CC = \{ \{1,2\ldots,i-1\} \cup \{i+j-k,\ldots,j\} \mid (i,j) \in [1,k] \times [k+1,n]\}.
$$
The tropicalization of a monomial transformation is a linear map, so the fan structure of \cite{SW} agrees with the one in Theorem~\ref{thm:coincide}.
\end{remark}

\begin{remark}
The twist map $\eta_\M$ \cite{MS} is a subtraction-free birational map that induces a birational isomorphism $\eta_\M: \Pi_\M \simeq \Pi_\M$. It also induces a piecewise-linear isomorphism $\Trop(\eta_\M) : \Pluck(\M)_{>0} \simeq\Pluck(\M)_{>0}$.  It is likely that this map preserves the fan structure i.e. $\Trop(\eta_\M)$ restricts to a linear map on each cone of the fan $\Pluck(\M)_{>0}$.
\end{remark}

\subsection{}
Theorem~\ref{thm:coincide} allows us to parametrize $\D(k,n)$ with collections of planar trees, see also \cite[Section 4]{HJJS} and \cite{BC,CGUZ}.    For a subset $S \subset [n]$, let $H_{S=1}$ denote the subspace given by intersecting $\{x_s = 1\}$ for $s \in S$.  Note that the face $H_{S=1} \cap \Delta(k,n)$ is isomorphic to $\Delta(k-|S|,n-|S|)$.

Let $\tDelta \in \D(k,n)$.  Intersecting $\tDelta$ with the face $H_{S=1}$ for $|S|=k-2$ gives a subdivision of $\Delta(2,[n] \setminus S)$, i.e. an element $\tDelta_S \in \D(2,[n] \setminus S)$.  It is well known, and explained in \S\ref{sec:M0n}, that $\D(2,[n] \setminus S)$ is in bijection with the set of planar trees $\{T_{[n] \setminus S}\}$ with leaves labeled cyclically by $[n] \setminus S$.  We obtain a map
\begin{equation}\label{eq:planartree}
\tDelta \longmapsto \T=\{T_{[n] \setminus S} \mid S \in \mbox{$\binom{[n]}{k-2}$}\}
\end{equation}
sending a hypersimplex subdivision to a collection of planar trees.

\begin{corollary}
The map \eqref{eq:planartree} is injective.
\end{corollary}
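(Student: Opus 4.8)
The plan is to show that a hypersimplex subdivision $\tDelta \in \D(k,n)$ can be reconstructed from the data $\T = \{T_{[n] \setminus S} \mid S \in \binom{[n]}{k-2}\}$ of planar trees, i.e.\ from the restrictions $\tDelta_S$ to the octahedral-type faces $H_{S=1} \cap \Delta(k,n) \simeq \Delta(2,[n]\setminus S)$. By Proposition~\ref{prop:posdiv}(2) and Theorem~\ref{thm:coincide}, $\tDelta$ corresponds to a (relatively open) cone $C(\tDelta)$ in the Pl\"ucker fan structure on $\Pluck(k,n)_{>0}$, and $C(\tDelta)$ is exactly the set of positive tropical Pl\"ucker vectors $p_\bullet$ for which, for every $S$ of size $k-2$ and every $a<b<c<d \notin S$, the specified one of the three quantities $p_{Sab}+p_{Scd}$, $p_{Sad}+p_{Sbc}$ (the minimum, recalling $p_{Sac}+p_{Sbd}$ is always that minimum by \eqref{eq:trop}) is selected. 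Thus two subdivisions with the same Pl\"ucker-fan chamber data are equal. The key point is that this chamber data is determined octahedron-by-octahedron: fixing $S$ of size $k-2$, the relevant inequalities only involve Pl\"ucker coordinates $p_{Sij}$ with $i,j \in [n]\setminus S$, which is precisely the data recorded by the restriction $\tDelta_S$ to the face $H_{S=1}$.

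First I would make precise the statement that the Pl\"ucker fan structure on $\Pluck(k,n)_{>0}$ is the common refinement of the three-term Pl\"ucker fans indexed by pairs $(S, \{a,b,c,d\})$, as already set up in \S\ref{sec:fan}; hence the open cone containing $p_\bullet$ is determined by the family of answers to ``which of the three terms is strictly smallest, or are there ties, for each $(S,\{a,b,c,d\})$.'' Second, I would observe that for fixed $S$, these answers depend only on $(p_{Sij})_{i<j,\ i,j\notin S} \in \R^{\binom{[n]\setminus S}{2}}$, which by the $k=2$ case is the defining data of a regular subdivision of $\Delta(2,[n]\setminus S)$, equivalently of a planar tree $T_{[n]\setminus S}$ with leaves cyclically labeled by $[n]\setminus S$. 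So the tree $T_{[n]\setminus S}$ (obtained as $\tDelta_S$) records, for all $a<b<c<d \notin S$, exactly which pairing is selected in the tropical three-term relation on $\{Sij\}$. Third, assembling over all $S \in \binom{[n]}{k-2}$ recovers the entire Pl\"ucker-fan chamber of $p_\bullet$, hence recovers $C(\tDelta)$, hence $\tDelta$ by Theorem~\ref{thm:coincide}. Therefore the map \eqref{eq:planartree} is injective.

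The main obstacle is making the identification of ``Pl\"ucker-fan chamber data'' with ``collection of trees'' genuinely lossless: one must check that the only information in a chamber of the Pl\"ucker fan is the list of which term is minimal in each three-term relation (true by construction of the Pl\"ucker fan structure), and that, for fixed $S$, the planar tree $T_{[n]\setminus S}$ really does encode this list for every quadruple $a<b<c<d \notin S$ --- i.e.\ that in the $k=2$ dictionary a tree and the chamber it cuts out carry the same combinatorial content. This last point is the classical statement that the chambers of the $\binom{n}{2}$-dimensional space cut out by all tropical three-term Pl\"ucker relations for $\Gr(2,m)$ are indexed by planar trees on $m$ leaves, which is recalled in \S\ref{sec:M0n}; I would cite that discussion rather than reprove it. Once these identifications are in place, injectivity is immediate, since the restriction maps $\tDelta \mapsto \tDelta_S$ jointly recover the chamber $C(\tDelta)$.
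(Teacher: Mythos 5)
Your proposal is correct and follows essentially the same route as the paper: both reduce injectivity to Theorem~\ref{thm:coincide} (secondary fan $=$ Pl\"ucker fan), note that the Pl\"ucker-fan chamber of $p_\bullet$ is exactly the list of which term attains the minimum in each three-term relation indexed by $(S,\{a,b,c,d\})$, and then observe that for fixed $S$ this list is read off from the restricted vector $(p_{Sij})$, hence from $\tDelta_S$ and the tree $T_{[n]\setminus S}$ by the $k=2$ case of the same theorem. Your write-up just makes explicit the standard compatibility (restriction of a regular subdivision to a face is the regular subdivision of the restricted weight vector) that the paper leaves implicit.
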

\begin{proof}
Suppose $\tDelta = \tDelta(p_\bullet)$ where $p_\bullet \in \Pluck(k,n)_{>0}$.  To determine $\tDelta$, by Theorem~\ref{thm:coincide} we need to know which cone of the Pl\"ucker fan structure $p_\bullet$ lies in.  Thus for every $S \subset [n]$ and $a<b<c<d$ not contained in $S$, we need to know which of the following situations we are in:\begin{align*}
p_{Sac} + p_{Sbd} &= p_{Sab} + p_{Scd} <  p_{Sad} + p_{Sbc}\\
p_{Sac} + p_{Sbd} &= p_{Sab} + p_{Scd} =  p_{Sad} + p_{Sbc}\\
p_{Sac} + p_{Sbd} &=  p_{Sad} + p_{Sbc} < p_{Sab} + p_{Scd}.
\end{align*}
Again, by Theorem~\ref{thm:coincide}, this is determined by $\tDelta_S \in \D(2,[n] \setminus S)$ which in turn is determined by $T_S$.
\end{proof}
Note however that not every collection $\{T_{[n] \setminus S} \mid S \in \binom{[n]}{k-2}\}
$ of planar trees are in the image of \eqref{eq:planartree}.

\section{Nearly convergent functions}\label{sec:Gamma}
For more details on the material of this section we refer the reader to \cite{AHL1}.
\subsection{}\label{ssec:domain}
Let $f(\x) = p(\x)/q(\x) \in \R(\x):=\R(x_1,\ldots,x_r)$ be a subtraction-free rational function i.e., both $p(\x)$ and $q(\x)$ are polynomials with positive coefficients.  The piecewise-linear function $\Trop(f)$ on $\R^r$ is obtained by the substitution \eqref{eq:tropsubs}.

Suppose now that $f(\x) \in \R_{\geq 0}[x_1^{\pm 1}, \ldots, x_r^{\pm 1}]$ is a positive Laurent polynomial.    Let $\N[f(\x)]$ denote the Newton polytope of $f(\x)$ inside $\R^r$.  This is the lattice polytope given by the convex hull of $\vv \in \Z^r$, as $\vv$ varies over lattice points such that $\alpha \x^\vv$ is a monomial appearing in $f(\x)$.  The following result is well-known.
\begin{lemma}\label{lem:Newton}
For a positive Laurent polynomial $f(\x)$, the domains of linearity of $\Trop(f)$ consist of the chambers of a complete fan $\F$, and $\F$ is equal to the inner normal fan of the polytope $\N[f(\x)]$.
\end{lemma}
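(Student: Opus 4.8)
The plan is to recognize $\Trop(f)$ as a ``support function'' attached to the polytope $\N[f(\x)]$ and then invoke the standard dictionary between the regions of linearity of a minimum of linear forms and the cones of a normal fan; concretely, I would show that the maximal domains of linearity of $\Trop(f)$ are exactly the maximal cones of the inner normal fan of $P:=\N[f(\x)]$.

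First I would unwind the definitions. Writing $f(\x)=\sum_{\vv\in A}c_\vv\,\x^\vv$ with $A\subset\Z^r$ finite and each $c_\vv>0$, and clearing the monomial denominator so that the substitution \eqref{eq:tropsubs} applies literally, one obtains
$$
\Trop(f)(X)=\min_{\vv\in A}\langle \vv,X\rangle\qquad(X\in\R^r),
$$
the positive constants $c_\vv$ contributing nothing. Since a linear functional on the polytope $P=\N[f(\x)]=\Conv(A)$ attains its minimum at a vertex, and each vertex of $\Conv(A)$ belongs to $A$, I would observe that $A$ may be replaced by the vertex set $\Vert(P)$ without changing this function, so that $\Trop(f)(X)=\min_{w\in\Vert(P)}\langle w,X\rangle$.

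Next, for each vertex $w$ of $P$ I would introduce the inner normal cone $\sigma_w:=\{X\in\R^r:\langle w,X\rangle\le\langle w',X\rangle\text{ for all }w'\in\Vert(P)\}$; by definition the collection $\F$ of the $\sigma_w$ together with all their faces is the inner normal fan of $P$. Two points are then immediate: $\F$ is complete, since any $X$ lies in $\sigma_w$ for any $w$ minimizing $\langle\cdot,X\rangle$ over $P$; and $\Trop(f)$ restricted to $\sigma_w$ is the linear function $\langle w,\cdot\rangle$, directly from the definition of $\sigma_w$, so every cone of $\F$ sits inside a single domain of linearity of $\Trop(f)$.

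The actual content of the lemma is the reverse inclusion: no region of linearity is strictly larger than a maximal cone of $\F$. The argument I have in mind is short: if $\Trop(f)$ agrees with a single linear form $\ell$ on a connected open set $U$, pick $X_0\in U$ and a vertex $w$ with $X_0\in\sigma_w$; then $\ell-\langle w,\cdot\rangle$ is a linear form which is $\le 0$ throughout $U$ and vanishes at the interior point $X_0$, hence is identically zero, whence $\ell=\langle w,\cdot\rangle$ everywhere and $U\subseteq\sigma_w$. Combining this with the previous paragraph shows that the maximal domains of linearity of $\Trop(f)$ are precisely the interiors of the cones $\sigma_w$, $w\in\Vert(P)$, and that the common refinement of the walls $\langle w,\cdot\rangle=\langle w',\cdot\rangle$ cuts out exactly the lower-dimensional cones, so the fan of domains of linearity coincides with $\F$. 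I do not anticipate any real obstacle here; the one step requiring a moment's care is this maximality argument — ruling out ``accidental'' linearity across a wall — and it is handled by the elementary observation just quoted.
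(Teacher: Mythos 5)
Your proof is correct. The paper states this lemma as ``well-known'' and supplies no proof, so there is nothing to compare against; your argument is the standard one (identify $\Trop(f)$ with the minimum of $\langle \vv,\cdot\rangle$ over the support, reduce to the vertex set of $\N[f(\x)]$, and use the elementary fact that a linear form which is $\le 0$ on an open set and vanishes at an interior point is identically zero to rule out linearity regions larger than a normal cone), and it fills the gap cleanly.
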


The normal fan of the Minkowski sum of two polytopes is the common refinement of the two normal fans.  
Thus if $f_1(\x),f_2(\x),\ldots,f_a(\x)$ are positive Laurent polynomials, then the common domains of linearity give a complete fan that is equal to the inner normal fan of the Minkowski sum $\N[f_1(\x)]+\cdots +\N[f_a(\x)]$.

\subsection{}
A subtraction-free rational function $f(\x) \in  \R(\x)$ is called {\it nearly convergent} if the function $\Trop(f(\x))$ takes nonnegative values on the whole of $\R^r$.  

\begin{lemma}\label{lem:nearlyconverge}
A  subtraction-free rational function $f(\x) = p(\x)/q(\x)$ is nearly convergent if and only if $\N[p(\x)] \subset \N[q(\x)]$.
\end{lemma}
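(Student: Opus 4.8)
The plan is to reduce the statement to a question about Newton polytopes and supporting hyperplanes, using the description of $\Trop(f)$ in terms of $p$ and $q$. Write $f(\x) = p(\x)/q(\x)$ with $p,q \in \R_{\geq 0}[x_1^{\pm 1},\ldots,x_r^{\pm 1}]$, so that $\Trop(f)(\x) = \Trop(p)(\x) - \Trop(q)(\x)$, where for a positive Laurent polynomial $h = \sum_{\vv} c_\vv \x^\vv$ we have $\Trop(h)(\z) = \min_{\vv \in \N[h]} \langle \vv, \z\rangle$ (the minimum over monomials of $h$, which is attained at a vertex of $\N[h]$). Thus $f$ is nearly convergent if and only if for every $\z \in \R^r$,
\begin{equation*}
\min_{\vv \in \N[p]} \langle \vv, \z \rangle \;\geq\; \min_{\vv \in \N[q]} \langle \vv, \z \rangle.
\end{equation*}

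First I would prove the ``if'' direction. Suppose $\N[p] \subseteq \N[q]$. Fix $\z \in \R^r$ and let $\vv_0 \in \N[p]$ achieve $\min_{\vv \in \N[p]} \langle \vv,\z\rangle$. Since $\vv_0 \in \N[q]$ as well, we get $\min_{\vv \in \N[q]}\langle \vv, \z\rangle \leq \langle \vv_0, \z\rangle = \min_{\vv \in \N[p]}\langle \vv,\z\rangle$, which is exactly the displayed inequality. Hence $\Trop(f) \geq 0$ everywhere and $f$ is nearly convergent.

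For the ``only if'' direction, I would argue by contrapositive: assume $\N[p] \not\subseteq \N[q]$ and produce a point $\z$ where $\Trop(f)(\z) < 0$. Pick a vertex $\vv_0$ of $\N[p]$ with $\vv_0 \notin \N[q]$ (such a vertex exists: if all vertices of $\N[p]$ lay in $\N[q]$ then $\N[p] \subseteq \N[q]$ by convexity). Since $\N[q]$ is a closed convex polytope and $\vv_0 \notin \N[q]$, the separating hyperplane theorem gives $\z \in \R^r$ with $\langle \vv_0, \z\rangle < \langle \vv, \z\rangle$ for all $\vv \in \N[q]$. Then $\min_{\vv \in \N[p]}\langle \vv, \z\rangle \leq \langle \vv_0, \z\rangle < \min_{\vv \in \N[q]} \langle \vv, \z\rangle$, so $\Trop(f)(\z) < 0$, contradicting near convergence.

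The only real subtlety — and the step I would be most careful about — is the precise relationship between $\Trop(h)$ and $\N[h]$ for a positive Laurent polynomial with possibly many monomials: one must note that $\min$ over all monomial exponents equals $\min$ over the vertices of the Newton polytope, which is Lemma~\ref{lem:Newton} together with the elementary fact that a linear functional on a polytope attains its minimum at a vertex. One should also observe that the representation $f = p/q$ is not unique (one may clear or introduce common monomial factors), but near convergence and the condition $\N[p] \subseteq \N[q]$ are both unchanged under multiplying $p$ and $q$ by the same monomial $\x^{\mathbf{w}}$ (this shifts both Newton polytopes by $\mathbf{w}$ and shifts $\Trop(f)$ by the linear function $\langle \mathbf{w}, \cdot\rangle - \langle \mathbf{w},\cdot\rangle = 0$), so the statement is well-posed; I would include a sentence to that effect. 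Everything else is routine convex geometry.
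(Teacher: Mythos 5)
Your proof is correct and follows essentially the same route as the paper's: both arguments come down to comparing the support functions $\min_{\vv\in\N[p]}\langle\vv,\z\rangle$ and $\min_{\vv\in\N[q]}\langle\vv,\z\rangle$ and invoking the separating-hyperplane characterization of when a point lies outside a polytope (the paper just reduces first to the case of a monomial numerator $\x^\vv/q(\x)$, which is exactly your step of testing a single vertex $\vv_0$ of $\N[p]$). The well-posedness remark is a nice touch but not needed, since the lemma is stated for a fixed representation $p/q$.
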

\begin{proof}
Write $p(\x) = \sum_\vv \alpha_\vv \x^\vv$.  Then  
$$\Trop(f(\x)) = \Trop(\sum_\vv \alpha_\vv \x^\vv/q(\x) ) = \min_\vv(\Trop(\x^\vv/q(\x)).$$
Thus $\Trop(f(\x))$ is nonnegative if and only if $\Trop(\x^\vv/q(\x))$ is nonnegative for all $\vv$ such that a monomial $\alpha_\vv \x^\vv$ occurs in $p(\x)$.  Thus it suffices to establish the claim for the case $f(\x) = \x^\vv/q(\x)$, i.e., $\Trop(f(\x))$ is nonnegative if and only if $\vv \in \N[q(\x)]$.  This follows from the following observation: a point $\vv \in \Z^r$ is outside a polytope $P$ if and only if there exists a vector $\a \in \R^r$ such that $\a \cdot \vv < \min(\a \cdot \u \mid \u \mbox{ a vertex of $P$})$.
\end{proof}

\subsection{}\label{ssec:topgauge}
Set $d = k(n-k)$ and $r = d-(n-1)$.  Fix $\CC \subset \binom{[n]}{k}$ a cluster and $\G \subset \CC$ a gauge-fix as in \S\ref{ssec:gaugefix}.  Let $\CC \setminus \G = \{J_1,J_2,\ldots,J_r\}$ and denote $x_i:= \Delta_{J_i}$.  Let $\T(\CC,\G) \simeq (\C^\times)^{r}$ be the subtorus of $\T(\CC) := \Spec(\C[\Delta_I \mid I \in \CC])$ satisfying $\Delta_J = 1$ for all $J \in \G$.  Let $\Gr(k,n)_\G \subset \Gr(k,n)$ be the subspace where $\Delta_J = 1$ for all $J \in \G$.  We have a rational map $\pi_{\CC,\G}: \T(\CC,\G) \to \Gr(k,n)_\G$ induced by the positive parametrization $\T(\CC) \to \hGr(k,n)$ in \S\ref{ssec:posparam}.  We identify $\Gr(k,n)_\G$ birationally with $\Conf(k,n)$.  The following result follows from \eqref{eq:posgf} and Proposition~\ref{prop:Laurent}. 

\begin{proposition}\label{prop:picg}
The map $\pi_{\CC,\G}$ is birational, the restriction to $\R_{>0}^{d-(n-1)}$ is a homeomorphism onto $\Conf(k,n)_{>0}$, and every Pl\"ucker coordinate $\Delta_I$ pulls back under $\pi_{\CC,\G}$ to a positive Laurent polynomial $\Delta_I(\x)$ in $x_1,\ldots,x_r$.
\end{proposition}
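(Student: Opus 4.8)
The plan is to read off all three assertions directly from the cluster description of $\C[\hGr(k,n)]$ together with the defining property of a gauge-fix. For \emph{birationality}, I would first recall from \S\ref{ssec:posparam} (the uniform-matroid instance of the Laurent phenomenon) that the cluster torus $\T(\CC) = \Spec(\C[\Delta_I^{\pm 1} \mid I \in \CC])$ is a dense open subtorus of $\hGr(k,n)$, where $|\CC| = \dim\Gr(k,n)+1 = k(n-k)+1$. Since $\G \subset \CC$, the coordinate characters $\{\Delta_J \mid J \in \G\}$ are part of a basis of the character lattice $X(\T(\CC))$, so there is a split exact sequence $1 \to \T(\CC,\G) \to \T(\CC) \to (\C^\times)^{\G} \to 1$ with $\T(\CC,\G) \cong (\C^\times)^{r}$, $r = k(n-k)+1-n$. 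On the target side, the definition of a gauge-fix (\S\ref{ssec:gaugefix}) says exactly that $\{e_J \mid J \in \G\}$ is a $\Z$-basis of $X(\hT)$, so $\hat t \mapsto (e_J(\hat t))_{J \in \G}$ identifies $\hT$ with $(\C^\times)^{\G}$; under this identification the $\hT$-action on $\hGr(k,n)$ translates the coordinates $\Delta_J$, $J \in \G$, so the slice $\Gr(k,n)_\G = \{\Delta_J = 1 \mid J \in \G\}$ is a rational section of the $\hT$-quotient of $\hGr(k,n)$. Combining the two pictures, $\pi_{\CC,\G}$ is nothing but the inclusion $\T(\CC,\G) = \T(\CC)\cap\Gr(k,n)_\G \hookrightarrow \Gr(k,n)_\G$, an open immersion with dense image (because $\T(\CC)$ is $\hT$-stable and dense open and $\Gr(k,n)_\G$ is transverse to the $\hT$-orbits); composing with the birational identification $\Gr(k,n)_\G \simeq \Conf(k,n)$ from the setup gives the claim.

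The \emph{positive Laurent polynomial} statement should then be immediate from Proposition~\ref{prop:Laurent}: each $\Delta_I$ with $I \in \binom{[n]}{k}$ is a positive Laurent polynomial in $\{\Delta_J \mid J \in \CC\}$, and pulling back along $\pi_{\CC,\G}$ amounts to specializing $\Delta_J \mapsto 1$ for $J \in \G$ and renaming the remaining $\Delta_{J_i}$ as $x_i$; setting some variables of a positive Laurent polynomial equal to $1$ again yields a positive Laurent polynomial, so $\Delta_I(\x) \in \R_{\geq 0}[x_1^{\pm 1},\dots,x_r^{\pm 1}]$.

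For the \emph{homeomorphism onto $\Conf(k,n)_{>0}$} I would pass to positive points. By the positivity clause of Proposition~\ref{prop:Laurent}, a point all of whose $\CC$-Pl\"ucker coordinates are positive has all Pl\"ucker coordinates positive (and the converse is clear), so the positive parametrization attached to $\CC$ restricts to a homeomorphism onto $\Gr(k,n)_{>0}$, with continuous inverse the projection to the $\CC$-coordinates. Restricting the source to $\R_{>0}^{\CC \setminus \G} = \R_{>0}^r$, equivalently imposing $\Delta_J = 1$ for $J \in \G$, gives a homeomorphism $\R_{>0}^r \xrightarrow{\ \sim\ } \{V \in \Gr(k,n)_{>0} \mid \Delta_J(V) = 1 \text{ for all } J \in \G\}$, and by \eqref{eq:posgf} applied to the uniform matroid, for which $\Pi_{\M,>0} = \Gr(k,n)_{>0}$, the right-hand side is precisely $\Gr(k,n)_{>0}/T_{>0} = \Conf(k,n)_{>0}$.

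I expect the one genuinely fiddly point to sit inside the birationality step, namely checking that $\Gr(k,n)_\G$ really is a rational section of the $\hT$-quotient of $\hGr(k,n)$ (it meets a generic orbit in exactly one point) and that $\pi_{\CC,\G}$ surjects onto a dense open of it rather than merely mapping into it. But this is exactly what the gauge-fix axiom --- $\{e_J \mid J \in \G\}$ a $\Z$-basis of $X(\hT)$ --- is designed to provide, and once it is in place the proof uses nothing beyond \S\ref{ssec:posparam}, Proposition~\ref{prop:Laurent}, and \eqref{eq:posgf}.
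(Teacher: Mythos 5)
Your proposal is correct and follows exactly the route the paper intends: the paper's ``proof'' is only the one-line citation of \eqref{eq:posgf} and Proposition~\ref{prop:Laurent}, and your argument is a faithful elaboration of precisely those ingredients together with the cluster-torus/Laurent-phenomenon facts from \S\ref{ssec:posparam} and the gauge-fix definition from \S\ref{ssec:gaugefix}. The only points you leave implicit (that a nonzero positive Laurent polynomial is strictly positive on $\R_{>0}^{\CC}$, and that the cluster torus is the full nonvanishing locus of the $\CC$-coordinates, so that totally positive points lie in it) are immediate and are also assumed without comment in the paper.
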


\subsection{}

 A Laurent monomial 
\begin{equation}\label{eq:M}
M= \prod_{I\in \binom{[n]}{k}} \Delta_I^{a_I}, \qquad a_I \in \Z
\end{equation}
in the Pl\"ucker coordinates is $T$-invariant if it has weight 0, i.e., $\sum_I a_I e_I = 0$.  
For fixed $(\CC,\G)$, each Laurent monomial $M$ \eqref{eq:M} pulls back to a subtraction-free rational function $M(\x)$ by Proposition~\ref{prop:picg}.  We say that $M$ is nearly convergent with respect to $(\CC,\G)$ if $M(\x)$ is nearly converegent.  

\begin{lemma}
Let $M$ be a $T$-invariant monomial and let $(\CC,\G)$ and $(\CC',\G')$ be two choices of cluster and gauge-fix.  Then $M$ is nearly convergent with respect to $(\CC,\G)$ if and only if it is nearly convergent with respect to $(\CC',\G')$.
\end{lemma}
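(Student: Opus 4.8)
The plan is to reduce both conditions to a single intrinsic one: that the linear functional $p_\bullet \mapsto \sum_I a_I p_I$ is nonnegative on the positive Dressian $\Pluck(k,n)_{>0}$. Since $M$ is $T$-invariant we have $\sum_I a_I e_I = 0$, so $\sum_I a_I p_I$ depends only on the equivalence class of $p_\bullet$ and makes no reference to $(\CC,\G)$; once we show that nearly convergence of $M$ with respect to $(\CC,\G)$ is equivalent to nonnegativity of this functional on $\Pluck(k,n)_{>0}$, the lemma follows immediately (and symmetrically for $(\CC',\G')$). Write $r = k(n-k)-(n-1)$ and $\CC \setminus \G = \{J_1,\ldots,J_r\}$ with $x_i = \Delta_{J_i}$, so that $M$ pulls back under $\pi_{\CC,\G}$ to $M(\x) = \prod_I \Delta_I(\x)^{a_I}$, where each $\Delta_I(\x)$ is a positive Laurent polynomial in $\x = (x_1,\ldots,x_r)$ by Proposition~\ref{prop:picg}.

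The key step is to compute $\Trop(\Delta_I(\x))$. I will use the standard fact that $\Trop(f)$ depends only on a subtraction-free rational function $f$ and not on a chosen subtraction-free presentation of it, e.g.\ because $\Trop(f)(X_1,\ldots,X_r) = \lim_{\epsilon \to 0^+}\bigl(-\epsilon \log f(e^{-X_1/\epsilon},\ldots,e^{-X_r/\epsilon})\bigr)$. Now, as in the proof of Proposition~\ref{prop:Plucksub} (which invokes the proof of Theorem~\ref{thm:param}), the rational function $\Delta_I$ is obtained from the cluster variables $\{\Delta_J \mid J \in \CC\}$ by iteratively substituting $\Delta_{Sac} = (\Delta_{Sab}\Delta_{Scd} + \Delta_{Sad}\Delta_{Sbc})/\Delta_{Sbd}$, i.e.\ \eqref{eq:threeterm}, and then setting $\Delta_J = 1$ for $J \in \G$. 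Applying $\Trop$ to this computation and using that \eqref{eq:trop} is exactly the image of \eqref{eq:threeterm} under \eqref{eq:tropsubs}, we recover precisely the piecewise-linear recursion appearing in the proof of Theorem~\ref{thm:param}, carried out with $p_J = 0$ for $J \in \G$. Hence, under the bijection $\Pluck(k,n)_{>0}/\!\!\sim \,\to\, \R^{\CC \setminus \G}$ of Corollary~\ref{cor:param}, the function $\Trop(\Delta_I(\x))$ on $\R^r$ corresponds to $[p_\bullet] \mapsto p'_I$, where $p'_\bullet$ is the representative of the class with $p'_J = 0$ for $J \in \G$.

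It follows that $\Trop(M(\x)) = \sum_I a_I \Trop(\Delta_I(\x))$ corresponds, under the same bijection, to $[p_\bullet] \mapsto \sum_I a_I p'_I = \sum_I a_I p_I$, the last equality by $T$-invariance. Since the bijection of Corollary~\ref{cor:param} surjects onto $\R^r$, the function $\Trop(M(\x))$ is nonnegative on all of $\R^r$ if and only if $\sum_I a_I p_I \geq 0$ for every $p_\bullet \in \Pluck(k,n)_{>0}$. This condition does not involve $(\CC,\G)$, and the identical argument for $(\CC',\G')$ yields the same condition, so the two notions of nearly convergence coincide.

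I expect the main obstacle to be the key step of the second paragraph: verifying carefully that tropicalizing the positive Laurent polynomial $\Delta_I(\x)$ produced by the Laurent phenomenon yields the same piecewise-linear function as running the tropical recursion of Theorem~\ref{thm:param}. This rests on presentation-independence of $\Trop$ on subtraction-free expressions together with the fact that the iterated three-term substitutions compute $\Delta_I$ as an honest rational function, so that tropicalization may be applied term by term. An alternative that bypasses Corollary~\ref{cor:param} is to work with the transition map $\phi \colon \T(\CC,\G)\dashrightarrow\T(\CC',\G')$ obtained by composing $\pi_{\CC,\G}$, the birational identifications with $\Conf(k,n)$, and $\pi_{\CC',\G'}^{-1}$: using Proposition~\ref{prop:picg} and the linear algebra of gauge-fixes one checks that $\phi$ is subtraction-free in both directions, so $\Trop(\phi)$ is a piecewise-linear bijection of $\R^r$; since $\pi_{\CC,\G}^*M = \phi^*\pi_{\CC',\G'}^*M$ by $T$-invariance, we get $\Trop(\pi_{\CC,\G}^*M) = \Trop(\pi_{\CC',\G'}^*M)\circ\Trop(\phi)$, and surjectivity of $\Trop(\phi)$ again gives the claim.
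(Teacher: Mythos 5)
Your proposal is correct, and your main argument takes a genuinely different route from the paper's. The paper's proof is a two-step change-of-coordinates argument: first, $T$-invariance of $M$ shows that near convergence with respect to $(\CC,\G)$ is equivalent to near convergence of the pullback to the full cluster torus $\T(\CC)$ (so the gauge-fix is irrelevant); second, the transition maps between the positive parametrizations of two clusters are subtraction-free in both directions, so their tropicalizations are piecewise-linear bijections of $\R^r$ and the nonnegativity of $\Trop(M)$ is preserved. This is exactly the alternative you sketch in your last paragraph via the map $\phi$. Your main route instead produces an intrinsic characterization: writing $M=\prod_I\Delta_I^{a_I}$, the function $M$ is nearly convergent if and only if the linear functional $p_\bullet\mapsto\sum_I a_I p_I$ is nonnegative on $\Pluck(k,n)_{>0}$, which manifestly does not mention $(\CC,\G)$. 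The step you correctly flag as the crux --- that $\Trop(\Delta_I(\x))$ computed from the positive Laurent polynomial agrees with the piecewise-linear function $p_I$ from the recursion of Theorem~\ref{thm:param} --- does go through: $\Trop$ is a well-defined semifield homomorphism on subtraction-free rational functions (positive coefficients prevent cancellation, so $\Trop(fg)=\Trop(f)+\Trop(g)$ and $\Trop(f+g)=\min(\Trop(f),\Trop(g))$, and hence $\Trop$ is independent of the subtraction-free presentation, as your dequantization formula also shows), so tropicalizing the iterated three-term substitutions of Proposition~\ref{prop:Plucksub} term by term yields exactly the tropical recursion. Your approach costs more (it leans on Theorem~\ref{thm:param}, Corollary~\ref{cor:param}, and the identification of tropicalized cluster expansions with the Dressian coordinates, essentially the content of one direction of Theorem~\ref{thm:coincide}), but it buys a cleaner statement that is in fact what the paper implicitly uses later, e.g.\ in Proposition~\ref{prop:strata}, where near convergence is tested by evaluating $\Trop(M)$ on positive tropical Pl\"ucker vectors. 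Since everything you invoke precedes this lemma in the paper, there is no circularity.
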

\begin{proof}
Pulling $M$ back to $\T(\CC)$ we have the notion of $M$ being nearly convergent with respect to $\CC$.  The $T$-invariance of $M$ implies that $M$ is nearly convergent with respect to $\CC$ if and only if it is nearly convergent with respect to some $(\CC,\G)$.  The positive parametrizations of two clusters $\CC$ and $\CC'$ are related by invertible subtraction-free rational transformations, and this implies that the notion of nearly convergent does not depend on cluster.
\end{proof}

\subsection{}\label{ssec:Px}
Let $P(\x) = P(x_1,\ldots,x_r) = \prod_I \Delta_I(x_1,\ldots,x_r)$ denote the product of all the Pl\"ucker variables, considered as a Laurent polynomial in $x_1,\ldots,x_r$.  Let $P(k,n) = \N[P(\x)] \subset \R^r$ be the Newton polytope of $P(\x)$.  Thus $P(k,n)$ is the Minkowski sum of the Newton polytopes $\N[\Delta_I(\x)]$.  By Theorem~\ref{thm:coincide} (specifically, the equivalence of the secondary fan structure and the positive fan structure) and Lemma~\ref{lem:Newton}, we have the following.
\begin{proposition}\label{prop:Ft}
There is a bijection $F \mapsto \tDelta(F)$ between the set of faces $\{F \subset P(k,n)\}$ of $P(k,n)$ and the set $\D(k,n)$ of regular subdivisions of the hypersimplex into positroid polytopes.
\end{proposition}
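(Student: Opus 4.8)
The plan is to recognize the inner normal fan of the polytope $P(k,n)$ as the positive fan structure on $\Pluck(k,n)_{>0}/\!\!\sim$, and then to combine the classical face--normal-cone correspondence for a polytope with Theorem~\ref{thm:coincide}.

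First I would pass to the gauge-fixed picture of \S\ref{ssec:topgauge}. By Corollary~\ref{cor:param} we identify $\R^r$, with coordinates $x_1 = \Delta_{J_1},\ldots,x_r = \Delta_{J_r}$ where $\CC \setminus \G = \{J_1,\ldots,J_r\}$, with $\Pluck(k,n)_{>0}/\!\!\sim$; under this identification the piecewise-linear function $p_I$ appearing in the definition of the positive fan structure becomes exactly $\Trop(\Delta_I(\x))$, for $\Delta_I(\x)$ the positive Laurent polynomial of Proposition~\ref{prop:picg}. By Lemma~\ref{lem:Newton} the domains of linearity of $\Trop(\Delta_I(\x))$ are the chambers of the inner normal fan of $\N[\Delta_I(\x)]$, and since the inner normal fan of a Minkowski sum is the common refinement of the summands' fans, the common domains of linearity of the functions $\Trop(\Delta_I(\x))$ over all $I \in \binom{[n]}{k}$ are the chambers of the inner normal fan of $P(k,n) = \sum_I \N[\Delta_I(\x)]$. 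These common domains of linearity are, by definition, the positive fan structure on $\Pluck(k,n)_{>0}$ for the cluster $\CC$, descended to $\R^r$; hence the inner normal fan of $P(k,n)$ is the positive fan structure, and by Theorem~\ref{thm:coincide} it equals the secondary fan structure.

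Next I would invoke the standard fact that, for any polytope $Q$, assigning to a nonempty face $F$ its normal cone (the set of linear functionals on the ambient space that are minimized on $Q$ exactly along $F$) gives an inclusion-reversing bijection from the face poset of $Q$ onto the poset of cones of the inner normal fan of $Q$. Applied to $Q = P(k,n)$ this produces a bijection between faces $F \subset P(k,n)$ and cones of the positive $=$ secondary fan structure. On the other hand, by construction of the secondary fan structure in \S\ref{sec:fan} together with Proposition~\ref{prop:posdiv}(2), its cones are in bijection with the regular positroid subdivisions of $\Delta(k,n)$, i.e.\ with $\D(k,n)$, via $\tDelta \mapsto C(\tDelta)$. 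Composing the two bijections yields the desired map $F \mapsto \tDelta(F)$, where $\tDelta(F)$ is the positroid subdivision whose secondary cone is the normal cone of $F$; in particular $P(k,n)$ itself corresponds to the cone $\{0\}$ and hence to the trivial subdivision $\tDelta_0$, matching $\dim \tDelta_0 = r$ in \eqref{eq:dimDelta}, and since $\{0\}$ being a cone of the fan forces $P(k,n)$ to be full-dimensional in $\R^r$, the normal fan is genuinely complete.

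I do not anticipate a real obstacle here: all the ingredients --- Lemma~\ref{lem:Newton}, the Minkowski-sum refinement property, the definition of the positive fan structure, and Theorem~\ref{thm:coincide} --- are already established, and the face--cone correspondence is classical. The closest thing to an obstacle is bookkeeping: making sure the cluster and gauge-fix used to define $P(\x)$ are the same ones used to define the positive fan structure (so that $\Trop(\Delta_I(\x))$ really is the function $p_I$), that the fan structures are $\sim$-invariant and therefore descend to $\R^r$, and that one keeps straight the order-reversal in the face--cone correspondence so that the extremal cases (the whole polytope versus the trivial subdivision, a vertex versus a finest positroid subdivision) line up correctly.
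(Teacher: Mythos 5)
Your proposal is correct and follows essentially the same route as the paper, which derives Proposition~\ref{prop:Ft} directly from Lemma~\ref{lem:Newton} (the normal fan of the Minkowski sum $P(k,n)=\sum_I\N[\Delta_I(\x)]$ is the common refinement of the domains of linearity of the $\Trop(\Delta_I(\x))$, i.e.\ the positive fan structure) together with Theorem~\ref{thm:coincide} identifying this with the secondary fan, whose cones index $\D(k,n)$. Your additional bookkeeping about the gauge-fix, the descent to $\Pluck(k,n)_{>0}/\!\!\sim\;\simeq\R^r$, and the order-reversing face--normal-cone correspondence is exactly the implicit content of the paper's one-line justification.
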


\subsection{}\label{ssec:Gamma}

Let
$$
\Gamma=\{M \mid \mbox{$M$ is $T$-invariant and nearly convergent}\}
$$ 
denote the (finitely-generated) monoid of nearly convergent, $T$-invariant, Laurent monomials in $\Delta_I$.  Let $\C[\Gamma] \subset \C(\Gr(k,n))^T$ be the subring of $T$-invariant rational functions on the Grassmannian generated by $M \in \Gamma$.  We will also identify $\C[\Gamma]$ with the subring $\C[M(\x) \mid M \in \Gamma] \subset \C(\x)$ of rational functions on the torus $\T(\CC,\G)$.

The following result follows from Lemma~\ref{lem:nearlyconverge} and the fact that each variable $x_i$ is the image in $\C(\x)$ of some $T$-invariant monomial $M$.  See also \cite[Section 10]{AHL1}.
\begin{lemma}\label{lem:vP}
The ring $\C[\Gamma] \subset \C(\x)$ is spanned by the rational functions $\x^\vv/P(\x)^\ell$ for an integer $\ell \geq 1$ and $\vv \in \ell \,P(k,n) \cap \Z^r$.
\end{lemma}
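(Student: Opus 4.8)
I would prove Lemma~\ref{lem:vP} by comparing the ring $\C[\Gamma]$ with the semigroup ring attached to the polytope $P(k,n)$. The key point is that a $T$-invariant monomial $M = \prod_I \Delta_I^{a_I}$ with $\sum_I a_I e_I = 0$ pulls back, under the cluster parametrization $\pi_{\CC,\G}$, to a subtraction-free rational function $M(\x)$, and by Proposition~\ref{prop:Laurent} each $\Delta_I(\x)$ is a positive Laurent polynomial; so $M(\x)$ is a ratio of products of positive Laurent polynomials. First I would observe that, by Lemma~\ref{lem:nearlyconverge}, $M$ is nearly convergent precisely when the numerator Newton polytope is contained in the denominator Newton polytope. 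The natural normalization is to clear denominators using $P(\x)=\prod_I \Delta_I(\x)$: for any $M\in\Gamma$ of total degree $\deg(M)=\sum_I a_I =: m k$ (the $T$-invariance forces $\sum a_I e_I = 0$, hence $\sum a_I = mk$ for some integer $m$, after using the relation $\sum_i (e_I)_i = k$), multiplying numerator and denominator by suitable further $\Delta_I$'s one can rewrite $M(\x)$ with denominator a pure power $P(\x)^\ell$. This is where I would be careful about signs of the $a_I$; the clean way is to first treat $M$ with all $a_I \geq 0$ in the numerator and all negative ones in the denominator, then pad both sides.

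Second, I would make precise the statement that $\C[\Gamma]$ is spanned by the $\x^\vv/P(\x)^\ell$. One inclusion is immediate: each such $\x^\vv/P(\x)^\ell$ with $\vv\in \ell\,P(k,n)\cap\Z^r$ is a Laurent monomial divided by $P(\x)^\ell$, and since $P(\x)$ is (up to a unit monomial) a product of $T$-invariant ratios $\Delta_I/\Delta_I$... more precisely, I would invoke the hint in the excerpt: each coordinate $x_i = \Delta_{J_i}$ is the image of some $T$-invariant monomial $M_i$ (because the gauge-fix $\G$ and the cluster $\CC$ together span $X(\hT)$, so every $e_{J_i}$ is an integer combination of the $e_J$, $J\in\G$, modulo the weight lattice relation, hence $\Delta_{J_i}\prod_{J\in\G}\Delta_J^{c_J}$ is $T$-invariant after including the correct power of a necklace-Plücker combination). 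Since $P(\x)\in\C[\Gamma]$ as well (it is $T$-invariant up to a monomial, and nearly convergent with $\Trop(P)\equiv 0$ on its own Newton polytope... here I'd rather say $P(\x)/P(\x)=1$ and argue via the definition of $\Gamma$ directly), the element $\x^\vv/P(\x)^\ell$ lies in $\C[\Gamma]$ whenever $\x^\vv$ is itself expressible; the constraint $\vv \in \ell\,P(k,n)$ is exactly the near-convergence condition by Lemma~\ref{lem:nearlyconverge} applied to $f = \x^\vv/P(\x)^\ell$, since $\N[P(\x)^\ell] = \ell\,P(k,n)$.

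Third, for the reverse inclusion (spanning), take $M\in\Gamma$; write $M(\x) = N(\x)/D(\x)$ with $N,D$ products of the positive Laurent polynomials $\Delta_I(\x)$. Near convergence gives $\N[N]\subseteq\N[D]$. Now multiply: $M(\x) = \bigl(N(\x)\prod_I \Delta_I(\x)^{b_I}\bigr)/\bigl(D(\x)\prod_I\Delta_I(\x)^{b_I}\bigr)$; choosing the $b_I$ so that $D(\x)\prod_I\Delta_I(\x)^{b_I} = P(\x)^\ell$ for a common $\ell$ (possible since $P$ is the product of all $\Delta_I$'s exactly once), we get $M(\x) = \bigl(\text{positive Laurent poly }R(\x)\bigr)/P(\x)^\ell$ with $\N[R]\subseteq\N[P^\ell]=\ell\,P(k,n)$ by the Minkowski-sum computation in \S\ref{ssec:Px} and monotonicity of Newton polytopes under multiplication. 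Expanding $R(\x)=\sum_\vv \alpha_\vv\x^\vv$ with each $\vv\in\ell\,P(k,n)\cap\Z^r$ exhibits $M(\x)$ as a nonnegative combination of the claimed spanning set. \textbf{The main obstacle} I expect is the bookkeeping in the padding step: ensuring that after multiplying numerator and denominator by products of $\Delta_I$'s one lands exactly on a pure power $P(\x)^\ell$ rather than an arbitrary product, and keeping the $T$-invariance/homogeneity consistent throughout — this is the place where the precise relation $\sum_I a_I e_I = 0$, the shape of the gauge-fix, and the identity $P=\prod_I\Delta_I$ all have to be used together. Everything else is a routine application of Lemma~\ref{lem:nearlyconverge}, Lemma~\ref{lem:Newton}, and Proposition~\ref{prop:Laurent}.
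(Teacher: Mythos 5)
Your proposal is correct and follows essentially the same route as the paper, which proves the lemma exactly by combining Lemma~\ref{lem:nearlyconverge} (near convergence of $\x^\vv/P(\x)^\ell$ is the containment $\vv\in \ell\,P(k,n)$) with the observation that each $x_i$, and hence each $\x^\vv/P(\x)^\ell$, is the pullback of a $T$-invariant Laurent monomial in the $\Delta_I$ after adjusting by gauge-fixed Pl\"ucker coordinates. The ``padding'' step you worry about is routine: for $M=\prod_I\Delta_I^{a_I}$ take $\ell\geq\max_I\max(-a_I,0)$ and multiply numerator and denominator by $\prod_I\Delta_I^{\ell+\min(a_I,0)}$, which lands the denominator exactly on $P(\x)^{\ell}$.
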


\begin{lemma}\label{lem:bound}
Let $f \in \C[\Gamma]$.  Then there exists a constant $D\in \R$ such that $|f(\x)| \leq D$ for all $\x \in \R_{>0}^r$. 
\end{lemma}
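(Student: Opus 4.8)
The plan is to reduce the statement to bounding the explicit generators of $\C[\Gamma]$ provided by Lemma~\ref{lem:vP}, and then to bound each of them by a weighted arithmetic--geometric mean estimate. First I would use Lemma~\ref{lem:vP} to write $f$ as a finite $\C$-linear combination $f = \sum_j c_j\, \x^{\vv_j}/P(\x)^{\ell_j}$ with $\ell_j \ge 1$ and $\vv_j \in \ell_j\, P(k,n) \cap \Z^r$. Since a finite sum of functions bounded on $\R_{>0}^r$ is again bounded, it suffices to produce, for each pair $(\vv,\ell)$ with $\vv \in \ell\, P(k,n)\cap \Z^r$, a constant $D_{\vv,\ell} > 0$ with $0 < \x^\vv/P(\x)^\ell \le D_{\vv,\ell}$ for all $\x \in \R_{>0}^r$; then $D := \sum_j |c_j|\, D_{\vv_j,\ell_j}$ does the job. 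Note that $P(\x) = \prod_I \Delta_I(\x)$ is a positive Laurent polynomial, being a product of the positive Laurent polynomials $\Delta_I(\x)$ of Proposition~\ref{prop:picg}; hence $g := P(\x)^\ell$ is a positive Laurent polynomial with Newton polytope $\N[g] = \ell\,\N[P(\x)] = \ell\, P(k,n)$, so $\vv$ is a lattice point of $\N[g]$.

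The core step is the elementary observation: if $g(\x) = \sum_{\u} a_\u \x^\u$ is a Laurent polynomial with all $a_\u > 0$ and $\vv$ is a lattice point of $\N[g]$, then $\x^\vv/g(\x)$ is bounded on $\R_{>0}^r$ by a constant depending only on $g$. I would prove this by writing $\vv = \sum_{i=1}^m \lambda_i \u_i$ as a convex combination of the vertices $\u_1,\dots,\u_m$ of $\N[g]$ (so $\lambda_i \ge 0$, $\sum_i \lambda_i = 1$). Each vertex $\u_i$ occurs as an exponent of $g$ with coefficient $a_{\u_i} > 0$, so on $\R_{>0}^r$ the bound $\x^{\u_i} \le g(\x)/a_{\u_i}$ holds term by term; raising to the power $\lambda_i \in [0,1]$ and multiplying the resulting inequalities gives
$$
\x^\vv \;=\; \prod_{i=1}^m \bigl(\x^{\u_i}\bigr)^{\lambda_i} \;\le\; \prod_{i=1}^m \bigl(g(\x)/a_{\u_i}\bigr)^{\lambda_i} \;=\; g(\x)\prod_{i=1}^m a_{\u_i}^{-\lambda_i} \;\le\; g(\x)\,\prod_{i=1}^m \max\bigl(1,\, a_{\u_i}^{-1}\bigr),
$$
where the last inequality uses $0 \le \lambda_i \le 1$. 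The final product is a constant depending only on $g$ (and in particular independent of $\vv$). Applying this with $g = P(\x)^\ell$ yields $D_{\vv,\ell}$ uniformly over all $\vv \in \ell\,P(k,n)\cap \Z^r$, which finishes the argument.

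I do not expect a genuine obstacle: the only point needing a little care is the \emph{uniformity} of the bound over all lattice points $\vv$ of the polytope, which is secured by the last estimate above (bounding $\prod_i a_{\u_i}^{-\lambda_i}$ by the product over all vertices of $\max(1,a_{\u_i}^{-1})$). As an alternative route one could bypass Lemma~\ref{lem:vP}: for a single $T$-invariant nearly convergent monomial $M$, write $M(\x) = p(\x)/q(\x)$ in subtraction-free form, invoke Lemma~\ref{lem:nearlyconverge} to get $\N[p]\subset\N[q]$, and apply the same arithmetic--geometric mean bound to each monomial of $p$ against $q$; but the route through Lemma~\ref{lem:vP} is cleaner, since that lemma already presents $\C[\Gamma]$ in the convenient form $\x^\vv/P(\x)^\ell$.
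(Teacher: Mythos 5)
Your proof is correct and follows essentially the same route as the paper: reduce via Lemma~\ref{lem:vP} to bounding $\x^\vv/P(\x)^\ell$, write $\vv$ as a convex combination of the vertices of $\ell\,P(k,n)$, and conclude by a geometric-mean estimate against the vertex monomials of $P(\x)^\ell$. The only (harmless) difference is that the paper clears denominators to integer weights and quotes the bound $\prod_i(\x^{\vv_i})^{c_i}\leq(\sum_i\x^{\vv_i})^c$, whereas you keep real convex weights and track the vertex coefficients $a_{\u_i}$ explicitly, which is if anything slightly more careful.
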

\begin{proof}
By Lemma~\ref{lem:vP}, it is enough to show that $\x^\vv/P(\x)^\ell$ is bounded above on $\R_{>0}^r$, where $\vv$ is a lattice point in the polytope $\ell P(k,n)$.  We may write $c \vv = \sum_i c_i \vv_i$ where $\vv_i$ are the vertices of $\ell\,P(k,n)$ and $c, c_i$ are nonnegative integers satisfying $c>0$ and $c = \sum_i c_i$.  Then $\x^{c\vv} = \prod_i (\x^{\vv_i})^{c_i} \leq (\sum_i \x^{\vv_i})^c$ and it follows that $\x^\vv/P(\x)^\ell \leq 1$ on $\R_{>0}^r$.
\end{proof}

\subsection{}
For a lattice polytope $P \subset \R^r$, one has a proper normal toric variety $X_{P}$ which depends only on the normal fan of $P$ \cite{Ful}.  The toric variety $X_P$ contains a dense torus $\T = (\C^\times)^r \subset X_P$.  The subspace $X_{P,>0}:=\R_{>0}^r \subset X_P(\R)$ is called the {\it positive part of $X_P$} and its closure $X_{P,\geq 0}$ is called the {\it nonnegative part of $X_P$}.  The torus orbits of $\T$ on $X_P$ stratify $X_P$ and $X_{P,\geq 0}$, and the strata are in bijection with faces of $P$.  We have a stratification-preserving homeomorphism between $X_{P, \geq 0}$ and the polytope $P$ with its face stratification.  For a face $F \subset P$, let $X_F$ denote the corresponding closed stratum, which is itself isomorphic to the toric variety for the polytope $F$.  Thus the closed (resp. relatively open) faces of $X_{P,\geq 0}$ are exactly the $X_{F,\geq 0}$ (resp. $X_{F,>0}$) as $F$ varies over faces of $P$.

\begin{proposition}[see \cite{AHL1}]\label{prop:XPkn}\
 \begin{enumerate}
\item
The variety $\Spec(\C[\Gamma])$ is isomorphic to an affine open subset $X'_{P(k,n)}$ of the projective (and normal) toric variety $X_{P(k,n)}$ associated to the normal fan of $P(k,n)$.  
\item
The affine open subset $X'_{P(k,n)}$ contains the nonnegative part $X_{P(k,n),\geq 0}$.
\item
Let $\vv \in  \ell \,P(k,n) \cap \Z^r$ and $F \subset P(k,n)$ be a face.  The function $\x^\vv/P(\x)^\ell \in \C[\Gamma]$ vanishes identically on $X_{F,\geq 0}$ if and only if $\vv \notin \ell F$.  The subspace $X_{F,\geq 0}$ is cut out of $X'_{P(k,n)}$ by the vanishing of such functions.
\end{enumerate}
\end{proposition}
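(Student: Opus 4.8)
The plan is to realize $\C[\Gamma]$ as the coordinate ring of a distinguished affine chart of $X_{P(k,n)}$, and then to read off the stratum structure from the standard polytope--toric-variety dictionary. Write $P := P(k,n) \subset \R^r$, and let $c_\vv \in \Z_{\geq 0}$ be the coefficient of $\x^\vv$ in $P(\x) = \prod_I \Delta_I(\x)$, so that $P = \Conv\{\vv \mid c_\vv > 0\}$. I would form the cone $C := \operatorname{Cone}\{(\vv,1) \mid \vv \in P\} \subset \R^{r+1}$ over $P$ at height $1$, and the graded semigroup ring $S := \C[C \cap \Z^{r+1}] = \bigoplus_{\ell \geq 0} S_\ell$, with $\C$-basis $\{z_{(\vv,\ell)} \mid \ell \geq 0,\ \vv \in \ell P \cap \Z^r\}$; then $X_{P(k,n)} = \operatorname{Proj} S$ is the normal toric variety of the normal fan of $P$ (normal since $C \cap \Z^{r+1}$ is a saturated affine semigroup), and it is projective because its fan is the normal fan of the polytope $P$ \cite{Ful}. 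Set $\hat P := \sum_\vv c_\vv\, z_{(\vv,1)} \in S_1$; it is a nonzero, hence nonzerodivisor, element, so $X'_{P(k,n)} := D_+(\hat P)$ is an affine open subvariety of $X_{P(k,n)}$ with coordinate ring $(S[\hat P^{-1}])_0$. For part~(1), I would use the embedding $S \hookrightarrow \C[\x^{\pm 1}, s^{\pm 1}]$ given by $z_{(\vv,\ell)} \mapsto \x^\vv s^\ell$, under which $\hat P \mapsto P(\x)\,s$ by the definition of the $c_\vv$; localizing at $\hat P$ and passing to $s$-degree $0$ identifies $(S[\hat P^{-1}])_0$ with the subring of $\C(\x)$ generated by the $\x^\vv/P(\x)^\ell$, which is exactly $\C[\Gamma]$ by Lemma~\ref{lem:vP}. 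Hence $\Spec \C[\Gamma] \cong X'_{P(k,n)}$.

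For (2) and (3) I would use the orbit--face dictionary recalled just before the proposition: faces $F$ of $P$ correspond to faces $C_F = \operatorname{Cone}\{(\vv,1)\mid \vv\in F\}$ of $C$, to torus orbits $O_F$, and to the orbit closures $X_F = V_+(\mathfrak p_F)$, where $\mathfrak p_F \subset S$ is the homogeneous prime spanned by $\{z_{(\vv,\ell)}\mid \vv \notin \ell F\}$ and $S/\mathfrak p_F = S_F := \C[C_F \cap \Z^{r+1}]$. The image of $\hat P$ in $S_F$ is $\hat P_F := \sum_{\vv \in F} c_\vv z_{(\vv,1)}$, the section of $\mathcal O(1)|_{X_F}$ attached to the face polynomial $P_F(\x) := \sum_{\vv \in F\cap \Z^r} c_\vv \x^\vv$. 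The key observation is that $P(\x)$ is a \emph{positive} Laurent polynomial by Proposition~\ref{prop:picg}, so all $c_\vv \geq 0$; and since any face $F$ contains a vertex $\vv$ of $P$, at which a maximizing support functional is uniquely maximized in each factor $\Delta_I(\x)$, the coefficient $c_\vv$ there is a product of positive monomial coefficients, hence $>0$. Thus every $P_F$ is a nonzero nonnegative combination of monomials, so it is strictly positive on the positive part $X_{F,>0}$ of the torus $O_F$; equivalently, $\hat P$ does not vanish on $X_{F,>0}$. Since $X_{F,\geq 0} = \bigsqcup_{F' \subseteq F} X_{F',>0}$ for every face $F$, $\hat P$ is nowhere zero on $X_{F,\geq 0}$, and taking $F = P$ gives $X_{P(k,n),\geq 0} = \bigsqcup_F X_{F,>0} \subset X'_{P(k,n)}$, proving (2). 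For the first assertion of (3), I would restrict $\x^\vv/P(\x)^\ell = z_{(\vv,\ell)}\hat P^{-\ell}$ to $X_F$: the factor $z_{(\vv,\ell)}|_{X_F}$ is identically $0$ exactly when $\vv \notin \ell F$, and otherwise is a character of $O_F$, strictly positive on the dense $X_{F,>0}$; since $\hat P_F^{-\ell}$ is nowhere zero on $X_{F,\geq 0}$ by the above, $\x^\vv/P(\x)^\ell$ vanishes identically on $X_{F,\geq 0}$ iff $\vv \notin \ell F$.

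Finally, for the last sentence of (3) I would localize $0 \to \mathfrak p_F \to S \to S_F \to 0$ at $\hat P$ and take $s$-degree-$0$ parts; as $\hat P_F$ is the image of $\hat P$, this gives an exact sequence $0 \to ((\mathfrak p_F)_{\hat P})_0 \to \C[\Gamma] \to (S_F[\hat P_F^{-1}])_0 \to 0$, identifying the ideal of $X_F \cap X'_{P(k,n)}$ in $\C[\Gamma] = \C[X'_{P(k,n)}]$ with the ideal generated by $\{\x^\vv/P(\x)^\ell \mid \vv \notin \ell F\}$; together with $X_{F,\geq 0} = X_F \cap X'_{P(k,n)} \cap X_{P(k,n),\geq 0}$ (immediate from the orbit stratifications of $X_F$ and of $X_{P(k,n),\geq 0}$), this is exactly the assertion that $X_{F,\geq 0}$ is cut out of $X'_{P(k,n)}$ by the vanishing of these functions. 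I expect the only genuinely non-formal step to be the positivity input in the second paragraph --- that $\hat P$ restricts to a strictly positive section on every stratum of $X_{P(k,n),\geq 0}$ --- which is where Proposition~\ref{prop:picg} and the description of $P(k,n)$ as a Minkowski sum of Newton polytopes of positive Laurent polynomials enter; the remainder is the standard translation between a polytope, its normal fan, and the affine charts of the associated projective toric variety.
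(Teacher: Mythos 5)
Your proof is correct and follows essentially the same route as the paper's (which defers most details to \cite{AHL1}): realize $X'_{P(k,n)}$ as the complement of the zero locus of the section of $\mathcal{O}(\ell)$ attached to $P(\x)^\ell$, identify its coordinate ring with $\C[\Gamma]$ via Lemma~\ref{lem:vP}, and use positivity of the coefficients of $P(\x)$ plus the orbit--face dictionary for (2) and (3). The only difference is presentational --- you work intrinsically with $\operatorname{Proj}$ of the cone semigroup ring and a degree-one section $\hat P$, whereas the paper passes through a very ample dilate and an explicit projective embedding --- and you usefully make explicit the point the paper leaves implicit, namely that the face restriction $P_F$ is a nonzero nonnegative polynomial because every vertex of the Newton polytope lies in the support.
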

\begin{proof}
(1) is proven in \cite[Section 10]{AHL1}; we sketch the main idea.  Recall that a full-dimensional lattice polytope $Q$ is called {\it very ample}  if for sufficiently large integers $s > 0$, every lattice point in $rQ$ is a sum of $s$ (not necessarily distinct) lattice points in $Q$.  Given $Q$, it is known that some dilate $\ell Q$ of $Q$ is very ample.  We then have a projective embedding of $X_Q$ given by the closure of the image of the map 
\begin{equation}\label{eq:closure}
\x \mapsto [\x^{\vv_0}: \x^{\vv_1}: \cdots : \x^{\vv_m}] \in \PP^m
\end{equation}
where $\ell Q \cap \Z^r = \{\vv_0,\vv_1,\ldots,\vv_m\}$ and $\x \in (\C^\times)^r$.  

We apply this construction to $X_{P(k,n)}$, supposing that $\ell P(k,n)$ is very ample.  Let $P(\x)^\ell = \sum_{i=0}^m c_i \x^{\vv_m}$, where $\ell P(k,n) \cap \Z^r = \{\vv_0,\ldots,\vv_m\}$.  Consider the linear section $H = \{c_0y_0 + \cdots + c_m y_m = 0\} \subset \PP^m$, where $y_i$ are the homogeneous coordinates on $\PP^m$.  The complement $X'_{P(k,n)} := X_{P(k,n)} \setminus H$ is an open subset of $X_{P(k,n)}$ that is an affine variety.  The coordinate ring $\C[X'_{P(k,n)}]$ is generated by the images of $y_i/(c_0y_0 + \cdots + c_m y_m)$, that is, $\x^{\vv_i}/P(\x)^\ell$.  By Lemma~\ref{lem:vP}, this gives the isomorphism $\Spec(\C[\Gamma])\simeq X'_{P(k,n)}$, establishing (1).

Since all the coefficients of $P(\x)$ are positive, the linear section does not intersect $X_{P(k,n),\geq 0}$, giving (2).  

The torus orbit closure $X_F$ is cut out from $X_P$ by the vanishing of the homogeneous coordinates $\{\x^{\vv} \mid \vv \notin F \cap \Z^r\}$ in the embedding \eqref{eq:closure}.  This gives (3).
\end{proof}

\section{Topology of nonnegative configuration space}\label{sec:ball}
%
%

\subsection{} We continue the notation of \S\ref{ssec:topgauge}.  In particular, a cluster $\CC \subset \binom{[n]}{k}$ and a gauge-fix $\G \subset \CC$ are fixed.  Since $\Conf(k,n) \simeq \oGr(k,n)/T$, each $T$-invariant monomial extends to a rational function on $\bConf(k,n)$.  Let $\tConf(k,n) \subset \bConf(k,n)$ denote the locus where all nearly convergent monomials are regular i.e. where the polar locus of each $M \in \Gamma$ has been removed.  Then we have a natural morphism 
\begin{equation}\label{eq:varphi}
\varphi: \tConf(k,n) \longrightarrow \Spec(\C[\Gamma]) = X'_{P(k,n)}.
\end{equation}
It follows from Lemma~\ref{lem:bound} that $\Ch(k,n)_{\geq 0} \subset \tConf(k,n)$.  Recall the bijection $F \mapsto \tDelta(F)$ in Proposition~\ref{prop:Ft}.

\begin{proposition}\label{prop:strata}
Let $F \subset P(k,n)$ be a face.  If $X \in \Theta_{\tDelta(F)}(k,n)$, then $\varphi(X) \in X_{F,> 0} \subset  X'_{P(k,n)}$.
\end{proposition}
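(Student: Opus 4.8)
The plan is to compute $\varphi(X)$ coordinate by coordinate, using the spanning set of $\C[\Gamma]$ from Lemma~\ref{lem:vP}, and then read off the stratum from the description of the toric strata in Proposition~\ref{prop:XPkn}(3). First I would set up the limit picture: given $X$ with $\tDelta(X)=\tDelta(F)$, write $X=\lim_{t\to 0}X(t)$ with $X(t)=\overline{T\cdot V(t)}$ and $V(t)\in\Gr(k,n)_{>0}$ an analytic curve (as in the proof of Proposition~\ref{prop:conf}), pass to the cluster chart so that $V(t)$ corresponds to $\x(t)\in\R_{>0}^r$ under $\pi_{\CC,\G}$, and record $\z:=\val(\x(t))\in\R^r$. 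By Lemma~\ref{lem:valtrop}, $\z$ represents a (finite) positive tropical Pl\"ucker vector, and by the discussion of \S\ref{sec:real} it induces the subdivision $\tDelta(\z)=\tDelta(X)=\tDelta(F)$. Recall also that $X\in\Ch(k,n)_{\geq 0}\subset\tConf(k,n)$ by Lemma~\ref{lem:bound}, so that $\varphi(X)$ is defined.

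Next I would carry out the main computation on the spanning functions. By Lemma~\ref{lem:vP} it suffices to evaluate, for each $\ell\geq 1$ and each $\vv\in\ell P(k,n)\cap\Z^r$, the function $g:=\x^\vv/P(\x)^\ell\in\C[\Gamma]$ (which is nearly convergent by Lemma~\ref{lem:nearlyconverge}); since $\varphi$ is a morphism regular at $X$ and $X'_{P(k,n)}$ is affine, the coordinate of $\varphi(X)$ corresponding to $g$ equals $g(X)=\lim_{t\to 0}g(\x(t))$. Because every coefficient of $P(\x)$ is positive and the leading coefficients of $\x(t)$ are positive, there is no cancellation, so $g(\x(t))$ has positive leading coefficient and valuation $\val(g(\x(t)))=\Trop(g)(\z)=\langle\vv,\z\rangle-\min_{\u\in\ell P(k,n)}\langle\u,\z\rangle\geq 0$. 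Hence $g(X)=0$ when $\Trop(g)(\z)>0$, and $g(X)$ equals the positive leading coefficient when $\Trop(g)(\z)=0$. Now, by Proposition~\ref{prop:Ft} (which combines Lemma~\ref{lem:Newton} with the coincidence of the secondary and positive fan structures, Theorem~\ref{thm:coincide}), the subdivision $\tDelta(\z)=\tDelta(F)$ is exactly the one attached to the face $F$ of $P(k,n)$, which means that $\z$ exposes $F$: the functional $\langle\cdot,\z\rangle$ attains its minimum over $P(k,n)$ exactly on $F$, hence over $\ell P(k,n)$ exactly on $\ell F$. Therefore $\Trop(g)(\z)=0$ if and only if $\vv\in\ell F$, and so $g(X)\neq 0$ — and then $g(X)>0$ — exactly when $\vv\in\ell F$, while $g(X)=0$ exactly when $\vv\notin\ell F$.

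Finally I would invoke Proposition~\ref{prop:XPkn}(3): the stratum $X_{F,\geq 0}$ is cut out of $X'_{P(k,n)}$ precisely by the vanishing of the functions $\x^\vv/P(\x)^\ell$ with $\vv\notin\ell F$. By the previous step $\varphi(X)$ satisfies exactly these relations, so $\varphi(X)\in X_{F,\geq 0}$, and since the surviving coordinates $g(X)$ are strictly positive, $\varphi(X)$ lies in the positive part. To see that $\varphi(X)$ avoids every proper face stratum $X_{F',\geq 0}$ with $F'\subsetneq F$, pick $\ell$ large enough that $\ell F$ contains a lattice point $\vv$ in its relative interior; then $\vv\in\ell F\setminus\ell F'$, so $g(X)\neq 0$ while $g$ vanishes on $X_{F',\geq 0}$. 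Since $X_{F,>0}=X_{F,\geq 0}\setminus\bigcup_{F'\subsetneq F}X_{F',\geq 0}$, this gives $\varphi(X)\in X_{F,>0}$, as desired. The step I expect to be the main obstacle is the dictionary in the second paragraph — identifying the value $g(X)$ of a nearly convergent function at the Chow limit point with the tropicalization $\Trop(g)$ evaluated at the underlying positive tropical Pl\"ucker vector, and aligning the minimization conventions so that the hypothesis $\tDelta(\z)=\tDelta(F)$ translates into ``$\z$ exposes the face $F$ of $P(k,n)$''.
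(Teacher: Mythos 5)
Your proof is correct and follows essentially the same route as the paper's: realize $X$ as the limit of an analytic positive curve, pass to the associated positive tropical Pl\"ucker vector lying in the relatively open normal cone of $F$, and use the tropicalization of the spanning functions $\x^\vv/P(\x)^\ell$ together with Proposition~\ref{prop:XPkn}(3) to locate $\varphi(X)$. The only difference is that you spell out in more detail the positivity of the coordinates indexed by $\vv\in\ell F$ and the exclusion of proper subfaces, which the paper compresses into ``the same argument shows that we must actually have $\varphi(X)\in X_{F,>0}$.''
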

\begin{proof}
Identify $\Pluck(k,n)_{>0}/\sim$ with a complete fan on $\R^r$.  Via Proposition~\ref{prop:Ft}, the cones $C(F) = C(\tDelta(F))$ are indexed by either faces $F$ of $P(k,n)$ or by $\tDelta(F) \in \D(k,n)$.  Let $\gamma(t)$ be an (analytic) curve in $\Gr(k,n)_{>0}/T_{>0}$ such that $\lim_{t \to 0} \gamma(t) = X \in \Theta_{\tDelta(F)}(k,n)$.  Then $\gamma(t)$ gives rise to a formal curve, and thus a Puiseux curve $V(t) \in \Gr(k,n)(\RR_{>0})$.  The positive tropical Pl\"ucker vector $\val(V(t))$ lies in the cone $C(\tDelta(F))$.

Now let $\vv \in (\ell P(k,n) \cap \Z^r) \setminus (\ell F \cap \Z^r)$.  Let $f(\x) =  \x^\vv/P(\x)^\ell$.  The function $\Trop( \x^\vv/P(\x)^\ell)$ is nonnegative on $\R^r$ and strictly positive on the cone $C(F)$ in the fan $\Pluck(k,n)_{>0}$.  Considering $f(\x)$ as a $T$-invariant rational function on $\Gr(k,n)$, we conclude that $\val(f(V(t)))>0$.  Thus $f(\x)$ vanishes at $X$ and by Proposition~\ref{prop:XPkn}(3), $\varphi(X) \in X_{F,\geq 0}$.  The same argument shows that we must actually have $\varphi(X) \in X_{F,> 0}$.
\end{proof}

\subsection{}\label{ssec:homeo}
We can now prove Theorem~\ref{thm:ball}.

\begin{theorem}\label{thm:homeo}
There is a stratification-preserving homeomorphism $\Ch(k,n)_{\geq 0} \simeq P(k,n)$.  In particular, each stratum $\Theta_{\tDelta, \geq 0}$ (resp. $\Theta_{\tDelta, > 0}$) is homeomorphic to a closed ball (resp. open ball) of dimension $\dim(\tDelta)$.
\end{theorem}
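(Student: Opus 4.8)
The goal is to show that the morphism $\varphi$ of \eqref{eq:varphi} restricts to a homeomorphism between $\Ch(k,n)_{\geq 0}$ and the nonnegative part $X_{P(k,n),\geq 0}$ of the toric variety; since the latter, with its toric stratification, is homeomorphic to the polytope $P(k,n)$ with its face stratification, and since $\varphi$ carries $\Theta_{\tDelta(F),>0}$ into $X_{F,>0}$ by Proposition~\ref{prop:strata}, everything in the statement will follow. First I would check that $\varphi$ lands in $X_{P(k,n),\geq 0}$. On $\Conf(k,n)_{>0}$ the map $\varphi$ coincides, by construction, with the homeomorphism $\Conf(k,n)_{>0}\simeq\R_{>0}^r$ inverse to $\pi_{\CC,\G}$ (Proposition~\ref{prop:picg}) followed by the inclusion of the open dense torus $\R_{>0}^r\hookrightarrow X'_{P(k,n)}$; hence $\varphi(\Conf(k,n)_{>0})$ is the positive part $X_{P(k,n),>0}$. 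Since $\Ch(k,n)_{\geq 0}$ is compact and equals the closure of $\Conf(k,n)_{>0}$, its image $\varphi(\Ch(k,n)_{\geq 0})$ is a closed subset of $X'_{P(k,n)}$ containing the dense subset $X_{P(k,n),>0}$ of $X_{P(k,n),\geq 0}$, hence equals $X_{P(k,n),\geq 0}$. Thus $\varphi$ is a continuous surjection from a compact space onto a Hausdorff space, so it is automatically closed, and it will be the desired homeomorphism once it is shown to be injective. Granting injectivity, the rest is automatic: a homeomorphism carrying strata bijectively onto strata (Proposition~\ref{prop:strata}, surjectivity, and disjointness of the $X_{F,>0}$) and commuting with closures restricts to homeomorphisms $\Theta_{\tDelta,>0}\simeq X_{F,>0}$ and $\Theta_{\tDelta,\geq 0}\simeq X_{F,\geq 0}$, and $X_{F,>0}\cong\operatorname{relint}(F)$ is an open ball whose dimension $\dim F$ equals $\dim\tDelta$ by \eqref{eq:dimDelta}, Proposition~\ref{prop:Ft} (identifying the secondary fan on $\Pluck(k,n)_{>0}/\!\!\sim$ with the normal fan of $P(k,n)$), and the duality between the dimension of a face and that of its normal cone.

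So it remains to prove that $\varphi$ is injective on $\Ch(k,n)_{\geq 0}$. Since $\varphi(\Theta_{\tDelta(F),>0})\subseteq X_{F,>0}$ (Proposition~\ref{prop:strata}), the sets $X_{F,>0}$ are pairwise disjoint, and $\varphi$ is surjective, each restriction $\varphi\colon\Theta_{\tDelta(F),>0}\to X_{F,>0}$ is surjective, and $\varphi$ is globally injective if and only if it is injective on each stratum $\Theta_{\tDelta,>0}$.

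Injectivity on a fixed stratum is the step I expect to be the main obstacle, and here is the approach I would take. Let $P_{\M_1},\dots,P_{\M_s}$ be the maximal faces of $\tDelta$. A point $X\in\Theta_{\tDelta,>0}$ is the cycle $\sum_i\overline{T\cdot V_i}$, where $\overline{T\cdot V_i}$ is the toric variety with moment polytope $P_{\M_i}$ (Proposition~\ref{prop:conf}); a cycle with this fixed moment-polytope decomposition is determined by its collection of components, and the component $\overline{T\cdot V_i}$ is determined by $V_i$ modulo $T$, i.e.\ by a point $[V_i]\in\bConf(\M_i)_{\geq 0}$. It therefore suffices to recover each $[V_i]$ from $\varphi(X)$. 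For this I would prove: (i) choosing a cluster and gauge-fix for $\M_i$ (or, if $\M_i$ is disconnected, for each of its connected components, which by Proposition~\ref{prop:disconnect} live on proper subsets of $[n]$), the resulting ``local'' nearly convergent $T$-invariant monomials separate points of $\bConf(\M_i)_{\geq 0}$ --- this is the positroid-Chow-quotient analogue of the present theorem, to be carried along as part of an induction on $n$; and (ii) each such local monomial is the restriction to the stratum $X_F$ of a global monomial $M\in\Gamma$, so that its value at $X$ can be read off from $\varphi(X)$. Combining (i) and (ii), the coordinates of $\varphi(X)$ determine every $[V_i]$ and hence $X$. The delicate points --- where the real work lies --- are part (ii), namely the bookkeeping matching the local and global monoids of nearly convergent monomials through the face combinatorics of the Minkowski sum $P(k,n)=\sum_I\N[\Delta_I(\x)]$ induced by the subdivision $\tDelta$, and the fact that a representative $V_i$ need only lie in the open positroid variety $\oPi_{\M_i}$ (so its matroid may be a proper subset of $\M_i$), which must be handled when invoking the inductive statement for $\bConf(\M_i)_{\geq 0}$.
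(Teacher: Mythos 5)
Your skeleton is the same as the paper's: exhibit $\varphi$ as a continuous stratification-preserving bijection from the compact space $\Ch(k,n)_{\geq 0}$ onto the Hausdorff space $X_{P(k,n),\geq 0}$, with surjectivity and the reduction of injectivity to recovering the components $V_i$ of the cycle handled exactly as you describe. The genuine gap is your step (ii), which you correctly flag as ``where the real work lies'' but do not carry out --- and it is precisely the content of the paper's Proposition~\ref{prop:TV}. The mechanism there is concrete: choose by Lemma~\ref{lem:span} a cluster $\CC'\subset\M_i$ with gauge-fix $\G$, extend it to a cluster $\CC$ of $\binom{[n]}{k}$, and recover each coordinate $x_j(V_i)$, $x_j\in\CC'\setminus\G$, as the ratio of $\x^{\vv+e_j}/P(\x)^\ell$ and $\x^{\vv}/P(\x)^\ell$ with \emph{both} lattice points $\vv,\vv+e_j$ lying in $\ell F$, so that by Proposition~\ref{prop:XPkn}(3) neither function vanishes on $X_{F,>0}$. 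The existence of such pairs is equivalent to the normal cone $C(F)$ being orthogonal to $e_j$ for those $j$, and this is proved by showing that the tropical Pl\"ucker vector representing a point of $C(F)$, once normalized to vanish on the gauge-fix $\G$ (Lemma~\ref{lem:facezero}), must vanish on \emph{all} of $\M_i$ because $P_{\M_i}$ is a full-dimensional piece of $\tDelta$; via Corollary~\ref{cor:param} this kills exactly the coordinates you need. Without some such argument the functions you propose to use could vanish identically on the stratum $X_F$, and the recovery of $V_i$ fails.

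A secondary problem is the inductive scheme you propose for your step (i): for a \emph{connected} positroid $\M_i$ on the full ground set $[n]$ there is no reduction in $n$, so the induction does not close. Fortunately the full strength of (i) is unnecessary: since the moment polytope of $\overline{T\cdot V_i}$ equals $P_{\M_i}$, the matroid of $V_i$ is exactly $\M_i$ (not a proper subset), and by Proposition~\ref{prop:conf} the point $V_i$ lies in the open cell $\Pi_{\M_i,>0}$, where the cluster coordinates of $(\CC',\G)$ already separate points by \eqref{eq:posgf}. So your worry about $V_i$ lying only in $\oPi_{\M_i}$ is unfounded, and the separating-functions statement for all of $\bConf(\M_i)_{\geq 0}$ is not needed; what is needed, and missing, is the lattice-point/normal-cone argument above.
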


We shall show that the map $\varphi$ restricts to a stratified bijection $\varphi: \Ch(k,n)_{\geq 0} \to X_{P(k,n),\geq 0}$.  Since $\Ch(k,n)_{\geq 0}$ is compact and $X_{P(k,n),\geq 0}$ is Hausdorff, this claim would establish Theorem \ref{thm:homeo}.  Proposition~\ref{prop:strata} shows that $\varphi$ is stratification-preserving.  It is easy to see that $\varphi:\Conf_{\geq 0} \to X_{P(k,n),\geq 0}$ is surjective.  If $p \in X_{P(k,n),\geq 0}$ then $p = \lim_{t \to 0} p(t)$ where $p(t) \in X_{P(k,n),> 0}$ for $t >0$.  The curve $p(t)$ can be lifted to a curve $X(t) \in \Conf(k,n)_{>0}$, such that $\varphi(X(t)) = p(t)$ for $t>0$.  Then $\varphi(\lim_{t \to 0} X(t) ) = p$.  

It remains to argue that the map $\varphi$ is injective.  A point $X = \sum_{i=1}^r X_i \in \Ch(k,n)_{\geq 0}$ is determined by the torus orbit closures $X_i = \overline{T \cdot V_i}$ where $V_i \in \Pi_{\M_i, >0}/T$ and $P_{\M_1},\ldots,P_{\M_r}$ are positroid polytopes that form a regular polyhedral subdivision of the hypersimplex.  The point $X$ is uniquely determined by the collection of points $\{V_1,\ldots,V_r\}$.  Thus, the injectivity of $\varphi$ follows from the following proposition which completes the proof of Theorem~\ref{thm:homeo}.

\begin{proposition}\label{prop:TV}
Let $X \in  \sum_{i=1}^r X_i  \Theta_{\tDelta,\geq 0}$ with $X_i = \overline{T \cdot V_i}$ where $V_i \in \Pi_{\M_i, >0}/T_{>0}$.  Then $V_1,\ldots,V_r$ is determined by the values $f(V_i)$ for varying $f \in \C[\Gamma]$.
\end{proposition}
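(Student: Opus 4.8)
The plan is to pass to the valuation picture, rewrite everything in terms of leading coefficients of Puiseux series, and identify $\varphi(X)$ with (the positive part of) the toric stratum $X_F$ of Propositions~\ref{prop:Ft} and~\ref{prop:strata}, where $F\subset P(k,n)$ is the face with $\tDelta(F)=\tDelta$. Write $X=\lim_{t\to 0}X(t)$ with $X(t)=\overline{T\cdot V(t)}$ and $V(t)\in\Gr(k,n)(\RR_{>0})$, put $p_\bullet=\val(V(t))\in C(\tDelta)$, and for each $I$ let $c_I\in\R_{>0}$ be the leading coefficient of $\Delta_I(V(t))$. As in \S\ref{sec:real}, for each $i$ there is a vector $\a_i\in\R^n$ with $F(p_\bullet+\a_i)=P_{\M_i}$, and then $V_i=\lim_{t\to 0}V(t)\cdot d(\a_i)$, so that $\Delta_I(V_i)=c_I$ for $I\in\M_i$ (and $\Delta_I(V_i)=0$ otherwise) and $p_I=-\sum_{l\in I}(\a_i)_l$ for $I\in\M_i$. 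In particular the tuple $(V_1,\ldots,V_r)$ — equivalently $X$ — is recorded by the collection $(c_I)_{I\in\binom{[n]}{k}}$, and only up to the common rescaling $c_I\mapsto(\prod_{l\in I}\mu_l)\,c_I$, which leaves every $V_j\in\Pi_{\M_j,>0}/T_{>0}$ unchanged.

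For a $T$-invariant nearly convergent monomial $M=\prod_I\Delta_I^{a_I}\in\Gamma$ we have $M(X)=\lim_{t\to 0}M(V(t))$, and since $M(V(t))\in\RR_{>0}$ has valuation $\Trop(M)(p_\bullet)=\sum_I a_I p_I\ge 0$, the value $M(X)$ equals $\prod_I c_I^{a_I}$ when $\sum_I a_I p_I=0$ and equals $0$ otherwise. If $\Supp(M)\subseteq\M_i$, then $T$-invariance ($\sum_I a_I e_I=0$) together with $p_I=-\sum_{l\in I}(\a_i)_l$ forces $\sum_I a_I p_I=0$, so $M(X)=\prod_{I\in\M_i}c_I^{a_I}=M(V_i)$; thus the values $M(V_i)$ are all visible from $\varphi(X)$, and the proposition reduces to showing that $\{M(V_i):M\in\Gamma,\ \Supp(M)\subseteq\M_i\}$, for all $i$, determines $(V_1,\ldots,V_r)$.

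I would prove this by showing that $\varphi(X)$ already determines $(c_I)$ up to a torus action that is invisible on the $V_j$. Since a monomial tropicalizes to a linear function, the condition $\sum_I a_I p_I=0$ (for $p_\bullet$ in the relative interior of $C(\tDelta)$) holds exactly when $\Trop(M)$ vanishes on all of $C(\tDelta)$; by Proposition~\ref{prop:XPkn}(3) the monomials $M\in\Gamma$ of this kind generate the coordinate ring of the affine chart $X'_F$ of $X_F$, so $\varphi(X)$ records $(c_I)$ precisely modulo the subtorus $H$ acting by $c_I\mapsto e^{q_I}c_I$ for $q_\bullet$ in the linear span of $C(\tDelta)$ (modulo equivalence). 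Now for $q_\bullet$ in the relative interior of $C(\tDelta)$ one has $\tDelta(q_\bullet)=\tDelta$, so $q_\bullet$ is affine-linear on each maximal cell $P_{\M_j}$, i.e. $q_\bullet|_{\M_j}$ is equivalent to the zero vector; hence on the block $(c_I)_{I\in\M_j}$ the map $c_I\mapsto e^{q_I}c_I$ is realized by an element of $T$ times an overall scalar, which does not change $V_j$. Therefore $(V_1,\ldots,V_r)$, and hence $X$, depends only on $(c_I)$ modulo $H$, i.e. only on $\varphi(X)$; combined with the observation recalled before the proposition that $\{V_1,\ldots,V_r\}$ determines $X$, this gives injectivity of $\varphi$ on $\Ch(k,n)_{\ge 0}$, completing the proof of Theorem~\ref{thm:homeo}.

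The step I expect to be the main obstacle is the identification in the last paragraph of the data remembered by $\varphi(X)$ with the class of $(c_I)$ modulo $H$ — i.e. showing that the leading terms of the Pl\"ucker coordinates along the cone $C(\tDelta)$ are captured exactly, no more and no less, by the monomials of $\Gamma$ that remain nonvanishing on the stratum. This is delicate because a monomial supported on a cell $\M_j$ that is nearly convergent only ``locally'' on $\Pi_{\M_j}$ need not lie in $\Gamma$, so one cannot simply run the construction of \S\ref{sec:Gamma} cell by cell.
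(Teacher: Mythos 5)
Your overall strategy is genuinely different from the paper's and is, in outline, viable: you work with the leading coefficients $(c_I)$ of a Puiseux lift and try to characterize exactly which monomials in the $c_I$ are visible from $\varphi(X)$, whereas the paper recovers each $V_j$ cell by cell, choosing a cluster $\CC'_j\subset\M_j$ with a gauge-fix (Lemma~\ref{lem:span}) and exhibiting each gauge-fixed cluster coordinate $x_i$ of $\Pi_{\M_j,>0}/T_{>0}$ as a ratio of two functions $\x^{\vv+e_i}/P(\x)^\ell$, $\x^{\vv}/P(\x)^\ell$ with $\vv,\vv+e_i\in\ell F$. Your closing observation — that any $q_\bullet$ in $\sp C(\tDelta)$ restricts on each maximal cell $\M_j$ to a vector equivalent to zero, hence acts on $(c_I)_{I\in\M_j}$ through the torus — is correct and is essentially the same computation (via Lemma~\ref{lem:facezero} and Corollary~\ref{cor:param}) that powers the paper's key claim that $C(F)\subset\{y_1=\cdots=y_b=0\}$.

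The genuine gap is exactly where you flag it: the assertion that $\varphi(X)$ records $(c_I)$ precisely modulo $H=\exp\bigl(\sp C(\tDelta)\bigr)$. What your argument actually requires is that the exponent vectors $(a_I)\in\R^{\binom{[n]}{k}}$ of the monomials $M=\prod_I\Delta_I^{a_I}\in\Gamma$ with $\Trop(M)|_{C(\tDelta)}=0$ span the full orthogonal complement of $\sp C(\tDelta)$. Proposition~\ref{prop:XPkn}(3) does not deliver this: it describes generators of $\C[X'_F]$ as functions on the $r$-dimensional cluster torus, which only controls the image of the exponent lattice under $(a_I)\mapsto\sum_I a_I\Trop(\Delta_I)$, not the lattice itself inside the $\binom{n}{k}$-dimensional space where the $c_I$ live; moreover the generators $\x^{\vv}/P(\x)^\ell$ are not monomials in the Pl\"ucker coordinates, so their values at $X$ are not monomials in the $c_I$ and cannot enter your linear-algebra step. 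The mismatch is already visible for $(k,n)=(2,4)$ and the two-piece subdivision: there $\dim X_F=0$ while $(c_I)$ modulo $H$ is one-dimensional, and the count is rescued only because the leading terms of the three-term Pl\"ucker relation force $c_{12}c_{34}=c_{13}c_{24}$ on the stratum — relations among the $c_I$ that your argument never invokes. The spanning claim is in fact true, but proving it takes work: one can build $M_1\in\Gamma$ whose tropicalization is the sum, over all octahedra $(S;a<b<c<d)$, of those quantities $p_{Sab}+p_{Scd}-p_{Sac}-p_{Sbd}$ and $p_{Sad}+p_{Sbc}-p_{Sac}-p_{Sbd}$ vanishing identically on $C(\tDelta)$, check that $\Trop(M_1)\ge0$ with zero locus exactly $C(\tDelta)$, and then show cone by cone that for any $T$-invariant $(a_I)$ vanishing on $C(\tDelta)$ the vector $(a_I)+N\cdot(\text{exponent of }M_1)$ is the exponent vector of an element of $\Gamma$ vanishing on $C(\tDelta)$ for $N\gg0$. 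Without such an argument (or the paper's cell-by-cell alternative), the proof is incomplete.
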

\begin{proof}
Note that for $f \in \C[\Gamma]$, we have $f(V_i) = f(V_j)$ whenever $f$ makes sense on both $V_i$ and $V_j$.
Let us first consider the special case where $\M=\binom{[n]}{k}$, and suppose $V \in \Gr(k,n)_{>0}/T_{>0}$.  By Lemma~\ref{lem:vP}, the function $\x^\vv/P(\x)^\ell$ is nearly convergent when $\vv \in \ell P(k,n)$.  For sufficiently large $\ell$, we can find lattice points $\vv$ and $\vv+e_i$ inside $\ell P(k,n)$, and the ratio of $\frac{\x^\vv}{P(\x)^\ell}$ and $ \frac{\x^{\vv+e_i}}{P(\x)^\ell}$ is equal to $x_i$, so the value of $x_i(V)$ is determined by $f(V)$, for $f \in \Gamma$.  (Note that the ratios $\frac{\x^\vv}{P(\x)^\ell}$ are always positive because all Pl\"ucker variables are positive on $V$.)  By Proposition~\ref{prop:picg}, $V \in \Gr(k,n)_{>0}/T_{>0}$ is determined by the positive parameters $x_1,\ldots,x_r \in \R_{>0}$, and thus we have recovered the point $V$.

Now suppose that $V \in \Pi_{\M, >0}/T_{>0}$ where $\M$ is an arbitrary connected positroid.  Applying Lemma~\ref{lem:span} we find a cluster $\CC' \subset \M$ and a gauge-fix $\G \subset \CC'$.  By Proposition~\ref{prop:OPS}, we can find a cluster $\CC$ of $\binom{[n]}{k}$ that contains $\CC'$.  We use $(\CC,\G)$ as our positive parametrization of $\Conf(k,n)_{>0}$, and we suppose that $x_1,\ldots,x_b$ belong to $\CC' \setminus \G$ while $x_{b+1},\ldots,x_r$ belong to $\CC \setminus \CC'$.  Here, $b = \dim(\M)-(n-1)$.  To determine $V$, we need to determine $x_1(V),\ldots,x_b(V)$.  The same argument as in the previous paragraph applies, except we need to ensure that the ratios $\frac{\x^\vv}{P(\x)^\ell}$ and $ \frac{\x^{\vv+e_i}}{P(\x)^\ell}$ used do not vanish on $\Pi_{\M,>0}/T_{>0}$. As in the proof of Proposition~\ref{prop:strata}, the function $x^\vv/P(\x)^\ell$ vanishes on $\Theta_{\tDelta}$ exactly when $\vv$ does not belong to the face $\ell F$ of $\ell P(k,n)$.

%
%

We claim that for each $i = 1,2,\ldots,b$, the face $\ell F$ contains lattice points $\vv$ and $\vv+e_i$ for some $\vv$.  In other words, if $\y = (y_1,\ldots,y_r) \in \R^r$ is a vector in the normal cone $C(F)$ to $F$ then $y_i = 0$ for $i = 1,2,\ldots,b$.  Let $p_\bullet$ be a positive tropical Pl\"ucker vector such that $\tDelta(p_\bullet) = \tDelta$.  By Lemma~\ref{lem:span}, there is a unique $p'_\bullet$ equivalent to $p_\bullet$ such that $p'_I = 0$ for all $I \in \G$, and by Lemma~\ref{lem:facezero}, it must be the case that $p'_J = 0$ for all $J \in \M$, since $P_\M$ is one of the full-dimensional pieces in $\tDelta$.  By Corollary~\ref{cor:param}, setting $y_i = p'_{I_i}$ for $I_i \in \CC$ gives a vector in $C(F)$, and any vector in the relative interior of $C(F)$ is obtained in this way.  All these vectors satisfy $y_1 = y_2 = \cdots = y_b = 0$ since $p'_J =0$ for all $J \in \M$.
\end{proof}

Let us illustrate the proof of Proposition~\ref{prop:TV} with an example for $(k,n) = (3,6)$. Take the positive parametrization 
\begin{equation}\label{eq:36param1}
(z_1,z_2,z_3,z_4) \in (\C^\times)^4 \mapsto
\begin{bmatrix}
 0 & 0 & -1 & -1 & -1 & -1 \\
 0 & 1 & 0 & -1 & -1-z_1 & -1-z_1z_2-z_2 \\
 1 & 0 & 0 & 1 & 1+z_1+z_3 & 1+z_1+z_2+z_4+\frac{z_3 z_3}{z_1}+\frac{z_4}{z_1}
   \\
\end{bmatrix}
\end{equation}
This is the positive parametrization associated to the gauge-fix and cluster 
$$\G:=\{123,124,125,126,134,234\} \text{ and } \CC := \G \cup \{z_1=145,z_2=156,z_3=345,z_4=456\}.
$$  Consider the two-piece hypersimplex decomposition $\tDelta$ (see \S\ref{ssec:36}) that slices $\Delta(3,6)$ with the hyperplane $x_4+x_5 +x_6= 2$.  On one side we have the positroid polytope for $\M = \binom{[6]}{3}\setminus \{4,5,6\}$, which has a cluster
$$
\CC \supset \CC' := \{123,124,125,126,134,234,145,156,345\} \supset \G.
$$
Now, the normal cone $C(F) = C(\tDelta)$ is a ray and it is generated by the vector $\y = r'_6=(0,0,0,1)$.  (This $r'_6$ is the image of $r_6$ in \S\ref{ssec:36} under the linear transformation that tropicalizes the monomial transformation $(x_1,x_2,x_3,x_4) \to (z_1,z_3/z_1, z_2/z_1, z_4/(z_2z_3))$.)  Agreeing with the proof of Proposition~\ref{prop:TV}, we have $y_1=y_2=y_3=0$.
\subsection{}
We end this section with the following question.

\begin{question}
Is the morphism $\varphi$ of \eqref{eq:varphi} an isomorphism of algebraic varieties?
\end{question}
We note that $\Spec(\C[\Gamma])$ is a normal variety, while the Chow quotient $\bConf(k,n)$ has rather complicated geometry (c.f \cite{KT,Laf}).

\section{\texorpdfstring{$\M_{0,n}$ and the case $k=2$}{M0n and the case k=2}}\label{sec:M0n}
\subsection{}
It is well-known \cite{Kap} that we have $\Conf(2,n) = \oConf(2,n) = \M_{0,n}$ and $\bConf(2,n) = \bM_{0,n}$, the moduli space of $n$ points on $\PP^1$ and its Deligne-Knudsen-Mumford compactification respectively.  The space $\M_{0,n}(\R)$ consists of $n$ points $z_1,\ldots,z_n$ on a circle $S^1$.  It is a smooth open manifold with $(n{-}1)!/2$ connected components, each of which is diffeomorphic to an open ball of dimension $n{-}3$.  Each connected component is given as the subspace where the $n$ points are in a fixed dihedral ordering.  Fixing such an ordering $z_1 < z_2 < \cdots < z_n$ (up to dihedral symmetries) we obtain the positive component $(\M_{0,n})_{>0} \subset \M_{0,n}(\R)$.  It is well-known that the closure $\Conf(2,n)_{\geq 0} = (\M_{0,n})_{\geq 0}$ of $(\M_{0,n})_{>0}$ in $\M_{0,n}(\R)$ is homeomorphic as a stratified space to the associahedron $\A_n$, agreeing with Theorem \ref{thm:homeo}.  The affine variety $\tM_{0,n} := \tConf(2,n)$ sits in between $\M_{0,n}$ and $\bM_{0,n}$.  It can be obtained from $\bM_{0,n}$ by removing all boundary divisors of $\bM_{0,n}$ that do not intersect $ (\M_{0,n})_{\geq 0}$ in codimension one, see \cite{Brown}.

\subsection{}
Let us now spell out our combinatorial constructions in this case.  A positroid $\M$ of rank $2$ on $[n]$ is given by a collection of conditions of the following form:
\begin{enumerate}
\item For some $i \in [n]$, we have $i \in I$ for all $I \in \M$.
\item For some $j \in [n]$, we have $j \notin I$ for all $I \in \M$.
\item For some cyclic interval $[a,b]$, we have $|[a,b] \cap I| \leq 1$ for all $I \in \M$.
\end{enumerate}
For $\M$ to be connected we must have none of the conditions of the form (1) or (2).  Such a connected positroid $\M$ is then determined by a decomposition $[n] = \bigsqcup_{i=1}^r [a_i,b_i]$ of $[n]$ into at least three cyclic intervals such that $\M$ is given by
$$
\M([a_i,b_i]) = \left\{I \in \mbox{$\binom{[n]}{2}$} \mid |I \cap [a_i,b_i]| \leq 1 \text{ for } i = 1,2,\ldots,r\right\}.
$$
We have the formula $\dim(\M) = n+r-4$.  For example, if $\M$ is the uniform matroid then $r = n$ and $\dim(\M) = 2n-4 = 2(n-2)$.  If $r = 3$ then $\dim(M) = n-1$ and $\M$ is a minimal connected positroid.  

For a connected positroid $\M = \M([a_i,b_i])$, it is not difficult to see that there is a canonical isomorphism $\oPi_\M/T \simeq \M_{0,r}$ that sends $\Pi_{\M,>0}/T_{>0}$ to $(\M_{0,r})_{>0}$, where $r$ is the number of cyclic intervals.  If $r = 3$, then $\M_{0,3}$ is a point, as expected.
\subsection{}
The faces of the associahedron $\A_n$ are labeled by planar trees $T$ with $n$ cyclically ordered leaves $1,2,\ldots,n$, with internal vertices having degree at least $3$.  Some such trees for $n =5$ are drawn below:
$$
\begin{tikzpicture}
\draw (0:1.2) node {$3$};
\draw (72:1.2) node {$2$};
\draw (144:1.2) node {$1$};
\draw (216:1.2) node {$5$};
\draw (288:1.2) node {$4$};
\draw (72:1)--(106:0.5)--(144:1);
\draw (216:1)--(252:0.5)--(288:1);
\draw (106:0.5)--(0:0.5)--(0:1);
\draw (252:0.5)--(0:0.5);
\begin{scope}[shift={(4,0)}]
\draw (0:1.2) node {$3$};
\draw (72:1.2) node {$2$};
\draw (144:1.2) node {$1$};
\draw (216:1.2) node {$5$};
\draw (288:1.2) node {$4$};
\draw (72:1)--(72:0.1)--(144:1);
\draw (72:0.1)--(0:1);
\draw (216:1)--(252:0.5)--(288:1);
\draw (252:0.5)--(72:0.1);
\end{scope}
\begin{scope}[shift={(8,0)}]
\draw (0:1.2) node {$3$};
\draw (72:1.2) node {$2$};
\draw (144:1.2) node {$1$};
\draw (216:1.2) node {$5$};
\draw (288:1.2) node {$4$};
\draw (72:1)--(0:0)--(144:1);
\draw (0:1)--(0:0)--(216:1);
\draw (288:1)--(0:0);
\end{scope}
\end{tikzpicture}
$$

Let $\Vert(T)$ denote the set of internal vertices of $T$.  Let $T$ be such a planar tree and $v \in \Vert(T)$ be an internal vertex of degree $r \geq 3$.  Removing $v$ from $T$ we obtain a forest with $r$ components, and this decomposes $[n]$ into $r$ cyclic intervals $[a_1,b_1],\ldots,[a_r,b_r]$, giving a positroid $\M(v)$.  Thus $I = \{i,j\}$ is a basis of the positroid $\M(v)$ if and only if the unique path from $i$ to $j$ in $T$ passes through $v$.

\begin{proposition}[\cite{Kap}]
The map
$$
T \longmapsto \tDelta_T:=\{P_{\M(v)} \mid v \in \Vert(T)\}
$$
gives a bijection between planar trees with $n$ leaves and regular subdivisions of the hypersimplex $\Delta(2,n)$ into positroid polytopes.
\end{proposition}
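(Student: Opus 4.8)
The plan is to reduce to the classical theory of tree metrics and then identify the subdivision $\tDelta_T$ explicitly. Given a planar tree $T$ with leaves cyclically labelled $1,\dots,n$, choose positive lengths $\ell_e>0$ for the internal edges of $T$ (and length $0$ for the leaf--edges), let $d^T_\ell\colon[n]\times[n]\to\R$ be the induced path metric, and set $p^T_\bullet:=-d^T_\ell\in\R^{\binom{[n]}{2}}$. Since $k=2$ the set $S$ in \eqref{eq:trop} is empty, and the classical four-point theorem — in the sharpened form that a metric on $[n]$ is realised by a tree drawable in a disk with leaves in the cyclic order $1,\dots,n$ precisely when, for each $a<b<c<d$, the ``crossing'' sum $d_{ac}+d_{bd}$ is the largest among $d_{ac}+d_{bd}$, $d_{ab}+d_{cd}$, $d_{ad}+d_{bc}$ — says exactly that $p^T_\bullet$ satisfies \eqref{eq:trop}, i.e. $p^T_\bullet\in\Pluck(2,n)_{>0}$. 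By Proposition~\ref{prop:posdiv}(2) the regular subdivision $\tDelta(p^T_\bullet)$ is then into positroid polytopes, so it lies in $\D(2,n)$; the Proposition will follow once I show \textbf{(a)} $\tDelta(p^T_\bullet)=\{P_{\M(v)}\mid v\in\Vert(T)\}=\tDelta_T$, and \textbf{(b)} that $T$ can be recovered from $\tDelta_T$.

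For \textbf{(a)} I first check that each $P_{\M(v)}$, $v$ internal, is a maximal cell of $\tDelta(p^T_\bullet)$. Put $a_i:=d^T_\ell(i,v)$ and $p'_\bullet:=\a\cdot p^T_\bullet$ as in \eqref{eq:equiv}. Writing $m(i,j,v)$ for the median (Steiner point) of $i,j,v$ in $T$, one computes $p'_{ij}=d^T_\ell(i,v)+d^T_\ell(j,v)-d^T_\ell(i,j)=2\,d^T_\ell\bigl(m(i,j,v),v\bigr)$, which equals $0$ if the path from $i$ to $j$ passes through $v$ and is strictly positive otherwise (then $m(i,j,v)\ne v$ and all internal lengths are positive). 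Hence $\min p'_\bullet=0$ and $F(p'_\bullet)=\M(v)$ by \eqref{eq:tDelta}, so $P_{\M(v)}$ is a face of $\tDelta(p^T_\bullet)$, of dimension $n-1$ since $\M(v)=\M([a_1,b_1],\dots,[a_r,b_r])$ with $r=\deg(v)\ge 3$ is a connected positroid. To see these are \emph{all} the maximal cells, it suffices to show the relative interiors $\mathring P_{\M(v)}$ cover $\Delta(2,n)$ up to measure zero. For a generic $x\in\Delta(2,n)$, orient each internal edge $e$ (with induced split $[n]=A_e\sqcup B_e$) towards whichever side has $x$-mass $>1$; this orients the subtree on the internal vertices, which therefore has a sink, and the sink is unique because two distinct sinks $v,w$ would produce along the $v$--$w$ path an internal vertex both of whose path-edges point away from it, hence two disjoint leaf-blocks each of $x$-mass $>1$, contradicting $\sum_{s\in[n]}x_s=2$. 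The unique sink $v$ satisfies $x\in\mathring P_{\M(v)}$. Since a polyhedral subdivision is determined by its maximal cells, this gives $\tDelta(p^T_\bullet)=\{P_{\M(v)}\mid v\in\Vert(T)\}$, which in particular shows $T\mapsto\tDelta_T$ is well defined into $\D(2,n)$.

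Surjectivity and injectivity then come from the known polyhedral structure. Any $\tDelta\in\D(2,n)$ equals $\tDelta(p_\bullet)$ for some $p_\bullet\in\Pluck(2,n)_{>0}$ (Proposition~\ref{prop:posdiv}(2)); taking $p_\bullet$ in the relative interior of its secondary cone, the four-point theorem gives $p_\bullet=-d^T_\ell$ with all internal $\ell>0$ for a unique combinatorial type of planar tree $T$, and then $\tDelta=\tDelta(p^T_\bullet)=\tDelta_T$ by \textbf{(a)}; this is surjectivity. For injectivity, if $\tDelta_{T_1}=\tDelta_{T_2}$ then $-d^{T_1}_{\ell_1}$ and $-d^{T_2}_{\ell_2}$ induce the same subdivision and hence lie in the same relatively open cone of the secondary fan structure on $\Pluck(2,n)_{>0}$; since the cones of this fan correspond bijectively to combinatorial types of planar trees (the $k=2$ case of Theorem~\ref{thm:coincide} together with classical tree-metric theory; see also \cite{SS,SW}), we get $T_1=T_2$. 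Alternatively $T$ can be read off $\tDelta_T$ directly: its dual graph has cells $\leftrightarrow$ internal vertices and codimension-one interior faces $\leftrightarrow$ internal edges, with the leaves recovered from the split hyperplanes supporting those faces.

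I expect the one real obstacle to be the covering claim inside step \textbf{(a)} — that the $P_{\M(v)}$ exhaust the maximal cells — since checking an \emph{individual} $P_{\M(v)}$ is a cell is the one-line computation with $p'_\bullet$ above, whereas ruling out extra cells needs the orientation/sink argument (equivalently, the fact that the tight span of a tree metric is the tree itself). Everything else is either this explicit computation or an appeal to classical tree-metric theory. A wholly different route, also viable, is to invoke the stratified homeomorphism of Theorem~\ref{thm:ball} (for $k=2$) identifying $\Ch(2,n)_{\ge 0}$ with the associahedron $\A_n$, whose faces are indexed by planar trees, and to match that face stratification with $\{\Theta_{\tDelta,>0}\mid\tDelta\in\D(2,n)\}$.
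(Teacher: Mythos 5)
The paper offers no argument for this proposition — it is quoted from Kapranov \cite{Kap} — so there is nothing internal to compare against; judged on its own, your proof is correct and essentially self-contained. The key computation is sound: with $p^T_\bullet=-d^T_\ell$ and $a_i=d^T_\ell(i,v)$, the Gromov-product identity $p'_{ij}=2\,d^T_\ell(m(i,j,v),v)$ does show $F(p'_\bullet)=\M(v)$ in \eqref{eq:tDelta} (the median of $i,j,v$ is always an interior point, so positivity of the internal lengths is all you need), and your sink argument correctly rules out extra maximal cells, which is indeed the only genuinely nontrivial step in (a). Two places lean on outside input and are worth making explicit. First, surjectivity needs that every $p_\bullet\in\Pluck(2,n)_{>0}$ is equivalent under \eqref{eq:equiv} to $-d^T_\ell$ for a planar tree with nonnegative internal lengths; this is the Buneman four-point theorem plus the observation that forcing the crossing sum to be maximal excludes the quartet $ac|bd$ and hence makes the tree planar in the given cyclic order — classical, and also recoverable from Theorem~\ref{thm:main} together with Speyer--Sturmfels, but it is an imported fact, not a consequence of \eqref{eq:trop} alone without proof. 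Second, your primary injectivity argument routes through Theorem~\ref{thm:coincide} (two distinct trees resolve some quartet differently, hence lie in different Pl\"ucker-fan cones, hence give different subdivisions), which is legitimate but heavy; the alternative you sketch — reading $T$ off the dual graph of $\tDelta_T$, with adjacency of $P_{\M(v)}$ and $P_{\M(w)}$ along a codimension-one face occurring exactly when $v,w$ are joined by an internal edge and the supporting hyperplane $\sum_{s\in B_e}x_s=1$ recovering the split — is elementary and should be promoted to the main argument. With those two points spelled out, the proof is complete.
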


These hypersimplex decompositions can be obtained by intersecting $\Delta(k,n)$ with the hyperplanes $H_e:=\{x_a+x_{a+1}+\cdots+x_b = 1\}$, one for each each internal $e$ of $T$, where for an internal edge $e$, we set $[a,b]$ to be the cyclic interval of leaves on one side of $e$.  The positroid polytope $P_{\M(v)}$ has as interior facets the $H_e$ where $v$ is incident to $e$.  If $T$ is the star with a single internal vertex $v$ and no edges then $\tDelta_T = \{P_{\M(v)} =\Delta(2,n)\}$ is the trivial decomposition.  

\subsection{}
Now let $p_\bullet \in \Pluck(2,n)_{>0}$ be a (finite) positive tropical Pl\"ucker vector.  We assign a planar tree $T(p_\bullet)$ to $p_\bullet$ as follows.  For $1 \leq a<b<c<d \leq n$, we consider which of the following three possibilities holds:
\begin{enumerate}
\item
$p_{ac} + p_{bd} = p_{ab} + p_{cd} < p_{ad} + p_{bc}$,
\item
$p_{ac} + p_{bd} = p_{ad} + p_{bc}<  p_{ab} + p_{cd}$,
\item
$p_{ac} + p_{bd} = p_{ab} + p_{cd} = p_{ad} + p_{bc}$.
\end{enumerate}
The tree $T(p_\bullet)$ has the property that 
\begin{enumerate}
\item the shortest path from leaf $a$ to leaf $d$ does not intersect the shortest path from leaf $b$ to leaf $c$,
\item the shortest path from leaf $a$ to leaf $b$ does not intersect the shortest path from leaf $c$ to leaf $d$,
\item
there is an internal vertex $v$ such that any shortest path between two of the vertices $a,b,c,d$ passes through $v$,
\end{enumerate}
respectively.
\begin{proposition}
The map $p_\bullet \mapsto T(p_\bullet)$ defines a fan structure on $\Pluck(2,n)$ that agrees with the ones in \S\ref{sec:fan}.
\end{proposition}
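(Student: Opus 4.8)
The plan is to show that the fibers of the map $p_\bullet\mapsto T(p_\bullet)$ are exactly the relatively open cones of the secondary fan structure on $\Pluck(2,n)_{>0}$. Granting this, $p_\bullet\mapsto T(p_\bullet)$ is visibly a fan structure, and by Theorem~\ref{thm:coincide} it coincides with the Pl\"ucker fan structure and the positive fan structure of \S\ref{sec:fan}. As a byproduct this also shows that $T(p_\bullet)$ is well defined, i.e.\ that a planar tree with the three stipulated path-incidence properties exists and is unique.

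The core of the argument is an identification carried out one quadruple at a time. Given $p_\bullet\in\Pluck(2,n)_{>0}$, the bijection between planar trees on $n$ leaves and $\D(2,n)$ (sending $T$ to $\tDelta_T=\{P_{\M(v)}\mid v\in\Vert(T)\}$) produces the unique planar tree $T'$ with $\tDelta_{T'}=\tDelta(p_\bullet)$; I would then show $T'=T(p_\bullet)$. Fix $a<b<c<d$ and restrict $\tDelta(p_\bullet)$ to the octahedral face of $\Delta(2,n)$ with vertices $e_{ab},e_{ac},e_{ad},e_{bc},e_{bd},e_{cd}$, a copy of $\Delta(2,\{a,b,c,d\})$. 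Since the restriction of a regular subdivision to a face of the polytope is the regular subdivision induced by the restricted heights, this restriction equals $\tDelta((p_I)_{I\subseteq\{a,b,c,d\}})$; and the case analysis behind Proposition~\ref{prop:posdiv}(2) (a three-term relation on a single octahedron) shows that \eqref{eq:trop} forces it to be: the two-cell positroid subdivision with cells $P_\M$ and $P_{\M'}$, where $\M=\binom{\{a,b,c,d\}}{2}\setminus\{bc\}$ and $\M'=\binom{\{a,b,c,d\}}{2}\setminus\{ad\}$ (the quartet $ad|bc$, with apices $e_{ad}$ and $e_{bc}$), in case (1); the analogous subdivision with apices $e_{ab}$ and $e_{cd}$ (the quartet $ab|cd$) in case (2); and the trivial one-cell subdivision in case (3). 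On the other hand, restricting $\tDelta_{T'}$ to the same octahedron recovers precisely the quartet (or star) that $T'$ displays on $\{a,b,c,d\}$, which --- since $T'$ is planar with the cyclic leaf order $1,\dots,n$ and $a<b<c<d$ --- is one of $ad|bc$, $ab|cd$, or the star, never the crossing quartet $ac|bd$. Comparing the two descriptions, $T'$ displays $ad|bc$, $ab|cd$, or the star according to whether case (1), (2), or (3) holds, and this is exactly the quartet data that the stipulated path-incidence properties assign to $T(p_\bullet)$ --- property (1), that the $a$--$d$ path does not meet the $b$--$c$ path, being the quartet $ad|bc$, and similarly for (2) and (3). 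Since a tree with all internal vertices of degree $\ge 3$ is determined by the collection of quartets it displays, $T'=T(p_\bullet)$; in particular $T(p_\bullet)$ exists and is unique.

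It then follows that $\{q_\bullet\mid T(q_\bullet)=T\}=\{q_\bullet\mid\tDelta(q_\bullet)=\tDelta_T\}$, which is the relatively open cone $C(\tDelta_T)$ of the secondary fan structure; this completes the proof. The step I expect to require the most care is the octahedral identification --- verifying that \eqref{eq:trop} pins down which of the three two-pyramid splittings of the octahedron occurs, and matching it (with the correct orientation of the labels (1)/(2)/(3)) to the quartet displayed by $T'$ --- but this is the same straightforward case check already invoked in the proof of Proposition~\ref{prop:posdiv}(2), and the phylogenetic fact that the displayed quartets determine a tree is classical, so no essentially new difficulty arises.
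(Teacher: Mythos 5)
The paper states this proposition without proof, so there is nothing to compare against line by line; your argument is a correct and complete way to supply it. Your route --- identify the fibers of $p_\bullet\mapsto T(p_\bullet)$ with the secondary cones by restricting $\tDelta(p_\bullet)$ to each octahedral face $\Delta(2,\{a,b,c,d\})$, matching the two-pyramid splitting forced by \eqref{eq:trop} to the quartet displayed by the tree, and invoking quartet reconstruction together with Kapranov's bijection $T\mapsto\tDelta_T$ --- is essentially the intended one, and your orientation of the case labels is right: case (1) puts the lifted square $\{e_{ab},e_{ac},e_{bd},e_{cd}\}$ on the lower hull with apices $e_{ad},e_{bc}$, i.e.\ the quartet $ad|bc$. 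One small remark: since for $k=2$ the Pl\"ucker fan is by definition the common refinement over quadruples of the three alternatives (1)/(2)/(3), the fibers of the quadruple-by-quadruple case data are automatically the relatively open Pl\"ucker cones, so the only genuine content is that this case data is realized by a (unique) planar tree; your secondary-fan detour establishes exactly that, and could equally be phrased as comparing directly with the Pl\"ucker fan rather than the secondary fan.
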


\subsection{}
Let $T$ be a planar tree and $\tDelta_T$ be the corresponding hypersimplex subdivision.  Suppose that $\tDelta_T = \{P_{\M(v)}\}$.  We have a homeomorphism 
\begin{equation}\label{eq:Ch}
\Theta_{\tDelta_T,>0} \simeq \prod_{v \in \Vert(T)} \Pi_{\M(v),>0}/T_{>0}
\end{equation}
and $\dim(\tDelta_T) = \dim(\Theta_{\tDelta_T,>0}) = n{-}2{-}|\Vert(T)|$.  This agrees with $\Pi_{\M(v),>0}/T_{>0} \simeq \R^{\deg(v) - 3}$, since $\sum_{v \in \Vert(T)} (\deg(v){-} 3) = n{-}2{-}|\Vert(T)|$.  For $k > 2$, we do not know an easy estimate for $\dim(\tDelta)$, nor does the ``factorization" in \eqref{eq:Ch} hold.  We will study of the geometry of $\Theta_{\tDelta,>0}$ in future work \cite{ALS2}.

\subsection{}
We shall use the following positive parametrization of $\Conf(2,n)$:
\begin{equation}\label{eq:Markparam}
\begin{bmatrix} 0 & 1&  1& 1&1& \cdots & 1 \\
-1 & 0& 1& 1+x_1 & 1+x_1+x_1x_2 & \cdots & 1+x_1+x_1x_2+ \cdots +x_1x_2\cdots x_{n-3}
\end{bmatrix}
\end{equation}
In this positive parametrization, the Pl\"ucker coordinates for $12,13,\ldots,1n$ and $23$ have been gauge-fixed to 1, while the positive parameters $(x_1,\ldots,x_{n-3})$ are given by $x_i = \Delta_{i+2,i+3}/\Delta_{i+1,i+2}$.  Thus, our positive parameterization is related to the one for
$$
(\CC,\G) = (\{12,13,\ldots,1n,23,34,\ldots,(n-1)n\},\{12,13,\ldots,1n,23\})
$$
by a monomial transformation.  Our reasons for choosing this parametrization will be explained in \cite{ALS2}.  An explicit description of the fan structure in these positive coordinates is given in \cite{SW}.

Let us take $n = 5$.  Then the non-monomial Pl\"ucker variables are $\Delta_{24} = 1+x_1$, $\Delta_{25} = 1+x_1+x_1x_2$, and $\Delta_{35} = x_1+x_1x_2$.  The common domains of linearity of $\Trop(\Delta_{24}) = \min(0,X_1)$, $\Trop(\Delta_{25}) = \min(0,X_1,X_1+X_2)$ and $\Trop(\Delta_{35}) = X_1 + \min(0,X_2)$ give the following fan:
$$
\begin{tikzpicture}
\draw [help lines, step=1cm] (-2,-2) grid (2,2);
\node[fill=black,circle, inner sep=0pt,minimum size=5pt] at (0,0) {};
\draw[thick] (0,0) --(2,0);
\draw[thick] (0,0) --(-2,0);
\draw[thick] (0,0) --(0,-2);
\draw[thick] (0,0) --(0,2);
\draw[thick] (0,0) --(2,-2);
\end{tikzpicture}
$$
As an example, let us consider the integer vector $(1,-1)$ lying on the southeast pointing ray, and substitute $(x_1,x_2) = (t,1/t)$ into \eqref{eq:Markparam} to obtain
$$
V(t) = \begin{bmatrix} 0 & 1&  1& 1&1\\
-1 & 0& 1& 1+ t & 2+t  
\end{bmatrix}
$$
The tropical Pl\"ucker vector $p_\bullet =\val( \Delta_\bullet(V(t)))$ is given by $p_{34} = 1$ and $p_J = 0$ for $J \neq 34$.  Taking $\a = (1/2,1/2,-1/2,-1/2,1/2)$ in \eqref{eq:equiv}, we see that $p_\bullet \sim p'_\bullet$, where $p'_{12}=p'_{15}=p'_{25} = 1$ while $p_J =0$ for $J \notin \{12,15,25\}$.
%
Thus the corresponding hypersimplex subdivision $\tDelta(p_\bullet)$ consists of two positroid polytopes $P_{\M_1}$ and $P_{\M_2}$ where
$$
\M_1 = \mbox{$\binom{[5]}{2}$} \setminus \{34\} \qquad \text{and} \qquad \M_2=\{13,14,23,24,34,35,45\}
$$
separated by the hyperplane $x_3+x_4 = 1$.  The planar tree is
$$
 \begin{tikzpicture}
\node at (180:2.5) {$T(p_\bullet)=$};
\draw (0:1.2) node {$3$};
\draw (72:1.2) node {$2$};
\draw (144:1.2) node {$1$};
\draw (216:1.2) node {$5$};
\draw (288:1.2) node {$4$};
\draw (72:1)--(144:0.1)--(144:1);
\draw (144:0.1)--(216:1);
\draw (288:1)--(-36:0.5)--(0:1);
\draw (-36:0.5)--(144:0.1);
\end{tikzpicture}
$$


\subsection{}
Let us fix a cluster $\CC\subset \binom{[n]}{2}$.  In the case $k=2$, the polytope $P(2,n)$ has the special feature that it is simple, and thus every cone of the normal fan $\F$ is a simplicial cone.  Let us denote the (first integer point on the) rays of $\F$ by $r_{ij}$, corresponding to the tree with a single interior edge separating leaves $i+1,\ldots,j$ from $j+1,\ldots,i-1,i$, where $(i,j)$ varies over the diagonals of a polygon with vertices $1,2,\ldots,n$.  Also write $p^{ij}_\bullet$ for the tropical Pl\"ucker vector that maps to $r_{ij}$ under Theorem~\ref{thm:param}.  As explained in \cite{AHL1}, a consequence of the simplicial-ness of $\F$ is that the ring $\C[\Gamma]$ has some particularly nice generators.
For $(i,j)$ a diagonal of the polygon with vertices $1,2,\ldots,n$, define the rational function
$$
u_{ij} = \frac{\Delta_{i,j+1} \Delta_{i+1,j} }{\Delta_{ij} \Delta_{i+1,j+1}} = \frac{(z_i-z_{j+1})(z_{i+1}-z_j)}{(z_i-z_j)(z_{i+1}-z_{j+1})}
$$
on $\Conf(2,n)$, which can be interpreted as a cross ratio of the four points $z_i,z_{i+1},z_j,z_{j+1}$ on $\PP^1$.  The functions $u_{ij}$ have the following special property.

\begin{proposition}[\cite{AHL1,AHL2}] The ring $\C[\Gamma]$ of \S\ref{sec:Gamma} is the subring of $\C(\Gr(k,n))$ generated by the $u_{ij}$.  For two diagonals $(i,j)$ and $(i',j')$, we have $$\Trop(u_{ij})(r_{i'j'}) = \Trop(u_{ij})(p^{i'j'}_\bullet) = \delta_{ij,i'j'}.$$
\end{proposition}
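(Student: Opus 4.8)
The plan is to treat the two assertions of the proposition separately. That $\C[\Gamma]$ is generated by the cross-ratios $u_{ij}$ is the content of \cite{AHL1,AHL2}, and I would recall its proof, which rests on the simplicity of $P(2,n)$; the tropical evaluation $\Trop(u_{ij})(r_{i'j'}) = \Trop(u_{ij})(p^{i'j'}_\bullet) = \delta_{ij,i'j'}$ is a short direct computation that I would carry out in full, and it is the part the rest of the plan focuses on. As a preliminary, note that each $u_{ij}$ lies in $\C[\Gamma]$: written as $u_{ij} = \Delta_{i,j+1}\Delta_{i+1,j}\,\Delta_{ij}^{-1}\Delta_{i+1,j+1}^{-1}$ it is a Laurent monomial in the Pl\"ucker coordinates of weight $e_i+e_{j+1}+e_{i+1}+e_j-e_i-e_j-e_{i+1}-e_{j+1}=0$, hence $T$-invariant, and applying the three-term Pl\"ucker relation \eqref{eq:threeterm} with $S=\emptyset$ to the four cyclically consecutive indices $i<i+1<j<j+1$ gives $\Delta_{ij}\Delta_{i+1,j+1} = \Delta_{i,i+1}\Delta_{j,j+1} + \Delta_{i,j+1}\Delta_{i+1,j}$; after the positive parametrization of \S\ref{ssec:topgauge} this writes $u_{ij}(\x) = A(\x)/(A(\x)+B(\x))$ with $A,B$ positive Laurent polynomials, so that $\Trop(u_{ij}) = \Trop(A) - \min(\Trop A, \Trop B) \geq 0$ and $u_{ij}$ is nearly convergent.

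For the tropical identity, recall from \S\ref{sec:M0n} that the ray $r_{i'j'}$ of $\F$ corresponds to the two-cell positroidal subdivision $\tDelta$ of $\Delta(2,n)$ obtained by slicing along the hyperplane $\{\sum_{s\in S} x_s = 1\}$ with $S = [i'+1,j']$, which is induced by the height vector $v_\bullet$ given by $v_{ab} = \max(0, |\{a,b\}\cap S| - 1) = \mathbf 1[\{a,b\}\subseteq S]$. I would then evaluate
\[
\Trop(u_{ij})(v_\bullet) = v_{i,j+1} + v_{i+1,j} - v_{i,j} - v_{i+1,j+1}
\]
by a case analysis on where the two boundary gaps of the cyclic interval $S$ lie relative to the cyclically adjacent pairs $\{i,i+1\}$ and $\{j,j+1\}$: if at most one of these two pairs is separated by $S$, the four indicator values cancel in pairs and the alternating sum is $0$; if both pairs are separated — equivalently $S = [i+1,j]$ or $S = [j+1,i]$, i.e. $(i,j) = (i',j')$ as an unoriented diagonal — exactly one of $v_{i,j+1},v_{i+1,j}$ is $1$ and the rest vanish, so the sum equals $1$. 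Hence $\Trop(u_{ij})(v_\bullet) = \delta_{ij,i'j'}$. Since $\Trop(u_{ij})$ is linear along the ray $C(\tDelta)$ and takes integer values on integer vectors, the value $1$ attained at $v_\bullet$ forces $v_\bullet$ itself to be the primitive generator of that ray, so $v_\bullet = r_{i'j'} = p^{i'j'}_\bullet$, and the identity follows.

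For the generation statement I would follow \cite{AHL1,AHL2}. The simplicity of $P(2,n)$ means $\F$ is a simplicial complete fan with rays $\{r_{ij}\}$ — indeed it is the type $A_{n-3}$ cluster fan, whose maximal cones are unimodular. By Lemma~\ref{lem:vP}, $\C[\Gamma]$ is spanned by the functions $\x^\vv/P(\x)^\ell$ with $\vv \in \ell\,P(2,n)\cap\Z^r$. Fixing a maximal cone $C$ of $\F$ with ray generators $r_{a_1},\ldots,r_{a_r}$, dual vertex $w$ of $P(2,n)$, dual lattice basis $b_1,\ldots,b_r$, and writing the facet with inner normal $r_{a_s}$ in the form $\{\langle r_{a_s},\cdot\rangle = -c_{a_s}\}$, one expands $\vv = \ell w + \sum_s \lambda_s b_s$ with $\lambda_s = \langle r_{a_s}, \vv\rangle + \ell c_{a_s} \in \Z_{\geq 0}$ (nonnegativity because $\vv\in\ell\,P(2,n)$), so that $\x^\vv/P(\x)^\ell = (\x^{w}/P(\x))^\ell \prod_s (\x^{b_s})^{\lambda_s}$, where $\x^{w}/P(\x)$ is invertible on $U_C\cap X'_{P(2,n)}$ and each $\x^{b_s}$ is regular on $U_C$. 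The identity of the previous paragraph is exactly the statement that, on the chart $U_C$, the cross-ratio $u_{a_s}$ agrees with $\x^{b_s}$ up to a factor invertible on $U_C\cap X'_{P(2,n)}$; assembling these chart-local descriptions over all maximal cones of $\F$ then exhibits each $\x^\vv/P(\x)^\ell$, hence every element of $\C[\Gamma]$, as a polynomial in the $u_{ij}$.

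The delicate point — the part where I would lean on \cite{AHL1,AHL2} rather than reprove it — is the final step of the generation argument: patching the chart-local expressions of $\x^\vv/P(\x)^\ell$ in terms of the $u_{ij}$ and the local units $\x^{w}/P(\x)$ into a single \emph{polynomial} (not merely Laurent) identity valid on all of $X'_{P(2,n)}$. This is precisely where the unimodularity of the cluster fan enters, guaranteeing that the dual bases $b_s$ are integral and the affine charts $U_C$ are smooth, so that the local monoid computations glue without loss.
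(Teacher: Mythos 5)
The paper gives no proof of this proposition at all: it is imported wholesale from \cite{AHL1,AHL2}, with only the remark afterwards that $\Trop(u_{ij}) = p_{i,j+1}+p_{i+1,j}-p_{ij}-p_{i+1,j+1}\geq 0$ on tropical Pl\"ucker vectors. So you are doing strictly more than the paper. Your treatment of the displayed identity is correct and complete: the height vector $v_{ab}=\mathbf{1}[\{a,b\}\subseteq S]$ with $S=[i'+1,j']$ does induce the two-piece subdivision attached to $r_{i'j'}$, and writing $v_{ab}=\chi(a)\chi(b)$ with $\chi$ the indicator of $S$ gives $\Trop(u_{ij})(v_\bullet)=-(\chi(i+1)-\chi(i))(\chi(j+1)-\chi(j))$, which is nonzero exactly when $S$ separates both $\{i,i+1\}$ and $\{j,j+1\}$, i.e.\ when $(i,j)=(i',j')$ as a diagonal, where it equals $1$; this packages your case analysis into one line. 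Your primitivity step ($1=\lambda m$ with $\lambda,m$ positive integers, using that $\Trop(u_{ij})$ sends integral points of the ray to nonnegative integers via Theorem~\ref{thm:param} and Corollary~\ref{cor:param}) is terse but sound, and your near-convergence check via $\Delta_{ij}\Delta_{i+1,j+1}=\Delta_{i,i+1}\Delta_{j,j+1}+\Delta_{i,j+1}\Delta_{i+1,j}$ is exactly equivalent to the paper's remark. For the generation statement your argument is only a sketch — the expansion $\vv=\ell w+\sum_s\lambda_s b_s$ in a unimodular cone and the identification of $u_{a_s}$ with $\x^{b_s}$ up to local units is the right mechanism, but the assembly into a global polynomial identity is precisely the content you defer to \cite{AHL1,AHL2}; since the paper defers the entire proposition to those same sources, this is not a gap relative to the paper, though you should be explicit that this half remains a citation rather than a proof.
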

Note that $\Trop(u_{ij}) = p_{i,j+1} + p_{i+1,j} - p_{ij} - p_{i+1,j+1}$, so it is easy to see that it takes nonnegative values on any tropical Pl\"ucker vector $p_\bullet$, and thus $u_{ij}$ is nearly convergent. It is not hard to see that all the inequalities from \eqref{eq:trop} are positive linear combinations of $\Trop(u_{ij})$ for various $i,j$.

Let  $\G \subset \CC$ be a gauge-fix.  Setting $\Delta_I = 1$ for $I \in \G$, it is not difficult to see that there is an invertible monomial transformation between the $n(n{-}3)/2$ functions $u_{ij}$ and the functions $\Delta_J$ with $J \notin \CC \setminus \G$.  The fan structure on $\Pluck(2,n)_{>0}$ is given by intersecting the cones $\Trop(u_{ij}) = 0$ and $\Trop(u_{ij}) > 0$ as $i,j$ vary.  

For $n= 5$ and the parametrization \eqref{eq:Markparam}, the $u_{ij}$ functions are
\begin{align*}
u_{13} &= \frac{1}{1+x_1},&  u_{14}&= \frac{1 + x_1}{1 + x_1 + x_1 x_2},& u_{24}&=\frac{x_1 (1 + x_1 + x_1 x_2)}{(1 + x_1) (x_1 + x_1 x_2)} \\
 u_{25}&= \frac{x_1 + x_1 x_2}{1 + x_1 + x_1 x_2}, &u_{35}&=\frac{x_1 x_2}{x_1 + x_1 x_2}
\end{align*}
which are easily seen to belong to $\Gamma$ using Lemma~\ref{lem:nearlyconverge}.

\section{Tropical bridge reduction}
\label{sec:bridge}

We will frequently use the following easy result without mention.
 \begin{lemma}
 We have $I \leq J$ if and only if $(I \setminus J) \leq (J \setminus I)$.
 \end{lemma}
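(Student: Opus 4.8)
Plan for proving the lemma ($I \leq J$ if and only if $(I\setminus J) \leq (J\setminus I)$):

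The statement concerns the Bruhat order on $\binom{[n]}{k}$, where $I = \{i_1 < \cdots < i_k\}$ and $J = \{j_1 < \cdots < j_k\}$ satisfy $I \leq J$ iff $i_r \leq j_r$ for all $r$. The plan is to reduce the comparison of the full sets to the comparison of the ``non-common'' parts, exploiting that $I \cap J$ contributes identically to both sides. First I would set $A := I \setminus J$ and $B := J \setminus I$, noting $|A| = |B| =: m$ since $|I| = |J| = k$. Write $A = \{a_1 < \cdots < a_m\}$ and $B = \{b_1 < \cdots < b_m\}$.

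The key observation is a \emph{position-matching} statement: if $I \leq J$, then for each $r$, the $r$-th smallest element of $A$ sits in the same ``rank slot'' (relative to the interleaving with $I \cap J$) as the $r$-th smallest element of $B$. More precisely, I would argue as follows. Consider the sorted list of $I$ and the sorted list of $J$; the elements of $C := I \cap J$ appear in both. Deleting the common elements from both sorted sequences, the remaining subsequences are exactly $A$ (sorted) inside $I$ and $B$ (sorted) inside $J$. The crucial claim is that $a_r$ and $b_r$ occupy positions in $I$ and $J$ respectively that differ only by how many elements of $C$ precede them, and one shows $\#\{c \in C : c < a_r\} = \#\{c \in C : c < b_r\}$ — or at least that the inequality $i_{p} \leq j_{p}$ at the matching position index forces $a_r \leq b_r$. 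The cleanest route is probably: suppose for contradiction that $a_r > b_r$ for some $r$; take the largest such $r$. Then count elements of $J$ that are $\leq b_r$: there are at least $r$ elements of $B$ that are $\leq b_r$ (namely $b_1, \ldots, b_r$), plus all $c \in C$ with $c \leq b_r$. Compare with the count of elements of $I$ that are $\leq b_r$: since $a_r > b_r$ and (by maximality of $r$) $a_{r'} \leq b_{r'} \leq b_r$ would need checking... this requires care, so instead I'd count elements $> b_r$. Let $p$ be the position such that $j_p = b_r$, i.e. $p = r + \#\{c \in C : c < b_r\}$. Then $i_p \leq j_p = b_r$. But the elements of $I$ that are $\leq b_r$ number at most $\#\{c \in C : c \leq b_r\} + \#\{a \in A : a \leq b_r\} \leq \#\{c \in C : c < b_r\} + (r-1)$ if $a_r > b_r$ (since then $a_r, \ldots, a_m > b_r$, leaving only $a_1, \ldots, a_{r-1}$ possibly $\leq b_r$). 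That gives at most $p - 1$ elements of $I$ below $b_r$, contradicting $i_p \leq b_r$ (which says the $p$-th smallest element of $I$ is $\leq b_r$, so there are at least $p$ such elements). The converse direction is symmetric, or follows by the same counting run backwards.

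The main obstacle — though it is genuinely routine once set up — is managing the bookkeeping of how the common elements $C$ interleave with $A$ and $B$, and making the position index $p$ translate correctly between the two sorted sequences. I would be careful to phrase everything in terms of cardinalities of down-sets, e.g. $|I \cap [1,x]|$ versus $|J \cap [1,x]|$, since the condition $I \leq J$ is equivalent to $|I \cap [x,n]| \leq |J \cap [x,n]|$ for all $x$ (equivalently $|I \cap [1,x]| \geq |J \cap [1,x]|$), and this ``profile'' characterization makes the common part cancel transparently: $|I \cap [1,x]| = |C \cap [1,x]| + |A \cap [1,x]|$ and similarly for $J$, so $|I \cap [1,x]| \geq |J \cap [1,x]|$ for all $x$ iff $|A \cap [1,x]| \geq |B \cap [1,x]|$ for all $x$, which is precisely $A \leq B$. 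This reformulation is the slick proof and avoids the contradiction argument entirely; I would present it this way. I should double-check the direction of the inequality in the profile characterization of Bruhat order (it is $|I \cap [1,x]| \geq |J \cap [1,x]|$ when $I \leq J$), but that is a standard fact.
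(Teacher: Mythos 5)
Your proposal is correct. The paper states this lemma without any proof (it is introduced only as an ``easy result'' to be used without mention), so there is no argument to compare against; the profile reformulation you settle on --- that $I \leq J$ is equivalent to $|I \cap [1,x]| \geq |J \cap [1,x]|$ for all $x$, after which the common part $I \cap J$ cancels from both counts and leaves exactly the condition $(I\setminus J) \leq (J\setminus I)$ --- is the clean way to do it and handles both directions at once. Your earlier counting-by-contradiction version also works as written (and in fact does not need the maximality of $r$: any index with $a_r > b_r$ already yields at most $p-1$ elements of $I$ below $j_p = b_r$, contradicting $i_p \leq j_p$), but the profile argument is the one worth presenting.
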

\subsection{}
Let $q_\bullet$ be a positive tropical Pl\"ucker vector.  Pick $a \in \Z$ and $i \neq j \in [n]$.  We define the tropical bridge $T_\gamma(a) = T_{i,j}(a)$ acting on $\R^{\binom{[n]}{k}}$ by $q'_\bullet = T_\gamma(a)  \cdot q_\bullet$ where
$$
q'_I = \begin{cases} \min(q_I, q_{I \setminus \{j\} \cup \{i\}} + a)& \mbox{if $j \in I$ but $i \notin I$} \\
q_I & \mbox{otherwise.}
\end{cases}
$$
This formula is the tropicalization of \eqref{eq:Deltax}.  When $j=i+1$, we write $T_i(a):= T_{i,i+1}(a)$.
\begin{proposition}\label{prop:bridge}
If $q_\bullet \in \Pluck(k,n)_{\geq 0}$ then $T_i(a) \cdot q_\bullet \in \Pluck(k,n)_{\geq 0}$.
\end{proposition}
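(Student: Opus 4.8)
Write $q'_\bullet := T_i(a) \cdot q_\bullet$. Since~\eqref{eq:trop} need only be verified one three-term relation at a time, the plan is to realize $q'_\bullet$ as the tropicalization of the ordinary three-term Pl\"ucker relation~\eqref{eq:threeterm} for the Grassmannian point $V \cdot x_i(t)$, where $V$ is the generic point of the positroid variety $\Pi_{\M}$ with $\M := \Supp(q_\bullet)$ (a positroid by Proposition~\ref{prop:posSpeyer}) and $t$ is a formal parameter. The point of passing through $V\cdot x_i(t)$ is that all the relevant expressions stay subtraction-free, so tropicalization respects the relation.

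Fix a cluster $\CC$ for $\M$. By the proof of Theorem~\ref{thm:param} --- which obtains $p_J$ from $(p_I \mid I \in \CC)$ by iterating~\eqref{eq:trop}, and~\eqref{eq:trop} is exactly the result of applying the substitution~\eqref{eq:tropsubs} to~\eqref{eq:threeterm} solved for one coordinate --- we have, for every $J$,
$$
q_J \;=\; \Trop(\Delta_J)\bigl(q_I \mid I \in \CC\bigr),
$$
where $\Delta_J$ denotes the subtraction-free rational expression in $\{\Delta_I \mid I \in \CC\}$ representing the $J$-th Pl\"ucker coordinate on $\Pi_{\M}$ (Proposition~\ref{prop:Plucksub}), with the convention that this expression, and its tropicalization, are $0$ and $\infty$ respectively when $J \notin \M$. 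By~\eqref{eq:Deltax} (valid for all $i \in [n]$ --- the sign in the definition of $x_n$ makes the formula subtraction-free in the wrap-around case), $\Delta_J(V \cdot x_i(t))$ equals $\Delta_J$ or $\Delta_J + t\,\Delta_{J \setminus \{i+1\} \cup \{i\}}$, hence is again subtraction-free in $\{\Delta_I \mid I \in \CC\}$ together with $t$; comparing with the definition of $T_i(a)$ and applying the displayed identity to $J$ and to $J \setminus\{i+1\}\cup\{i\}$, one checks case by case (according to which of these two sets lie in $\M$) that
$$
\Trop\bigl(\Delta_J(V \cdot x_i(t))\bigr)\bigl(\,q_I \mid I \in \CC;\ \val(t) = a\,\bigr) \;=\; q'_J .
$$
Finally, $V \cdot x_i(t)$ is a point of $\Gr(k,n)$ over the function field, so its Pl\"ucker coordinates satisfy~\eqref{eq:threeterm}; this is an identity of subtraction-free rational functions of $\{\Delta_I \mid I \in \CC\} \cup \{t\}$, and since tropicalization (extended by $\Trop(0)=\infty$) is a semiring homomorphism, applying~\eqref{eq:tropsubs} and then the evaluation above turns~\eqref{eq:threeterm} into
$$
q'_{Sac} + q'_{Sbd} \;=\; \min\bigl(q'_{Sab} + q'_{Scd},\ q'_{Sad} + q'_{Sbc}\bigr),
$$
i.e.~\eqref{eq:trop} for $q'_\bullet$. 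As $S$ and $a < b < c < d$ were arbitrary, $q'_\bullet \in \Pluck(k,n)_{\geq 0}$.

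I expect the one fiddly point to be the bookkeeping around vanishing Pl\"ucker coordinates: when $J \notin \M$ (or $J \setminus \{i+1\} \cup \{i\} \notin \M$) the relevant expressions degenerate to $0$, and one must confirm that the $\min/{+}$ identities above remain correct with the corresponding entries equal to $\infty$ --- equivalently, that tropicalization is still a homomorphism after adjoining the zero element $0 \mapsto \infty$. This is routine but should be spelled out. (A shorter but less self-contained route avoids it entirely: for rational $q_\bullet$, Theorem~\ref{thm:main} gives $q_\bullet = \val(\Delta_\bullet(V))$ with $V \in \Gr(k,n)(\RR_{\geq 0})$; then $V \cdot x_i(t^a)$ again lies in $\Gr(k,n)(\RR_{\geq 0})$ by~\eqref{eq:Deltax} since a sum of elements of $\RR_{\geq 0}$ lies in $\RR_{\geq 0}$, one has $\val(\Delta_\bullet(V \cdot x_i(t^a))) = q'_\bullet$ for lack of cancellation, and Lemma~\ref{lem:valtrop} concludes, the general case following by continuity and density of rational vectors. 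I would not use this here, though, since the bridge development is meant to reprove Theorem~\ref{thm:main} rather than rely on it.)
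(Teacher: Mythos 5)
Your main argument is correct, but it takes a genuinely different route from the paper. The paper's proof of Proposition~\ref{prop:bridge} is exactly the two-line argument you relegate to your final parenthesis: invoke Theorem~\ref{thm:main} to write $q_\bullet = \val(\Delta_\bullet(V))$ for some $V \in \Gr(k,n)(\RR_{\geq 0})$ and read the claim off \eqref{eq:Deltax}; the circularity you worry about is handled not by avoiding this route but by the remark immediately following the proposition, which stipulates that within \S\ref{sec:bridge} the proposition is only ever applied to vectors separately known to be representable. Your argument instead stays entirely on the tropical--algebraic side: it writes $q_J$ as $\Trop(\Delta_J)$ evaluated on the cluster coordinates (via the recursion in the proof of Theorem~\ref{thm:param} together with Proposition~\ref{prop:Plucksub}) and then tropicalizes the three-term Pl\"ucker identity for $V \cdot x_i(t)$ over the function field of the cluster torus. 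This buys two things: it removes the realizability input altogether, so no caveat needs to be tracked through \S\ref{sec:bridge}, and it treats arbitrary real $q_\bullet$ directly rather than rational vectors plus a closure/continuity step (the paper's phrase ``every $q_\bullet$ is representable'' is literally justified only for rational vectors). The cost is a dependence on the cluster machinery --- harmless logically, since neither Theorem~\ref{thm:param} nor Proposition~\ref{prop:Plucksub} relies on Theorem~\ref{thm:main} --- plus two points you should spell out: that $\Trop$ is well defined on subtraction-free rational expressions independently of the chosen representation and is a semifield homomorphism (so that $q_J = \Trop(\Delta_J)(q_I \mid I \in \CC)$ really follows from matching the two recursions term by term), and the degenerate bookkeeping you flag, which does close: a sum of nonzero subtraction-free expressions is nonzero, so a term of \eqref{eq:threeterm} vanishes identically on $\Pi_\M \cdot x_i(t)$ exactly when the corresponding tropical term is $\infty$, and the $\min/{+}$ identity survives under the convention $\Trop(0) = \infty$.
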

\begin{proof}
By Theorem~\ref{thm:main}, every $q_\bullet \in \Pluck(k,n)_{\geq 0}$ is representable by $V \in \Gr(k,n)(\RR_{\geq 0})$.  The claim then follows immediately from \eqref{eq:Deltax}.  
\end{proof}

\begin{remark}
Our proof below gives proofs of Theorem~\ref{thm:main} and of Proposition~\ref{prop:bridge} that are independent of our earlier proof of Theorem~\ref{thm:main}.  We will only apply Proposition~\ref{prop:bridge} to $q_\bullet \in \Pluck(k,n)_{\geq 0}$ that we separately know to be representable.  And once Theorem \ref{thm:main} is established, Proposition~\ref{prop:bridge} follows for arbitrary $q_\bullet \in \Pluck(k,n)_{\geq 0}$.
\end{remark}

\subsection{}
We shall show that bridge reduction (Proposition~\ref{prop:bridgedecomp}) holds for positive tropical Pl\"ucker vectors.  

\begin{proposition}\label{prop:tropreduce}
Let $p_\bullet \in \Pluck(\M)_{>0}$ where $\M$ is a rank $k >0$ positroid on $[n]$.  Then we show that at least one of the following holds:
\begin{enumerate}
\item For some $i \in [n]$, we have $f_\M(i) = i$.  Define $\epsilon_i: [n-1] \to [n]$ by $\epsilon_i(a) = a$ if $a <i$ and $\epsilon_i(a) = a+1$ if $a \geq i$.  Then $p_\bullet$ is in the image of the map $\epsilon_i:\Pluck(k,n-1)_{\geq 0} \hookrightarrow \Pluck(k,n)_{\geq 0}$ given by 
$$
\epsilon_i(q_\bullet)_I = \begin{cases} \infty & \mbox{if $i \in I$} \\
q_J & \mbox{if $I = \epsilon_i(J)$.}
\end{cases}
$$ 
\item For some $i \in [n]$, we have $f_\M(i) = i+n$.  Define $\epsilon_i: [n-1] \to [n]$ by $\epsilon_i(a) = a$ if $a <i$ and $\epsilon_i(a) = a+1$ if $a \geq i$.  Then $p_\bullet$ is in the image of the map $\varepsilon_i:\Pluck(k-1,n-1)_{\geq 0} \hookrightarrow \Pluck(k,n)_{\geq 0}$ given by
$$
\varepsilon_i(q_\bullet)_I = \begin{cases} \infty & \mbox{if $i \notin I$} \\
q_J & \mbox{if $I = \epsilon_i(J) \cup\{i\}$.}
\end{cases}
$$ 
\item For some $i \in [n]$, we have $i+1 \leq f_\M(i) < f_\M(i+1) \leq i+n$.  Then $p_\bullet = T_i(a) \cdot q_\bullet$ where $q_\bullet \in \Pluck(\M')_{>0}$ with $\M'$ the positroid satisfying $f_{\M'} = f_\M s_i$, and 
$$
a = p_{I_{i+1}} - p_{I_{i+1} \setminus\{i+1\} \cup \{i\}}.
$$
\end{enumerate}
\end{proposition}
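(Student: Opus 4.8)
As in Proposition~\ref{prop:bridgedecomp}, the three alternatives are a purely combinatorial statement about $f=f_\M$: if $f(i)=i$ for some $i$ we are in case (1); if $f(i)=i+n$ for some $i$ we are in case (2); otherwise $i+1\le f(i)\le i+n-1$ for all $i$, and since $f(i)\ge f(i+1)$ for all $i$ would force $f(1)\ge\cdots\ge f(n{+}1)=f(1)+n$, some $i$ has $f(i)<f(i+1)$, giving $i+1\le f(i)<f(i+1)\le i+n$, i.e. case (3); here one checks the three axioms to see that $f s_i$ is again a bounded affine permutation, so $\M'$ with $f_{\M'}=f_\M s_i$ exists (Proposition~\ref{prop:KLS}) and $\dim(\M')=\dim(\M)-1$. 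In case (1), $f_\M(i)=i$ says $i$ is a loop of $\M$, so no basis contains $i$, whence $p_I=\infty$ for all $I\ni i$ and $p_\bullet=\epsilon_i(q_\bullet)$ with $q_J:=p_{\epsilon_i(J)}$; the vector $q_\bullet$ satisfies \eqref{eq:trop} because each such relation for $q_\bullet$ is a relabelling of one for $p_\bullet$, and $\Supp(q_\bullet)=\M\setminus i$ is a rank-$k$ positroid on $[n-1]$. Case (2) is dual: $i$ is a coloop, $p_I=\infty$ for $I\not\ni i$, and $p_\bullet=\varepsilon_i(q_\bullet)$ with $q_J:=p_{\epsilon_i(J)\cup\{i\}}$, whose positive tropical Pl\"ucker relations are exactly those of $p_\bullet$ carrying the size-$(k{-}2)$ set $S\cup\{i\}$.

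\textbf{Case (3): the constant $a$.} Under $i+1\le f_\M(i)<f_\M(i+1)\le i+n$, a Grassmann-necklace computation gives $i\in I_i$ (since $f_\M(i)\ge i+1$), hence $i\notin I_{i+1}$, while $f_\M(i+1)>i+1$ forces $i+1\in I_{i+1}$. Thus $I_{i+1}$ is of the type ``$i{+}1\in I$ but $i\notin I$'' on which $T_i$ acts nontrivially, $I_{i+1}':=I_{i+1}\setminus\{i{+}1\}\cup\{i\}\in\M$, and $a:=p_{I_{i+1}}-p_{I_{i+1}'}$ is a well-defined real number (an integer if $p_\bullet$ is). Writing $I':=I\setminus\{i{+}1\}\cup\{i\}$, I would next show that $p_I\le p_{I'}+a$ for every $I$ of that type --- the tropical counterpart of the positivity of $a$ in Proposition~\ref{prop:bridgedecomp} --- so that the minimum defining $(T_i(a)\cdot q)_I$ below never exceeds $p_I$.

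\textbf{Case (3): constructing and verifying $q_\bullet$.} The plan is to \emph{define} $q_\bullet$ as the unique element of $\Pluck(\M')_{>0}$ agreeing with $p_\bullet$ on a cluster $\CC'\subset\M'$ chosen so that no member of $\CC'$ is of the type ``$i{+}1\in I$ but $i\notin I$'' --- such a $\CC'$ exists, e.g. from the plabic graph for $\M'$ obtained by removing the bridge at $i$ --- using Theorem~\ref{thm:param} together with $p_J<\infty$ for $J\in\CC'\subset\M'\subset\M$; then $\Supp(q_\bullet)=\M'$ by construction. It remains to check $T_i(a)\cdot q_\bullet=p_\bullet$. Both sides satisfy \eqref{eq:trop}: the left one because $T_i$ is the tropicalization of the additive action formula \eqref{eq:Deltax}, so its compatibility with \eqref{eq:trop} reduces to a finite case-check over the position of $\{i,i+1\}$ relative to $S$ and the four-element set in each three-term relation (the same bookkeeping as in the proof of Proposition~\ref{prop:posdiv}). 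On $\CC'$ the two sides agree by definition, and the induction on $(w(J),d(J))$ from the proof of Theorem~\ref{thm:param} --- which recovers every coordinate of a support-$\M$ positive tropical Pl\"ucker vector from a cluster --- then forces agreement everywhere, provided one first checks that $\Supp(T_i(a)\cdot q_\bullet)=\M$ (so that $\CC'$ together with one ``new'' bridged Pl\"ucker coordinate is a cluster for $\M$). I expect this last point --- showing that the bridge $T_i(a)$ creates exactly the bases $\M\setminus\M'$ predicted by $f_\M=f_{\M'}s_i$ and kills none, i.e. that $T_i(a)$ really is a bijection $\Pluck(\M')_{>0}\times\R\to\Pluck(\M)_{>0}$ --- to be the main obstacle; it is the tropical shadow of the fact from \cite{LamCDM} that $V\mapsto V\cdot x_i(-a)$ maps $\Pi_{\M,>0}$ into $\Pi_{\M',>0}$. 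An alternative that sidesteps the support bookkeeping is to tropicalize the explicit invertible subtraction-free bridge-to-cluster maps of \cite{MS}, turning all of case (3) into an invertible piecewise-linear change of coordinates, as in the second proof of Theorem~\ref{thm:param}.
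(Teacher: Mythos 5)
Your trichotomy, your treatment of cases (1) and (2), and your identification of the constant $a$ (including the check that $i+1\in I_{i+1}$ and $i\notin I_{i+1}$) all match the paper. The gap is in case (3), which is where essentially all of the paper's work lies.

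First, the cluster $\CC'\subset\M'$ you want --- one containing no subset $J$ with $i+1\in J$ and $i\notin J$ --- does not exist in general: by Proposition~\ref{prop:OPS} every cluster for $\M'$ contains the Grassmann necklace of $\M'$, and (normalizing $i=1$, $f(1)=j$, $f(2)=\ell$ as in the paper) the necklace element $I'_2=I_1\setminus\{1\}\cup\{\ell\}$ contains $2$ but not $1$ whenever $\ell\le n$. So ``agreement on $\CC'$ by definition'' already fails for at least one mandatory cluster variable, and you would have to argue separately that $T_i(a)$ fixes the value of $q_\bullet$ there.

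Second, and more seriously, your two key claims --- that $T_i(a)\cdot q_\bullet$ satisfies \eqref{eq:trop} ``by a finite case-check,'' and that $\Supp(T_i(a)\cdot q_\bullet)=\M$ --- are together exactly Proposition~\ref{prop:bridge}, which the paper deliberately does \emph{not} prove by direct computation. When $i+1\in S$ and $i\notin S\cup\{a,b,c,d\}$, all six terms of \eqref{eq:trop} get modified, and expanding the minima produces cross terms such as $p_{Sac}+p_{S'bd}+a$ with $S'=S\setminus\{i+1\}\cup\{i\}$, whose control requires tropical relations that are not consequences of the three-term relations defining $\Pluck(\M)_{>0}$; this is precisely the nontrivial content of Theorem~\ref{thm:main}. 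The remark following Proposition~\ref{prop:bridge} states that it is only ever applied to vectors already known to be representable, with representability of $q_\bullet$ supplied by induction on $\M$ --- a step your write-up does not set up, and which cannot be replaced by the bookkeeping of Proposition~\ref{prop:posdiv}. You flag the support statement as ``the main obstacle'' but leave it unresolved. The paper's route is an entirely explicit one: after a separate treatment of the subcase $f(i)=i+1$, the vector $q_\bullet$ is defined by the recursion \eqref{eq:ab}, its well-definedness, its support, and every instance of \eqref{eq:trop} are verified case by case (Lemmas~\ref{lem:supp}--\ref{lem:notgeneral} and the surrounding analysis), and only then is $p_\bullet=T_i(a)\cdot q_\bullet$ deduced, invoking Proposition~\ref{prop:bridge} only for the inductively representable $q_\bullet$. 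That verification is the substance of the proposition, and it is the part your proposal defers.
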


It is easy to see that $f_\M$ satisfies at least one of the three stated conditions in Proposition~\ref{prop:tropreduce}.  In Case (1), if $f_\M(i) = i$ then $I \notin \M$ for all $I$ containing $i$.  Thus $p_I = \infty$ whenever $i \in I$.  It is clear that $p_\bullet$ is in the image of $\epsilon_i$, and that the image of $\epsilon_i$ lies inside $\Pluck(k,n)_{\geq 0}$.  Case (2) is similar.

\subsection{}
We now consider Case (3) of Proposition~\ref{prop:tropreduce}.  Let $f = f_\M$.  We suppose that $i \in [n]$ satisifies $i+1 \leq f_\M(i) < f_\M(i+1) \leq i+n$.  To simplify the notation, we assume that $i = 1$ so we have $2 \leq f(1) < f(2) \leq 1+n$.   

First consider the case $f(1) = 2$.  Define $q_\bullet$ by
$$
q_I = \begin{cases} \infty & \mbox{if $2 \in I$} \\
p_I & \mbox{if $2 \notin I$}
\end{cases}
$$
which clearly satisfies the positive tropical Pl\"ucker relations.  
\begin{lemma}\label{lem:f12}
We have $p_\bullet = T_i(a) \cdot q_\bullet$ where $a = p_{I_2} - p_{I_{2} \setminus\{2\} \cup \{1\}}$. 
\end{lemma}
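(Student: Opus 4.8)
Since $i=1$ throughout, the operator $T_1(a)=T_{1,2}(a)$ replaces the $I$-coordinate by $\min\bigl(q_I,\,q_{I\setminus\{2\}\cup\{1\}}+a\bigr)$ when $2\in I$ and $1\notin I$, and fixes it otherwise. The plan is to check the claimed equality $p_\bullet=T_1(a)\cdot q_\bullet$ one coordinate at a time, splitting the index sets $I\in\binom{[n]}{k}$ into three cases: (a) $2\notin I$; (b) $\{1,2\}\subseteq I$; and (c) $2\in I$, $1\notin I$. Case (a) is immediate, since $T_1(a)$ fixes such a coordinate and $q_I=p_I$ by definition of $q_\bullet$. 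Cases (b) and (c) will be reduced to a single structural input about $\M$.

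The key step --- which I expect to carry the real content --- is to show that $\{1,2\}$ \emph{is a circuit of $\M$}, i.e.\ that $1$ and $2$ are parallel non-loops. First, $f_\M(1)=2$ means the Grassmann necklace of $\M$ satisfies $I_2=I_1\setminus\{1\}\cup\{2\}$ with $1\in I_1$, which forces $2\notin I_1$ (otherwise $I_2$ would not have $k$ elements), and also shows $1,2$ are non-loops since $1\in I_1$ and $2\in I_2$. Next, no basis of $\M$ contains both $1$ and $2$: since $\M\subseteq\SS_{I_1}$, any $I\in\M=\Supp(p_\bullet)$ satisfies $I_1\leq I$ in Bruhat order, and if $\{1,2\}\subseteq I$ then comparing second-smallest elements forces the second element of $I_1$ to be $2$, contradicting $2\notin I_1$. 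Hence $\{1,2\}$ is dependent while neither $\{1\}$ nor $\{2\}$ is, so $\{1,2\}$ is a circuit, and by circuit elimination the swap $1\leftrightarrow 2$ carries bases to bases: for $2\in I$, $1\notin I$ one has $I\in\M\iff I':=I\setminus\{2\}\cup\{1\}\in\M$, equivalently $p_I<\infty\iff p_{I'}<\infty$. All of this is pure matroid combinatorics, using only Proposition~\ref{prop:Oh} and the definition of $f_\M$.

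With this in hand, case (b) is done: there $q_I=\infty$ because $2\in I$, while $p_I=\infty$ because $\{1,2\}\subseteq I\notin\M$. In case (c), $q_I=\infty$ and $q_{I'}=p_{I'}$, so $(T_1(a)\cdot q)_I=p_{I'}+a$; if $p_I=\infty$ then $p_{I'}=\infty$ and the identity is $\infty=\infty$, so assume $I,I'\in\M$, and it remains to prove $p_I-p_{I'}=a$. Writing the index sets containing $2$ but not $1$ as $K\cup\{2\}$ with $K\in\binom{[n]\setminus\{1,2\}}{k-1}$, I would show $K\mapsto p_{K\cup\{2\}}-p_{K\cup\{1\}}$ is constant on $\mathcal K:=\{K:K\cup\{2\}\in\M\}$, with value $a$ at $K=I_1\setminus\{1\}$ (note $I_2\setminus\{2\}\cup\{1\}=I_1$, so $a=p_{I_2}-p_{I_1}$, and $I_1\setminus\{1\}\in\mathcal K$). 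The set $\mathcal K$ is exactly the collection of bases of the contraction $\M/\{2\}$ --- all avoiding $1$ by the structural fact --- so its basis-exchange graph is connected; and for adjacent $K=S\cup\{x\}$, $L=S\cup\{y\}$ in $\mathcal K$ the positive tropical three-term relation \eqref{eq:trop} for $S$ and $\{1,2,x,y\}$ degenerates, since $p_{S\cup\{1,2\}}=\infty$, into $p_{S\cup\{1,x\}}+p_{S\cup\{2,y\}}=p_{S\cup\{1,y\}}+p_{S\cup\{2,x\}}$ with all four terms finite, i.e.\ $p_{K\cup\{2\}}-p_{K\cup\{1\}}=p_{L\cup\{2\}}-p_{L\cup\{1\}}$; propagating along the exchange graph gives the claim. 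The main obstacle is precisely this circuit fact, which does double duty: it produces the vanishing $p_{S\cup\{1,2\}}=\infty$ that collapses the three-term relations, and it supplies the support equivalence $I\in\M\iff I'\in\M$ without which the $\infty$-bookkeeping of case (c) would break.
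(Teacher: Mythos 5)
Your proposal is correct and rests on the same key computation as the paper's proof: the three-term relation for $S$ and $1<2<x<y$ collapses, because $p_{S12}=\infty$, to $p_{S1x}+p_{S2y}=p_{S1y}+p_{S2x}$, which shows $p_{2K}-p_{1K}$ is constant equal to $a$ once propagated through basis exchanges (the paper organizes this as a downward induction on $|K\cap I_2|$ rather than via connectivity of the exchange graph of $\M/\{2\}$, but that is the same mechanism). The only real difference is bookkeeping: the paper invokes the inductive representability of $q_\bullet$ and Proposition~\ref{prop:bridge} to control $\Supp(T_i(a)\cdot q_\bullet)$, whereas you handle the $\infty$-coordinates directly via the observation that $\{1,2\}$ is a circuit, which makes your version slightly more self-contained.
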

\begin{proof}
By induction on $\M$, we may assume that $q_\bullet$ is representable and thus $p'_\bullet:= T_i(a) \cdot q_\bullet \in \Pluck(\M)_{>0}$ by Proposition \ref{prop:bridge}.  The assumption $f(1) = 2$ implies that $I \notin \M$ for any $I$ containing $\{1,2\}$.  It suffices to show that $p'_I = p_I$ for $2 \in I$ and $I \in \M$.  Let $I_2 = 2J := J \sqcup \{2\}$ where $J \subset [3,n]$.  We claim that 
\begin{equation}\label{eq:12K}
p_{2K} = p_{1K} + a.
\end{equation}
for $K \subset [3,n]$ such that $2K \in \M$.  To show this we proceed by (downward) induction on $|K \cap J|$, the case $|K\cap J| = k-1$ being tautological.  If $|K \cap J| < k-1$ then by the exchange relation there exists $K'$ with $2K' \in \M$ such that $|K' \cap J| = |K \cap J| + 1$ and $K' = K \setminus \{a\} \cup \{b\}$.  Setting $L = K \setminus \{a\}$, we have by \eqref{eq:trop} the equality
$$
p_{1La}+p_{2Lb} = \min(p_{12L}+p_{Lab}, p_{1Lb}+p_{2La}) = p_{1Lb}+p_{2La}
$$
if $a < b$, using $p_{12L} =\infty$.  The same equality $p_{1La}+p_{2Lb} = p_{1Lb}+p_{2La}$ holds if $a>b$.  Thus by induction we have $p_{2K}-p_{1K} = p_{2K'}-p_{1K'} = a$, establishing \eqref{eq:12K}.

By definition $p'_{2K} = \min(q_{2K}, q_{1K}+a) = p_{1K}+a$.  We conclude that $p_\bullet = p'_\bullet$.
\end{proof} 

\subsection{}
For the remainder of the proof we assume that $f(1) = j > 2$, and we set $f(2) = \ell> j$.  (It is possible for $\ell$ to equal $1+n$.)
Let the Grassmann necklace of $\M$ be $(I_1,I_2,\ldots,I_n)$.  
Then $I_1 = 12I$, $I_2 = 2I j $, $I_3 = I j \ell$.  Define $\M'$ to be the positroid with bounded affine permutation given by $f'(1) = \ell$ and $f'(2) = j$ and $f'(a) = f(a)$ for $a \in [3,n]$.  We define $q_\bullet$ by
$$
q_J = \begin{cases} \mbox{recursion given below} & \mbox{if $1 \notin J$ and $2 \in J$} \\
p_J & \mbox{otherwise.}
\end{cases}
$$
It follows immediately that instances of \eqref{eq:trop} for $q_\bullet$ immediately hold whenever all subsets involved either contain $1$ or do not contain $2$.

We now give the formula for $q_{K2}$, $K \subset [3,n]$ recursively.  At every step, we will check that instances of \eqref{eq:trop} of the form 
\begin{equation}\label{eq:ind}
q_{L2a} + q_{L1b} = \min(q_{L12} + q_{Lba}, q_{L1a}+ q_{L2b})
\end{equation}
hold, where $3 \leq b < a \leq n$ and $K = La \subset [3,n]$.  We say that \eqref{eq:ind} is associated to the pair $\{L2a,L1b\}$ of subsets, which uniquely determines \eqref{eq:ind}.

\noindent
(a) If $K2 \notin \M$, for example if $K < Ij$ in dominance order, then we already have $p_{K2} = \infty$.  We then set $q_{K2} = \infty$ as well.  \eqref{eq:ind} will hold since both sides were already equal to $\infty$ for $p_\bullet$ (and thus also for $q_\bullet$).

\noindent
(b) If $K = I j$, then we set $q_{I2j} = \infty$.  The equation $\eqref{eq:ind}$ holds because the LHS is already $\infty$ for $p_\bullet$.  To see this, observe that for the pair $\{I2j, I1b\}$ with $b < j$, we have that $I1 b  \not \geq_3 Ij\ell = I_3$; for the pair $\{I2j = L'2aj, L'1bj\}$ where $b < a$, either $b < j$ and $L'1bj  \not \geq_3 L'ajk = I_3$, or $b > j$ and $L'1bj \not \geq_1 L'12a = I_1$.  In both cases $I1b$ (resp. $L'1bj$) does not belong to $\M$.

\noindent
(c) If $K2 \in \M$ and $K1 \notin \M$, then we set $q_{K2} = p_{K2}$.  We call the subset $K2$ of {\it parallel type}.  Setting $K = La$, we see that \eqref{eq:ind} holds because 
$$
p_{L2a} + p_{L1b} = \min(p_{L12}+p_{Lab}, p_{L1a}+ p_{L2b}) = p_{L12}+p_{Lab}
$$
and all the terms on both sides are unchanged in $q_\bullet$.

\noindent
(d) Suppose finally that $K2, K1 \in \M$, and $K2 \neq I2j$.  We call such minors {\it general type}.  We inductively assume that the value of $q_{K'2}$ has been defined for all $K'2 < K2$.
Suppose that we have found $2 < b < a \leq n$ such that 
\begin{equation}\label{eq:pair}
\mbox{$K = La$ and $L1b \in \M$.}
\end{equation}
(The existence of such a pair $(a,b)$ follows from an exchange relation argument similar to the proof of Lemma~\ref{lem:f12}, see \cite[Lemma 7.11]{LamCDM}.) We then define
\begin{equation}\label{eq:ab}
q_{K2} =q_{L2a} := \min(q_{L12} + q_{Lba}, q_{L1a}+q_{L2b})- q_{L1b}.
\end{equation}
Since $L2b< L2a$, we may assume that $q_{L2b}$ has already been defined.  

\begin{lemma}
Equation \eqref{eq:ab} well-defines the value of $q_{K2}$, regardless of the choice of $a$ and $b$.
\end{lemma}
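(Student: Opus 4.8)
I would prove this lemma as one of the inductive steps inside the proof of Proposition~\ref{prop:tropreduce}(3). Order the general-type subsets $K2$ by the order used in the recursion, and assume as inductive hypothesis that for every $K'2$ preceding $K2$ the value $q_{K'2}$ is already well-defined and that \emph{all} the associated instances of \eqref{eq:ind} hold. Now fix two admissible pairs $(a,b)$ and $(a',b')$ for our $K$, i.e.\ $b<a$, $L1b\in\M$ with $L=K\setminus\{a\}$, and similarly for $(a',b')$, and let $Q,Q'$ denote the two right-hand sides of \eqref{eq:ab}. The goal is $Q=Q'$. Note $Q$ is by construction the solution $q_{L2a}=q_{K2}$ of the positive tropical Pl\"ucker relation with $S=L$ on indices $1<2<b<a$, and $Q'$ the solution of the relation with $S=L'=K\setminus\{a'\}$ on $1<2<b'<a'$.

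\textbf{Case $a=a'$.} Here $L=L'$ and we may assume $b\neq b'$, say $b<b'<a$, with $L1b,L1b'\in\M$. Every entry occurring in the two relations other than $q_{L2b}$ and $q_{L2b'}$ is of ``$p$-type'' (it contains $1$, or contains neither $1$ nor $2$) and hence equals the corresponding coordinate of $p_\bullet$, while $q_{L2b}$ and $q_{L2b'}$ precede $K2$ in the recursion order and so are already defined. I would then derive $Q=Q'$ by a short $(\min,+)$-manipulation from two relations that are available by induction: the $p_\bullet$-relation with $S=L$ on indices $1<b<b'<a$, and the instance of \eqref{eq:ind} associated to the pair $\{L2b',\,L1b\}$ (which is exactly the $q_\bullet$-relation with $S=L$ on $1<2<b<b'$, legitimate because $L1b\in\M$ makes $b$ an admissible index for $L2b'$). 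Splitting on which branch of each $\min$ is attained, these two relations force the two expressions for $q_{L2a}$ to agree; the degenerate sub-cases in which some entry equals $\infty$ are handled by observing that the same relations already held for $p_\bullet$, so the $\infty$'s propagate consistently. This part is routine once set up.

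\textbf{Case $a\neq a'$.} Now the two relations live on genuinely different $(k{-}2)$-sets $L=K\setminus\{a\}$ and $L'=K\setminus\{a'\}$, so no single octahedral relation compares them directly, and this is the main obstacle. My plan is to reduce to the previous case by a connectivity argument: using the description of $\M$ as an intersection of cyclically rotated Schubert matroids (Proposition~\ref{prop:Oh}), show that any two admissible pairs for $K$ are joined by a chain of admissible pairs in which consecutive members share their first coordinate (handled above) or their second coordinate (handled by the symmetric version of the same argument, which one states and proves simultaneously). A shortcut that removes much of this case: once the identity $p_{K'2}=\min(q_{K'2},\,p_{K'1}+c)$ from the outer induction ($c$ the bridge parameter) is available for all $K'2$ preceding $K2$, the recursion \eqref{eq:ab} for $K2$ evaluates to $p_{K2}$ \emph{for every} admissible $(a,b)$ whenever $p_{K2}<p_{K1}+c$ --- this is immediate from the positive tropical Pl\"ucker relation for $p_\bullet$ with $S=L$ on $1<2<b<a$, after substituting $q_{L2b}=p_{L2b}$ (resp.\ noting $q_{L2b}\ge p_{L1b}+c$) according to which sub-case holds --- so only the boundary case $p_{K2}=p_{K1}+c$ survives, where the $(\min,+)$-argument of the $a=a'$ case together with the chain argument finishes the proof. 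I expect the connectivity argument for $a\neq a'$, together with the careful bookkeeping of which of the subsets $L2b$, $L1b$, $Lbb'$, \dots\ lie in $\M$, in $\M'$, or in neither, to be where essentially all the work is.
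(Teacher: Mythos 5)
Your Case $a=a'$ (and its mirror $b=b'$) is essentially the paper's Cases 1 and 2: add $q_{L1b}+q_{L1b'}$ and unwind the two candidate expressions using already-established instances of \eqref{eq:ind}. That part is fine. The genuine gap is in the case where $a,b,a',b'$ are all distinct, which is where the substance of the lemma lies. Write $K=Maa'$ with $b<a<a'$; admissibility gives $M1a'b,\,M1ab'\in\M$. Your plan is to connect $(a,b)$ to $(a',b')$ by a chain of admissible pairs sharing a coordinate. In the subcase $b<b'<a$ this works, and is exactly what the paper does: \eqref{eq:trop} forces $M1ab,\,M1a'b'\in\M$, so $(a',b)$ and $(a,b')$ are admissible and one reduces to the shared-coordinate cases. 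But in the interleaved subcase $b<a<b'<a'$ the pair $(a,b')$ is not even of the required form, since an admissible pair must have its second entry below its first and here $b'>a$; and membership of $M1ab$ (needed for $(a',b)$) is not guaranteed --- \eqref{eq:trop} only yields $M1bb',\,M1aa'\in\M$. So no coordinate-sharing chain is available, and your reduction fails precisely where it is needed. The paper handles this configuration (and the similar one $b'<b<a<a'$) by a direct six-index $(\min,+)$ computation: it adds the finite quantity $q_{M1b'a'}$ (in the paper's normalization, $q_{135}$) to both candidate expressions and repeatedly applies already-verified relations until both collapse to the same minimum. That computation is the heart of the proof and is absent from your proposal.

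The proposed shortcut does not close this gap either. The identity $p_{K'2}=\min(q_{K'2},\,p_{K'1}+c)$ is precisely the statement $p_\bullet=T_i(c)\cdot q_\bullet$, which the paper can only prove \emph{after} $q_\bullet$ has been fully constructed, shown to have support $\M'$, and shown to satisfy all positive tropical Pl\"ucker relations (its proof invokes representability of $q_\bullet$ and Proposition~\ref{prop:bridge}). Importing it into the inner induction that defines $q_\bullet$ would require an independent argument that you have not supplied, and even granting it you concede that the boundary case still falls back on the chain argument. As written, the proposal establishes the easy half of the lemma and leaves the hard half unproved.
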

\begin{proof}
Suppose we have two pairs $(b < a)$ and $(b' < a')$ such that \eqref{eq:pair} holds.  

Case 1: If $a = a'$, we assume that $b' < b$, and compute as follows.
\begin{align*}
&q_{L2a} + q_{L1b} + q_{L1b'} \\
&= \min(q_{L12} + q_{Lba} +q_{L1b'}, q_{L1a}+q_{L2b} + q_{L1b'})  & \mbox{using \eqref{eq:ab} for $(b < a)$} \\
&=  \min(q_{L12} + q_{Lba} +q_{L1b'}, q_{L1a}+q_{L12} + q_{Lb'b}, q_{L1a}+q_{L1b} + q_{L2b'}) \\
&=\min(q_{L12} + q_{Lb'a} +q_{L1b}, q_{L1a}+q_{L2b'} + q_{L1b})
\end{align*}
Noting that $q_{L1b} + q_{L1b'} < \infty$, we conclude that \eqref{eq:ab} gives the same result using $(b<a)$ and $(b'<a)$.  

Case 2: We have $b = b'$.  Similar to Case 1.

Case 3:
The four numbers $a,b,a',b'$ are distinct.  Set $K = Maa'$, so by assumption $M1ab', M1ba' \in \M$.  Let us assume that $b < a < a'$.  If $b< b' < a$, then $M1ba,M1b'a' \in \M$ as well by \eqref{eq:trop}, and we can reduce to Case 1.  In the other cases, we may conclude from \eqref{eq:trop} that $M1bb', M1aa' \in \M$.

We now assume that $M1bb',M1aa' \in \M$ and first consider the case $b < a < b' < a'$.  To simplify notation, we omit $M$, and suppose that $(b,a,b',a') = (3,4,5,6)$.  So $145,136,135,146 \in \M$.  In the following, we underline the terms where \eqref{eq:trop} has been applied.  Using \eqref{eq:ab} for the pair $(246,145)$ we have
\begin{align*}
&\underline{q_{246}} + \underline{q_{145}} + q_{136}  \\
&= \min(\underline{q_{124}} + q_{456} + \underline{q_{136}}, q_{146}+q_{245} + q_{136}) \\
&= \min(q_{123} + q_{456} + q_{146}, q_{456} + q_{126} + q_{134}, q_{146}+q_{245} + q_{136})
\end{align*}
Using \eqref{eq:ab} for the pair $(246,136)$ we have
\begin{align*}
&\underline{q_{246}} + q_{145} + \underline{q_{136}}  \\
&= \min(q_{126} + \underline{q_{346}} + \underline{q_{145}}, q_{146}+q_{236} + q_{145}) \\
&= \min(q_{126} + q_{134} + q_{456},q_{126} + q_{146} + q_{345} ,q_{146}+q_{236} + q_{145}).
\end{align*}
The quantity $q_{126}+q_{134}+q_{456}$ appears in both minima, so it suffices to show that
$$
\min(q_{123}+q_{456},q_{245}+q_{136}) = \min(q_{126}+q_{345},q_{236}+q_{145}).
$$
Adding $q_{135}$ (which is $<\infty$), we compute
\begin{align*}
&\min(q_{123}+q_{456}+q_{135},\underline{q_{245}}+q_{136}+\underline{q_{135}}) \\
&=\min(q_{123}+q_{456}+q_{135},\underline{q_{125}}+q_{345}+\underline{q_{136}},q_{145}+q_{235}+q_{136}) \\
&=\min(q_{123}+q_{456}+q_{135},q_{345}+q_{123}+q_{156},q_{345}+q_{126}+q_{135},q_{145}+q_{235}+q_{136}) 
\end{align*}
and
\begin{align*}
&\min(q_{126}+q_{345}+q_{135},\underline{q_{236}}+q_{145}+\underline{q_{135}}) \\
&=\min(q_{126}+q_{345}+q_{135},\underline{q_{145}}+q_{123}+\underline{q_{356}},q_{145}+q_{136}+q_{235}) \\
&=\min(q_{126}+q_{345}+q_{135},q_{123}+q_{135}+q_{456},q_{123}+q_{156}+q_{345},q_{145}+q_{136}+q_{235}).
\end{align*}
This shows that \eqref{eq:ab} for the pair $(246,145)$ gives the same result as for the pair $(246,136)$.
Note that we have used \eqref{eq:ab} for $K2 = 245$ and $K2 = 236$, which may assume to hold since they are both less than $246$ in dominance order.  

The other case $b' < b < a < a'$ is similar.   
\end{proof}

We have now completely defined the vector $q_\bullet$.  
\begin{lemma}\label{lem:qbullet}
We have $q_\bullet \in \Pluck(\M')_{>0}$.
\end{lemma}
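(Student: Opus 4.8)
The plan is to prove the two assertions hidden in the statement ``$q_\bullet\in\Pluck(\M')_{>0}$'': first that $\M'$ is a positroid (immediate, since $f_{\M'}=f_\M s_1$ is again a bounded affine permutation, using $f_\M(2)\le 1+n$ which is part of the hypothesis of case (3) of Proposition~\ref{prop:tropreduce}), second that $q_\bullet$ satisfies every instance of \eqref{eq:trop}, and third that $\Supp(q_\bullet)=\M'$.

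For the relations I would sort the instances of \eqref{eq:trop} by the positions of $1$ and $2$. If the base $S$ contains $1$, then all six subsets contain $1$, all six values agree with $p_\bullet$, and the relation is inherited; similarly if neither $S$ nor $\{a,b,c,d\}$ meets $2$. The only instances whose four special indices are exactly $\{1,2,b,a\}$ with $2<b<a$ and whose base avoids $1,2$ are precisely the relations of shape \eqref{eq:ind} attached to the pairs $\{L2a,L1b\}$; these have been checked at each step of the recursion --- directly in cases (a), (b), (c), and for general-type minors by \eqref{eq:ab} combined with the preceding well-definedness lemma (with the existence of an admissible pair $(a,b)$ supplied by the exchange argument of \cite{LamCDM}). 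What remains are the instances of \eqref{eq:trop} with $1\notin S$ whose special set contains $2$ but not $1$, or whose special set contains $1$ while $2$ lies in $S$. I would dispose of these by downward induction on the dominance order of the subsets appearing: expand each general-type value via \eqref{eq:ab}, then collapse the two sides to a common tropical expression using the already-verified relations of shape \eqref{eq:ind} and the relations for $p_\bullet$, exactly in the ``underline-and-substitute'' style used in the well-definedness proof. I expect this reduction to be the main obstacle: the recursion \eqref{eq:ab} was engineered so that only the \eqref{eq:ind}-relations hold by fiat, and propagating them through the remaining relations while keeping track of which of the six subsets have collapsed to $\infty$ (parallel type versus general type versus out-of-$\M$) produces a sizeable but ultimately routine case analysis, of which the well-definedness lemma is a prototype.

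For the support I would use the combinatorics of bridges. By the standard description of how a positroid changes under $f_\M\mapsto f_\M s_1$ when $f_{\M'}(1)>f_{\M'}(2)$ (see Proposition~\ref{prop:bridgedecomp}(3) and \cite{KLS,LamCDM}), $\M$ and $\M'$ agree on all subsets that contain $1$ or avoid $2$, and on a subset $K2$ with $K\subset[3,n]$ one has $K2\in\M$ iff $K2\in\M'$ or $K\cup\{1\}\in\M'$; in particular $I_2=I2j\notin\M'$ while $I1j\in\M'$. Matching this against the four cases of the recursion: in case (a), $K2\notin\M$, hence $K2\notin\M'$ and $q_{K2}=\infty$; in case (b), $K2=I2j\notin\M'$ and $q_{K2}=\infty$; in case (c), $K1\notin\M$ forces $K2\in\M'$ and $q_{K2}=p_{K2}<\infty$; and in case (d), the exchange argument of \cite{LamCDM} produces an admissible pair \eqref{eq:pair} exactly when $K2\in\M'$, and a dominance induction then shows the right-hand side of \eqref{eq:ab} is finite (one summand is $q_{L1a}+q_{L2b}$ with $L1a=K1\in\M$, and $q_{L2b}$ is finite by induction whenever $L2b\in\M'$, the complementary case feeding into the other summand), while conversely $K2\notin\M'$ forces $q_{K2}=\infty$. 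Hence $q_{K2}<\infty$ iff $K2\in\M'$, which with the agreement on the remaining subsets gives $\Supp(q_\bullet)=\M'$.

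Assembling the three parts shows $q_\bullet$ is a positive tropical Pl\"ucker vector with support the positroid $\M'$, i.e. $q_\bullet\in\Pluck(\M')_{>0}$. The step I anticipate will require the most care is, within the relations argument, the reduction of those instances of \eqref{eq:trop} that are not already of shape \eqref{eq:ind} to the verified ones: unlike in the well-definedness lemma, where one compares two substitutions into the same relation, here one must show that a relation of a genuinely different shape is forced by the \eqref{eq:ind}-relations, and the bookkeeping over the configurations of $\infty$-valued subsets is where the real work lies.
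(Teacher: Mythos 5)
Your overall architecture matches the paper's: establish $\Supp(q_\bullet)=\M'$, then verify the remaining instances of \eqref{eq:trop} by reducing them, via tropical substitutions, to the instances of shape \eqref{eq:ind} that were built into the recursion defining $q_\bullet$. Your support argument (via the bridge relation ``$K2\in\M$ iff $K2\in\M'$ or $K1\in\M'$'') is a workable variant of the paper's Lemma~\ref{lem:supp}, which instead derives a contradiction with the positive $3$-term exchange relation \eqref{eq:3ex} for $\M'$. However, there are two concrete gaps in your treatment of the relations.

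First, your enumeration of the remaining cases omits the relations with $2\in S$ and with neither $1$ nor $2$ among the four special indices, i.e.\ $q_{S2ac}+q_{S2bd}=\min(q_{S2ab}+q_{S2cd},\,q_{S2ad}+q_{S2bc})$ for $2<a<b<c<d$. These are the paper's cases (d) and (e) and carry the bulk of the computational work (long substitution chains obtained by adding $q_{S1ad}+q_{S1ac}$, resp.\ $q_{S1ab}+q_{S1bd}$, to both sides). Second, and more seriously, your plan only explains what to do when a general-type subset appears on the \emph{left-hand} side of a relation (expand it by \eqref{eq:ab}); it does not address relations whose two LHS terms lie in $\M'$ but are \emph{not} of general type, while the RHS could a priori involve general-type subsets $T$ with $q_T>p_T$, which would raise the minimum and break the relation inherited from $p_\bullet$. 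The paper closes exactly this hole with Lemma~\ref{lem:notgeneral}, proved not by tropical manipulation but geometrically: take an actual point $X\in\Pi_{\M,>0}$, set $X'=x_i(-a)\cdot X\in\Pi_{\M',>0}$, and use $\Delta_I(X')\le\Delta_I(X)$ together with equality on the two LHS terms to force equality on all four RHS terms. Some version of this input (it is reused again inside case (e)) is required; your ``expand and collapse by downward dominance induction'' does not obviously substitute for it, since parallel-type values are not defined by \eqref{eq:ab} and so give you nothing to expand.
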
  

We first show that $q_\bullet$ has the correct support.

\begin{lemma}\label{lem:supp} 
We have $\Supp(q_\bullet) = \M'$.
\end{lemma}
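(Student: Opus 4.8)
The plan is to show the two inclusions $\Supp(q_\bullet) \subseteq \M'$ and $\M' \subseteq \Supp(q_\bullet)$ separately, working directly from the recursive definition of $q_\bullet$ and the description of $\M'$ via its bounded affine permutation $f' = f_\M s_1$ (equivalently, via its Grassmann necklace). Recall $\M' $ is the positroid with $f'(1) = \ell$, $f'(2) = j$, and $f'(a) = f(a)$ for $a \geq 3$, where $f(1) = j$, $f(2) = \ell$; I would first record the effect on the Grassmann necklace: $I'_1 = 12I$ becomes... actually $I'_1, I'_2$ change from $(12I, 2Ij)$ to the necklace of $\M'$, while $I'_a = I_a$ for $a \geq 3$ since $f$ and $f'$ agree there and Schubert-matroid intersections $\SS_{I_a,a}$ for $a\geq 3$ are the same. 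So by Proposition~\ref{prop:Oh}, $\M'$ differs from $\M$ only through the first two cyclically-rotated Schubert matroids, i.e.\ through which subsets $K2$ (with $1 \notin K2$) and which subsets $12L$ are bases.

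For the inclusion $\Supp(q_\bullet) \subseteq \M'$: on subsets $I$ with $1 \in I$ or $2 \notin I$ we have $q_I = p_I$, and I would check these agree with membership in $\M'$ using that the only necklace elements that change are $I_1, I_2$, and that $12I \notin \M$ since $f(1) = j > 2$, so adding $12L$'s to the support (as happens in passing to $\M'$) is consistent; meanwhile on subsets $K2$ with $1 \notin K2$, cases (a) and (b) of the recursion set $q_{K2} = \infty$ exactly when $K2 \notin \M$ or $K2 = I2j$ — and one checks $I2j \notin \M'$ precisely because $f'(2) = j$ forces $I'_2$ upward. In cases (c) (parallel type) and (d) (general type) one shows $q_{K2} < \infty$: in case (c) trivially since $q_{K2} = p_{K2} < \infty$, and in case (d) the formula \eqref{eq:ab} gives a finite value because the pair $(a,b)$ is chosen with $L1b \in \M$ (so $q_{L1b} < \infty$) and, by downward induction together with \eqref{eq:trop}, at least one term in the minimum on the right of \eqref{eq:ab} is finite. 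The matching claim — that every $K2$ of parallel or general type actually lies in $\M'$ — uses that raising $f(1)$ from $j$ to $\ell$ and lowering $f(2)$ from $\ell$ to $j$ can only keep such $K2$ as bases, which I would verify against the Schubert-matroid inequalities $K2 \geq_2 I'_2$.

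For the reverse inclusion $\M' \subseteq \Supp(q_\bullet)$: given $K2 \in \M'$ with $1 \notin K2$, I need $q_{K2} < \infty$, which is the content of cases (c), (d) above once we know $K2$ is of parallel or general type (not of type (a) or (b)); and $K2 \in \M'$, $K2 \neq I2j$ guarantees exactly this, since type (a) is $K2 \notin \M$ — but a subset $K2 \in \M' \setminus \M$ would have to be one gained by lowering $f(2)$, and I would check such subsets are still reached by the recursion — hmm, this is the delicate point. For subsets with $1 \in I$ or $2\notin I$, membership in $\M'$ versus finiteness of $q_I = p_I$ requires comparing $\M$ and $\M'$ on those subsets, and the only genuine change is the appearance of subsets of the form $12L$; one shows $12L \in \M' \iff p_{12L}$-data is consistent, but here $q_{12L} = p_{12L} = \infty$ since $12L \notin \M$, which would contradict $12L \in \M'$ unless in fact no such $12L$ lies in $\M'$ — and indeed $f'(1) = \ell$, $f'(2) = j$ with $\ell, j$ the old values means $12L \in \M' \iff$ (old condition after swap), which I expect forces $12L \notin \M'$ as well. \emph{The main obstacle} is precisely this bookkeeping: matching, basis by basis, the combinatorial effect of the transposition $f \mapsto f s_1$ on the positroid (at the level of Schubert-matroid-intersection inequalities for $I'_1, I'_2$) against the case division (a)--(d) in the recursion, and confirming that "general/parallel type" is equivalent to "$K2 \in \M'$ and $K2 \ne I2j$." I would handle it by fixing the cyclic order with $i = 1$ (as in the text), writing $I_1 = 12I$, $I_2 = 2Ij$, $I_3 = Ij\ell$ explicitly, and checking the four cases against $K2 \geq_2 I'_2$ and $K1 \geq_1 I'_1$ directly; the exchange-relation argument of \cite[Lemma 7.11]{LamCDM} cited in case (d) supplies the existence of the needed pair $(a,b)$, so the remaining work is purely a finite verification.
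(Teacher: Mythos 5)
There is a genuine gap, and it sits exactly at the point you flag as ``the main obstacle.'' Your proposed equivalence --- that $K2$ is of parallel or general type if and only if $K2 \in \M'$ and $K2 \neq I2j$ --- is false. The types (c) and (d) are defined by membership of $K2$ and $K1$ in $\M$, not in $\M'$, and in general $\M \setminus \M'$ contains general-type subsets besides $I_2 = I2j$ (e.g.\ for $k=2$, $n=5$ with $34 \notin \M$ and $f_{\M'}$ forcing $\Delta_{23}=\Delta_{24}=0$, the subset $24$ is of general type but lies in $\M\setminus\M'$). Consequently your claim that in case (d) the formula \eqref{eq:ab} ``gives a finite value'' cannot be right: the whole content of the lemma is that for general-type $K2 \in \M\setminus\M'$ the minimum in \eqref{eq:ab} comes out to $\infty$, while for general-type $K2 \in \M'$ it is finite. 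Your proposal never establishes either half of this dichotomy. A secondary confusion: subsets $12L$ are unaffected by the bridge (so $12L\in\M \iff 12L\in\M'$, and indeed $\M' \subseteq \M$, so there is no set ``gained by lowering $f(2)$''), and $12I = I_1 \in \M$, contrary to your assertion that $p_{12L}=\infty$ for all such subsets.

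The missing idea is the one the paper uses: induct on the dominance order so that for all subsets smaller than $K2$ one already knows $q_{J} < \infty \iff J \in \M'$, and then apply the positive $3$-term exchange relation \eqref{eq:3ex}, which $\M'$ satisfies as a positroid, to the octahedron on $\{1,2,b,a\}$ over $L$. If $q_{K2} = q_{L2a} < \infty$ in \eqref{eq:ab}, then one of the pairs $\{L12, Lba\}$ or $\{L1a, L2b\}$ lies in $\M'$, whence \eqref{eq:3ex} forces $L2a = K2 \in \M'$; contrapositively $K2 \notin \M'$ gives $q_{K2}=\infty$, and the same relation run forwards gives $q_{K2}<\infty$ when $K2 \in \M'$. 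Without this step the bookkeeping against the Grassmann necklace of $\M'$ that you propose does not close the argument.
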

\begin{proof}
We have $\Supp(p_\bullet) = \M$.
Let $J \in \M \setminus \M'$ be such that $p_J < \infty$.  We must show that $q_J = \infty$.  Note that $J = K2$ must be of general type.  Apply \eqref{eq:ab}.  If $q_J < \infty$, then by induction at least one of the two pairs $\{L12,Lba\}$ and $\{L1a,L2b\}$ must be contained in $\M'$, while $J = K2$ is not.  This contradicts the fact that $\M'$ satisfies the 3-term positive exchange relation.  A similar argument shows that for $J \in \M'$ we have $q_J < \infty$.
\end{proof}

By Lemma \ref{lem:supp}, to prove Lemma \ref{lem:qbullet} only need to check relations where the LHS of \eqref{eq:trop} is finite i.e. both terms on the LHS are indexed by elements in $\M'$.  

\begin{lemma}\label{lem:notgeneral}
Suppose that $J,K \in \M'$ appear on the LHS of \eqref{eq:trop}, and both $J$ and $K$ are not of general type.  Then all four subsets on the RHS of \eqref{eq:trop} are also not of general type.
\end{lemma}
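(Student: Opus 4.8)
The plan is to first replace the ad hoc notion of ``general type'' by a cleaner criterion, and then run a case analysis on how the elements $1$ and $2$ sit relative to the octahedron $\{Sab,Sac,Sad,Sbc,Sbd,Scd\}$.

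The first step records the combinatorics of the bridge at the level of matroids. Since $\M'=\M(V')$ and $\M=\M(V'\,x_1(a))$ for a totally nonnegative point $V'$, formula \eqref{eq:Deltax} gives $\M'\subseteq\M$, and more precisely: for $T$ with $2\in T$ and $1\notin T$ one has $T\in\M$ iff $T\in\M'$ or $T\setminus\{2\}\cup\{1\}\in\M'$, while for every other $T$ one has $T\in\M\iff T\in\M'$. (Alternatively this is immediate from the description of $\M$ via $f_\M$ in \cite{LamCDM}.) Writing $T^{\flat}:=T\setminus\{2\}\cup\{1\}$ for $T$ with $2\in T$, $1\notin T$, it follows that $T$ is of general type precisely when $2\in T$, $1\notin T$, $T^{\flat}\in\M'$, and $T\neq I_2$; equivalently, $T$ is \emph{not} of general type precisely when $2\notin T$, or $1\in T$, or $T^{\flat}\notin\M'$, or $T=I_2$.

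Next I would dispose of the two trivial cases: if $1\in S$ then all six subsets in \eqref{eq:trop} contain $1$, and if $2\notin S\cup\{a,b,c,d\}$ then none of them contains $2$; either way the four subsets on the right are not of general type. So assume $1\notin S$ and $2\in S\cup\{a,b,c,d\}$. A right-hand subset containing $1$ (possible only if $1\in\{a,b,c,d\}$) or omitting $2$ is automatically not of general type, so it remains to treat the right-hand subsets $T$ with $2\in T$, $1\notin T$ and show $T^{\flat}\notin\M'$ or $T=I_2$. For these $T$ the sets $T^{\flat}$ are four of the six vertices of the octahedron obtained from $(S;a,b,c,d)$ by replacing $2$ by $1$, and $(Sac)^{\flat},(Sbd)^{\flat}$ are its remaining ``long diagonal''. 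The hypothesis that $Sac,Sbd$ are not of general type gives $(Sac)^{\flat}\notin\M'$ or $Sac=I_2$, and likewise for $Sbd$; so in the main sub-case $(Sac)^{\flat},(Sbd)^{\flat}\notin\M'$, and since an octahedron missing a long diagonal is not the polytope of a positroid (Proposition~\ref{prop:posSpeyer}, via the positive $3$-term exchange \eqref{eq:3ex} and the classification of rank-$2$ positroids on four cyclically ordered points), the restriction of $\M'$ to that octahedron is forced into one of a short list of shapes, which pushes all but one or two of the four sets $T^{\flat}$ out of $\M'$. For a right-hand $T$ whose image $T^{\flat}$ still lies in $\M'$ one must then conclude $T=I_2$: this is where the hypotheses of Case~(3) of Proposition~\ref{prop:tropreduce} enter, since $f_\M(1)=j>2$ and $f_\M(2)=\ell>j$ force $1,2\in I_1$ and $I_2=2Ij$, and $I_2$ is the $\leq_2$-minimum of $\M$; hence any basis of $\M$ containing $2$ that is strictly $\leq_2$-smaller than $2Ij$ is excluded, and comparing the positions of $a,b,c,d$ with those of $I$ and $j$ leaves $T=I_2$ as the only surviving possibility (and, when two such $T$ would survive, rules out that configuration entirely). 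The remaining sub-cases — $Sac=I_2$ or $Sbd=I_2$, and the case $2\in\{a,b,c,d\}$ — are handled the same way but are shorter, since the coincidence with $I_2$ already supplies one of the needed constraints.

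The hard part will be exactly this last step: showing that the right-hand subset whose $\flat$-image survives in $\M'$ is forced to equal $I_2$ (or that no such subset survives). This is the only point at which the octahedral $3$-term relations for $\M$ and $\M'$ do not by themselves suffice, and it is precisely the interplay between the positions of $a,b,c,d$ and the Grassmann-necklace datum $I_2=2Ij$ of $\M$ that must be exploited; the rest is routine octahedron bookkeeping once Propositions~\ref{prop:posSpeyer} and \ref{prop:posfacet} and the description of $\M\setminus\M'$ above are in hand.
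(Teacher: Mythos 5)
Your proposal is a genuinely different (purely combinatorial) route from the paper's, but it has a gap that you yourself flag and do not close. The reduction to the statement ``every right-hand subset $T$ with $2\in T$, $1\notin T$, $T\in\M$ and $T\setminus\{2\}\cup\{1\}\in\M'$ must equal $I_2$ (or the configuration is impossible)'' is exactly where all the difficulty lives, and asserting that ``comparing the positions of $a,b,c,d$ with those of $I$ and $j$'' settles it is not a proof. The octahedral bookkeeping you describe before that point only shows that at most one of $\{Sab,Scd\}$ and at most one of $\{Sad,Sbc\}$ can have its image $T\setminus\{2\}\cup\{1\}$ in $\M'$; it gives no mechanism for excluding those survivors, and the positive $3$-term exchange relation alone is too weak here because the sets $S\,xy$ and their images under $2\mapsto 1$ live on a rank-$k$ configuration involving six elements $1,2,a,b,c,d$, not a single octahedron. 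Since the remaining step is the entire content of the lemma, the proposal as written is an incomplete outline rather than a proof.

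For contrast, the paper's argument avoids all of this by working with an actual totally nonnegative point: take $X\in\Pi_{\M,>0}$ and $X'=x_i(-a)\cdot X\in\Pi_{\M',>0}$. By \eqref{eq:Deltax}, $\Delta_I(X)=\Delta_I(X')+a\,\Delta_{I\setminus\{i+1\}\cup\{i\}}(X')$ when applicable, so $\Delta_I(X')\le\Delta_I(X)$ for all $I$, with equality precisely when $I$ is not of general type (for $I\in\M'$). The hypothesis gives $\Delta_J(X)\Delta_K(X)=\Delta_J(X')\Delta_K(X')$, and since each product on the right-hand side of \eqref{eq:threeterm} can only decrease in passing from $X$ to $X'$ while the left-hand side is unchanged, every factor on the right must be unchanged as well, which is the desired conclusion. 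This uses realizability of $\M$ and the bridge matrix as a single monotonicity input, replacing the entire case analysis you would otherwise have to carry out against the Grassmann-necklace datum $I_2=2Ij$. If you want to salvage your combinatorial approach, the missing step is precisely a combinatorial substitute for that monotonicity, and you should expect it to require the full relationship $f_{\M'}=f_\M s_i$ rather than only the exchange axioms.
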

\begin{proof}
Let $X$ be any point in $\Pi_{\M,>0}$ and $a>0$ be the unique value such that $X' = x_i(-a) \cdot X \in \Pi_{\M',>0}$ as in \cite{LamCDM}.  We have $\Delta_J(X) = \Delta_{J}(X')$ and $\Delta_K(X) = \Delta_K(X')$.  But we also have $\Delta_I(X') \leq \Delta_I(X)$ for all $I$.  It follows that on the RHS of the three-term Pl\"ucker relation for the matrix $X$, all subsets $S$ that appear must satisfy $\Delta_S(X) = \Delta_S(X')$.  In particular, all subsets $S$ that appear in the relation are not of general type.
\end{proof}

Thus \eqref{eq:trop} holds whenever the LHS involves $q_{T2}$ where $T2$ is not of general type.
It thus suffices to consider instances of \eqref{eq:trop} involving $q_{T2}$ where $T2$ is of general type.

\noindent
(a) We have already verified all relations of the form \eqref{eq:ind}.

\noindent
(b) Let us verify relations of the form
\begin{equation}\label{eq:caseb}
q_{S2b} + q_{Sac} = \min(q_{S2a} + q_{Sbc} , q_{S2c} + q_{Sab})
\end{equation}
where $2 < a < b < c \leq n$ and $S2b$ is of general type.  Thus $S1b \in \M$ and we add $q_{S1b} < \infty$ to both sides.  The LHS becomes
\begin{align*}
&\min(q_{S2b}+q_{S1a} + q_{Sbc},q_{S2b}+q_{S1c}+q_{Sab}) \\
&= \min(q_{S12}+q_{Sab}+q_{Sbc},q_{S1b}+q_{S2a}+q_{Sbc},q_{S2b}+q_{S1c}+q_{Sab})
\end{align*}
which is equal to (RHS of \eqref{eq:caseb} $+q_{S1b}$), using only instances of \eqref{eq:trop} for $q_\bullet$ that we know must hold.

\noindent
(c) Let us verify relations of the form
$$
q_{S2ac} + q_{S12b} = \min(q_{S12a} + q_{S2bc} , q_{S12c} + q_{S2ab})
$$
where $2 < a < b < c \leq n$ and $S2ac$ is of general type.  Thus $S1ac \in\M$ and we add $q_{S1ac} < \infty$ to both sides.  The calculation is similar to (b).

\noindent
(d) Let us verify relations of the form
\begin{equation}\label{eq:verify}
q_{S2ac} + q_{S2bd} = \min(q_{S2ab} + q_{S2cd} , q_{S2ad} + q_{S2bc})
\end{equation}
where $2<a<b<c<d \leq n$ are disjoint from $S2$ and both $S2ac, S2bd$ are of general type.  By assumption, $S1ac, S1bd \in \M$, so it follows that either both $S1ad, S1bc \in \M$ or $S1ab, S1cd \in \M$.

In the former case, add $q_{S1ad}+q_{S1ac} < \infty$ to both sides.  Apply previously established instances of \eqref{eq:trop} successively to the pairs $(2bd,1ad)$, $(abd,1ac)$, $(1ab,2ac)$, $(1ac,1bd)$, $(1ab,2ad)$, $(2ac,abd)$ on the LHS, and $(1ad,2cd)$, $(1ac,2bc)$, $(12c,2ad)$, $(1ac,2ad)$ on the RHS.  The resulting formulae are the same, proving \eqref{eq:verify}.

In the latter case, add $q_{S1ab} + q_{S1bd} < \infty$ to both sides.  Apply previously established instances of \eqref{eq:trop} successively to the pairs $(2ac,1ab)$, $(1ac,1bd)$, $(1bc,2bd)$, $(12b,1ad)$ on the LHS, and $(1ab,2ad)$, $(2cd,1bd)$, $(2bc,abd)$ on the RHS.  The resulting formulae are the same, proving \eqref{eq:verify}.

\noindent
(e) Let us verify the relation \eqref{eq:verify} where one of $S2ac, S2bd$ is of general type, and the other one is of parallel type.  By an argument similar to the proof of Lemma \ref{lem:notgeneral}, at least one of $S2ab,S2cd,S2bc,S2ad$ is of general type.  

We have $S2bd,S2ac \in \M$.  If one of $S1ad$, $S1cd$, or $S1bc$ is in $\M$ then \eqref{eq:3ex} implies that $S1ac \in \M$.  So if $S2ac$ is of parallel type, we may assume that $S1bd \in \M$.  Thus $q_{S1ab}+q_{S1bd} <\infty$ and the argument reduces to that in Case (d).  

Otherwise, we have $S2ac$ of general type.  If $S1cd \in \M$ then \eqref{eq:3ex} implies that $S1bd \in \M$, a contradiction.  If $S1ad \in \M$ then again the argument reduces to that in Case (d).  The remaining cases are very similar, and can be simplified by using the assumption that $q_{S1bd}, q_{S1ad}, q_{S1cd}$ are equal to $\infty$.

We have now completed the proof of Lemma~\ref{lem:qbullet}.  Finally, let us check that indeed $p_\bullet \to q_\bullet$ is a tropical bridge reduction.

\begin{lemma}
Let $a = p_{I2j} - p_{I1j}$.  Then $p_\bullet = T_i(a) \cdot q_\bullet$.
\end{lemma}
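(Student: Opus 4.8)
The plan is to verify $p_\bullet = T_1(a)\cdot q_\bullet$ one Pl\"ucker coordinate at a time, where $a = p_{I2j}-p_{I1j}$. First note $a$ is finite: $I2j=I_2\in\M$ and $I1j\in\M$ (the latter being part of the bridge setup of \cite{LamCDM}), so both $p_{I2j}$ and $p_{I1j}$ are finite. For any $K$ with $1\in K$ or $2\notin K$ the tropical bridge acts trivially, $(T_1(a)\cdot q)_K = q_K$, and $q_K = p_K$ by the very definition of $q_\bullet$; so these coordinates require nothing. The content is therefore the family of identities
$$p_{K'2} = \min(q_{K'2},\,p_{K'1}+a), \qquad K'\subseteq[3,n],$$
where I have used $q_{K'1}=p_{K'1}$.

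I would establish these by induction on $K'2$ in dominance order, case-checking against the recursive definition of $q_\bullet$. When $K'2\notin\M$ (recursion case (a)) both $p_{K'2}$ and $q_{K'2}$ equal $\infty$; here I would also argue that $K'1\notin\M$, using the combinatorial description of how the positroid changes under a bridge (\cite{LamCDM,KLS}, see also Lemma~\ref{lem:supp}): $\M'\subseteq\M$, the new bases in $\M\setminus\M'$ all contain $2$ and omit $1$, and $K'2\in\M$ iff $K'2\in\M'$ or $K'1\in\M'$ — whence $p_{K'1}+a=\infty$ and both sides are $\infty$. When $K'=Ij$ (case (b)), $q_{I2j}=\infty$ while $p_{K'1}+a = p_{I1j}+(p_{I2j}-p_{I1j}) = p_{I2j}=p_{K'2}$, so the minimum is $p_{K'2}$ — this is precisely what forces the choice of $a$. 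When $K'2\in\M$, $K'\neq Ij$, and $K'1\notin\M$ (case (c), parallel type), $q_{K'2}=p_{K'2}$ and $p_{K'1}=\infty$, so again the minimum is $p_{K'2}$.

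The remaining general-type case ($K'2,K'1\in\M$, $K'\neq Ij$) is where the work is. Writing $K'=Lc'$ and choosing, as in \eqref{eq:pair}--\eqref{eq:ab}, integers $2<b'<c'$ with $L1b'\in\M$ (existence from \cite[Lemma 7.11]{LamCDM}), equation \eqref{eq:ab} — after replacing each $q_\bullet$ on a coordinate that contains $1$ or omits $2$ by the corresponding $p_\bullet$ — says $q_{K'2}+p_{L1b'} = \min(p_{L12}+p_{Lb'c'},\,p_{L1c'}+q_{L2b'})$. Since $L2b'<K'2$ in dominance order, the inductive hypothesis gives $p_{L2b'}=\min(q_{L2b'},\,p_{L1b'}+a)$. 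Combining this with the positive tropical Pl\"ucker relation \eqref{eq:trop} for $p_\bullet$ applied to $S=L$ and $1<2<b'<c'$, namely $p_{L1b'}+p_{K'2}=\min(p_{L12}+p_{Lb'c'},\,p_{L1c'}+p_{L2b'})$, and cancelling the finite term $p_{L1b'}$, I obtain $p_{K'2}=\min(q_{K'2},\,p_{L1c'}+a)=\min(q_{K'2},\,p_{K'1}+a)$, completing the induction and the proof.

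I expect the obstacles to be bookkeeping rather than depth: keeping the bridge parameter $a$ notationally distinct from the indices $b',c'$ of \eqref{eq:ab} (which collide with $a$ in the source text), citing the existence of that pair from \cite{LamCDM}, and pinning down the combinatorial relation between $\M=\M_f$ and $\M'=\M_{fs_1}$ needed to dispose of case (a). The rest is the short $\min$/$+$ identity above.
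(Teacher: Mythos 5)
Your proof is correct, and it reaches the conclusion by a genuinely different route than the paper. The paper's proof sets $p'_\bullet := T_i(a)\cdot q_\bullet$, invokes the outer induction on $\M$ to assume $q_\bullet$ is representable, applies Proposition~\ref{prop:bridge} to conclude that $p'_\bullet$ is a positive tropical Pl\"ucker vector with support $\M$, checks that $p'_J=p_J$ for every $J$ not of general type, and then appeals to the fact that the recursion \eqref{eq:ab} determines the general-type coordinates of any such vector from the remaining ones, so $p'_\bullet=p_\bullet$. You instead verify the coordinatewise identity $p_{K'2}=\min(q_{K'2},\,p_{K'1}+a)$ directly, by induction on dominance order: in the general-type case you combine \eqref{eq:ab} with a single instance of \eqref{eq:trop} for $p_\bullet$ (with $S=L$ and $1<2<b'<c'$) and cancel the finite term $p_{L1b'}$ — a computation that checks out. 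What this buys you is independence from the representability of $q_\bullet$ and from Proposition~\ref{prop:bridge}, a point the paper is explicitly careful about since that proposition is only licensed for vectors already known to be representable; your argument uses only that $p_\bullet$ satisfies \eqref{eq:trop} and the definition of $q_\bullet$. The price is that you must separately handle case (a) by showing $K'2\notin\M\Rightarrow K'1\notin\M$, which you correctly extract from the relation $K'2\in\M\Leftrightarrow(K'2\in\M' \text{ or } K'1\in\M')$ between $\M$ and $\M'=\M_{fs_i}$ (a geometric fact the paper itself uses in the proof of Lemma~\ref{lem:notgeneral}, so there is no circularity). Both arguments ultimately rest on the same determinacy of general-type coordinates via \eqref{eq:ab}; yours just makes the verification explicit rather than routing it through representability.
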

\begin{proof}
Let $p'_\bullet =T_i(a) \cdot q_\bullet$.  By induction on $\M$, we may suppose that $q_\bullet$ is representable, so Proposition \ref{prop:bridge} applies and $p'_\bullet$ is a positive tropical Pl\"ucker vector.
We have that $\Supp(p'_\bullet) = \M$ and $p'_J = p_J$ except when $J = K2 > I_2$ is of general type.  But the recursion we used to define $q_{K2}$ can also be applied to $p'_\bullet$ (resp. $p_\bullet$), so we conclude that the rest of the values of $p_\bullet$ and $p'_\bullet$ agree.
\end{proof}

This completes the proof of Proposition~\ref{prop:tropreduce}.

\subsection{Proof of Theorem \ref{thm:bridge}}
Since $\Pluck(\M)_{>0}$ has the structure of a rational polyhedral complex, the statement over $\Z$ implies the other statements.  If $\M = \{I\}$ is a singleton, then $\Pluck(\{I\})_{>0}(\Z)$ consists of the vectors $p(I,z)_\bullet$, $z \in \Z$ given by
$$
p(I,z)_J = \begin{cases} z & \mbox{if $I = J$,} \\
\infty & \mbox{otherwise.}
\end{cases}
$$
Thus the result holds for $\M = \{I\}$.  Otherwise, by using the bridge reduction moves, we have $f_\M = f_{\M'} (i,j)$ for $\dim(\M') = \dim(\M) -1$ and some transposition $\gamma = (i,j)$.  The effect of using the embeddings $\epsilon_a:\Pluck(k,n-1)_{\geq 0} \hookrightarrow \Pluck(k,n)_{\geq 0}$ and $\varepsilon_a:\Pluck(k-1,n-1)_{\geq 0} \hookrightarrow \Pluck(k,n)_{\geq 0}$ is that $(i,i+1)$ is changed to $(i,j)$ (extra numbers are inserted in between $i$ and $i+1$).  We have shown that each $p_\bullet \in \Pluck(\M)_{>0}(\Z)$ is equal to $T_\gamma(a) \cdot q_\bullet$ for a uniquely specified $q_\bullet \in \Pluck(\M')_{>0}(\Z)$ and $a \in \Z$.  Conversely, by induction every point $q_\bullet \in \Pluck(\M')_{>0}(\Z)$ is representable so $T_\gamma(a) \cdot q_\bullet \in \Pluck(\M)_{>0}(\Z)$ by Proposition \ref{prop:bridge}.  We conclude that we have a bijection $\Z \times \Pluck(\M')_{>0}(\Z) \simeq \Pluck(\M)_{>0}(\Z)$.  The stated tropical bridge parametrization follows.

\section{Connected positroids}\label{sec:connected}

\subsection{}
We collect some facts concerning connected positroids here.  

\begin{lemma}\label{lem:connectbridge}
Let $\M$ be a connected positroid with bounded affine permutation $f = f_\M$.  Then at least one of the following holds: 
\begin{enumerate}
\item $f(i) \in \{i,i+1,i+n-1,i+n\}$ for some $i \in [n]$;
\item there exists $i \in [n]$ so that $i < f(i) < f(i+1) < i+n$ such that $f' = f s_i$ is a bounded affine permutation of a connected positroid.
\end{enumerate}
\end{lemma}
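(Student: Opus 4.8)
The plan is to prove the contrapositive: assuming $\M$ is connected and~(1) fails, exhibit an index as in~(2). Use throughout the reformulation of connectedness coming from Proposition~\ref{prop:disconnect}: a positroid with bounded affine permutation $g$ is disconnected if and only if $[n]$ has a proper nonempty cyclic interval $S$ with $g(S)\equiv S\pmod n$. (Both directions follow from Proposition~\ref{prop:disconnect}; the forward one uses that a noncrossing partition of $[n]$ into two blocks has each block equal to a cyclic interval.)

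Since~(1) fails, $i+2\le f(i)\le i+n-2$ for all $i$, so $\M$ has no loops and no coloops; hence $k>0$ and, exactly as in the discussion before Proposition~\ref{prop:tropreduce}, there is at least one \emph{candidate} index $i$, meaning $i+1\le f(i)<f(i+1)\le i+n$ — which under our hypothesis forces $i<f(i)<f(i+1)<i+n$. It suffices to show some candidate $i$ has $fs_i$ connected, so suppose not. If a candidate $i$ has $fs_i$ disconnected, pick a proper nonempty cyclic $S$ with $(fs_i)(S)\equiv S$; since $fs_i$ agrees with $f$ off $\{i,i+1\}$ and $\M$ is connected, $S$ contains exactly one of $i,i+1$, and replacing $S$ by its complement if needed we may take $S=[a,i]$. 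Unwinding $(fs_i)(i)=f(i+1)$ and $(fs_i)(i+1)=f(i)$, one finds that $f$ carries $[a,i-1]\cup\{i+1\}$ bijectively onto $[a,i]\pmod n$ while $f(i)\bmod n\notin[a,i]$; thus $[a,i]$ is ``almost $f$-invariant,'' with its unique $f$-outflow at the endpoint $i$ and its unique $f$-inflow at $i+1$. Feeding $j+2\le f(j)\le j+n-2$ into almost-invariance forces $|S|\ge 4$ (smaller $|S|$ is arithmetically impossible) and that $f$ restricted to $[a,i-1]$ is not strictly decreasing (a strictly decreasing chain there would force $f(a)>a+n-2$, violating the bound); hence $f$ has an ascent at some $j_\ast$ with $a\le j_\ast\le i-2$, and $j_\ast$ is again a candidate, lying strictly inside $S$.

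Finish extremally. Among all candidates $i$ with $fs_i$ disconnected, pick one whose almost-invariant interval $S=[a,i]$ has smallest possible size. Bridging instead at the internal ascent $j_\ast\in[a,i-2]$: by our supposition $fs_{j_\ast}$ is also disconnected, with almost-invariant interval $S'$; the claim is that $S'\subsetneq S$, so $|S'|<|S|$, contradicting minimality and completing the proof. The heart of the matter is exactly this containment: one must rule out that the ``bad'' interval $S'$ for $j_\ast$ is as large as, or skew to, $S$, by using the bounded-affine inequalities together with the almost-invariance of $[a,i]$ to pin down where the values $f([a,i-1])$ and $f(j_\ast+1)$ can land relative to $[a,i]$. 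I anticipate that making this step clean may require replacing ``smallest $|S|$'' by a more refined well-founded quantity (for instance the pair consisting of $|S|$ and the left endpoint $a$, under a suitable cyclic order) and possibly a more careful choice of internal index than an arbitrary internal ascent; but the structural skeleton — connectedness $\Leftrightarrow$ no invariant cyclic interval, the almost-invariance dichotomy at a disconnecting bridge, and the forced internal ascent — should be the substance of the argument.
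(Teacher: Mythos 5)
Your skeleton is the same as the paper's: argue by contradiction, use Proposition~\ref{prop:disconnect} to convert ``$fs_i$ disconnected'' into an invariant cyclic interval, observe that this interval is almost $f$-invariant with a single outflow/inflow at $i,i+1$, force an ascent of $f$ (hence another candidate index) by the bounded-affine inequalities, and then run an extremal argument. The preliminary steps you carry out are correct: $S$ contains exactly one of $i,i+1$; the arithmetic forcing $|S|\ge 4$ checks out; and a strictly decreasing chain on $[a,i-1]$ with values constrained to $[a,i]$ mod $n$ really is impossible, so an internal ascent exists. But the proof has a genuine gap at exactly the decisive step: the containment $S'\subsetneq S$ that makes the extremal argument close is asserted, not proved, and you yourself flag that ``smallest $|S|$'' and ``an arbitrary internal ascent'' may not be the right choices. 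That step is the entire content of the lemma --- everything before it is bookkeeping --- so as written the argument does not yet constitute a proof. Note also that your setup is not the paper's mirror image in a way that would let you import its argument: you seek the ascent inside the interval $S=[a,i]$ containing $i$ and minimize $|S|$, whereas the paper seeks the ascent inside the complementary interval $B=[i+1,j]$ containing $i+1$ and minimizes $|B|$ (so you are in effect \emph{maximizing} the paper's extremal quantity).

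For comparison, here is how the paper closes the gap. It uses a two-level minimality: for each bad candidate $i$ it first chooses the disconnecting decomposition $[n]=A\sqcup B$ with $B=B[i]=[i+1,j]$ of \emph{minimal} size, and then chooses $i$ so that $|B[i]|$ is minimal over all candidates. If $f$ has an ascent at some $r\in[i+2,j]$, then $r$ is a candidate and its interval $B[r]=[r+1,j']$ satisfies $|B[r]|\ge|B|$ by outer minimality, hence must spill into $A$; a short analysis of where $\pi(i+1)$ and $\pi(i)$ can sit then shows $\pi''(A\cap B[r])=A\cap B[r]$, so $B[r]$ could be shrunk, contradicting the \emph{inner} minimality. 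If there is no such ascent, $f$ is strictly decreasing on $[i+2,j]$ with values in $B$ mod $n$, which the bounds $i+2\le f(\cdot)\le \cdot+n-2$ rule out (the same kind of arithmetic you used on $[a,i-1]$). To complete your write-up you would need to supply an argument of comparable substance for your version of the containment, or switch to the paper's choice of interval and extremal quantity.
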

\begin{proof}
Suppose that $f = f_\M$ is a counterexample.  Since no $i \in [n]$ satisfies (1),  we must have some $i \in [n]$ such that $i < f(i) < f(i+1) <i+n$.  We have $f'(j) = f(j)$ if $j \neq i,i+1$ and $f'(i) = f(i+1)$ and $f'(i+1) = f(i)$.  Let $\M'$ satisfy $f_{\M'} = f'$.  Write $\pi,\pi':[n] \to [n]$ for the permutations that are reductions of $f,f'$ modulo $n$.  Since (2) fails, $\M'$ is not connected.  Then by Lemma~\ref{prop:disconnect}, we must have a decomposition $[n] = A \cup B$ into disjoint cyclic intervals $A=A[i],B=B[i]$ so that 
$$
\pi'(A) = A \qquad \text{and} \qquad \pi'(B) = B.
$$
But $\M$ is connected so after renaming $A$ and $B$ we must have $i,\pi(i+1) \in A=[j+1,i]$ and $i+1,\pi(i) \in B = [i+1,j]$ where we assume that $j$ has been chosen so that $|B|$ is minimal.  

We now assume that out of all $i \in [n]$ satisfying $i < f(i) < f(i+1) <i+n$ we have chosen $i$ so that $|B[i]|$ is minimal. 
Suppose that $f(r) <f(r+1)$ for some $r,r+1 \in [i+2,j]$.  Since $\pi(r),\pi(r+1) \in B$, the minimality assumption on $|B|$ implies that $\pi(r+1), r , r+1, \pi(r)$ are in order within (the totally ordered cyclic interval) $B$.  Set $f''=fs_r$ and let $\pi''$ be the reduction of $f''$ modulo $n$.  Now let $B[r] = [r+1,j']$, so that $\pi''(B[r]) = B[r]$.  Since $|B[r]| \geq |B|$ by assumption, we must have $B[r] \cap A \neq \emptyset$.  If $\pi(i+1) \in (B[r]\cap A)$, we would contradict $\pi''(B[r]) = B[r]$, since $i+1 \notin B[r]$.  Thus $i \notin B[r]$, and we must have $\pi''(A \cap B[r]) \subset A$ as well, so that $\pi''(A \cap B[r]) = (A \cap B[r])$.  This contradicts our assumption that $B[r] = [r+1,j']$ is chosen so that $|B[r]|$ is minimal.

Thus we must have $f(i+2) > f(i+3) > \cdots > f(j)$, and all these values lie inside $B$ modulo $n$.  It is easy to see that this is impossible (since (1) is never satisfied) and we have arrived at a contradiction.\end{proof}

\subsection{}

Let $\CC \subset \M$ be a cluster for a positroid $\M$.  The plabic graph construction \cite{Pos,OPS} implies that there exists a bipartite graph $G(\CC)$ embedded into the disk, whose faces are in bijection with $\CC$ (write $F(J)$ for the face indexed by $J \in \CC$).  The graph $G(\CC)$ is essentially unique if we assume that interior vertices have degree $> 2$, except for vertices connected to the boundary vertices.  (To fix conventions, let us use ``target-labels" for the faces, in agreement with \cite{OPS}.)  The graph $G(\CC)$ has the following properties:
\begin{enumerate}
\item every connected component of $G(\CC)$ is connected to the boundary, and the connected components of $G(\CC)$ are naturally in bijection with the connected components of $\M$;
\item for each interior face $F(J)$, we have $e_{J_1}+e_{J_3}+\cdots+e_{J_{2r-1}} = e_{J_2}+e_{J_4}+\cdots+e_{J_{2r}}$, where $F(J_1),F(J_2),\ldots,F(J_{2r})$ are the faces edge adjacent to $F(J)$ arranged in cyclic order.
\end{enumerate}

\begin{lemma}\label{lem:mintree}
A connected positroid $\M$ is minimal if and only if $G(\CC)$ is a tree, for some cluster $\CC$ of $\M$.  In this case, $\M$ has a unique cluster $\CC \subset \M$ and we denote by $T_\M:=G(\CC)$ this tree.
\end{lemma}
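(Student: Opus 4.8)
\textbf{Setup and the dimension count.} Recall that a connected positroid $\M$ is minimal (in the sense of a \emph{minimal connected positroid}) when $\dim(\M) = n-1$, and that any cluster $\CC \subset \M$ has cardinality $|\CC| = \dim(\M)+1$. So $\M$ is minimal precisely when $|\CC| = n$. The plan is to relate $|\CC|$, the number of faces of $G = G(\CC)$, to the combinatorics of $G$ as an embedded bipartite graph in the disk via an Euler-characteristic computation, and then to use the connectivity facts (1) and (2) listed just before the lemma.

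\textbf{The Euler characteristic argument.} First I would reduce to the case where $G$ is trimmed of degree-$2$ interior vertices and "lollipop" edges, using the essential uniqueness of $G(\CC)$ stated in the excerpt, so that the topological content is unambiguous. Since $\M$ is connected, property (1) says $G$ has a single connected component meeting the boundary; I want to argue $G$ is connected as a graph. Drawing $G$ in the disk, Euler's formula gives $V - E + F = 1 + c$ where $c$ is the number of connected components of $G$ (the "$+1$" accounting for the outer region, and each extra component of $G$ contributing an extra bounded region or not — this is exactly the point to be careful about) and $F$ counts the interior faces, so $F = |\CC|$. The key inequality is that $E \geq V - c$ always, with equality iff $G$ is a forest; combined with the planar Euler relation this forces $|\CC| = F \geq (\text{something}) $, with the extremal case $|\CC| = n$ occurring exactly when $G$ has no cycles. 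I would make this precise by noting that the number of boundary faces is $n$ (one between each pair of consecutive boundary vertices) and that interior faces correspond to independent cycles of $G$; hence $|\CC| = n + (\text{number of independent cycles of } G)$, i.e. $|\CC| = n + E - V + c$. Since connectedness of $\M$ will give $c = 1$, we get $|\CC| = n$ iff $E = V - 1$ iff $G$ is a tree. The main obstacle is pinning down precisely the correspondence "interior faces $\leftrightarrow$ independent cycles" and the boundary bookkeeping in the presence of the embedding in the disk — this requires being careful about how the disk boundary interacts with the faces and about the normalization of $G(\CC)$, and is where a genuine (if routine) argument is needed rather than a one-line citation.

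\textbf{Uniqueness of the cluster.} For the last sentence, suppose $G(\CC)$ is a tree $T$. Mutation at an interior face of a plabic graph (equivalently, a square move) requires that face to be a quadrilateral, which needs the face to be bounded by a $4$-cycle in $G$; a tree has no cycles at all, so no interior face admits a mutation. By Proposition~\ref{prop:mutation}, any two clusters of $\M$ are connected by mutations, so $\CC$ is the unique cluster, and $T_\M := G(\CC)$ is well-defined. I would also remark that conversely, if $\M$ has a unique cluster then no cluster of $\M$ can contain a mutable (quadrilateral) interior face, which forces every $G(\CC)$ to be a tree, closing the equivalence; alternatively this direction is immediate from the dimension count once the forward direction is established, since $G(\CC)$ being a tree is equivalent to $|\CC| = n$ which is independent of the choice of $\CC$.
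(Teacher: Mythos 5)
The paper states this lemma without proof, so there is nothing to compare against; judged on its own, your argument is correct and is surely the one the authors intend. The Euler-characteristic count is the right mechanism: adjoining the boundary circle to $G(\CC)$ and applying Euler's formula in the disk gives $F = 1 + E + n - V$ for the number of regions, which for connected $G$ (guaranteed by property (1) since $\M$ is connected) equals $n + b_1(G)$; combined with $F = |\CC| = \dim(\M)+1$ this yields $\dim(\M) = n-1$ iff $b_1(G)=0$ iff $G(\CC)$ is a tree, and since $|\CC|$ is the same for every cluster, ``for some cluster'' upgrades to ``for every cluster'' for free. One small imprecision: your formula $F = n + E - V + c$ is only the correct region count when $c=1$ (in general, with every component meeting the boundary, it is $n + E - V + 1$), but this is harmless since connectivity of $\M$ forces $c=1$. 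For uniqueness, your reduction to ``no mutation is applicable'' via Proposition~\ref{prop:mutation} is the right move; the step that a cluster mutation requires an interior quadrilateral face of the plabic graph is exactly the OPS correspondence between mutations and square moves, which is part of the same citation, and since contraction/expansion moves preserve the homotopy type of the graph, a tree can never acquire such a face. So the argument is complete.
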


The positroid polytope $P_\M$ of a minimal connected positroid $\M$ can be described as follows.  We assume that $T_\M$ is chosen so that it is a bipartite tree all of whose vertices have degree $> 2$.  Then the facets of $P_\M$ are in bijection with the edges of $T_\M$.  There are $n$ edges of $T_\M$ that connect to boundary points.  These correspond to external facets: these are facets of $P_\M$ that are supported on the same hyperplane $\{x_i = 0\}$ or $\{x_i = 1\}$ as a facet of the hypersimplex (the color of the internal vertex determines which of $\{x_i = 0\}$ or $\{x_i = 1\}$ occurs).  Interior  edges $e$ of $T_\M$ correspond to internal facets of $P_\M$.  The edge $e$ divides $T_\M$ into two components, and thus $[n] = [a,b-1] \cup [b,a-1]$ into two cyclic intervals.  The internal facet of $P_\M$ is supported on a hyperplane of the form $\sum_{i\in[a,b-1]} x_i = c$ for some $c$.

\subsection{Proof of Lemma~\ref{lem:span}}\label{ssec:proofspan}
We first argue that $\sp_\Z\{e_J \mid J \in \CC\} = X(\hT)$ for any cluster $\CC$.  Since $\M$ is connected, Lemma~\ref{lem:connected} says that the action of $T$ on $\oPi_\M$ is faithful and thus the action of $\hT$ on $\tPi_\M$ is faithful.  On the other hand, if the integral span of $\{e_J \mid J \in \CC\}$ is strictly smaller than $X(\hT)$, then there is a nontrivial subtorus $\{1\} \subsetneq S \subset \hT$ that acts on the rational function field $\C(\Delta_{J} \mid J \in \CC)$ as the identity.  But we have an inclusion $\C[\tPi_\M] \subset \C(\Delta_{J} \mid J \in \CC)$ coming from the cluster structure on $\tPi_\M$, and thus $S$ must act trivially on $\tPi_\M$, a contradiction.

Suppose that $\M$ is a connected positroid.  Then $i<f_\M(i)<i+n$ for all $i$.  By Lemma~\ref{lem:connectbridge}, we must be in one of the following situations:
\begin{enumerate}
\item for some $i$, we have $f_\M(i) = i+1$ or $f_\M(i) = i+n-1$;
\item for some $i$, we have $i < f_\M(i) < f_\M(i+1) < i+n$ and $ f s_i = f_{\M'}$ where $\M'$ is connected.
\end{enumerate}
Suppose (1) holds.  We assume $f_\M(i) = i+n-1$; the other case is similar.  For any cluster $\CC \subset \M$ the graph $G(\CC)$ the vertices $i-1,i$ are connected to the same white interior vertex.  Thus there is $J \in \CC$ such that $i-1 \in J$ but $i-1\notin J'$ for all $J '\in \CC':=\CC \setminus \{J\}$.  The set $\CC'$ is a cluster for some connected positroid $\M'$ on $[n] \setminus \{i-1\}$ (not depending on $\CC$) of the same rank as $\M$ and all clusters $\CC'$ for $\M'$ occur in this way.  By induction, there exists a gauge-fix and cluster $\G' \subset \CC'$ for $\M'$ i.e. $|\G'| = n-1$ and 
$$\sp_\Z\{e_J \mid J \in \CC'\} = \{(x_1,\ldots,x_n) \in \Z^n \mid x_{i-1} = 0 \text{ and } k \text{ divides } \sum x_i \}.$$
It follows that $\G := (\G' \cup \{J\} )\subset \CC:=(\CC' \cup \{J\})$ is a gauge-fix.

Suppose (2) holds.  Let $\G' \subset \CC' \subset \M'$ be a gauge-fix and a cluster for $\M'$.  It is possible to add an edge to the planar bipartite graph $G(\CC')$ to obtain the planar bipartite graph $G(\CC)$ for some cluster $\CC$ of $\M$, see \cite[Section 7.4]{LamCDM}.  We have $\CC = \CC' \cup \{J\}$ where $J$ is the face label of the new face in $G(\CC)$.
Thus $\G' \subset \CC$ is a gauge-fix.

\appendix
\section{\texorpdfstring{The cases $(3,6)$, $(3,7)$, and $(3,8)$}{The cases (3,6), (3,7), and (3,8)}}\label{sec:examples}
\def\ndim{{\rm ndim}}
For a positroid subdivision of the hypersimplex $\tDelta = \{P_{\M_1},\ldots,P_{\M_s}\}$, we define a ``naive dimension":
$$
\ndim(\tDelta) = \sum_{i=1}^s \dim(\Pi_{\M_i,>0}/T_{>0})= \sum_{i=1}^s(\dim(\M_i) - (n-1)).
$$
The naive dimension $\ndim(\tDelta)$ is always greater than or equal to $\dim(\tDelta)$.  In the case $k = 2$, we have $\dim(\tDelta) = \ndim(\tDelta)$, but for $k > 2$ this no longer holds.
We would have $\ndim(\tDelta) = \dim(\tDelta)$ if the positive Chow cell $\Theta_{\tDelta,>0}$ were isomorphic to the product $\prod_i \Pi_{\M_i,>0}/T_{>0}$.  A less naive dimension estimate would take into account $\dim(\M)$ for lower-dimensional faces of $\tDelta$.

Our positive parametrizations essentially agree with those of \cite{SW}.
\subsection{$(k,n) = (3,6)$}\label{ssec:36}
We use the following positive parametrization of $\Gr(3,6)/T$, which is related to \eqref{eq:36param1} by a monomial transformation
$$
\begin{bmatrix}
 0 & 0 & -1 & -1 & -1 & -1 \\
 0 & 1 & 0 & -1 & -1-x_1 & -1-x_1-x_1 x_3 \\
 1 & 0 & 0 & 1 & 1+x_1+x_1 x_2 & 1+x_1+x_1x_2+x_1x_2 x_3 +x_1x_3 +x_1x_2 x_3 x_4 \\
\end{bmatrix}
$$
The polytope $P(3,6)$ (see Proposition~\ref{prop:Ft}) has $f$-vector $(1, 48, 98, 66, 16, 1)$.
We list the sixteen rays of the inner normal fan $\F$ of the polytope $P(3,6)$:
\begin{align*}
r_1&=(-1, 0, 0, 0)
&r_2&=(-1, 0, 0, 1)
&r_3&=(0, -1, 0, 0)
&r_4&=(0, 0, -1, 0) \\
r_5&=(0, 0, 0, -1)
&r_6&=(0, 0, 0, 1)
&r_7&=(0, 0, 1, -1)
&r_8&=(0, 0, 1, 0) \\
r_9&=(0, 1, 0, -1)
&r_{10}&=(0, 1, 0, 0)
&r_{11}&=(0, 1, 1, -1)
&r_{12}&=(1, -1, -1, 0) \\
r_{13}&=(1, -1, 0, 0)
&r_{14}&=(1, 0, -1, 0)
&r_{15}&=(1, 0, 0, -1)
&r_{16}&=(1, 0, 0, 0)
\end{align*}
A comparison of this fan with the cluster fan of $D_4$ was given in \cite{BCL}.
Let $p^{(i)}_\bullet$ be the positive tropical Pl\"ucker vector associated to $r_i$ via Theorem~\ref{thm:param}.
Let $\tDelta_i = \Delta(p^{(i)}_\bullet)$.  These positroid subdivisions form four families under the action of the cyclic group, and we explicitly describe one in each family:
\begin{enumerate}
\item (Rays 1, 5, 8, 10, 13, 14) The two-piece decomposition obtained by slicing $\Delta(3,6)$ with the hyperplane $x_1+x_2 = 1$.  The dimensions $\dim(\M)$ of the participating positroids are $\{7,7\}$.  Thus $\ndim(\tDelta) = 4$ but $\dim(\tDelta) = 3$.
\item (Rays 3, 4, 6, 7, 9, 16) The two-piece decomposition obtained by slicing $\Delta(3,6)$ with the hyperplane $x_1+x_2 +x_3= 1$.  The dimensions $\dim(\M)$ of the participating positroids are $\{5,8\}$.  Thus $\ndim(\tDelta) = 3 =\dim(\tDelta)$.
\item (Rays 2, 15) The three-piece decomposition obtained as follows.  Consider the projection $\pi:\R^6 \to \R^3$ given by $(x_1,x_2,x_3,x_4,x_5,x_6) \mapsto (x_1+x_2,x_3+x_4,x_5+x_6) = (y_1,y_2,y_3)$.  The image $\pi(\Delta(3,6))$ can be divided into three pieces $\{y_1 \leq 1, y_1+y_2 \leq 2\}, \{y_2 \leq 1, y_2+y_3 \leq 2\}, \{y_3 \leq 1, y_1+y_3 \leq 2\}$.  The three pieces of $\tDelta$ are the preimages under $\pi$.  The dimensions $\dim(\M)$ of the participating positroids are $\{6,6,6\}$.  Thus $\ndim(\tDelta) = 3 =\dim(\tDelta)$.
\item (Rays 11,12) The three-piece decomposition that are the preimages under $\pi$ of $\{y_2 \leq 1, y_1+y_2 \leq 2\}, \{y_3 \leq 1, y_2+y_3 \leq 2\}, \{y_1 \leq 1, y_1+y_3 \leq 2\}$.  The dimensions $\dim(\M)$ of the participating positroids are $\{6,6,6\}$.  Thus $\ndim(\tDelta) = 3 =\dim(\tDelta)$.
\end{enumerate}

\subsection{$(k,n) = (3,7)$}
We use the following positive parametrization of $\Gr(3,7)/T$:
{\tiny $$
\begin{bmatrix}
 0 & 0 & 1 \\
 0 & 1 & 0 \\
 -1 & 0 & 0 \\
 -1 & -1 & 1 \\
 -1 & -1-x_1 & 1+x_1+x_1 x_2 \\
 -1 & -1-x_1-x_1 x_3 & 1+x_1+x_1 x_2+x_1 x_3+x_1 x_2 x_3+x_1 x_2 x_3 x_4
   \\
 -1 & {-}1{-}x_1{-}x_1 x_3{-}x_1 x_3 x_5 & 1{+}x_1{+}x_1 x_2{+}x_1 x_3{+}x_1 x_2 x_3{+}x_1
   x_2 x_3 x_4{+}x_1 x_3 x_5{+}x_1 x_2 x_3 x_5{+}x_1 x_2 x_3 x_4 x_5{+}x_1 x_2
   x_3 x_4 x_5 x_6 \\
\end{bmatrix}
$$}

 The polytope $P(3,7)$ has $f$-vector $(1, 693, 2163, 2583, 1463, 392, 42, 1)$.
 We list the 42 rays of the inner normal fan $\F$ of the polytope $P(3,7)$:
\begin{align*}
r_{1}&=(-1, 0, 0, 0, 0, 0)
&r_{2}&=(-1, 0, 0, 1, 0, -1)
&r_{3}&=(-1, 0, 0, 1, 0, 0) \\
r_{4}&=(-1, 0, 0, 1, 1, -1)
&r_{5}&=(0, -1, 0, 0, 0, 0)
&r_{6}&=(0, 0, -1, 0, 0, 0) \\
r_{7}&=(0, 0, -1, 0, 0, 1)
&r_{8}&=(0, 0, -1, 1, 0, 0)
&r_{9}&=(0, 0, 0, -1, 0, 0)\\
r_{10}&=(0, 0, 0, 0, -1, 0)
&r_{11}&=(0, 0, 0, 0, 0, -1)
&r_{12}&=(0, 0, 0, 0, 0, 1)\\
r_{13}&=(0, 0, 0, 0, 1, -1)
&r_{14}&=(0, 0, 0, 0, 1, 0)
&r_{15}&=(0, 0, 0, 1, 0, -1)\\
r_{16}&=(0, 0, 0, 1, 0, 0)
&r_{17}&=(0, 0, 0, 1, 1, -1)
&r_{18}&=(0, 0, 1, -1, -1, 0)\\
r_{19}&=(0, 0, 1, -1, 0, 0)
&r_{20}&=(0, 0, 1, 0, -1, 0)
&r_{21}&=(0, 0, 1, 0, 0, -1)\\
r_{22}&=(0, 0, 1, 0, 0, 0)
&r_{23}&=(0, 1, 0, -1, 0, 0)
&r_{24}&=(0, 1, 0, 0, 0, -1)\\
r_{25}&=(0, 1, 0, 0, 0, 0)
&r_{26}&=(0, 1, 0, 0, 1, -1)
&r_{27}&=(0, 1, 1, -1, -1, 0)\\
r_{28}&=(0, 1, 1, -1, 0, -1)
&r_{29}&=(0, 1, 1, -1, 0, 0)
&r_{30}&=(0, 1, 1, 0, 0, -1)\\
r_{31}&=(1, -1, -1, 0, 0, 0)
&r_{32}&=(1, -1, -1, 0, 0, 1)
&r_{33}&=(1, -1, 0, 0, -1, 0)\\
r_{34}&=(1, -1, 0, 0, 0, 0)
&r_{35}&=(1, 0, -1, 0, 0, 0)
&r_{36}&=(1, 0, -1, 0, 0, 1)\\
r_{37}&=(1, 0, 0, -1, -1, 0)
&r_{38}&=(1, 0, 0, -1, 0, 0)
&r_{39}&=(1, 0, 0, 0, -1, 0)\\
r_{40}&=(1, 0, 0, 0, 0, -1)
&r_{41}&=(1, 0, 0, 0, 0, 0)
&r_{42}&=(1, 0, 1, -1, -1, 0)
\end{align*}
These rays and their corresponding positroid subdivisions form six families under the action of the cyclic group, and we explicitly describe one in each family:
\begin{enumerate}
\item (Rays 1, 11, 14, 20, 25, 34, 35) The two-piece decomposition obtained by slicing $\Delta(3,7)$ with the hyperplane $x_1+x_2 = 1$.  The dimensions $\dim(\M)$ of the participating positroids are $\{9,10\}$.  Thus $\ndim(\tDelta) = 7$ but $\dim(\tDelta) = 5$.
\item (Rays 6, 9, 16, 19, 22, 24, 39) The two-piece decomposition obtained by slicing $\Delta(3,7)$ with the hyperplane $x_1+x_2+x_3= 1$.  The dimensions $\dim(\M)$ of the participating positroids are $\{8,10\}$.  Thus $\ndim(\tDelta) = 6$ but $\dim(\tDelta) = 5$.
\item (Rays 5, 10, 12, 13, 15, 23, 41) The two-piece decomposition obtained by slicing $\Delta(3,7)$ with the hyperplane $x_1+x_2+x_3+x_4= 1$.  The dimensions $\dim(\M)$ of the participating positroids are $\{6,11\}$.  Thus $\ndim(\tDelta) = 5=\dim(\tDelta)$.
\item (Rays 2, 3, 7, 21, 36, 38, 40) The three-piece decomposition obtained as follows.  Consider the projection $\pi:\R^7 \to \R^3$ given by $(x_1,x_2,x_3,x_4,x_5,x_6,x_7) \mapsto (x_1+x_2+x_3,x_4+x_5,x_6+x_7) = (y_1,y_2,y_3)$.  The image $\pi(\Delta(3,7))$ can be divided into three pieces $\{y_1 \leq 1, y_1+y_2 \leq 2\}, \{y_2 \leq 1, y_2+y_3 \leq 2\}, \{y_3 \leq 1, y_1+y_3 \leq 2\}$.  The three pieces of $\tDelta$ are the preimages under $\pi$.  The dimensions $\dim(\M)$ of the participating positroids are $\{7,8,9\}$.  Thus $\ndim(\tDelta) = 6$ but $\dim(\tDelta) = 5$.
\item (Rays 17, 18, 26, 27, 29, 31, 33) The three-piece decomposition that are the preimages under $\pi$ of $\{y_2 \leq 1, y_1+y_2 \leq 2\}, \{y_3 \leq 1, y_2+y_3 \leq 2\}, \{y_1 \leq 1, y_1+y_3 \leq 2\}$.  The dimensions $\dim(\M)$ of the participating positroids are $\{7,8,9\}$.  Thus $\ndim(\tDelta) = 6$ but $\dim(\tDelta) = 5$.
\item (Rays 4, 8, 28, 30, 32, 37, 42) The four-piece decomposition obtained as follows.  Consider the projection $\kappa: \R^7 \to \R^4$ given by 
$$
(x_1,\ldots,x_7) \mapsto (x_1+x_2,x_3+x_4,x_5+x_6,x_7) = (y_1,y_2,y_3,y_4).
$$
The image $\kappa(\Delta(3,7))$ is contained inside the tetrahedron $Q$ given by $y_i \geq 0$ and $\sum_i y_i = 3$.  We cut $Q$ into four pieces $\{y_1+y_4 \leq 1, y_3 \geq 1\}, \{y_3+y_4 \leq 1, y_1 \geq 1\}, \{y_1 \leq 1, y_3 \leq 1, y_2 \geq 1\}, \{y_1+y_4 \geq 1, y_3+y_4 \geq 1, y_2 \leq 1\}$.  The four pieces of $\tDelta$ are the preimages under $\kappa$.  The dimensions $\dim(\M)$ of the participating positroids are $\{7,7,7,8\}$.  Thus $\ndim(\tDelta) = 5=\dim(\tDelta)$.
\end{enumerate}

\subsection{$(k,n) = (3,8)$}
 The polytope $P(3,8)$ has $f$-vector 
 $$(1, 13612, 57768, 100852, 93104, 48544,
14088, 2072, 120, 1).$$
There are thus 120 regular subdivisions of the hypersimplex into positroid polytopes, satisfying $\dim(\tDelta) = 7$.  These hypersimplex subdivisions come in 15 cyclic families, each of size 8.  Let us report on the dimensions of the positroids (for maximal faces) involved.
\begin{center}
\begin{tabular}{|c|c|c|}
\hline 
number of maximal faces & dimensions of positroids & $\ndim(\tDelta)$ \\
\hline
2 & $\{7,14\}$ & $7$\\
\hline
2 & $\{9,13\}$ & $8$\\
\hline
2 & $\{11,12\}$ & $9$\\
\hline
2 & $\{11,13\}$ & $10$\\
\hline
3 & $\{8,10,12\}$ & $9$\\
\hline
3 & $\{8,10,12\}$ & $9$\\
\hline
3 & $\{9,10,11\}$ & $9$\\
\hline
3 & $\{9,10,11\}$ & $9$\\
\hline
4 & $\{8,8,9,11\}$ & $8$\\
\hline
4 & $\{8,8,10,10\}$ & $8$\\
\hline
4 & $\{8,8,10,10\}$ & $8$\\
\hline
4 & $\{9,9,9,10\}$ & $9$\\
\hline
5 & $\{8,8,9,9,9\}$ & $8$\\
\hline
5 & $\{8,8,9,9,9\}$ & $8$\\
\hline
6 & $\{8,8,8,8,8,9\}$ & $7$\\
\hline
\end{tabular}
\end{center}

\end{document}